\documentclass{article}

\usepackage[english]{babel}
\usepackage[T1]{fontenc}

\usepackage[letterpaper,top=3cm,bottom=3cm,left=2cm,right=2cm,marginparwidth=2cm]{geometry}

\setlength{\parindent}{2em}

\usepackage{amsmath}
\usepackage{graphicx}
\usepackage{amsfonts}
\usepackage{amsthm}
\usepackage{comment}
\usepackage{slashed}
\usepackage[colorinlistoftodos]{todonotes}
\usepackage[colorlinks=true, allcolors=blue]{hyperref}
\usepackage{amssymb}
\usepackage{float}
\usepackage{tikzfill}
\usepackage{lipsum}
\allowdisplaybreaks
\newtheorem{lemma}{Lemma}[section]
\newtheorem{theorem}{Theorem}[section]
\newtheorem*{theorem*}{Theorem}
\newtheorem{definition}{Definition}[section]
\newtheorem{remark}{Remark}[section]
\newtheorem{corollary}{Corollary}[section]
\newtheorem{proposition}{Proposition}[section]
\newtheorem*{proposition*}{Proposition}
\newcommand{\tr}{\text{tr}}
\numberwithin{equation}{section}

\title{Systems of Wave Equations on Asymptotically de Sitter \\ Vacuum Spacetimes in All Even Spatial Dimensions}
\author{Serban Cicortas\footnote{Princeton University, Department of Mathematics, Fine Hall, Washington Road, Princeton, NJ 08544, USA}}

\begin{document}

\maketitle

\begin{abstract}
    This is the second paper of a two part work that establishes a definitive quantitative nonlinear scattering theory for asymptotically de Sitter vacuum solutions $\big(\mathcal{M},g\big)$ in $(n+1)$ dimensions with $n\geq4$ even, which are determined by small scattering data at $\mathcal{I}^{\pm}.$ In this paper we prove quantitative estimates for systems of wave equations on the $\big(\mathcal{M},g\big)$ backgrounds. The systems considered include the Einstein vacuum equations commuted with suitable time-dependent vector fields, where we treat the nonlinear terms as general inhomogeneous factors. The estimates obtained are essential in establishing sharp top order estimates in \cite{Cmain} for the scattering map of the Einstein vacuum equations, taking asymptotic data at $\mathcal{I}^-$ to asymptotic data at $\mathcal{I}^+$.
\end{abstract}

\section{Introduction}

In this work, we study systems of wave equations on asymptotically de Sitter vacuum backgrounds and we establish quantitative estimates. The simplest setting considered is given by the scalar wave equation on the background of de Sitter space:
\[\square_{g_{dS}}\phi=0.\]
More generally, we consider time-dependent background metrics in the class of asymptotically de Sitter vacuum spacetimes, and we allow for inhomogeneous terms on the right hand side of the equations. 

We apply the results of this paper in \cite{Cmain} for the Einstein vacuum equations with a positive cosmological constant, which can be written as a system of wave equations with the nonlinear terms treated as general inhomogeneous factors. We note that the current work also generalizes our previous results in \cite{linearwave}, where we established a scattering theory for the wave equation on exact de Sitter space.

We define below asymptotically de Sitter vacuum solutions, which represent the background spacetimes for the systems of wave equations considered in this paper. We state the main results in Theorems~\ref{main theorem first system} and \ref{main theorem second system}. We then explain the relation of the main theorems to the nonlinear scattering theory proved in \cite{Cmain}.

\paragraph{Asymptotically de Sitter vacuum solutions.} We consider the $(n+1)$-dimensional Einstein vacuum equations with positive cosmological constant $\Lambda=\frac{n(n-1)}{2},$ for any $n\geq 4$ even:
\begin{equation}\label{Einstein vacuum eq}
    Ric_{\mu\nu}-\frac{1}{2}Rg_{\mu\nu}+\Lambda g_{\mu\nu}=0.
\end{equation}
We consider asymptotically de Sitter solutions of \eqref{Einstein vacuum eq} arising from scattering data at past infinity $\mathcal{I}^-$. These are vacuum spacetimes $\big(\mathcal{M},g\big)$ of the form $\mathcal{M}=(0,\tau_0)\times S^n$ for some $\tau_0>0,$ which can be written with respect to coordinates $\{\theta^A\}$ associated to an arbitrary chart on $S^n$ as follows:
\begin{equation}\label{general form for metric g}
        g=-\frac{d\tau^2}{\tau^2}+\frac{1}{4\tau^2}\slashed{g}_{AB}(\tau,\theta^1,\ldots,\theta^n)d\theta^Ad\theta^B.
\end{equation}
Moreover, we also require that the spacetime $\big(\mathcal{M},g\big)$ is determined by scattering data at $\mathcal{I}^-=\{\tau=0\}$ consisting of a Riemannian metric $\big(S^n,\slashed{g}_0\big)$ and a symmetric traceless 2-tensor $h$ on $S^n$, which satisfies additional constraints.

The standard example of an asymptotically de Sitter vacuum solution is given by the $(n+1)$-dimensional de Sitter space $\big((0,\infty)\times S^n,g_{dS}\big)$. Denoting by $\slashed{g}_{S^n}$ the round metric on $S^n,$ we have:
\begin{equation}\label{de Sitter metric}
    g_{dS}=-\frac{d\tau^2}{\tau^2}+\frac{1}{4\tau^2}\slashed{g}_{dS}=-\frac{d\tau^2}{\tau^2}+\frac{1}{4\tau^2}\bigg(\frac{1}{2}+2\tau^2\bigg)^2\slashed{g}_{S^n}.
\end{equation}
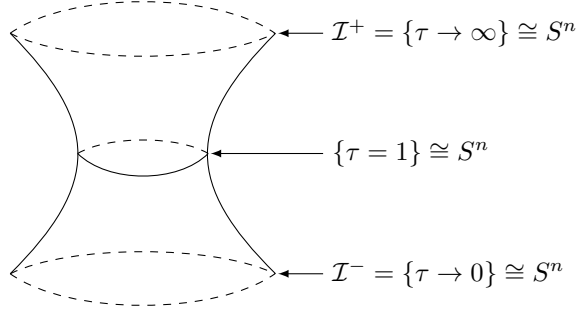
\begin{figure}
    \centering
    \begin{tikzpicture}[scale=0.8]
    \draw (5,2)  node[anchor=west] {$\{\tau=1\}\cong S^n$};
    \draw (4.21,0) .. controls (2.7,1.49) and (2.7,2.49) .. (4.21,4);
    \draw[dashed] (0.921,2) .. controls (1.43,2.3) and (2.59,2.3) .. (3.09,2);
    \draw[dashed] (-0.2,4) .. controls (0.79,3.49) and (3.2,3.49) .. (4.2,4);
    \draw (-0.2,0) .. controls (1.3,1.49) and (1.3,2.49) .. (-0.2,4);
    \draw[latex-] (3.13,2) -- (5,2);
    \draw[dashed] (-0.2,0) .. controls (0.79,-0.7) and (3.2,-0.7) .. (4.2,0);
    \draw[latex-] (4.25,0) -- (5,0);
    \draw (5,0)  node[anchor=west] {$\mathcal{I}^-=\{\tau\rightarrow0\}\cong S^n$};
    \draw[dashed] (-0.2,4) .. controls (0.79,4.7) and (3.2,4.7) .. (4.2,4);
    \draw[latex-] (4.25,4) -- (5,4);
    \draw (5,4)  node[anchor=west] {$\mathcal{I}^+=\{\tau\rightarrow\infty\}\cong S^n$};
    \draw (0.921,2) .. controls (1.43,1.5) and (2.59,1.5) .. (3.09,2);
    \draw[dashed] (-0.2,0) .. controls (0.79,0.49) and (3.2,0.49) .. (4.2,0);
    \end{tikzpicture}
    \caption{Diagram of the the $(n+1)$-dimensional de Sitter space $\big((0,\infty)\times S^n,g_{dS}\big)$.}
    \label{fig:main-thm}
\end{figure}

According to the results in \cite{Fefferman-Graham, ambient-metric}, \cite{selfsimilarvacuum}, and \cite{hintz}, we have that for any $n\geq4$ even and any smooth scattering data $\big(\slashed{g}_0,h\big)$ there exists a unique asymptotically de Sitter vacuum solution satisfying the expansion at $\tau=0$:
\begin{equation}\label{expansion in terms of  tau}
    \slashed{g}=\slashed{g}_0+\tau^2\slashed{g}_1+\ldots+\frac{\tau^{n-2}}{(n/2-1)!}\slashed{g}_{n/2-1}+\frac{2\tau^{n}\log(2\tau)}{(n/2)!}\mathcal{O}+\frac{\tau^{n}}{(n/2)!}k+O\big(\tau^{n+2}|\log\tau|\big),
\end{equation}
where the tensors $\slashed{g}_1,\ldots,\slashed{g}_{n/2-1},\mathcal{O},\tr k$ are determined by $\slashed{g}_0$ through certain compatibility relations, $h$ is the trace free part of $k,$ and the higher order terms in the expansion are determined by $\slashed{g}_0$ and $h$.

An essential feature that creates significant challenges in our work is the lack of smoothness in $\tau$ of the expansion \eqref{expansion in terms of  tau} due to the term $\mathcal{O},$ called the obstruction tensor. On the other hand, in the case of odd spatial dimension $n$ the corresponding expansion is smooth in $\tau,$ which results in considerable simplifications.

\paragraph{Assumptions on the background spacetime.} Before introducing the systems of wave equations and stating the main results, we fix the assumptions made on the background spacetime $\big(\mathcal{M},g\big)$ for the rest of the paper. 

We fix $\epsilon>0$ to be small enough and $N>0$ to be a large integer depending on $n$. We consider $\big(\mathcal{M},g\big)$ to be a smooth asymptotically de Sitter vacuum solution of the form \eqref{general form for metric g} determined by scattering data $\big(\slashed{g}_0,h\big)$, with $\mathcal{M}=(0,1]\times S^n.$ \footnote{In the context of \cite{Cmain}, we will always work with globally defined spacetimes with $\mathcal{M}=(0,\infty)\times S^n$ which are quantitatively close to de Sitter space. However, in view of the time inversion transformation $\tau\rightarrow1/\tau$, it suffices to restrict to $\mathcal{M}=(0,1]\times S^n$ for the purpose of this paper.} We assume that $\big(\mathcal{M},g\big)$ is close to de Sitter space, in the sense that:
\begin{equation}\label{preliminary estimates for background metric}
    \sum_{i=0}^N\sup_{\mathcal{M}}\big|\mathcal{L}_{\theta}^i\big(\slashed{g}-\slashed{g}_{dS}\big)\big|\leq\epsilon,
\end{equation}
where $\slashed{g}_{dS}$ is defined in \eqref{de Sitter metric}, we denote by $\mathcal{L}_{\theta}^i$ all the combinations of $i$ Lie angular derivatives in a coordinate chart, and we sum over a family of coordinate charts that covers $S^n.$

We use the convention that the absolute value $|\cdot|$, the integration volume form, and the covariant angular derivatives are taken with respect to the conformal metric $\slashed{g}$ induced on $S_{\tau}=\{\tau\}\times S^n$. We also denote by $H^k(S_{\tau})$ the $L^2$-based Sobolev space of order $k$ defined with respect $\slashed{g}$, which is defined in detail in Section \ref{LP section}.

We denote by $\chi=\mathcal{L}_{e_4}\slashed{g}$ the renormalized second fundamental form of $S_{\tau}$ with $e_4=\frac{1}{2\tau}\partial_{\tau},$ and we denote the schematic quantities $\psi\in\{1,\slashed{g},\chi,\tau^2\chi\}$. We assume that for some $C_0>0$ we have the bound:
\begin{equation}\label{preliminary estimates for background psi}
    \sup_{\tau\in(0,1]}\big\|\chi\big\|_{H^N(S_{\tau})}+\sup_{\tau\in(0,1]}\big\|\nabla_{\tau}\chi\big\|_{H^N(S_{\tau})}\leq C_0.
\end{equation}

Finally, we say that a tensor is horizontal on $\mathcal{M}$ if its entries are in $TS_{\tau}$.

\paragraph{The model systems.} We introduce two model systems of wave equations on the $\big(\mathcal{M},g\big)$ background. These are motivated by \cite[Section~5]{Cmain} and they include the Einstein vacuum equations \eqref{Einstein vacuum eq} commuted $n/2$ times with the vector field $e_4=\frac{1}{2\tau}\partial_{\tau}.$ According to the expansion \eqref{expansion in terms of  tau}, commuting at this order captures the singular behavior of the solution caused by the obstruction tensor $\mathcal{O}.$
\begin{definition}[Model Systems]\label{model systems definition}
    We fix the integers $I>0$ and $\sigma\in\{1,2\}.$ Let $\Phi_0,\ldots,\Phi_I$ be smooth horizontal tensors on $\mathcal{M}$, which satisfy the expansions at $\mathcal{I}^-$ for all $1\leq i\leq I$:
    \begin{align*}
        &\Phi_0=2\mathcal{O}\log\tau+h+O\big(\tau^2|\log\tau|^2\big),\ \nabla_{\tau}\Phi_0=\frac{2\mathcal{O}}{\tau}+O\big(\tau|\log\tau|^2\big)\text{ in }C^{\infty}(S^n)\\
        &\Phi_i=\Phi_i^0+O\big(\tau^2|\log\tau|^2\big),\ \nabla_{\tau}\Phi_i=O\big(\tau|\log\tau|^2\big)\text{ in }C^{\infty}(S^n).
    \end{align*}
    Based on these expansions, we say that $\Phi_0$ is a singular quantity and that $\Phi_1,\ldots,\Phi_I$ are regular quantities at $\mathcal{I}^-$. 
    
    We assume that $\Phi_0,\ldots,\Phi_I$ satisfy the model system of wave equations for any $0\leq m\leq M$:
    \begin{align}
        &\nabla_{\tau}\big(\nabla_{\tau}\nabla^m\Phi_0\big)+\frac{1}{\tau}\nabla_{\tau}\nabla^m\Phi_0-4\Delta_{\slashed{g}}\nabla^m\Phi_0=\psi\nabla^{m+1}\Phi+F_{m}^{0}\label{equation for Phi 0} \\
        &\nabla_{\tau}\big(\nabla_{\tau}\nabla^m\Phi_i\big)+(-1)^{\sigma-1}\cdot\frac{1}{\tau}\nabla_{\tau}\nabla^m\Phi_i-4\Delta_{\slashed{g}}\nabla^m\Phi_i=\sum_{j=\sigma-1}^I\psi\nabla^{m+1}\Phi_j+F_{m}^{i},\label{equation for Phi i}
    \end{align}    
    where $\psi\nabla^{m+1}\Phi$ is a schematic notation for sums of general terms with any $\psi\in\{1,\slashed{g},\chi,\tau^2\chi\}$, $\Phi\in\{\Phi_0,\ldots,\Phi_I\}$, the inhomogeneous terms satisfy $F_{m}^{0},F_{m}^{i}\in L^1_\tau\big([0,1]\big)C^{\infty}(S^n),$ and the covariant angular derivatives $\nabla$ are taken with respect to the conformal metric $\slashed{g}$ induced on $S_{\tau}$ according to \eqref{general form for metric g}. 
    
    We refer to the above system with the choice of sign $\sigma=1$ as the \textbf{first model system}, and similarly in the case $\sigma=2$ as the \textbf{second model system}.
\end{definition}

An additional motivation for the model systems which does not rely on the computations for the Einstein equations is provided by the wave equation on asymptotically de Sitter spacetimes. Let $\phi:\mathcal{M}\rightarrow\mathbb{R}$ be a smooth solution of: 
\begin{equation}\label{the wave eq}
    \square_{g}\phi=0.
\end{equation}
According to \cite{microlocal}, $\phi$ satisfies a similar expansion to \eqref{expansion in terms of  tau}. Commuting the wave equation up to $n/2$ times with the vector field $e_4$, we set $\Phi_0=(e_4)^{n/2}\phi,$ $\Phi_i=(e_4)^{n/2-i}\phi,$ for $1\leq i\leq I=n/2,$ and we obtain that $\big(\Phi_0,\ldots,\Phi_I\big)$ satisfy both model systems. In particular, this contains the case of the wave equation on exact de Sitter space considered in our previous work \cite{linearwave}.

As in the case of the above wave equation example, we prove in \cite[Section~5]{Cmain} that the commuted Einstein vacuum equations also satisfy both model systems with suitable inhomogeneous terms. We consider two model systems since in the case of the regular quantities $\nabla^m\Phi_i$ we have flexibility on which equations to use when proving estimates. On the other hand, we notice that for the singular quantities $\nabla^m\Phi_0$ we always work with \eqref{equation for Phi 0}.

\paragraph{The main results.} We prove quantitative estimates for the two model systems on the background spacetime $\big(\mathcal{M},g\big)$. In order to obtain sharp estimates, we consider a renormalization of the tensor $h$ in the expansion of $\Phi_0:$
\begin{equation}\label{renormalization of h}
\mathfrak{h}=h-2(\log\nabla)\mathcal{O},
\end{equation}
where the operator $\log\nabla$ is defined in Section \ref{LP section} using the geometric Littlewood-Paley theory of \cite{geometricLP}. We note that the need to renormalize $h$ is already present in the simplified model problem considered in \cite{linearwave} for the scattering for the linear wave equation on exact de Sitter space.

For solutions of the first model system, we prove estimates at $\tau\in(0,1]$ in terms of the asymptotic data at $\mathcal{I}^-$:
\begin{theorem}[First model system]\label{main theorem first system}
    Let $M>N$ be large enough. We assume that $\Phi_0,\ldots,\Phi_I$ satisfy the first model system on the background $\big(\mathcal{M},g\big),$ with $\sigma=1$ in \eqref{equation for Phi i}. We also assume that the sphere $\big(S^n,\slashed{g}_0\big)$ satisfies for some $C_1>0:$
    \[\big\|\slashed{Riem}(\slashed{g}_0)\big\|_{H^{M}(S^n)}\leq C_1.\]
    For all $\tau\in(0,1],$ we define the energy of the solution on $S_{\tau}$ to be:
    \[\mathcal{E}_I(\tau)=\tau^2\big\|\nabla_{\tau}\nabla^M\Phi_0\big\|^2_{H^{1/2}}+\tau^2\big\|\nabla^M\Phi_0\big\|^2_{H^{3/2}}+\sum_{i=1}^I\Big(\tau\big\|\nabla_{\tau}\nabla^M\Phi_i\big\|^2_{H^{1/2}}+ \tau\big\|\nabla^{M+1}\Phi_i\big\|^2_{H^{1/2}}+\big\|\Phi_i\big\|^2_{H^{M+1}}\Big).\]
    We define the asymptotic data norm and the inhomogeneous norm as:
    \[\mathcal{D}_I=\big\|\mathcal{O}\big\|^2_{H^{M+1}}+\big\|\mathfrak{h}\big\|^2_{H^{M+1}}+\sum_{i=1}^I\big\|\Phi_i^0\big\|^2_{H^{M+1}},\ \mathcal{F}_I(\tau)=\sum_{m=0}^{M}\sum_{i=0}^I\int_0^{\tau}\big\|F_m^i\big\|_{L^2}^2d\tau'+\sum_{i=0}^I\int_0^{\tau}\tau'\big\|F_M^i\big\|_{H^{1/2}}^2d\tau'.\]
    The solution of the first model system satisfies the estimates for some constant $C_I=C_I\big(M,C_0,C_1\big)>0$:
    \begin{equation}\label{main estimate first model system}
        \mathcal{E}_I(\tau)\leq C_I\mathcal{D}_I+C_I\mathcal{F}_I(\tau),
    \end{equation}
    \begin{equation}\label{practical estimate first model system}
        \big\|\Phi_0\big\|^2_{H^{M+1}}\leq C_I\mathcal{D}_I+C_I\mathcal{F}_I(\tau)+C_I|\log\tau|^2\big\|\mathcal{O}\big\|^2_{H^{M+1}}.
    \end{equation}
\end{theorem}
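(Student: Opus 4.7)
The strategy is a multiplier energy argument, frequency-localized via the geometric Littlewood--Paley projectors $P_k$ of \cite{geometricLP} (as set up in Section~\ref{LP section}), combined with a sharp identification of the asymptotic data at $\tau=0$ through the Bessel-type structure of the wave equation at each frequency. The organizing heuristic is that on the frequency shell $2^k$, the principal operator reduces to a Bessel ODE in $\tau$ whose two independent solutions near $\tau=0$ are a bounded mode behaving like $J_0(2\tau\cdot 2^k)\to 1$ and a logarithmic mode behaving like $Y_0(2\tau\cdot 2^k)\sim (2/\pi)\log(\tau\cdot 2^k)$. Matching against the prescribed expansion $\Phi_0\sim 2\mathcal{O}\log\tau+h$ at frequency $2^k$ forces the coefficient of the bounded mode to be $P_k(h-2(\log 2^k)\mathcal{O})=P_k\mathfrak{h}$, up to harmless constants, which is precisely why the renormalization $\mathfrak{h}=h-2(\log\nabla)\mathcal{O}$ enters the sharp data norm $\mathcal{D}_I$.

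I would first apply $P_k$ and commute with $\nabla^M$ in \eqref{equation for Phi 0} and \eqref{equation for Phi i} (with $\sigma=1$), controlling the resulting commutators between $P_k$ and the geometric operators $\nabla_\tau,\nabla,\Delta_{\slashed{g}}$ on the curved background using \eqref{preliminary estimates for background metric} and \eqref{preliminary estimates for background psi}. For the singular quantity $\Phi_0$ I would use a multiplier of the form $\tau^2\nabla_\tau P_k\nabla^M\Phi_0$ weighted by $2^k$, producing an energy identity whose boundary value at $\tau=0$ matches $4\|P_k\nabla^M\mathcal{O}\|_{L^2}^2$ via the prescribed limit $\tau\nabla_\tau\Phi_0\to 2\mathcal{O}$. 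For each regular $\Phi_i$ I would use the multiplier $\nabla_\tau P_k\nabla^M\Phi_i$ (again weighted by $2^k$), matching the boundary value at $\tau=0$ with $\|P_k\nabla^M\Phi_i^0\|_{L^2}^2$; the sign $\sigma=1$ ensures that the damping $\tfrac{1}{\tau}\nabla_\tau$ enters both identities with a coherent sign. The coupling terms $\psi\nabla^{m+1}\Phi_j$ are absorbed by Cauchy--Schwarz and the smallness of $\slashed{g}-\slashed{g}_{dS}$, while the inhomogeneities $F_m^i$ are bounded directly by the flux integral defining $\mathcal{F}_I(\tau)$. Summing over $k$ with the $2^k$ weight assembles the $H^{1/2}$ norms, and a Gr\"onwall argument closes \eqref{main estimate first model system}. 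The coercive $\|\Phi_i\|_{H^{M+1}}^2$ term is then recovered by a separate elementary integration of $\nabla_\tau\Phi_i$ from $\tau=0$, using the $\tau^{-1/2}$ bound on $\|\nabla_\tau\nabla^M\Phi_i\|_{H^{1/2}}$ from the already-established estimate together with the data $\Phi_i(0)=\Phi_i^0$.

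Finally, \eqref{practical estimate first model system} follows by writing $\nabla^M\Phi_0(\tau)=\nabla^M\Phi_0(1)-\int_\tau^1\nabla_\tau\nabla^M\Phi_0(\tau')\,d\tau'$, applying the pointwise bound $\|\nabla_\tau\nabla^M\Phi_0(\tau')\|_{H^{1/2}}\leq C{\tau'}^{-1}(\mathcal{D}_I+\mathcal{F}_I(\tau'))^{1/2}$ from \eqref{main estimate first model system}, and using $\int_\tau^1{\tau'}^{-1}\,d\tau'=|\log\tau|$ to produce the claimed $|\log\tau|^2\|\mathcal{O}\|_{H^{M+1}}^2$ factor. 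The main obstacle is the sharp extraction of the renormalization $\mathfrak{h}$ at the Littlewood--Paley level: this requires precise tracking of the $\log 2^k$ correction through the energy identity on each frequency shell, together with tight control of the commutators between $P_k$ and the covariant operators on the asymptotically de Sitter background, so that the $\log 2^k$ corrections assemble exactly into the $(\log\nabla)\mathcal{O}$ operator appearing in $\mathfrak{h}$. A secondary difficulty is to run the energy argument for $\Phi_0$ and the $\Phi_i$ simultaneously while preventing the logarithmic singularity of $\Phi_0$ from contaminating the coercive estimates for the regular $\Phi_i$; this is managed by the different weight structure ($\tau^2$ for $\Phi_0$ versus $\tau$ for $\Phi_i$) built into $\mathcal{E}_I(\tau)$.
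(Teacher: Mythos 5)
Your Bessel-matching heuristic is the right one, but the plan as written leaves the crux unresolved: nowhere do you actually exhibit a mechanism by which $\mathfrak{h}$ (rather than $h$, or only $\mathcal{O}$) enters the estimate. The boundary term at $\tau=0$ produced by your $\tau^2\nabla_{\tau}P_k\nabla^M\Phi_0$ multiplier is $\lim_{\tau\rightarrow0}\tau^2\big\|P_k\nabla_{\tau}\nabla^M\Phi_0\big\|^2_{L^2}=4\big\|P_k\nabla^M\mathcal{O}\big\|^2_{L^2}$, which sees only the logarithmic mode; the amplitude of the bounded mode, which is what $P_k\mathfrak{h}_M$ measures, never appears in that identity, and your sentence about "precise tracking of the $\log 2^k$ correction through the energy identity" is exactly the missing step, not a technicality. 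The paper resolves this structurally: for each $m\leq M$ it constructs the singular component $\big(\nabla^m\Phi_0\big)_Y$ solving the decoupled equation \eqref{equation for Phi Y} with prescribed data $2\nabla^m\mathcal{O}\log\tau+2(\log\nabla)\nabla^m\mathcal{O}$ (Section~\ref{existence of singular component section}), so that the remainder $\big(\nabla^m\Phi_0\big)_J$ is regular at $\mathcal{I}^-$ with data exactly $\mathfrak{h}_m=\nabla^mh-2(\log\nabla)\nabla^m\mathcal{O}$ as in \eqref{equation for Phi J}, and the two components are then estimated by separate arguments. Without this decomposition (or an equivalent renormalized quantity at each frequency), your scheme has no slot where $P_k\mathfrak{h}$ can enter as asymptotic data, so the sharp dependence on $\mathcal{D}_I$ cannot be produced.

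Two further steps would fail as stated. First, a single multiplier used globally in $\tau\in(0,1]$ at each frequency does not close: with the $\tau^2\nabla_{\tau}$ weight the energy identity generates the bulk term $\int_0^{\tau}\tau'\big\|\nabla P_k\nabla^M\Phi_0\big\|^2_{L^2}d\tau'=\int_0^{\tau}(\tau')^{-1}\cdot(\tau')^2\big\|\nabla P_k\nabla^M\Phi_0\big\|^2_{L^2}d\tau'$, which is not Gronwallable near $\tau'=0$; with the unweighted $\nabla_{\tau}$ multiplier, weighting by $2^k$ and summing would assert a bound on $\sum_k2^k\big\|P_k\nabla_{\tau}\nabla^M\Phi_i\big\|^2_{L^2}$ \emph{without} the $\tau$ weight, which is false since $\Phi_i^0$ is only in $H^{M+1}$ — the half-derivative gain is available only together with the $2^k\tau$ weight. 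This is precisely why the paper runs, for each $k$, a low frequency regime estimate on $(0,2^{-k-1}]$ with the $\nabla_{\tau}$ multiplier (Proposition~\ref{low frequency regular proposition}) and a high frequency regime estimate on $[2^{-k-1},1]$ with the $\sqrt{t}$-weighted multiplier, $t=2^k\tau$ (Proposition~\ref{high frequency forward estimate}), whose favorable bulk term is what produces the $2^k\tau$ weights; the commutators $[\nabla_4,P_k]$ moreover generate terms with the different projection $\underline{\widetilde{P}}_k$, which can only be absorbed after summing over all $k$ and then applying Gronwall — not frequency by frequency. Second, your route to \eqref{practical estimate first model system} by integrating $\nabla_{\tau}\nabla^M\Phi_0$ back from $\tau=1$ yields at best $\big\|\Phi_0\big\|^2_{H^{M+1}}\lesssim|\log\tau|^2\big(\mathcal{D}_I+\mathcal{F}_I(1)\big)$, i.e., the logarithmic loss multiplies the entire data and inhomogeneous norms and you only reach $\mathcal{F}_I(1)$; the stated estimate confines the $|\log\tau|^2$ loss to $\big\|\mathcal{O}\big\|^2_{H^{M+1}}$ alone and keeps $\mathcal{F}_I(\tau)$, and in the paper this sharper form is again a consequence of the decomposition, since only the singular component — whose data involve only $\mathcal{O}$ — carries the logarithm, while $\big(\Phi_0\big)_J$ is bounded without log loss.
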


For solutions of the second model system, we prove estimates at $\tau\in(0,1)$ in terms of the solution at $\tau=1$. We also prove estimates for the asymptotic data at $\mathcal{I}^-$.
\begin{theorem}[Second model system]\label{main theorem second system}    
    Let $M>N$ be large enough. We assume that $\Phi_0,\ldots,\Phi_I$ satisfy the second model system on the background $\big(\mathcal{M},g\big),$ with $\sigma=2$ in \eqref{equation for Phi i}. We also assume that on $S_{1}=\{1\}\times S^n$ we have for some $C_2>0:$
    \[\big\|\psi\big\|_{H^{M+1}(S_{1})}\leq C_2.\]
    For all $\tau\in(0,1],$ we denote by $\mathcal{E}_{II}(\tau)$ the energy of the solution on $S_{\tau}$, which is defined in detail in~\eqref{definition of E II}. We have schematically at top order with respect to angular derivatives:
    \[\mathcal{E}_{II}(\tau)=\tau^2\big\|\Phi_0\big\|_{H^{M+3/2}}^2+\tau^2\big\|\nabla_{\tau}\nabla^M\Phi_0\big\|_{H^{1/2}}^2+\sum_{i=1}^I\big\|\Phi_i\big\|_{H^{M+3/2}}^2+\sum_{i=1}^I\big\|\nabla_{\tau}\nabla^M\Phi_i\big\|_{H^{1/2}}^2+\ldots\]
    We define the asymptotic data norm and the inhomogeneous norm as:
    \[\mathcal{D}_{II}=\sum_{i=0}^I\big\|\Phi_i\big\|_{H^{M+3/2}(S_{1})}^2+\sum_{i=0}^I\big\|\nabla_{\tau}\Phi_i\big\|_{H^{M+1/2}(S_{1})}^2,\ \mathcal{F}_{II}(\tau)=\sum_{i=0}^I\sum_{m=0}^{M}\int_{\tau}^1\tau'\big\|F_m^i\big\|_{H^{1/2}}^2d\tau'.\]
    The solution of the second model system satisfies the estimate for some constant $C_{II}=C_{II}\big(M,C_0,C_2\big)>0$:
    \begin{equation}\label{main estimate second model system}
        \mathcal{E}_{II}(\tau)\leq C_{II}\mathcal{D}_{II}+C_{II}\mathcal{F}_{II}(\tau).
    \end{equation}
    Moreover, the asymptotic data at $\mathcal{I}^-$ given by $\Phi_i^0,\ \mathcal{O},$ and $\mathfrak{h}$ satisfies the estimates:
    \begin{equation}\label{main estimate asymptotic data}
        \big\|\mathcal{O}\big\|_{H^{M+1}}^2+\sum_{i=1}^I\big\|\Phi_i^0\big\|^2_{H^{M+1}}\leq C_{II}\mathcal{D}_{II}+C_{II}\mathcal{F}_{II}(0),
    \end{equation}
    \begin{equation}\label{main estimate asymptotic data h}
        \big\|\mathfrak{h}\big\|_{H^{M+1}}^2\leq C\Big(C_{II},\big\|\slashed{Riem}(\slashed{g}_0)\big\|_{H^{M}}\Big)\cdot\Big(\mathcal{D}_{II}+\mathcal{F}_{II}(0)+\big\|h\big\|_{L^2}^2\Big).
    \end{equation}
\end{theorem}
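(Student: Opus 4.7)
The strategy is a backward energy propagation from $\tau=1$ toward $\mathcal{I}^-$, carried out at each derivative level $0\leq m\leq M$ separately for the singular quantity $\Phi_0$ and the regular $\Phi_1,\ldots,\Phi_I$. For \eqref{equation for Phi 0} I would use the multiplier $2\tau^2\nabla_\tau\nabla^m\Phi_0$: the weight $\tau^2$ is chosen so that the contribution of the $\frac{1}{\tau}\nabla_\tau\nabla^m\Phi_0$ first-order term exactly cancels the $-2\tau\|\nabla_\tau\nabla^m\Phi_0\|_{L^2}^2$ produced by differentiating the weight, leaving the clean identity $\frac{d}{d\tau}\big(\tau^2\|\nabla_\tau\nabla^m\Phi_0\|_{L^2}^2+4\tau^2\|\nabla^{m+1}\Phi_0\|_{L^2}^2\big)=8\tau\|\nabla^{m+1}\Phi_0\|_{L^2}^2+\text{source}$, in which the bulk term has the correct sign for backward-in-$\tau$ propagation. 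For the $\Phi_i$ equation with $\sigma=2$, the sign of the first-order term is reversed, and the unweighted multiplier $2\nabla_\tau\nabla^m\Phi_i$ produces the energy $\|\nabla_\tau\nabla^m\Phi_i\|_{L^2}^2+4\|\nabla^{m+1}\Phi_i\|_{L^2}^2$ with bulk term $\frac{2}{\tau}\|\nabla_\tau\nabla^m\Phi_i\|_{L^2}^2$ again of the correct sign for decreasing $\tau$.

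To promote these $L^2$-identities to the $H^{1/2}$-regularity in $\mathcal{E}_{II}(\tau)$, I would apply the same multipliers to the frequency-localized pieces $P_k\nabla^m\Phi_0$ and $P_k\nabla^m\Phi_i$ using the geometric Littlewood-Paley calculus of Section \ref{LP section}, weight by $2^k$, and sum dyadically in $k$. The coupling terms $\psi\nabla^{m+1}\Phi$ on the right-hand sides are absorbed by Cauchy-Schwarz against the available energy, using the background bound from \eqref{preliminary estimates for background psi} (noting that $\tau^2\|\nabla^{m+1}\Phi_i\|_{L^2}^2\leq\|\Phi_i\|_{H^{M+3/2}}^2$ is itself part of $\mathcal{E}_{II}$), and closed by Gronwall in $\tau$; the commutators $[\nabla_\tau,\nabla^m]$, $[P_k,\nabla_\tau]$, $[P_k,\nabla^m]$ from the Littlewood-Paley theory are absorbed in the same way. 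This yields \eqref{main estimate second model system}.

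With the uniform energy in hand, the asymptotic data bounds follow by taking $\tau\to 0$ limits in the expansions of Definition \ref{model systems definition}. For $\mathcal{O}$, the estimate $\tau^2\|\nabla_\tau\nabla^M\Phi_0\|_{H^{1/2}}^2\leq C_{II}\big(\mathcal{D}_{II}+\mathcal{F}_{II}(0)\big)$ says that $\tau\nabla_\tau\Phi_0$ is uniformly bounded in $H^{M+1/2}$, and combined with $\tau\nabla_\tau\Phi_0=2\mathcal{O}+O(\tau^2|\log\tau|^2)$ this identifies $\mathcal{O}$ with its desired $H^{M+1}$ bound. For each $\Phi_i^0$, integrability of $\nabla_\tau\Phi_i$ in $\tau$ (from the energy estimate combined with the defining asymptotic $\nabla_\tau\Phi_i=O(\tau|\log\tau|^2)$) lets us set $\Phi_i^0=\Phi_i(1)-\int_0^1\nabla_\tau\Phi_i\,d\tau'$, which is bounded in $H^{M+1}$, establishing \eqref{main estimate asymptotic data}.

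The main obstacle is the renormalized estimate \eqref{main estimate asymptotic data h} for $\mathfrak{h}=h-2(\log\nabla)\mathcal{O}$. The expansion $\Phi_0=2\mathcal{O}\log\tau+h+O(\tau^2|\log\tau|^2)$ has a $\log\tau$ divergence at top order that no straightforward energy argument can remove: at dyadic angular frequency $P_k$, each level incurs a $\log(1/\tau)$ factor that must be matched against the spectral weight $\log 2^k$ defining $\log\nabla$. Following the dyadic strategy of \cite{linearwave}, I would for each $k$ propagate the $\Phi_0$ energy down only to the matched time scale $\tau_k\sim 2^{-k}$, at which point the residual $\log\tau$-loss in $P_k\Phi_0$ is precisely absorbed into $2(\log\nabla)P_k\mathcal{O}$, which is exactly the correction defining $\mathfrak{h}$. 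The low-frequency contribution is handled using the a priori $\|h\|_{L^2}^2$ input on the right-hand side of \eqref{main estimate asymptotic data h}, while the dependence on $\|\slashed{Riem}(\slashed{g}_0)\|_{H^M}$ enters through the Fefferman-Graham compatibility relations needed to convert the covariant $H^{M+1}$ bound obtained with respect to $\slashed{g}$ into an intrinsic $H^{M+1}$ bound on $(S^n,\slashed{g}_0)$.
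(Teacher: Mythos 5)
Your backward energy scheme (the multiplier $2\tau^{2}\nabla_{\tau}\nabla^{m}\Phi_{0}$ for the singular equation, $2\nabla_{\tau}\nabla^{m}\Phi_{i}$ for the regular ones, then frequency localization, $2^{k}$ weights and dyadic summation) reproduces, in essence, the paper's treatment of the regular quantities in Section~\ref{good quantities estimates subsection} and the preliminary bounds for $\Phi_{0}$ in Proposition~\ref{backward direction basic estimate proposition}, and it can plausibly be pushed to the top-order content of \eqref{main estimate second model system}. The genuine gap is in the asymptotic data estimates, and your own argument exposes it: from the uniform bound $\tau^{2}\big\|\nabla_{\tau}\nabla^{M}\Phi_{0}\big\|_{H^{1/2}}^{2}\lesssim\mathcal{D}_{II}+\mathcal{F}_{II}$ together with $\tau\nabla_{\tau}\Phi_{0}\rightarrow2\mathcal{O}$ you can only conclude $\big\|\mathcal{O}\big\|_{H^{M+1/2}}^{2}\lesssim\mathcal{D}_{II}+\mathcal{F}_{II}(0)$, which is half a derivative short of \eqref{main estimate asymptotic data}. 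The missing half derivative is precisely the Bessel-type gain at the matched time $\tau\sim2^{-k}$: the paper recovers it from the low frequency regime estimate on $[0,X2^{-k-1}]$ (Proposition~\ref{prelim low freq singular proposition}), whose data terms are the energies $2^{k}a_{k}(X2^{-k-1})$ of Proposition~\ref{improved high frequency estimate proposition}, and crucially from the summability $\sum_{k}2^{k}a_{k}(X2^{-k-1})\lesssim\mathcal{D}_{II}+\mathcal{F}_{II}$. That summability is not implied by your uniform energy estimate (it only yields $\big\|P_{k}\nabla_{\tau}\nabla^{M}\Phi_{0}\big\|_{L^{2}}^{2}\lesssim2^{k}\big(\mathcal{D}_{II}+\mathcal{F}_{II}\big)$ at $\tau\sim2^{-k}$, which does not sum in $k$), and proving it is the technical heart of the paper.

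The reason it is hard is that the frequency-matched high frequency estimate (multiplier $2^{k}\tau\nabla_{\tau}$ on $[X2^{-k-1},1]$, Proposition~\ref{high freq back estimate}) produces the top-order bulk term $\int_{\tau}^{1}\frac{2^{k}}{(\tau')^{2}}\big\|P_{k}\nabla^{M}\Phi_{0}\big\|_{L^{2}}^{2}d\tau'$ with an \emph{unfavorable} sign, in contrast with the first model system; so your blanket claim that the bulk terms have the correct sign for backward propagation, while true for your $\tau^{2}$-weighted multiplier at the level of the uniform energy, does not apply to the estimate you actually need for the data bounds. Controlling this term is exactly what forces the refined Poincar\'e inequality \eqref{ref Poincarey}, the frequency-coupled Gronwall-type Lemma~\ref{grwonwall type lemma}, the discrete Gronwall step, and the careful fixing of $X,\delta$ in Section~\ref{improved high frequency estimates section}; none of this appears in your proposal. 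Your dyadic-matching idea for $\mathfrak{h}$ is the right spirit (the paper implements it via $\overline{\xi}_{k}=P_{k}\nabla^{M}\Phi_{0}-\tau\log(2^{k}\tau)P_{k}\nabla_{\tau}\nabla^{M}\Phi_{0}$ and the operator $R_{k}$), but without the bound on $\sum_{k}2^{k}a_{k}(X2^{-k-1})$ and the low/high frequency decomposition of the resulting error terms (Proposition~\ref{estimate for PkhM proposition}, Lemma~\ref{lemma for the fourth intermediate term}) it cannot be closed at the $2^{2k}\big\|P_{k}\mathfrak{h}_{M}\big\|_{L^{2}}^{2}$ level. A minor further correction: the dependence on $\big\|\slashed{Riem}(\slashed{g}_{0})\big\|_{H^{M}}$ in \eqref{main estimate asymptotic data h} enters through the commutator $\big[(\log\nabla),\nabla^{M}\big]\mathcal{O}$ as in Lemma~\ref{commute with log lemma}, not through Fefferman--Graham compatibility relations.
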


\begin{remark}
    Solutions of the wave equation \eqref{the wave eq} on the background $\big(\mathcal{M},g\big)$ satisfy both model systems, where we set $\Phi_0=(e_4)^{n/2}\phi,$ $\Phi_i=(e_4)^{n/2-i}\phi,$ for $1\leq i\leq I=n/2.$ In particular, Theorems~\ref{main theorem first system} and \ref{main theorem second system} generalize our previous results in \cite{linearwave}, where we proved a scattering theory for the wave equation on exact de Sitter space. We use \cite{linearwave} as a guideline for the current paper.
\end{remark}

\begin{remark}\label{1/2 gain remark}
    For both model systems, the solution at $\tau=1$ and the asymptotic data at $\mathcal{I}^-$ naturally lie in different Sobolev spaces. In particular, solutions of the first model system "gain" $1/2$ derivatives at $\tau=1$ compared to the asymptotic data. Similarly, the asymptotic data of solutions of the second model system "lose" $1/2$ derivatives compared to the solution at $\tau=1$. This behavior is already present at the level of the linear wave equation on exact de Sitter space in \cite{linearwave}, and is explained below using the asymptotics of the Bessel functions in Section~\ref{toy problem section}. We also point out that capturing this structure of the solutions, together with the need to renormalize $h$ to $\mathfrak{h}$, requires decomposing the solutions using the geometric LP theory of \cite{geometricLP}.
\end{remark} 
\begin{remark}
    The first model system has a favorable structure for "forward" estimates, which consist of proving bounds for the solution at times $\tau\in(0,1]$ in terms of the asymptotic data at $\mathcal{I}^-$. On the other hand, the second model system is more suitable for "backward" estimates, which consist of proving bounds for the solution at times $\tau\in(0,1)$ and for the asymptotic data at $\mathcal{I}^-$ in terms of the solution at $\tau=1$. While the equation \eqref{equation for Phi 0} for the singular quantity $\Phi_0$ is the same for both model systems, the flexibility in \eqref{equation for Phi i} simplifies our analysis of the quantities that are regular at $\mathcal{I}^-$. Our approach of using the model systems to prove sharp estimates for the scattering map in \cite{Cmain} is justified since the commuted Einstein vacuum equations satisfy both model systems. Moreover, we defined the norms in order to have $\mathcal{E}_I(1)\sim\mathcal{D}_{II},$ which allows us to combine the estimates for the two model systems in \cite{Cmain}.
\end{remark}

\paragraph{The nonlinear scattering theory.} In \cite{Cmain}, we established a definitive quantitative nonlinear scattering theory for asymptotically de Sitter solutions of the Einstein vacuum equations \eqref{Einstein vacuum eq}, which are determined by small scattering data at $\mathcal{I}^{\pm}$. We present this result and explain how it relies on the Theorems \ref{main theorem first system} and \ref{main theorem second system} proved in the present paper.

We introduce briefly some notation, and refer the reader to \cite{Cmain} for precise definitions. Given smooth scattering data $\big(\slashed{g}_0,h\big),$ we denote by $\Sigma\big(\slashed{g}_0,h\big)$ an asymptotic initial data set consisting of certain tensors that can be computed in terms of $\slashed{g}_0,h,$ and their derivatives. We note that two essential components of $\Sigma\big(\slashed{g}_0,h\big)$ are the obstruction tensor $\mathcal{O}$ and the renormalized tensor $\mathfrak{h}=h-2\big(\log\nabla\big)\mathcal{O}.$ In particular, we denote by $\Sigma_{dS}$ the asymptotic initial data set corresponding to de Sitter space. For any $M>0$, we define the asymptotic initial data norm $\big\|\Sigma\big(\slashed{g}_0,h\big)\big\|_M$ to be an $L^2$-based Sobolev type norm of order $M$ measuring closeness to the de Sitter data. For any $\varepsilon>0$, we define $B_{\varepsilon}^M\big(\Sigma_{dS}\big)$ to be the set of smooth $\varepsilon$-small asymptotic data of order $M$, given by the open ball of size $\varepsilon$ around $\Sigma_{dS}$ with respect to the asymptotic initial data norm.

We state the main result of \cite{Cmain}:
\begin{theorem}[{\cite[Theorem~1.1]{Cmain}}]\label{main theorem of main}
    For any even integer $n\geq4,$ we have a scattering theory for asymptotically de Sitter vacuum solutions determined by small data. For any $M>0$ large enough there exists $\varepsilon_0>0$ small enough, such that for any $0<\varepsilon\leq\varepsilon_0$ we have:
    \begin{enumerate}
    \item\textit{Existence and uniqueness of scattering states:} for any $\varepsilon$-small asymptotic data of order $M$ at $\mathcal{I}^-$ or $\mathcal{I}^+$ given by $\Sigma\big(\slashed{g}_0,h\big)\in B_{\varepsilon}^M\big(\Sigma_{dS}\big)$, there exists a unique smooth global solution $\big(\mathcal{M},g\big)$ of the form (\ref{general form for metric g}) to the Einstein vacuum equations (\ref{Einstein vacuum eq}) which remains quantitatively close to the de Sitter metric; 
    \item\textit{Asymptotic completeness:} any smooth solution of the Einstein vacuum equations (\ref{Einstein vacuum eq}) of the form (\ref{general form for metric g}), which is quantitatively close to the de Sitter metric at a finite time $\tau,$ exists globally and induces scattering data $\big(\slashed{g}_0,h\big)$ at $\mathcal{I}^-$ and $\big(\underline{\slashed{g}_0},\underline{h}\big)$ at $\mathcal{I}^+;$
    \item\textit{Existence of a scattering map with quantitative estimates:} there exists a constant $C_M>0$ independent of $\varepsilon,$ such that we have a well-defined scattering map taking the asymptotic data at $\mathcal{I}^-$ to asymptotic data at $\mathcal{I}^+:$
    \begin{equation}\label{def of scattering map intro}
        \mathcal{S}:B_{\varepsilon}^M\big(\Sigma_{dS}\big)\rightarrow B_{C_M\varepsilon}^M\big(\Sigma_{dS}\big),\ \mathcal{S}\Big(\Sigma\big(\slashed{g}_0,h\big)\Big)=\Sigma\big(\underline{\slashed{g}_0},\underline{h}\big).
    \end{equation}
    The scattering map is locally invertible and locally Lipschitz at $\Sigma_{dS},$ satisfying the quantitative estimate:
    \begin{equation}\label{main estimate for S map intro}
        \Big\|\mathcal{S}\big(\Sigma(\slashed{g}_0,h)\big)\Big\|_M\leq C_M\Big\|\Sigma\big(\slashed{g}_0,h\big)\Big\|_M.
    \end{equation}
    This estimate is sharp and avoids any "derivative loss", in the sense that we use the same Sobolev-type norm of order $M$ to measure the smallness of asymptotic data at $\mathcal{I}^{\pm}.$
\end{enumerate}
\end{theorem}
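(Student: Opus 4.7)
The plan is to prove Theorem~\ref{main theorem of main} by a bootstrap argument in the gauge \eqref{general form for metric g}, using the model-system estimates of Theorems~\ref{main theorem first system} and~\ref{main theorem second system} applied to the commuted Einstein vacuum equations as the linear backbone. The argument splits into local existence near $\mathcal{I}^\pm$, propagation of smallness to finite $\tau$, and construction of the scattering map with sharp bounds. For local existence in a neighbourhood of $\mathcal{I}^-$, say on $\tau\in(0,\tau_1]$ with $\tau_1>0$ depending only on $\varepsilon$, I would invoke the cited works of Fefferman--Graham, \cite{selfsimilarvacuum}, and \cite{hintz}, which construct from smooth scattering data $(\slashed{g}_0,h)$ the unique asymptotically de Sitter vacuum solution with expansion \eqref{expansion in terms of  tau}. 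Because $\Sigma(\slashed{g}_0,h)\in B_\varepsilon^M(\Sigma_{dS})$, on $(0,\tau_1]$ the induced $\slashed{g}$ is $\varepsilon$-close to $\slashed{g}_{dS}$, so the background hypotheses \eqref{preliminary estimates for background metric} and \eqref{preliminary estimates for background psi} hold with a universal $C_0$, enabling the rest of the analysis to be framed in the setting of the present paper.

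Next, I would carry out a bootstrap on $(0,1]$. Following the computations of \cite[Section~5]{Cmain}, writing the Einstein equations in a suitable wave/geodesic-type gauge and commuting $n/2$ times with $e_4=\frac{1}{2\tau}\partial_\tau$ yields a system of wave equations for horizontal tensors $\Phi_0,\ldots,\Phi_I$, where $\Phi_0$ carries the obstruction tensor $\mathcal{O}$ as the coefficient of $\log\tau$ and $\Phi_1,\ldots,\Phi_I$ are regular at $\mathcal{I}^-$. Crucially, this commuted system fits Definition~\ref{model systems definition} for \emph{both} $\sigma=1$ and $\sigma=2$, with the quadratic and higher nonlinearities absorbed into $F_m^0$ and $F_m^i$ by standard product estimates. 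Under a bootstrap assumption $\mathcal{E}_I(\tau)+\mathcal{E}_{II}(\tau)\leq A\varepsilon^2$, the nonlinear contributions to $\mathcal{F}_I$ and $\mathcal{F}_{II}$ are quadratically small, $\mathcal{F}\lesssim A^2\varepsilon^2\cdot\sup\mathcal{E}$, so Theorems~\ref{main theorem first system} and~\ref{main theorem second system} improve the bootstrap for $\varepsilon$ small. Forward closure from data at $\mathcal{I}^-$ via Theorem~\ref{main theorem first system} gives (1) for data at $\mathcal{I}^-$ on $(0,1]$; the time inversion $\tau\to 1/\tau$ preserves the class \eqref{general form for metric g} and extends the solution to $(0,\infty)$, and the symmetric argument handles data at $\mathcal{I}^+$. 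For asymptotic completeness (2), I run the backward bootstrap based on Theorem~\ref{main theorem second system}, whose estimates \eqref{main estimate asymptotic data} and \eqref{main estimate asymptotic data h} produce the asymptotic data at $\mathcal{I}^-$ (and symmetrically at $\mathcal{I}^+$) from smallness at a single finite time.

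Third, the scattering map \eqref{def of scattering map intro} is then constructed by composing the forward map (data at $\mathcal{I}^-$ to state at $\tau=1$, via Theorem~\ref{main theorem first system}) with the backward map (state at $\tau=1$ to data at $\mathcal{I}^+$, via Theorem~\ref{main theorem second system} after $\tau\to 1/\tau$). The estimate \eqref{main estimate for S map intro} without derivative loss follows by chaining \eqref{main estimate first model system} with \eqref{main estimate second model system}, \eqref{main estimate asymptotic data}, and \eqref{main estimate asymptotic data h}; the compatibility $\mathcal{E}_I(1)\sim\mathcal{D}_{II}$ emphasised in the remarks is precisely what allows these estimates to be concatenated at $\tau=1$ in the same Sobolev order. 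Local Lipschitz continuity and invertibility at $\Sigma_{dS}$ follow by running the same bootstrap for the difference of two nearby solutions, which again satisfies the model systems with quadratically small sources, and then re-applying Theorems~\ref{main theorem first system} and~\ref{main theorem second system} to the difference.

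The main obstacle, I expect, is the nonlinear algebraic/gauge step underlying the whole scheme: verifying that the commuted Einstein equations genuinely fit both model systems with nonlinearities admissible in $\mathcal{F}_I,\mathcal{F}_{II}$, and in particular with the $\tau\to 0$ singularity confined to $\Phi_0$. This requires a carefully chosen gauge so that the obstruction tensor enters only in the top-order commutator, together with product estimates that reproduce the $L^1_\tau L^2$ and weighted $L^1_\tau H^{1/2}$ structure of the source norms. Closing the bootstrap uniformly up to $\tau=0$ also depends essentially on the renormalisation $h\mapsto\mathfrak{h}$ of \eqref{renormalization of h}, as highlighted in Remark~\ref{1/2 gain remark}; without it, the asymptotic-data norm at $\mathcal{I}^-$ would not be controlled by the bulk energy, and the sharp no-derivative-loss estimate \eqref{main estimate for S map intro} could not be achieved.
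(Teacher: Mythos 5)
The statement you are proving is not actually proved in this paper: it is Theorem~1.1 of the companion work \cite{Cmain}, quoted here only to explain how Theorems~\ref{main theorem first system} and \ref{main theorem second system} are used. Your outline does reproduce the strategy that the paper attributes to \cite{Cmain} — commute the Einstein equations $n/2$ times with $e_4$, check that the commuted system satisfies both model systems with the nonlinearities placed in $F_m^i$, run forward estimates from $\mathcal{I}^-$ with Theorem~\ref{main theorem first system}, backward estimates and recovery of asymptotic data with Theorem~\ref{main theorem second system}, chain them at $\tau=1$ using $\mathcal{E}_I(1)\sim\mathcal{D}_{II}$, and use the time inversion $\tau\to1/\tau$ to pass between $\mathcal{I}^-$ and $\mathcal{I}^+$. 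So at the level of architecture you are aligned with the intended argument rather than proposing a genuinely different route.

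However, as a proof your proposal has real gaps, all concentrated in exactly the material that lives in \cite{Cmain} and is not reproducible from the present paper alone. First, the claim that the commuted Einstein equations ``fit Definition~\ref{model systems definition} for both $\sigma=1$ and $\sigma=2$ with the nonlinearities absorbed by standard product estimates'' is the content of \cite[Section~5]{Cmain}; nothing in this paper supplies the gauge choices, constraint propagation, or the structure that confines the $\log\tau$ singularity to $\Phi_0$, and these cannot be waved through. Second, existence and uniqueness of the solution from asymptotic data (statement (1)) does not follow from the formal expansion results of Fefferman--Graham/\cite{selfsimilarvacuum}/\cite{hintz} plus a bootstrap on $(0,1]$ without a quantitative local theory near $\mathcal{I}^-$ in the norms used here — the present paper \emph{assumes} \eqref{preliminary estimates for background metric}--\eqref{preliminary estimates for background psi} and even assumes existence of the background, so invoking its theorems inside your bootstrap is circular unless the preliminary closeness estimates are established independently, which is again \cite{Cmain}. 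Third, the absorption of the quadratic terms into $\mathcal{F}_I,\mathcal{F}_{II}$ is more delicate than ``quadratically small'': the top-order singular quantity only satisfies bounds with $|\log\tau|$ growth (cf.\ \eqref{practical estimate first model system}) and the energies carry $\tau$-weights, so one must check that products of such quantities land in the specific $L^1_\tau L^2$ and weighted $L^1_\tau H^{1/2}$ source norms uniformly up to $\tau=0$; you flag this as the main obstacle but do not resolve it, and the same remark applies to the difference estimates underlying Lipschitz continuity and local invertibility of $\mathcal{S}$. In short, your text is a faithful summary of the intended proof strategy, but the theorem remains an external result: within the scope of this paper the correct move is to cite \cite{Cmain}, and a self-contained proof would require supplying the Section~5, 7 and 9 content of that paper rather than deferring it.
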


The first two statements of Theorem \ref{main theorem of main} are proved entirely in \cite{Cmain}. In particular, given small scattering data we obtain a global smooth solution $\big(\mathcal{M},g\big)$ with $\mathcal{M}=(0,\infty)\times S^n$, which is quantitatively close to the de Sitter metric. We use these bounds as preliminary estimates in order to justify our assumptions \eqref{preliminary estimates for background metric} and \eqref{preliminary estimates for background psi}.

The construction of the scattering map given by \eqref{def of scattering map intro} and the sharp estimate \eqref{main estimate for S map intro} rely on \cite[Theorem~7.1, Theorem 9.1, Theorem 9.2]{Cmain}, which follow from Theorems~\ref{main theorem first system} and \ref{main theorem second system} proved in the present paper. We treat the Einstein equations \eqref{Einstein vacuum eq} commuted with $e_4$ at top order using the model systems on the $\big(\mathcal{M},g\big)$ background, with the nonlinear terms contained in the inhomogeneous terms. Theorems \ref{main theorem first system} and \ref{main theorem second system} give sharp estimates for the top order quantities, which are then used in \cite{Cmain} to deal with the nonlinear terms and ultimately prove \eqref{main estimate for S map intro}. We note that the main estimates proved in the present paper are also illustrated in \cite[Section~7, Section~9]{Cmain} for a simplified toy problem.

We outline the structure of the remainder of the introduction. In Section \ref{outline proof intro}, we present the main steps of the proof. We illustrate the argument of \cite{linearwave} for a toy problem in Section \ref{toy problem section}. We then introduce the geometric Littlewood-Paley theory of \cite{geometricLP} in Section \ref{LP theory intro}. In Section \ref{first model system intro section}, we outline the proof of Theorem \ref{main theorem first system}. In Section \ref{second model system intro section}, we explain the main steps in the proof of Theorem \ref{main theorem second system}. Finally, we outline the structure of the rest of the paper in Section \ref{paper outline}.

\subsection{Outline of the proof}\label{outline proof intro}

\subsubsection{A toy problem on the background of de Sitter}\label{toy problem section}
We consider a toy problem for the model systems on the background of exact de Sitter space. This contains some important features of the general problem, and it allows us to illustrate the argument of \cite{linearwave}.

We consider $\alpha:(0,1]\times S^n\rightarrow\mathbb{R}$ to be a solution of:
\begin{equation}\label{model eq dS}
    \partial_{\tau}^2\alpha+\frac{1}{\tau}\partial_{\tau}\alpha-\Delta_{\slashed{g}_{S^n}}\alpha=f_1(\tau)\tau\partial_{\tau}\alpha+f_2(\tau)\alpha
\end{equation}
\[\alpha=\alpha_{\mathcal{O}}\log\tau+\alpha_{h}+O\big(\tau^2|\log\tau|^2\big),\]
where $|f_1|+|f_2|=O(1)$. Moreover, we assume that $\alpha$ is supported on spherical harmonics with corresponding eigenvalues in $[2^{2l},2^{2l+2}).$ We introduce the new time variable $t=2^l\tau$ and decompose the solution as $\alpha=\alpha_J+\alpha_Y,$ where $\alpha_J,\ \alpha_Y$ solve \eqref{model eq dS}, and satisfy the expansions:
\[\alpha_J=\alpha_{\mathfrak{h}}+O\big(t^2|\log t|^2\big),\ \alpha_Y=\alpha_{\mathcal{O}}\log t+O\big(t^2|\log t|^2\big),\]
with $\alpha_{\mathfrak{h}}=\alpha_{h}-\alpha_{\mathcal{O}}\log2^l.$ We refer to $\alpha_J,\ \alpha_Y$ as the regular and singular components of $\alpha.$ 

As suggested by our notation, $\alpha_J$ and $\alpha_Y$ satisfy similar bounds in terms of $t$ to the first and second Bessel functions $J_0,\ Y_0$. Thus, at $t=0$ we have $|\alpha_J|\lesssim1,\ |\alpha_Y|\lesssim|\log t|,$ while at $t=2^l$ we have $|\alpha_J|,|\alpha_Y|\lesssim t^{-1/2}.$ The asymptotic behavior at $t=0$ implies the need to renormalize $\alpha_{h}$ to $\alpha_{\mathfrak{h}},$ similarly to \eqref{renormalization of h}. Moreover, the asymptotic behavior of the Bessel functions for large $t$ implies that at $\tau=1$ we have $\alpha_J,\alpha_Y\sim2^{-l/2}.$ This determines the "gain" of 1/2 derivatives for the solution at $\tau=1$ compared to the asymptotic data at $\tau=0,$ similarly to Remark~\ref{1/2 gain remark}.

While the above discussion provides heuristics on the behavior of the solution, we note that the results of \cite{linearwave} do not rely on the theory of Bessel functions. Instead, we capture the quantitative properties of the solution by constructing frequency dependent multipliers and proving energy estimates separately in the low frequency regime $t\in(0,1]$ and in the high frequency regime $t\in[1,2^l].$ We explain this in detail below for the model systems.

\subsubsection{Geometric Littlewood-Paley projections}\label{LP theory intro}

According to Theorems \ref{main theorem first system} and \ref{main theorem second system}, a remarkable property of solutions of the model systems is that the asymptotic data at $\mathcal{I}^-$ and the solution at $\{\tau=1\}$ lie in Sobolev spaces which differ by 1/2 in terms of regularity. The detailed analysis needed to capture this property requires the use of Littlewood-Paley projections. These are used to define fractional Sobolev spaces, the $\log\nabla$ operator present in the renormalization of $h,$ and to construct the frequency dependent multipliers needed in our estimates. 

In the case of the toy problem considered above, we projected using the spherical harmonics decomposition. However, in the general setting the conformal metric $\slashed{g}$ induced by the background on the spheres $S_{\tau}$ has a nontrivial time dependence. As a result, we use the geometric Littlewood-Paley theory of Klainerman-Rodnianski from \cite{geometricLP}, which we introduce in detail in Section \ref{LP section}. We also use the methods of \cite{geometricLP, duhamelLP} to prove additional new results that are needed in our situation, which we outline below.

The geometric LP projections satisfy a series of standard properties, which in particular allow us to define fractional derivatives. For example, the operator $\log\nabla$ used in the renormalization \eqref{renormalization of h} is defined as:
\[(\log\nabla)F=\sum_{k\geq0}P_k^2F\cdot\log2^k.\]

Unlike the case of the toy problem, we encounter additional difficulties since the LP projections only satisfy $L^2$-almost orthogonality. For any two families of LP projections $P_k$ and $P'_l$ we have according to \cite{geometricLP}:
\[\big\| P_kP'_lF\big\|_{L^2}\lesssim2^{-4|k-l|}\cdot\big\|F\big\|_{L^2}.\]

The LP projections satisfy the finite band property $\|P_kF\|_{L^2}\lesssim2^{-k}\|\nabla\widetilde{P}_kF\|_{L^2},$ where $\widetilde{P}_k^2=P_k.$ Because of the $L^2$-almost orthogonality, bounding the RHS of this inequality using the $P_l$ projection operators creates dangerous terms with frequency higher than $k.$ Instead, we prove a novel refined Poincaré inequality for any $k\geq0$ and $\delta>0:$
\begin{equation}\label{introo refined Poincare inequality}
    \big\|P_kF\big\|_{L^2}^2\lesssim\frac{1}{\delta}2^{-2k}\big\|\nabla P_kF\big\|_{L^2}^2+\delta\sum_{0\leq l<k}2^{-9k+7l}\big\|\nabla P_lF\big\|_{L^2}^2+\delta^{-1}2^{-4k}\big\|F\big\|_{L^2}^2.
\end{equation}
The important feature of this inequality is that all the LP projection operators have the same symbol. Moreover, only the last term contains frequencies higher than $k$, but this is lower order due to the good $2^{-4k}$ weight.

Finally, we also prove bounds for the commutation error terms obtained due to the time dependence of the metric $\slashed{g}.$ For example, for a certain projection operator $\underline{\widetilde{P}}_k$ defined in Section \ref{LP section} in terms of $P_k$, we have:
\begin{equation}\label{introo LP bound}
        \big\|[\nabla_4,P_k]F\big\|_{L^2}\lesssim\big\|\underline{\widetilde{P}}_kF\big\|_{L^2}+2^{-k}\big\|F\big\|_{L^2}.
\end{equation}
While the presence of projection operators with different symbols in this inequality cannot be avoided, we point out that both sides of the inequality can be summed for $k\geq0,$ which will prove to be essential later.

\subsubsection{The proof of Theorem \ref{main theorem first system}}\label{first model system intro section}
The goal of Theorem \ref{main theorem first system} is to obtain estimates for solutions of the first model system at $\tau\in(0,1]$ in terms of the asymptotic data at $\mathcal{I}^-.$  We provide a detailed outline of the proof of Theorem \ref{main theorem first system}, and note the reader should use this section for assistance when reading the proof in Section~\ref{model system for direction section}. Our strategy is to adapt the approach in \cite{linearwave} to the current setting by using the geometric LP theory instead. We first decompose the solution into its singular and regular parts, which we treat separately in all the estimates. We then explain how to prove lower order estimates and top order estimates, which complete the proof of Theorem \ref{main theorem first system}.

\paragraph{Decomposition of $\Phi_0$.} The first model system has a favorable structure for proving estimates for the quantities $\Phi_1,\ldots,\Phi_I$ in terms of the asymptotic data at $\mathcal{I}^-,$ due to the choice of sign $\sigma=1$ in Definition~\ref{model systems definition}. Moreover, these quantities are regular at $\tau=0,$ according to their asymptotic expansions. While the equation for $\Phi_0$ has the same favorable structure, we encounter difficulties since $\Phi_0$ blows-up at $\mathcal{I}^-$ as $\log\tau.$ We isolate this singular behavior by defining the singular component of $\Phi_0$, which decouples from the rest of the system and can be treated separately. The remaining component of $\Phi_0$ is regular at $\mathcal{I}^-$ and can be treated similarly to the other regular quantities.

For each $m\leq M,$ we define the singular component $\big(\nabla^m\Phi_0\big)_Y$ to be the horizontal tensor solving:
\begin{align}\label{equation for Phi Y}
    &\nabla_{\tau}\big(\nabla_{\tau}\big(\nabla^m\Phi_0\big)_Y\big)+\frac{1}{\tau}\nabla_{\tau}\big(\nabla^m\Phi_0\big)_Y-4\Delta\big(\nabla^m\Phi_0\big)_Y=\psi\nabla\big(\nabla^m\Phi_0\big)_Y\\
    &\big(\nabla^m\Phi_0\big)_Y({\tau})=2\nabla^m\mathcal{O}\log({\tau})+2(\log\nabla)\nabla^m\mathcal{O}+ O\big({\tau}^2|\log({\tau})|^2\big),\ \nabla_{\tau}\big(\nabla^m\Phi_0\big)_Y({\tau})=\frac{2\nabla^m\mathcal{O}}{\tau}+ O\big({\tau}|\log({\tau})|^2\big).\notag
\end{align}
We prove the existence and uniqueness of the singular component in Section~\ref{existence of singular component section}. We define the regular component: 

\[\big(\nabla^m\Phi_0\big)_J=\nabla^m\Phi_0-\big(\nabla^m\Phi_0\big)_Y.\]
Making the renormalization $\mathfrak{h}_m=\nabla^mh-2(\log\nabla)\nabla^m\mathcal{O},$ we get that the regular component satisfies the equation:
\begin{align}\label{equation for Phi J}
    \nabla_{\tau}\big(\nabla_{\tau}\big(\nabla^m\Phi_0\big)_J\big)+\frac{1}{\tau}\nabla_{\tau}\big(\nabla^m\Phi_0\big)_J-4\Delta\big(\nabla^m\Phi_0\big)_J=\psi\nabla\big(\nabla^m\Phi_0\big)_J+\sum_{j=1}^I\psi\nabla^{m+1}\Phi_{j}+F_{m}^{0}\\
    \big(\nabla^m\Phi_0\big)_J({\tau})=\mathfrak{h}_m+ O\big({\tau}^2|\log({\tau})|^2\big),\ \nabla_{\tau}\big(\nabla^m\Phi_0\big)_J({\tau})=O\big({\tau}|\log({\tau})|^2\big)\text{ in }C^{\infty}(S^n).\notag
\end{align}

\paragraph{Lower order estimates.} In order to prove the needed refined estimates for the solution at top order, it is essential to first prove in Section~\ref{lower order estimates section} energy estimates that are lower order in terms of angular derivatives. We treat separately the singular and regular quantities. For the regular quantities we note that the favorable choice of sign $\sigma=1$ and the regularity at $\mathcal{I}^-$ allow us to use $\nabla_{\tau}$ as a multiplier to prove in Proposition~\ref{standard estimates regular components propositionn}:
\begin{equation}\label{lower order regular estimate intro}
    \sum_{m=0}^{M-1}\big\|\big(\nabla^m\Phi_0\big)_J\big\|^2_{H^{1}}+\sum_{i=1}^I\big\|\Phi_i\big\|^2_{H^{M}}\leq C_I\mathcal{D}_I+C_I\mathcal{F}_I(\tau).
\end{equation}
For the singular component, the lower order version of \eqref{practical estimate first model system} consists of proving in Proposition~\ref{practical estimate for Phi Y proposition}:
\begin{equation}\label{lower order singular estimate intro}
    \sum_{m=0}^{M-1}\big\|\big(\nabla^m\Phi_0\big)_Y\big\|^2_{H^{1}}\leq C_I\mathcal{D}_I+C_I|\log\tau|^2\big\|\mathcal{O}\big\|^2_{H^{M+1}}.
\end{equation}
To prove this, we further decompose for every $m<M:$ $\big(\nabla^m\Phi_0\big)_Y=\big(\nabla^m\Phi_0\big)_Y^1+\big(\nabla^m\Phi_0\big)_Y^2,$ where $\big(\nabla^m\Phi_0\big)_Y^1$ and $\big(\nabla^m\Phi_0\big)_Y^2$ are the solutions of (\ref{equation for Phi Y}) that satisfy the expansions:
\[\big(\nabla^m\Phi_0\big)_Y^1({\tau})=2\nabla^m\mathcal{O}\log({\tau})+ O\big({\tau}^2|\log({\tau})|^2\big),\ \big(\nabla^m\Phi_0\big)_Y^2({\tau})=2(\log\nabla)\nabla^m\mathcal{O}+ O\big({\tau}^2|\log({\tau})|^2\big).\]
The desired lower order estimate for the singular component follows using the $\nabla_{\tau}$ multiplier in the equations for $\big(\nabla^m\Phi_0\big)_Y^1/\log\tau$ and $\big(\nabla^m\Phi_0\big)_Y^2,$ similarly to the approach in \cite{linearwave}.

Additionally, we prove estimates for the commutator term $\mathcal{C}=\big(\nabla^M\Phi_0\big)_Y-\nabla\big(\nabla^{M-1}\Phi_0\big)_Y$ and $H^{1/2}$ estimates for $\nabla(\nabla^{M-1}\Phi_0)_Y$ and $\nabla_{\tau}(\nabla^{M-1}\Phi_0)_Y$ in Section~\ref{fractional estimates section}, which simplify the proof of the top order estimates.

\paragraph{Top order estimates for the regular components.} In order to capture the specific behavior of the regular components at top order, we consider the equations satisfied by the projections $P_k\big(\nabla^M\Phi_0\big)_J$ and $P_k\nabla^M\Phi_i$, for all $1\leq i\leq I$ and $k\geq0.$ The goal is to prove similar estimates to the ones for the toy problem in Section~\ref{toy problem section}, dictated by the asymptotics of the Bessel function $J_0.$

For each $P_k$ projection we consider the low frequency regime $\tau\in(0,2^{-k-1}]$ in Section~\ref{first system low fre estimates section}. We propagate the $L^2$ bounds satisfied by the asymptotic data at $\mathcal{I}^-$ using $\nabla_{\tau}$ as a multiplier to obtain schematically for $\tau\in(0,2^{-k-1}]$:
\[\big\|P_k\big(\nabla^M\Phi_0\big)_J\big\|^2_{H^1}+\sum_{i=1}^I\big\|P_k\nabla^M\Phi_i\big\|^2_{H^1}+\ldots\lesssim\big\|P_k\mathfrak{h}_M\big\|^2_{H^1}+\sum_{i=1}^I\big\|P_k\nabla^M\Phi_i^0\big\|^2_{H^1}+\sum_{i=0}^I\int_0^{\tau}\frac{1}{2^{k}}\big\|P_kF_M^i\big\|_{L^2}^2d\tau'+\ldots\]
Using the commutation estimates for LP projections, the error terms that contain $\Phi$ have good $2^{-k}$ weights.

For each $P_k$ projection we also consider the high frequency regime $\tau\in[2^{-k-1},1]$ in Section~\ref{first system high fre estimates section}. We prove boundedness for the energy obtained by using the multiplier $2^k\tau\nabla_{\tau}$ to get schematically for $\tau\in[2^{-k-1},1]:$
\begin{align*}
    2^k\tau\big\|P_k\big(\nabla^M\Phi_{0}\big)_J\big\|^2_{H^1}+\sum_{i=1}^I2^k\tau\big\|P_k\nabla^M\Phi_i\big\|^2_{H^1}+\ldots\lesssim\big\|P_k\big(\nabla^M\Phi_{0}\big)_J\big\|^2_{H^1}\big|_{\tau=2^{-k-1}}+\sum_{i=1}^I\big\|P_k\nabla^M\Phi_{i}\big\|^2_{H^1}\big|_{\tau=2^{-k-1}}+&\\
    +\sum_{i=0}^I\int_{2^{-k-1}}^{\tau}2^k\tau'\big\|P_kF_M^i\big\|_{L^2}^2d\tau'+\int_{2^{-k-1}}^{\tau}(\tau')^3\big\|\underline{\widetilde{P}}_k\nabla\big(\nabla^M\Phi_{0}\big)_J\big\|_{L^2}^2d\tau'+\ldots&
\end{align*}
It is essential that in this estimate we obtain a top order bulk term with favorable sign, which can be dropped from the estimate\footnote{On the other hand, we point out that in the context of the second model system, the bulk term obtained in the high frequency regime estimate for the singular component has an unfavorable sign, creating major difficulties.}. We also notice that to deal with the error terms we use refined LP commutation estimates such as \eqref{introo LP bound}. In particular, this creates error terms with different projection operators, such as the last term written above, which cannot be controlled directly at the level of the high frequency regime estimates.

To conclude the proof of the top order estimates for the regular components, we combine the high frequency regime and low frequency regime estimates in Section~\ref{first system combined estimates section}. We point out that this argument uses the top order estimates for the singular component, which are proved separately as we explain below. To bound the error terms containing different LP projection operators, we first need to sum the estimates obtained for all $k\geq0,$ and then use Gronwall. For the negative frequencies we use the lower order estimates from Section~\ref{lower order estimates section}. As a result, we obtain:
\[\tau\big\|(\nabla^M\Phi_{0}\big)_J\big\|^2_{H^{3/2}}+\sum_{i=1}^I\tau\big\|\nabla^{M+1}\Phi_i\big\|^2_{H^{1/2}}+\ldots\leq C_I\mathcal{D}_I+C_I\mathcal{F}_I(\tau).\]

\paragraph{Top order estimates for the singular component.} The singular component decouples from the rest of the system, so it can be treated independently of the regular quantities. We proved top order estimates for the singular component in \cite{Cmain}, in order to illustrate the methods of the present paper. According to \cite{Cmain}, we have:
\begin{align*}
    &\tau^2\big\|\nabla_{\tau}\big(\nabla^M\Phi_0\big)_Y\big\|^2_{H^{1/2}}+ \tau^2\big\|\nabla\big(\nabla^M\Phi_0\big)_Y\big\|^2_{H^{1/2}}\leq C_I\big\|\mathcal{O}\big\|^2_{H^{M+1}},\\
    &\big\|\big(\nabla^M\Phi_0\big)_Y\big\|^2_{H^{1}}\leq C_I\big(1+|\log\tau|^2\big) \big\|\mathcal{O}\big\|^2_{H^{M+1}}.
\end{align*}
In Section~\ref{singular component outline section} we explain the proof in \cite[Section~7]{Cmain}, which follows similar steps to the above argument for the regular components and it relies on the lower order estimates for the singular component in Section~\ref{lower order estimates section}. In the high frequency regime there is the additional simplification of not having inhomogeneous terms on the RHS. In the low frequency regime we need to account for the singular behavior of the solution at $\tau=0,$ so we prove estimates for $P_k\big(\nabla^M\Phi_0\big)_Y/\log(2^k\tau)$ instead. The error terms can be simplified significantly using the additional lower order estimates in Section~\ref{lower order estimates section}. Finally, we combine the low frequency and high frequency regime estimates in a similar way to the regular components.

\subsubsection{The proof of Theorem \ref{main theorem second system}}\label{second model system intro section}
In Theorem~\ref{main theorem second system}, we obtain estimates for solutions of the second model system at $\tau\in(0,1)$ in terms of the solution at $\tau=1.$ Additionally, we also prove estimates for the asymptotic data at $\mathcal{I}^-.$ In this section, we provide a detailed outline of the proof of Theorem~\ref{main theorem second system}, and refer the reader to this section for assistance when reading the proof in Section~\ref{model system back direction section}. Similarly to Section~\ref{first model system intro section}, our strategy is to use the geometric LP theory and adapt the approach in \cite{linearwave} to the current setting. We first explain how to prove estimates for the regular quantities. We then outline the lower order estimates and top order estimates for the singular quantities, and we also explain the estimates for the asymptotic data at $\mathcal{I}^-,$ completing the proof of Theorem \ref{main theorem second system}.

\paragraph{Estimates for the regular quantities.} The second model system has a favorable structure for proving estimates for the regular quantities $\Phi_1,\ldots,\Phi_I$ in terms of the initial data at $\tau=1,$ due to the choice of sign $\sigma=2$ in Definition~\ref{model systems definition}. Another key feature of the second model system is that the singular terms $\psi\nabla^{m+1}\Phi_0$ are absent from the RHS of \eqref{equation for Phi i}. Thus, the equations for the regular quantities decouple from the singular quantities, and we can estimate them separately.

The favorable choice of sign $\sigma=2$ allows us to use $\nabla_{\tau}$ as a multiplier in \eqref{equation for Phi i}. We obtain in Section~\ref{good quantities estimates subsection}:
\begin{equation}\label{good quantities estimate intro}
    \sum_{i=1}^I\big\|\Phi_i\big\|_{H^{M+3/2}}^2+\sum_{i=1}^I\big\|\nabla_{\tau}\nabla^M\Phi_i\big\|_{H^{1/2}}^2+\ldots\leq C_{II}\mathcal{D}_{II}+C_{II}\mathcal{F}_{II}(\tau).
\end{equation}
Using the expansions satisfied by the regular quantities at $\tau=0,$ we obtain for the asymptotic data at $\mathcal{I}^-$:
\begin{equation}\label{good quantities asymptotic data intro}
    \sum_{i=1}^I\big\|\Phi_i^0\big\|^2_{H^{M+1}}\leq C_{II}\mathcal{D}_{II}+C_{II}\mathcal{F}_{II}(0).
\end{equation}

\paragraph{Estimates for the singular quantities} The structure of the equation \eqref{equation for Phi 0} satisfied by the singular quantity $\Phi_0$ creates significant new challenges compared to the case of the regular quantities. Additionally, we must prove estimates consistent with the expansion of $\Phi_0$ at $\tau=0,$ and obtain suitable bounds for $\mathcal{O}$ and $\mathfrak{h}.$

We start with a lower order estimate proved in Section~\ref{back singular preliminary section}, which provides useful preliminary bounds for $\Phi_0$:
\begin{equation}\label{back singular preliminary intro}
    \sum_{m=0}^M\tau^2\big\|\nabla_{\tau}\nabla^m\Phi_0\big\|_{L^2}^2+\tau^2\big\|\Phi_0\big\|_{H^{M+1}}^2+\tau\big\|\Phi_0\big\|_{H^M}^2+\int_{\tau}^1\tau'\big\|\Phi_0\big\|_{H^{M+1}}^2d\tau'\leq C_{II}\mathcal{D}_{II}+C_{II}\mathcal{F}_{II}(\tau).
\end{equation}

The main part of the argument consists of proving sharp estimates for the top order quantity $\xi=\nabla^M\Phi_0.$ We use as a guideline the toy problem considered in \cite[Section~9]{Cmain}, where we studied the equation satisfied by $\xi=\nabla^M\Phi_0,$ but we dropped the terms $F'_M=F_M^0+\sum_{i=1}^I\psi\nabla^{M+1}\Phi_i$ in order to illustrate the main ideas.

Similarly to Section~\ref{first model system intro section}, we prove estimates for each projection $P_k\nabla^M\Phi_0,$ with $k\geq0.$ For this purpose, we consider separately the low frequency regime $k\geq x$ with $\tau\in[0,X2^{-k-1}],$ and the high frequency regime $k\geq x$ with $\tau\in[X2^{-k-1},1]$, where we introduced a large constant $X=2^{x+1}$ for technical reasons. We note already here that we can deal with the terms with $k<x$ using the preliminary estimate \eqref{back singular preliminary intro}.

In Section~\ref{low frequency backward section singular}, we consider the low frequency regime $k\geq x,$ $\tau\in[0,X2^{-k-1}],$ and we prove a similar estimate to \eqref{back singular preliminary intro} for $P_k\nabla^M\Phi_0:$
\[\tau^2\big\|P_k\nabla_{\tau}\xi\big\|_{L^2}^2+\tau^2\big\|\nabla P_k\xi\big\|_{L^2}^2\lesssim X^22^{-2k}\Big(\big\|P_k\nabla_{\tau}\xi\big\|^2_{L^2}+\big\|\nabla P_k\xi\big\|^2_{L^2}\Big)\Big|_{\tau=X2^{-k-1}}+C_X2^{-3k}\mathcal{F}_{II}(\tau)+\ldots\]
We note that the data terms at $\tau=X2^{-k-1}$ will be bounded later using the high frequency regime estimates.

In Section~\ref{prelim high frequency estimate singular section}, we consider the high frequency regime $\tau\in[X2^{-k-1},1]$, and we prove a similar estimate to the one in Section~\ref{first model system intro section}. Using the multiplier $2^k\tau\nabla_{\tau}$, we get:
\[2^k\tau\big\|P_k\xi\big\|^2_{H^1}+\ldots\lesssim2^k\big\|P_k\xi\big\|^2_{H^1}\big|_{\tau=1}
+\int_{\tau}^1\frac{2^k}{(\tau')^2}\big\|P_k\xi\big\|_{L^2}^2d\tau'+\int_{\tau}^{1}2^k\tau'\big\|P_kF_M'\big\|_{L^2}^2d\tau'+\ldots\]

The main difference compared to Section~\ref{first model system intro section} is the presence of the second term on the RHS above, which is a top order bulk term with an unfavorable sign, creating significant difficulties. We improve the above high frequency regime estimate and deal with this bad term in Section~\ref{improved high frequency estimates section}, which represents the main technical part of the paper. We use the refined Poincaré inequality \eqref{introo refined Poincare inequality} to bound the error term:
\[\int_{\tau}^1\frac{2^k}{(\tau')^2}\big\|P_k\xi\big\|_{L^2}^2d\tau'.\]
As a result, we obtain a sum of error terms on the RHS in the low frequency regime and high frequency regime. We bound the high frequency regime error terms using the novel Gronwall-like inequality in Lemma~\ref{grwonwall type lemma}. For the low frequency regime error terms we use the estimates of Section~\ref{low frequency backward section singular}, which in turn create a sum of error terms that we bound using the discrete Gronwall inequality.

Moreover, in the above estimates we also have error terms with different projection operators, obtained by using LP commutation estimates such as \eqref{introo LP bound}, similarly to Section~\ref{first model system intro section}. As before, we deal with these terms towards the end of our argument, when summing the estimates obtained for all $k\geq x.$

Finally, in Section~\ref{main estimate back direction proof section} we combine the improved high frequency regime estimates of Section~\ref{improved high frequency estimates section} with the low frequency regime estimates of Section~\ref{low frequency backward section singular} and the preliminary estimate \eqref{back singular preliminary intro}, to obtain the main estimate for $\Phi_0:$
\[\tau^2\big\|\Phi_0\big\|_{H^{M+3/2}}^2+\tau^2\big\|\nabla_{\tau}\nabla^M\Phi_0\big\|_{H^{1/2}}^2+\ldots\leq C_{II}\mathcal{D}_{II}+C_{II}\mathcal{F}_{II}(\tau).\]
Using the estimate for the regular quantities \eqref{good quantities estimate intro} as well, we conclude the proof of the main estimate \eqref{main estimate second model system} in Theorem~\ref{main theorem second system}.

\paragraph{Estimates for the asymptotic quantities} The final step in the proof of Theorem~\ref{main theorem second system} is showing the estimates \eqref{main estimate asymptotic data} and \eqref{main estimate asymptotic data h} for the asymptotic data at $\mathcal{I}^-$ in Section~\ref{asympt data estimates section}. In view of \eqref{good quantities asymptotic data intro}, to establish \eqref{main estimate asymptotic data} we prove:
\begin{equation}\label{estimate for O intro}
    \|\mathcal{O}\|_{H^{M+1}}^2\leq C_{II}\mathcal{D}_{II}+C_{II}\mathcal{F}_{II}(0).
\end{equation}
This estimate for the obstruction tensor follows by taking the limit $\tau\rightarrow0$ in the low frequency regime estimate above for each $k\geq x$, and using the expansion of $\Phi_0$ at $\mathcal{I}^-.$

The estimate \eqref{main estimate asymptotic data h} for $\mathfrak{h}$ is more involved. We first notice that it suffices to show:
\begin{equation}\label{estimate for h intro}
    \sum_{k\geq x}2^{2k}\big\|P_k\mathfrak{h}_M\big\|_{L^2}^2\leq C_{II}\mathcal{D}_{II}+C_{II}\mathcal{F}_{II}(0),
\end{equation}
where $\mathfrak{h}_M=\nabla^Mh-2(\log\nabla)\nabla^M\mathcal{O}.$ According to the expansion of $\nabla^M\Phi_0$ at $\mathcal{I}^-,$ we can prove the above bound using energy estimates for the equations satisfied by the quantities $\overline{\xi}_k=P_k\nabla^M\Phi_0-\tau\log(2^k\tau)P_k\nabla_{\tau}\nabla^M\Phi_0.$ To deal with the error terms obtained in this case, we use the previously established bounds for the low frequency regime and the high frequency regime.

\subsection{Outline of the paper}\label{paper outline}
We outline the structure of the paper. In Section~\ref{LP section} we introduce the geometric Littlewood-Paley theory of \cite{geometricLP}, and prove the additional new results needed in our analysis. In Section~\ref{model system for direction section}, we prove Theorem~\ref{main theorem first system} following the outline in Section~\ref{first model system intro section}. In Section~\ref{model system back direction section}, we prove Theorem~\ref{main theorem second system} following the strategy presented in Section~\ref{second model system intro section}.

\paragraph{Acknowledgements.} The author would like to acknowledge Igor Rodnianski and Mihalis Dafermos for their valuable advice in the process of writing this paper.

\section{Geometric Littlewood-Paley Theory}\label{LP section}

We introduce the geometric Littlewood-Paley theory of Klainerman-Rodnianski \cite{geometricLP}. We also use the methods of \cite{geometricLP, duhamelLP} to prove additional new results that are needed in the proofs of the sharp estimates in Theorems~\ref{main theorem first system} and \ref{main theorem second system}, as explained in Section~\ref{LP theory intro} of the Introduction.

\subsection{Bounds for the heat equation}\label{heat eq section}
The geometric Littlewood-Paley projections are defined using the heat flow. In this section, we introduce some standard properties of the heat equation based on \cite{geometricLP}, and we prove additional commutation estimates. We notice that the estimates depend on the bounds assumed on the background spacetime \eqref{preliminary estimates for background metric} and \eqref{preliminary estimates for background psi}. Unless otherwise noted, all the implicit constants present in the estimates depend only on the constant $C_0$.

For any tensor field $F$ on $S_{\tau}$, we denote by $U(z)F$ the solution on $[0,\infty)\times S_{\tau}$ to the heat equation:
\begin{equation}\label{heat equation}
    \partial_zU(z)F-\Delta U(z)F=0,\ U(0)F=F,
\end{equation}
where $\Delta$ is the Laplace-Beltrami operator on $\big(S_{\tau},\slashed{g}\big),$ for some $\tau\in[0,1].$ We notice that the operators $U$ are self-adjoint and form a semigroup.

We use the following estimates for the heat kernel of \cite{geometricLP}:

\begin{proposition*}[{\cite[Proposition~4.1]{geometricLP}}]
    We have the estimates for the operator $U(z)$:
    \begin{align*}
        \big\|U(z)F\big\|_{L^2}&\leq\|F\|_{L^2}\\
        \big\|\nabla U(z)F\big\|_{L^2}&\leq\|\nabla F\|_{L^2}\\
        \big\|\nabla U(z)F\big\|_{L^2}+\big\|U(z)\nabla F\big\|_{L^2}&\leq\sqrt{2}z^{-\frac{1}{2}}\|F\|_{L^2}\\
        \big\|\Delta U(z)F\big\|_{L^2}&\leq\frac{\sqrt{2}}{2}z^{-1}\|F\|_{L^2}.
    \end{align*}
\end{proposition*}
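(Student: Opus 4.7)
These are the standard heat-semigroup energy estimates on a compact Riemannian manifold, and my plan is to prove them by successive $z$-monotonicity of higher and higher energies along the flow, following the strategy of \cite{geometricLP}. Throughout, write $u(z)=U(z)F$, so $\partial_z u=\Delta u$ on $[0,\infty)\times S_\tau$, and recall that $S_\tau$ is closed, so all integrations by parts produce no boundary contributions. All curvature considerations below will use the fact that, by assumption \eqref{preliminary estimates for background metric}, the metric $\slashed g$ on $S_\tau$ is an $O(\epsilon)$ perturbation of a conformal round sphere, whose Ricci tensor is strictly positive.

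First I would establish the $L^2$-monotonicity underlying the first inequality by the classical computation
\[\tfrac{d}{dz}\|u(z)\|_{L^2}^2 \;=\; 2\langle u,\Delta u\rangle_{L^2} \;=\; -2\|\nabla u(z)\|_{L^2}^2 \;\leq\; 0,\]
which also produces the quantitative identity $2\int_0^z\|\nabla u(s)\|_{L^2}^2\,ds=\|F\|_{L^2}^2-\|u(z)\|_{L^2}^2\leq\|F\|_{L^2}^2$, which I will need later. For the second inequality I would show that the Dirichlet energy $\|\nabla u(z)\|_{L^2}^2$ is itself monotone nonincreasing; differentiating and applying the Bochner/Weitzenb\"ock identity to commute $\nabla$ past $\Delta$ gives
\[\tfrac{d}{dz}\|\nabla u\|_{L^2}^2 \;=\; -2\|\nabla^2 u\|_{L^2}^2 \;-\; 2\int_{S_\tau}\!\mathrm{Ric}_{\slashed g}(\nabla u,\nabla u) \;+\; \text{(lower-order tensorial curvature terms)},\]
and the near-positivity of $\mathrm{Ric}_{\slashed g}$ together with the bound \eqref{preliminary estimates for background psi} on $\chi$ allows one to absorb every remaining term with a favorable sign, yielding $\|\nabla u(z)\|_{L^2}\leq\|\nabla F\|_{L^2}$.

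The third and fourth inequalities then follow by combining these two monotonicities with a Cauchy--Schwarz / semigroup iteration. For the third, the monotonicity of $\|\nabla u(s)\|_{L^2}^2$ in $s$ upgrades the integrated identity above to the pointwise bound
\[z\,\|\nabla u(z)\|_{L^2}^2 \;\leq\; \int_0^z\|\nabla u(s)\|_{L^2}^2\,ds \;\leq\; \tfrac12\|F\|_{L^2}^2,\]
so $\|\nabla u(z)\|_{L^2}\leq(2z)^{-1/2}\|F\|_{L^2}$; the companion term $\|U(z)\nabla F\|_{L^2}$ is controlled by the same bound via self-adjointness of $U(z)$ and the pairing $\langle U(z)\nabla F,\psi\rangle=\langle F,\nabla^{\ast}U(z)\psi\rangle$, with the same curvature commutator argument as in step two, and summing the two contributions produces the prefactor $\sqrt 2$. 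For the fourth inequality I would iterate the semigroup: combining the third bound at time $z/2$ (which gives $\|\nabla u(z/2)\|_{L^2}^2\leq z^{-1}\|F\|_{L^2}^2$) with the identity $-\tfrac{d}{ds}\|\nabla u(s)\|_{L^2}^2 = 2\|\Delta u(s)\|_{L^2}^2+O(\epsilon)\|\nabla u\|_{L^2}^2$ and the monotonicity of $\|\Delta u(s)\|_{L^2}^2$ (proved analogously to step two, since $\Delta u$ itself solves the heat equation), I obtain
\[\tfrac{z}{2}\,\|\Delta u(z)\|_{L^2}^2 \;\leq\; \int_{z/2}^{z}\|\Delta u(s)\|_{L^2}^2\,ds \;\leq\; \tfrac12\|\nabla u(z/2)\|_{L^2}^2 \;\leq\; \tfrac{1}{2z}\|F\|_{L^2}^2,\]
which rearranges to the claimed bound with constant $\sqrt{2}/2$.

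The main obstacle will be the curvature commutator appearing in step two: on the genuinely curved, time-dependent background $(S_\tau,\slashed g)$ the naive Bochner calculation is not sign-definite as written, and one must exploit the positivity of $\mathrm{Ric}_{\slashed g}$ on the nearly-round sphere together with a careful treatment of the lower-order terms involving $\chi$, using \eqref{preliminary estimates for background metric} and \eqref{preliminary estimates for background psi}. Once that monotonicity of $\|\nabla u(z)\|_{L^2}^2$ is secured, steps three and four are purely mechanical and reproduce the stated numerical constants.
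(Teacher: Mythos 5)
First, a framing remark: the paper does not prove this proposition — it is quoted verbatim from \cite{geometricLP} (Proposition~4.1 there), so the comparison is with the standard heat-flow energy argument of that reference, which uses only integration by parts and self-adjointness and no curvature hypotheses whatsoever (consistent with the fact that the stated constants $1,\sqrt2,\sqrt2/2$ are absolute).

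Your step 1 is exactly that argument and is fine, but step 2 contains a genuine gap which then contaminates steps 3 and 4. You commute $\nabla$ past $\Delta$ via Bochner--Weitzenb\"ock and propose to absorb the curvature terms using near-positivity of $\mathrm{Ric}_{\slashed g}$ and the bound \eqref{preliminary estimates for background psi} on $\chi$. This does not close: (i) $F$ is a tensor, so the commutator $[\nabla,\Delta]$ produces full Riemann and $\nabla\slashed{Riem}$ terms contracted against $u$ and $\nabla u$; these are not sign-definite, are of the same size as the Ricci term (the curvature of $\slashed g$ is $O(1)$, not $O(\epsilon)$), and cannot be ``absorbed with a favorable sign'' into $-2\|\nabla^2u\|^2$; any Gronwall-type absorption would give a factor $e^{Cz}$, which destroys the constant-$1$ bound that must hold uniformly for all $z\in[0,\infty)$; (ii) $\chi$ is irrelevant here, since the heat flow runs in $z$ at fixed $\tau$ and never differentiates the metric in time; (iii) your constants would depend on the geometry, whereas the statement's do not. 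The correct (and cited) argument avoids commuting altogether: since $\mathrm{div}\,\nabla u=\Delta u$, integrating by parts the other way gives $\frac{d}{dz}\|\nabla u\|_{L^2}^2=2\langle\nabla\Delta u,\nabla u\rangle=-2\|\Delta u\|_{L^2}^2\le0$ for arbitrary tensors on any compact $(S_\tau,\slashed g)$, and likewise $\|\Delta u(z)\|_{L^2}^2$ is monotone because $\Delta u$ itself solves the heat equation; with these monotonicities your step 3 and the duality bound for $\|U(z)\nabla F\|_{L^2}$ go through as written (no commutator is needed there either, only self-adjointness and $\|\mathrm{div}\,U(z)\psi\|_{L^2}\le\|\nabla U(z)\psi\|_{L^2}$). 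A smaller but real slip: your iteration in step 4 yields $\|\Delta u(z)\|_{L^2}\le z^{-1}\|F\|_{L^2}$, not $\tfrac{\sqrt2}{2}z^{-1}\|F\|_{L^2}$; it does not ``rearrange'' to the stated constant. To recover $\sqrt2/2$, use the weighted identity $2\int_0^z s\|\Delta u(s)\|_{L^2}^2\,ds=-\int_0^z s\,\tfrac{d}{ds}\|\nabla u(s)\|_{L^2}^2\,ds\le\int_0^z\|\nabla u(s)\|_{L^2}^2\,ds\le\tfrac12\|F\|_{L^2}^2$ together with monotonicity of $\|\Delta u(s)\|_{L^2}^2$, which bounds the left-hand side below by $z^2\|\Delta u(z)\|_{L^2}^2$; alternatively, the weaker constant $1$ would suffice for every application in this paper, where only $\lesssim$ is used.
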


We prove additional estimates using the methods of \cite{geometricLP, duhamelLP}. In particular, we notice the importance of proving estimates for the commutation of the operator $U(z)$ and the $e_4$ vector field.

\textbf{Notation.} Unless otherwise noted, in this section we write $A\lesssim B$ for some quantities $A,B>0$ if there exists a constant $C>0$ depending only on the constant $N,C_0$ defined in the Introduction, such that $A\leq CB.$ 
\begin{lemma}\label{heat lemma} We have the estimates for the operator $U(z)$:
\begin{align}
    \big\|\nabla^mU(z)F\big\|_{L^2}&\lesssim C\big(\|\slashed{Riem}\|_{H^{m-2}}\big)\cdot\|F\|_{H^m}\label{heat equation 1}\\
    \big\|[U(z),\nabla]F\big\|_{L^2}&\lesssim(\sqrt{z}+z)\cdot\big\|F\big\|_{L^2}\label{heat equation 2}\\
    \big\|\nabla[U(z),\nabla]F\big\|_{L^2}&\lesssim(1+\sqrt{z})\cdot\big\|F\big\|_{L^2}\label{heat equation 4}\\
    \big\|[U(z),\nabla^m]F\big\|_{L^2}&\lesssim(\sqrt{z}+z) C\big(\|\slashed{Riem}\|_{H^{m-1}}\big)\cdot\big\|F\big\|_{H^{m-1}}\label{heat equation 3}\\
    \big\|\nabla[U(z),\nabla^m]F\big\|_{L^2}&\lesssim(\sqrt{z}+z) C\big(\|\slashed{Riem}\|_{H^m}\big)\cdot\big\|F\big\|_{H^{m}}\label{heat equation 3.5}\\
    \big\|[U(z),G]F\big\|_{L^2}&\lesssim(z+\sqrt{z})\big\|G\big\|_{W^{2,\infty}}\big\|F\big\|_{L^2}\label{heat equation 5}\\
    \big\|\nabla[U(z),G]F\big\|_{L^2}&\lesssim(1+\sqrt{z})\big\|G\big\|_{W^{2,\infty}}\big\|F\big\|_{L^2}\label{heat equation 6}\\
    \big\|[U(z),\nabla_4]F\big\|_{L^2}&\lesssim(1+z)\cdot\big\|F\big\|_{L^2}\label{heat equation 7}\\
    \big\|\nabla[U(z),\nabla_4]F\big\|_{L^2}&\lesssim(1+z)\cdot\big\|F\big\|_{H^1}\label{heat equation 8}\\
    \big\|\nabla^2U(z)F\big\|_{L^2}&\lesssim \big(1+z^{-3/4}\big)\cdot\big\|F\big\|_{H^{1/2}}.\label{heat equation 9}
\end{align}
\end{lemma}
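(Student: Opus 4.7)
The master identity is Duhamel's principle for commutators: for any linear operator $A$,
\[
[U(z), A] F = -\int_0^z U(z-s) \, [A, \Delta] \, U(s) F \, ds,
\]
obtained by differentiating $s \mapsto U(z-s) A U(s) F$ in $s$. The right-hand side is then controlled by inserting the base bounds of the preceding proposition together with the explicit form of $[A, \Delta]$. Under the background assumptions \eqref{preliminary estimates for background metric}--\eqref{preliminary estimates for background psi}, $L^\infty$-norms of $\slashed{Riem}$, $\chi$, and their derivatives up to fixed order are uniformly bounded via Sobolev embedding, while higher Sobolev norms of $\slashed{Riem}$ enter only multiplicatively, accounting for the $C\big(\|\slashed{Riem}\|_{H^{m-2}}\big)$ factors in \eqref{heat equation 1}, \eqref{heat equation 3}, \eqref{heat equation 3.5}.

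The purely intrinsic commutators are routine. Schematically $[\nabla, \Delta] = \slashed{Riem} \cdot \nabla + \nabla\slashed{Riem}$, so $\|[\nabla, \Delta] U(s) F\|_{L^2} \lesssim (s^{-1/2} + 1)\|F\|_{L^2}$ by the preceding proposition; integrating in $s$ yields \eqref{heat equation 2}. For \eqref{heat equation 4} we apply an extra $\nabla U(z-s)$, costing $(z-s)^{-1/2}$, and evaluate via $\int_0^z (z-s)^{-1/2} s^{-1/2}\, ds = B(\tfrac{1}{2}, \tfrac{1}{2})$ and $\int_0^z (z-s)^{-1/2}\, ds = 2\sqrt{z}$. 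The higher-order versions \eqref{heat equation 1}, \eqref{heat equation 3}, \eqref{heat equation 3.5} follow by induction on $m$: the commutator $[\nabla^m, \Delta]$ expands via Leibniz into sums $\nabla^a \slashed{Riem} \cdot \nabla^b$ with $a + b \leq m$, and curvature factors are absorbed into $C\big(\|\slashed{Riem}\|_{H^\cdot}\big)$ constants via Sobolev multiplication. Analogously $[G, \Delta] = -\Delta G - 2 \nabla G \cdot \nabla$ gives $\|[G, \Delta] U(s) F\|_{L^2} \lesssim \|G\|_{W^{2,\infty}} (1 + s^{-1/2}) \|F\|_{L^2}$, and the same template proves \eqref{heat equation 5}--\eqref{heat equation 6}.

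The main obstacle is \eqref{heat equation 7}--\eqref{heat equation 8}, because $\nabla_4$ is transverse to $S_\tau$. Schematically $[\nabla_4, \Delta] = \chi \cdot \nabla^2 + \nabla\chi \cdot \nabla + (\text{l.o.t.})$, so the dangerous bulk piece in Duhamel is $\int_0^z U(z-s)\, \chi \cdot \nabla^2 U(s) F \, ds$: the naive bound $\|\nabla^2 U(s) F\|_{L^2} \lesssim s^{-1} \|F\|_{L^2}$ combined with $\|\chi\|_{L^\infty} \lesssim 1$ produces a non-integrable $s^{-1}$ singularity, giving only $\log z$ in place of the required $(1 + z)$. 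My fix is to argue in duality: test against $\psi$ with $\|\psi\|_{L^2} = 1$, use self-adjointness of $U(z-s)$ to move it onto $\psi$, and integrate by parts on $S_\tau$ to distribute the two $\nabla$'s on $\nabla^2 U(s) F$ between $\chi$ and $U(z-s)\psi$. The pairing becomes
\[
\int_0^z \langle \nabla U(s) F, \, (\nabla \chi) \cdot U(z-s)\psi + \chi \cdot \nabla U(z-s)\psi \rangle \, ds,
\]
and the estimates $\|\nabla U(s) F\|_{L^2} \lesssim s^{-1/2} \|F\|_{L^2}$ and $\|\nabla U(z-s) \psi\|_{L^2} \lesssim (z-s)^{-1/2}$ (via the self-adjoint swap) give integrand $\lesssim s^{-1/2}\bigl(1 + (z-s)^{-1/2}\bigr)$, which integrates to $O(1 + \sqrt{z}) \lesssim 1 + z$, yielding \eqref{heat equation 7} after handling the sub-leading $\nabla\chi \cdot \nabla$ piece directly. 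Estimate \eqref{heat equation 8} is obtained analogously: an extra outer $\nabla$, transferred onto $U(z-s)\psi$ by integration by parts in duality, costs one derivative of $F$ and produces the $\|F\|_{H^1}$ factor. Finally \eqref{heat equation 9} is real interpolation between $\|\nabla^2 U(z) F\|_{L^2} \lesssim (1 + z^{-1})\|F\|_{L^2}$ (from a Bochner identity and $\|\Delta U(z) F\|_{L^2} \lesssim z^{-1}\|F\|_{L^2}$) and $\|\nabla^2 U(z) F\|_{L^2} \lesssim (1 + z^{-1/2})\|F\|_{H^1}$ (via $\nabla U(z) F = U(z)\nabla F - [U(z), \nabla] F$ together with \eqref{heat equation 4}); interpolating at $\theta = 1/2$ gives the claimed $(1 + z^{-3/4}) \|F\|_{H^{1/2}}$ bound.
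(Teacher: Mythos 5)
Your overall template --- Duhamel for $[U(z),A]$ plus the standard heat kernel bounds --- is exactly the paper's, and your estimates for \eqref{heat equation 2}--\eqref{heat equation 4} and \eqref{heat equation 5}--\eqref{heat equation 6} match the paper's proof. For \eqref{heat equation 7}--\eqref{heat equation 8} your duality/integration-by-parts fix is correct, but it is really a dual restatement of what the paper does directly: the commutation formula \eqref{commutation formula 2} is already in divergence form $[\Delta,\nabla_4]\phi=\nabla(\chi\nabla\phi)+\mathrm{l.o.t.}$, so the bound $\|U(z-z')\nabla G\|_{L^2}\lesssim(z-z')^{-1/2}\|G\|_{L^2}$ absorbs the extra derivative and the alleged $s^{-1}$ singularity never appears; your pairing against $\psi$ produces the same Beta-type integral.

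There are, however, concrete spots where your sketch does not deliver the bounds as stated. First, \eqref{heat equation 1} is not a commutator estimate and cannot be obtained from the Leibniz expansion of $[\nabla^m,\Delta]$ alone: that route (or deriving it from \eqref{heat equation 3} via $\nabla^mU=U\nabla^m+[\nabla^m,U]$) gives bounds growing in $z$, whereas \eqref{heat equation 1} is uniform in $z$. The paper's induction instead uses the elliptic estimate $\|\nabla^2\phi\|_{L^2}\lesssim\|\Delta\phi\|_{L^2}+\|\slashed{Riem}\|_{L^{\infty}}\|\phi\|_{H^1}$ together with the exact commutation $\Delta U(z)=U(z)\Delta$ and the $z$-uniform base bounds $\|U(z)F\|_{L^2}\leq\|F\|_{L^2}$, $\|\nabla U(z)F\|_{L^2}\leq\|\nabla F\|_{L^2}$; this mechanism is missing from your argument, and \eqref{heat equation 1} is precisely what controls the inner factors in \eqref{heat equation 3}--\eqref{heat equation 3.5}. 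Second, for \eqref{heat equation 3} your "sums $\nabla^a\slashed{Riem}\cdot\nabla^b$ with $a+b\leq m$" include the term $\slashed{Riem}\cdot\nabla^mU(s)F$, whose naive estimate costs $\|F\|_{H^m}$, one derivative more than stated; the paper rewrites this top term in divergence form, $\slashed{Riem}\cdot\nabla^m\phi=\nabla(\slashed{Riem}\,\nabla^{m-1}\phi)-\nabla\slashed{Riem}\cdot\nabla^{m-1}\phi$, so that the outer $U(z-z')$ absorbs the derivative at cost $(z-z')^{-1/2}$ and only $\|F\|_{H^{m-1}}$ enters. You use exactly this trick for $\nabla_4$, so state it here too. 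Third, for \eqref{heat equation 9} your $H^1$ endpoint, obtained from $\nabla U(z)\nabla F$ plus \eqref{heat equation 4}, is $\lesssim(z^{-1/2}+1+\sqrt z)\|F\|_{H^1}$, so interpolation gives an extra $z^{1/4}$ at large $z$, exceeding the stated $(1+z^{-3/4})$; you would need the growth-free endpoint (e.g.\ via $\|\Delta U(z)F\|_{L^2}\lesssim z^{-1/2}\|\nabla F\|_{L^2}$ from the spectral calculus and the elliptic estimate), and also a word on why the geometric-LP $H^{1/2}$ is the interpolation space. The paper sidesteps both points by decomposing into $P_k$ blocks, which commute with $U(z)$, and applying Cauchy--Schwarz blockwise. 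These defects are harmless for the downstream applications (everything is integrated against the rapidly decaying symbols $m_k$), but as written your proof establishes weaker statements than the lemma claims.
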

\begin{proof}
    We prove (\ref{heat equation 1}) by induction, while considering separately the cases $m\leq2n$ and $m>2n.$ The case $m=1$ is proved in \cite{geometricLP}. We assume at first that (\ref{heat equation 1}) holds up to $m<2n,$ and prove it for $m+1.$ We notice that for any tensor $\phi$ we have:
    \[\|\nabla^2\phi\|_{L^2}\lesssim\|\Delta\phi\|_{L^2}+\|\slashed{Riem}\|_{L^{\infty}}\|\phi\|_{H^1}.\]
    As a result, we get:
    \[\|\nabla^{m+1}U(z)F\|_{L^2}\lesssim\|\Delta\nabla^{m-1}U(z)F\|_{L^2}+\|\slashed{Riem}\|_{L^{\infty}}\|\nabla^{m-1}U(z)F\|_{H^1}.\]
    Since $\|\slashed{Riem}\|_{L^{\infty}}\lesssim1$ by the bound on the background spacetime in \eqref{preliminary estimates for background metric}, the second term is controlled using the induction hypothesis.
    For the first term, we write using $[\Delta,U(z)]F=0$:
    \[\|\Delta\nabla^{m-1}U(z)F\|_{L^2}\lesssim\|\nabla^{m-1}\Delta U(z)F\|_{L^2}+\big\|[\Delta,\nabla^{m-1}]U(z)F\big\|_{L^2}\]
    \[\lesssim\|\nabla^{m-1}U(z)\Delta F\|_{L^2}+\sum_{i+2j=m-1}\big\|\nabla^i(\slashed{Riem}^{j+1}U(z)F)\big\|_{L^2}\lesssim\|F\|_{H^{m+1}},\]
    by using \eqref{preliminary estimates for background metric} again. This completes the first induction argument, and establishes (\ref{heat equation 1}) for all $m\leq2n.$ Next, for some $m\geq2n$ we assume that (\ref{heat equation 1}) holds up to $m,$ and prove it for $m+1.$ As before, we have:
    \[\|\nabla^{m+1}U(z)F\|_{L^2}\lesssim\|\nabla^{m-1}U(z)\Delta F\|_{L^2}+\big\|[\Delta,\nabla^{m-1}]U(z)F\big\|_{L^2}+C\big(\|\slashed{Riem}\|_{H^{m-2}}\big)\cdot\|F\|_{H^m}\]
    \[\lesssim C\big(\|\slashed{Riem}\|_{H^{m-2}}\big)\cdot\|F\|_{H^{m+1}}+\sum_{i+2j=m-1}\big\|\nabla^i(\slashed{Riem}^{j+1}U(z)F)\big\|_{L^2}.\]
    The second term in the above can be written as:
    \[\sum_{i+2j=m-1}\sum_{i_0+\ldots+i_{j+1}=i}\bigg\|\big(\nabla^{i_0}U(z)F\big)\prod_{l=1}^{j+1}\big(\nabla^{i_l}\slashed{Riem}\big)\bigg\|_{L^2}.\]
    For each terms of the sum, we bound the factor with the most number of angular derivatives in $L^2.$ The other factors have at most $(m-1)/2$ derivatives, so we bound them in $L^{\infty}$ and apply the Sobolev inequality to get:
    \[\|\nabla^{m+1}U(z)F\|_{L^2}\lesssim C\big(\|\slashed{Riem}\|_{H^{m-1}}\big)\cdot\|F\|_{H^{m+1}}.\]
    
    This completes the proof of (\ref{heat equation 1}). Next, we prove (\ref{heat equation 2}). By Duhamel's formula as in \cite{duhamelLP} we have:
    \[[U(z),\nabla]F=\int_0^zU(z-z')[\Delta,\nabla]U(z')Fdz'.\]
    Using the standard estimates for the heat kernel in \cite{geometricLP}, we get (\ref{heat equation 2}). The proof of (\ref{heat equation 4}) is similar. We also have using $[\Delta,\nabla^m]=\nabla[\Delta,\nabla^{m-1}]+[\Delta,\nabla]\nabla^{m-1}$:
    \[[U(z),\nabla^m]F=\int_0^zU(z-z')[\Delta,\nabla^m]U(z')Fdz'=\int_0^zU(z-z')\nabla\big(\slashed{Riem}\nabla^{m-1}+[\Delta,\nabla^{m-1}]\big)U(z')Fdz'+\]\[+\int_0^zU(z-z')\big(\nabla\slashed{Riem}\cdot\nabla^{m-1}U(z')F\big)dz'.\]
    
    This implies (\ref{heat equation 3}) using the bounds in (\ref{heat equation 1}), and by estimating the commutator term as in the proof of (\ref{heat equation 1}). The proof of (\ref{heat equation 3.5}) is similar. The proofs of (\ref{heat equation 5}) and (\ref{heat equation 6}) follow, since according to \cite{geometricLP}:
    \[[U(z),G]F=\int_0^zU(z-z')\big(\Delta G\cdot U(z')F+2\nabla G\cdot\nabla U(z')F\big)dz'.\]
    By Duhamel's formula as in \cite{duhamelLP} we have:
    \[[U(z),\nabla_4]F=\int_0^zU(z-z')[\Delta,\nabla_4]U(z')Fdz'.\]
    Using \eqref{commutation formula 2} and the standard estimates for the heat kernel in \cite{geometricLP}, we get (\ref{heat equation 7}). The proof of (\ref{heat equation 8}) is similar. Finally, to prove (\ref{heat equation 9}):
    \[\big\|\nabla^2U(z)F\big\|_{L^2}^2\lesssim\sum_{k\in\mathbb{Z}}\big\|P_k\nabla^2U(z)F\big\|_{L^2}^2\lesssim (1+z^{-1})\big\|F\big\|_{L^2}^2+\sum_{k\geq0}\big\|\nabla^2P_kU(z)F\big\|_{L^2}^2\lesssim\]\[\lesssim (1+z^{-1})\big\|F\big\|_{L^2}^2+\sum_{k\geq0}\big\|\nabla^2U(z)P_kF\big\|_{L^2}^2\lesssim (1+z^{-1})\big\|F\big\|_{L^2}^2+\sum_{k\geq0} z^{-1}\big\|P_kF\big\|_{L^2}\cdot z^{-1/2}\big\|\nabla P_kF\big\|_{L^2}\lesssim\]\[\lesssim (1+z^{-1})\big\|F\big\|_{L^2}^2+\sum_{k\geq0} z^{-3/2}2^k\big\|P_kF\big\|_{L^2}\big\|\underline{P}_kF\big\|_{L^2}\lesssim (1+z^{-1})\big\|F\big\|_{L^2}^2+z^{-3/2}\big\|F\big\|_{H^{1/2}}^2.\]
    We note that the proof of the last statement uses parts of Lemma \ref{Litt Paley lemma}. We included its statement in the heat equation bounds section for future convenience.
\end{proof}

In our proofs we will repeatedly use the commutation formulas in the following lemma. Their derivation is standard and is contained in Section 2 of \cite{Cmain}.
\begin{lemma} We have the commutation formulas:
    \begin{align}
        [\nabla,\nabla_4]\phi&=\nabla\chi\cdot\phi+\chi\cdot\nabla\phi,\label{commutation formula 1}\\
        [\Delta,\nabla_4]\phi&=\nabla(\chi\nabla\phi)+O\big(\|\chi\|_{W^{2,\infty}}(|\nabla\phi|+|\phi|)\big).\label{commutation formula 2}
    \end{align}
\end{lemma}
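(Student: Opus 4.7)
The plan is to work directly in local coordinates $\{\theta^A\}$ on $S^n$, treating $\slashed{g}_{AB}(\tau,\theta)$ as a one-parameter family of Riemannian metrics on $S^n$ with $\tau$ playing the role of the parameter. Since $e_4=\frac{1}{2\tau}\partial_\tau$ acts only in the $\tau$-direction, in these coordinates $\mathcal{L}_{e_4}$ commutes with $\partial_A$, so $\mathcal{L}_{e_4}\slashed{g}_{AB}=\chi_{AB}$ holds at the level of coordinate components. Accordingly, $\nabla_4$ applied to a horizontal tensor reduces to the Lie derivative $\mathcal{L}_{e_4}$ of its components (corrected by Christoffel terms when written covariantly), and the commutators $[\nabla,\nabla_4]$ and $[\Delta,\nabla_4]$ become explicit expressions in $\mathcal{L}_{e_4}\slashed{\Gamma}$ and $\mathcal{L}_{e_4}\slashed{g}^{AB}$.

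The first step is to derive the standard variation formula
\[\mathcal{L}_{e_4}\slashed{\Gamma}^A_{BC}=\tfrac{1}{2}\slashed{g}^{AD}\bigl(\nabla_B\chi_{CD}+\nabla_C\chi_{BD}-\nabla_D\chi_{BC}\bigr),\]
which is obtained by differentiating the Koszul formula $\slashed{\Gamma}^A_{BC}=\tfrac12\slashed{g}^{AD}(\partial_B\slashed{g}_{CD}+\partial_C\slashed{g}_{BD}-\partial_D\slashed{g}_{BC})$ in $\tau$, rewriting partial derivatives of $\chi$ as covariant derivatives (the Christoffel correction cancels in the symmetrized combination), and using $\mathcal{L}_{e_4}\slashed{g}^{AB}=-\chi^{AB}$. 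This identity is exactly the Lie derivative of a Christoffel symbol under a metric deformation. The appearance of $\nabla\chi$ (not just $\chi$) is what drives \eqref{commutation formula 1}: for any horizontal tensor $\phi$, writing $\nabla_B\phi$ in components as $\partial_B\phi$ minus contractions with $\slashed\Gamma$'s, and then applying $\mathcal{L}_{e_4}$, the schematic identity
\[[\nabla_B,\nabla_4]\phi=-\bigl(\mathcal{L}_{e_4}\slashed{\Gamma}\bigr)\cdot\phi\]
follows by direct Leibniz expansion, and substituting the variation formula gives the claimed schematic form $\nabla\chi\cdot\phi$. The term $\chi\cdot\nabla\phi$ in \eqref{commutation formula 1} absorbs the schematic dependence on whether one interprets $\nabla_4$ as Lie derivative along $e_4$ or as a covariant derivative with respect to the full spacetime connection; either convention produces $\chi\cdot\nabla\phi$ from lower-order terms, but since the formula is schematic (meaning up to tensorial contractions and numerical constants that are irrelevant for the paper's estimates) this is harmless.

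The second step is to deduce \eqref{commutation formula 2} from \eqref{commutation formula 1}. Writing $\Delta\phi=\slashed{g}^{AB}\nabla_A\nabla_B\phi$ and applying $\nabla_4$ yields
\[[\Delta,\nabla_4]\phi=\bigl(\mathcal{L}_{e_4}\slashed{g}^{AB}\bigr)\nabla_A\nabla_B\phi+\slashed{g}^{AB}[\nabla_A\nabla_B,\nabla_4]\phi,\]
and the second commutator expands as $\nabla_A[\nabla_B,\nabla_4]\phi+[\nabla_A,\nabla_4]\nabla_B\phi$. Using $\mathcal{L}_{e_4}\slashed{g}^{AB}=-\chi^{AB}$ and applying \eqref{commutation formula 1} to each commutator, all the top-order pieces either combine into a divergence form $\nabla(\chi\nabla\phi)$ (after commuting one $\nabla$ inside) or else involve at most one derivative of $\phi$ paired with up to two derivatives of $\chi$. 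The latter group is controlled pointwise by $\|\chi\|_{W^{2,\infty}}(|\nabla\phi|+|\phi|)$, which is precisely the error term in \eqref{commutation formula 2}.

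The main bookkeeping obstacle is keeping track of which pieces combine into the divergence structure $\nabla(\chi\nabla\phi)$ versus which pieces get absorbed into the $W^{2,\infty}$ error. This matters because the divergence structure is crucial for later integration by parts in the energy estimates of Sections~\ref{model system for direction section}--\ref{model system back direction section}. The resolution is that the term $\slashed{g}^{AB}\nabla_A(\chi_{B}{}^{C}\nabla_C\phi)$ (coming from $\slashed{g}^{AB}\nabla_A[\nabla_B,\nabla_4]\phi$ and from the $\chi\cdot\nabla\phi$ portion of $[\nabla_A,\nabla_4]\nabla_B\phi$, together with the $\chi^{AB}\nabla_A\nabla_B\phi$ trace term) is exactly the expression $\nabla(\chi\nabla\phi)$, while the remaining commutator pieces are all of the form $\nabla\chi\cdot\nabla\phi$ or $\nabla^2\chi\cdot\phi$ with no derivatives of $\phi$ hitting $\chi$ again, hence are bounded schematically by $\|\chi\|_{W^{2,\infty}}(|\nabla\phi|+|\phi|)$. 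Once this decomposition is carried out carefully in components (and one uses that all Leibniz-type terms for general horizontal tensor ranks follow the same schematic pattern), both formulas follow.
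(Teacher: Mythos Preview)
The paper does not actually prove this lemma here; it states that the derivation is standard and refers the reader to Section~2 of the companion paper \cite{Cmain}. Your proposal is a correct and complete outline of the standard derivation: differentiating the Koszul formula to get $\mathcal{L}_{e_4}\slashed{\Gamma}$ in terms of $\nabla\chi$, reading off \eqref{commutation formula 1} from the Leibniz expansion of $\nabla_4\nabla\phi$, and then expanding $[\Delta,\nabla_4]$ via $\mathcal{L}_{e_4}\slashed{g}^{AB}=-\chi^{AB}$ together with two applications of \eqref{commutation formula 1} to isolate the divergence-form top-order piece. This is exactly the approach one expects in \cite{Cmain}, and your identification of which terms combine into $\nabla(\chi\nabla\phi)$ versus which are absorbed into the $W^{2,\infty}$ error is the only nontrivial bookkeeping, which you have handled correctly.
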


\subsection{Bounds for the LP projections}\label{lp bounds section}

In this section, we follow \cite{geometricLP} to define the LP projections using the heat flow. We then prove a series of additional bounds for the LP projections that are needed in our analysis.

The class $\mathcal{M}$ of smooth symbols defined in \cite{geometricLP} consists of smooth functions $m:[0,\infty)\rightarrow\mathbb{R},$ which decay at infinity and satisfy the vanishing moments property:
\[\int_0^{\infty}z^{k_1}\partial_z^{k_2}m(z)dz=0,\ |k_1|+|k_2|\leq K,\]
for large enough order $K>0.$ For any $m\in\mathcal{M}$, we set $m_k(z)=2^{2k}m(2^{2k}z).$ For any tensor field $F$ on $S_{\tau}$, we define the LP projections for $k\in\mathbb{Z}$:
\[P_kF=\int_0^{\infty}m_k(z)U(z)Fdz.\]
We refer the reader to Theorem 5.5 in \cite{geometricLP} for the fundamental properties of these operators, similar to the standard LP projections. We use the following estimates for the  LP projections of \cite{geometricLP}:
\begin{proposition*}[{\cite[Theorem~5.5, Remark~5.6]{geometricLP}}]
    For an arbitrary LP projection, and any smooth tensor $F$ we have:
    \begin{enumerate}
        \item Bessel inequality.
        \[\sum_{k\in\mathbb{Z}}\big\|P_kF\big\|_{L^2}^2\lesssim\big\|F\big\|_{L^2}^2\]
        \item Finite band property.
        \[\big\|\nabla P_kF\big\|_{L^2}\lesssim2^k\big\|F\big\|_{L^2},\ \big\|P_kF\big\|_{L^2}\lesssim2^{-k}\big\|\nabla F\big\|_{L^2}\]
        \[\big\|\Delta P_kF\big\|_{L^2}^2\lesssim2^{2k}\big\|F\big\|_{L^2},\ \big\|P_kF\big\|_{L^2}\lesssim2^{-2k}\big\|\Delta F\big\|_{L^2}\]
        \item $L^2$-almost orthogonality. For any two families of LP projections $P_k, \widetilde{P}_k$ we have:
        \[\big\| P_k\widetilde{P}_{k'}F\big\|_{L^2}\lesssim2^{-4|k-k'|}\cdot\big\|F\big\|_{L^2}\]
    \end{enumerate}
\end{proposition*}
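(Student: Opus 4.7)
The plan is to pass to the spectral calculus of the self-adjoint operator $-\Delta$ on $(S_\tau,\slashed{g})$, under which $U(z) = e^{z\Delta}$ becomes multiplication by $e^{-z\lambda}$ on the $\lambda$-eigenspace. Then $P_k$ is the spectral multiplier associated to
\[
\phi_k(\lambda) = \int_0^\infty m_k(z)\, e^{-z\lambda}\, dz = \phi(2^{-2k}\lambda), \qquad \phi(\mu) := \int_0^\infty m(z)\, e^{-z\mu}\, dz.
\]
Smoothness and fast decay of $m$ make $\phi$ rapidly decreasing as $\mu \to \infty$, while the vanishing moments $\int_0^\infty z^j m(z)\, dz = 0$ for $j \leq K$ force $\phi$ to vanish to order $K+1$ at $\mu = 0$; together these give $|\phi(\mu)| \lesssim \min(\mu^{K+1},\mu^{-K})$.

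For the Bessel inequality I would verify that $\sum_{k \in \mathbb{Z}} |\phi(2^{-2k}\lambda)|^2 \lesssim 1$ uniformly in $\lambda \geq 0$ by splitting the geometric sum at $2^{2k} \sim \lambda$ and using the two-sided decay above; the spectral theorem then yields $\sum_k \|P_k F\|_{L^2}^2 \lesssim \|F\|_{L^2}^2$. For the finite band property, writing $\nabla P_k F = \int_0^\infty m_k(z)\, \nabla U(z)F\, dz$ and applying the heat-kernel bound $\|\nabla U(z) F\|_{L^2} \leq z^{-1/2}\|F\|_{L^2}$ from the previous proposition gives $\|\nabla P_k F\|_{L^2} \lesssim 2^k \|F\|_{L^2}$ after the change of variable $z = 2^{-2k} w$. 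The reverse estimate $\|P_k F\|_{L^2} \lesssim 2^{-k}\|\nabla F\|_{L^2}$ exploits the vanishing moment $\int_0^\infty m\, dz = 0$: write $m = \partial_z \widetilde{m}$ with decaying antiderivative $\widetilde{m}$, integrate by parts in $z$, and use $\partial_z U(z) = \Delta U(z)$ together with the self-adjointness of $\nabla$ to transfer one derivative onto $F$. The $\Delta$-versions follow by iterating.

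For the $L^2$-almost orthogonality, let $P_k$ and $\widetilde{P}_{k'}$ be associated to symbols $m$ and $\widetilde{m}$ with corresponding Laplace transforms $\phi,\widetilde{\phi}$. The composition $P_k \widetilde{P}_{k'}$ is the spectral multiplier $\phi(2^{-2k}\lambda)\,\widetilde{\phi}(2^{-2k'}\lambda)$. Assuming without loss of generality $k \geq k'$, combining the decay $|\phi(2^{-2k}\lambda)| \lesssim (2^{-2k}\lambda)^{-K}$ at the transition scale $\lambda \sim 2^{2k'}$ with the uniform bound $|\widetilde{\phi}(2^{-2k'}\lambda)| \lesssim 1$ yields
\[
\sup_{\lambda \geq 0}\big|\phi(2^{-2k}\lambda)\widetilde{\phi}(2^{-2k'}\lambda)\big| \lesssim 2^{-2K(k-k')},
\]
and choosing $K \geq 2$ gives the claimed $2^{-4|k-k'|}$ decay via the spectral theorem.

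The main obstacle is that this scalar spectral-multiplier picture works cleanly only for functions on a fixed Riemannian manifold, whereas the LP projections here act on tensors over the time-dependent spheres $S_\tau$. The commutator errors between $\nabla$ and $U(z)$ on the curved background — precisely those quantified in Lemma~\ref{heat lemma} — must be tracked through each integration by parts, and one must verify that they contribute only lower-order terms that do not spoil the clean powers $2^{\pm k}$ or $2^{-4|k-k'|}$ appearing above. Once these commutator corrections are absorbed, the rest of the argument reduces to the scalar symbol analysis just described.
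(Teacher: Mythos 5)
The paper offers no proof of this proposition at all: it is imported verbatim from \cite{geometricLP}, where Klainerman--Rodnianski prove it by direct manipulation of the heat flow (Duhamel plus the moment conditions), deliberately avoiding any spectral decomposition because they also want $L^p$ bounds under weak regularity of the metric. Your spectral-calculus route is therefore a genuinely different, and for these purely $L^2$ statements perfectly legitimate, shortcut: on each fixed slice $(S_\tau,\slashed{g})$ the rough Laplacian acting on a tensor bundle is self-adjoint, non-positive, with discrete spectrum, $U(z)=e^{z\Delta}$, and every projection with symbol in $\mathcal{M}$ is the multiplier $\phi(2^{-2k}\lambda)$ with $\phi$ the Laplace transform of $m$. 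Since $\langle-\Delta F,F\rangle_{L^2}=\|\nabla F\|_{L^2}^2$ holds exactly for the rough Laplacian, the Bessel inequality, both finite-band inequalities and the almost orthogonality all reduce to elementary sup bounds on $\phi$, with constants depending only on $m$ and hence uniform in $\tau$. In particular your closing worry about tensors and $[\nabla,U(z)]$ commutators is unnecessary for this proposition: the spectral argument carries over to tensor fields verbatim, the commutators only enter your write-up because you chose to prove the reverse finite-band bound by pushing a derivative through $U(z)$ instead of staying inside the functional calculus, and they genuinely matter only for the later commutation estimates (Lemmas~\ref{Litt Paley lemma} and \ref{Litt Paley lemma refined}), not here. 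What the spectral route gives up is exactly what the heat-flow proofs of \cite{geometricLP} were built to retain, namely $L^p$ theory and robustness in rough geometry, neither of which is needed for the statement as used in this paper.

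Two details in your sketch need repair. First, the rapid decay of $\phi(\mu)$ as $\mu\to\infty$ does not follow from smoothness and decay of $m$ alone: integrating by parts produces boundary terms $m^{(j)}(0)\mu^{-j-1}$, so one needs the moment conditions with $k_2\geq1$, which give $m^{(j)}(0)=0$ for $j\leq K-1$ and hence $|\phi(\mu)|\lesssim\mu^{-K}$. Second, in the almost-orthogonality step your branches are swapped: for $k\geq k'$ and $\lambda\sim2^{2k'}$ the argument $2^{-2k}\lambda$ is small, so the relevant bound there is the vanishing of $\phi$ at $0$, not its decay at infinity. The clean argument splits the spectrum at $\lambda\sim2^{k+k'}$, using $|\phi(2^{-2k}\lambda)|\lesssim(2^{-2k}\lambda)^{K+1}$ below the split and $|\widetilde{\phi}(2^{-2k'}\lambda)|\lesssim(2^{-2k'}\lambda)^{-K}$ above it, yielding $\sup_\lambda|\phi(2^{-2k}\lambda)\widetilde{\phi}(2^{-2k'}\lambda)|\lesssim2^{-K|k-k'|}$; this requires $K\geq4$ rather than $K\geq2$, which is harmless since the order $K$ in the class $\mathcal{M}$ is taken large.
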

Using the geometric LP projections, we can define fractional Sobolev spaces. We first state the following result of \cite{geometricLP}:
\begin{proposition*}[{\cite[Corollary~7.12]{geometricLP}}]
    For an arbitrary LP projection, $a\geq0$ and any smooth tensor $F,$ we have:
   \[\sum_{k\geq0}2^{2ak}\big\|P_kF\big\|^2_{L^2}\lesssim\big\|F\big\|_{H^{a}}^2.\]
   Moreover, if $\sum_kP_k^2=I$ and $a<4,$ then:
   \[\big\|F\big\|_{H^{a}}^2\lesssim\sum_{k\geq0}2^{2ak}\big\|P_kF\big\|^2_{L^2}+\big\|F\big\|_{L^2}^2.\]
\end{proposition*}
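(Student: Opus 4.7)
The plan is to prove both inequalities by comparing the weighted Littlewood--Paley square function $\sum_{k\geq 0} 2^{2ak}\|P_kF\|_{L^2}^2$ with a heat-kernel characterization of $\|F\|_{H^a}^2$, leveraging the three main tools assembled earlier in this section: the finite band property, the $L^2$-almost orthogonality, and, for the second inequality, the reproducing formula $\sum_k P_k^2=I$. The exponent $4$ appearing in the almost orthogonality bound $\|P_k\widetilde P_l\|_{L^2\to L^2}\lesssim 2^{-4|k-l|}$ is precisely what constrains the admissible range $a<4$ in the reverse direction.

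For the upper bound, I would fix an auxiliary family $\{\widetilde P_l\}$ of LP projections which admits a reproducing decomposition $\sum_l \widetilde P_l^2=I$ modulo a smoothing (low-frequency) remainder, and write $P_kF=\sum_l P_k\widetilde P_l\cdot\widetilde P_l F$. Applying the $L^2$-almost orthogonality gives
\[\|P_kF\|_{L^2}\lesssim \sum_l 2^{-4|k-l|}\|\widetilde P_l F\|_{L^2}+(\text{smoothing error}).\]
Multiplying by $2^{ak}$, squaring in $k$, and applying Schur's test to the kernel $K(k,l)=2^{a(k-l)-4|k-l|}$, which is summable in each variable as soon as $a<4$, yields
\[\sum_{k\geq 0} 2^{2ak}\|P_kF\|_{L^2}^2\lesssim \sum_{l\geq 0} 2^{2al}\|\widetilde P_l F\|_{L^2}^2+\|F\|_{L^2}^2.\]
For integer $a$ the right hand side is bounded by $\|F\|_{H^a}^2$ via iterated finite band; for general $a$ one represents $(-\Delta)^{a/2}F$ through the heat semigroup using the subordination formula $(-\Delta)^{a/2}=c_a\int_0^\infty z^{-a/2-1}\bigl(U(z)-I\bigr)\,dz$ and compares scales against the defining integral $\widetilde P_lF=\int m_l(z)U(z)F\,dz$. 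For $a\geq 4$ I would simply replace $\widetilde P_l$ by a family built from a symbol with sufficiently higher vanishing-moment order, recovering the estimate for all $a\geq 0$.

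For the lower bound I would instead use $\sum_k P_k^2=I$ to decompose
\[\|F\|_{H^a}^2\sim\|F\|_{L^2}^2+\|(-\Delta)^{a/2}F\|_{L^2}^2 =\|F\|_{L^2}^2+\sum_{k,l}\bigl\langle (-\Delta)^{a/2}P_k^2F,\,(-\Delta)^{a/2}P_l^2F\bigr\rangle.\]
The frequency localization estimate $\|(-\Delta)^{a/2}P_kG\|_{L^2}\lesssim 2^{ak}\|G\|_{L^2}$, proved for integer $a$ by iterated finite band with commutator corrections controlled via Lemma~\ref{heat lemma}, combined with the $L^2$-almost orthogonality, dominates each cross term by $2^{a(k+l)}2^{-4|k-l|}\|P_kF\|_{L^2}\|P_lF\|_{L^2}$. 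Cauchy--Schwarz in $(k,l)$ together with Schur's test then converts this into $\sum_k 2^{2ak}\|P_kF\|_{L^2}^2$, and the summability condition on the Schur kernel imposes exactly $a<4$.

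The main obstacle will be the treatment of $(-\Delta)^{a/2}P_kF$ for noninteger $a$, since in this geometric setting $-\Delta$ does not admit exact spectral projections. My approach would be to define fractional powers through the subordination formula above and to use the vanishing moments of the symbol $m$ together with the smoothing effect of $U(z)$ to reduce the analysis to integer powers of the Laplacian applied to $P_kF$; the commutator bounds in Lemma~\ref{heat lemma} then absorb the time dependence of $\slashed g$, and all implicit constants remain controlled by $C_0$ and the curvature bounds assumed on the background.
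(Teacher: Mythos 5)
You should first note that the paper does not prove this statement at all: it is imported as a black box from \cite[Corollary~7.12]{geometricLP}, and its role here is precisely to legitimize the LP-based definition of the fractional norms $H^a$ introduced immediately afterwards. So your sketch has to be judged as a reconstruction of the Klainerman--Rodnianski argument, and while it assembles the right ingredients (finite band, almost orthogonality, Bessel/Schur summation, a heat-flow representation of fractional powers), the decisive step is asserted rather than proved, and as written it is internally inconsistent. In the reverse inequality you claim each cross term is bounded by $2^{a(k+l)}2^{-4|k-l|}\|P_kF\|_{L^2}\|P_lF\|_{L^2}$ ``by combining'' the localization bound $\|(-\Delta)^{a/2}P_kG\|_{L^2}\lesssim 2^{ak}\|G\|_{L^2}$ with the quoted almost orthogonality. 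Neither quoted tool gives this: Cauchy--Schwarz plus the localization bound yields $2^{ak}2^{al}\|P_kF\|_{L^2}\|P_lF\|_{L^2}$ with no off-diagonal gain (not summable), while the almost orthogonality stated in Section 2 applies to two projections with symbols in $\mathcal{M}$, not to $(-\Delta)^{a/2}P_k^2$. To gain decay you must show that $2^{-ak}(-\Delta)^{a/2}P_k$ is itself an LP-type operator with quantified vanishing moments and decay, and when one does this the fractional power degrades the off-diagonal decay by an amount proportional to $a$; the correct cross-term bound is of the form $2^{2a\max(k,l)}2^{-4|k-l|}$, whose Schur kernel $2^{(a-4)|k-l|}$ is what forces $a<4$. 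Note that your stated bound would give a Schur kernel $2^{-4|k-l|}$ and hence no restriction on $a$ at all, contradicting your own conclusion that Schur ``imposes exactly $a<4$''; the restriction is the signature of the degradation, and establishing that composition estimate is exactly the content of the $\Lambda^a$-calculus of \cite{geometricLP}, not a corollary of the two properties quoted in this paper.

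Two further points need repair even granting that. The subordination formula $(-\Delta)^{a/2}=c_a\int_0^\infty z^{-a/2-1}(U(z)-I)\,dz$ converges only for $0<a<2$ (near $z=0$ one has $U(z)-I\sim z\Delta$), so for $a\in[2,4)$ you must use higher-order differences of $U(z)$ or split off integer powers; and for the first inequality over the full range $a\geq0$ the quoted decay $2^{-4|k-l|}$ is insufficient both in the Schur kernel and in the low-frequency remainder $\sum_{l<0}$ (which contributes $2^{-4k}\|F\|_{L^2}$ and is summable against $2^{2ak}$ only for $a<4$), so you must invoke the improved almost-orthogonality rate for symbols with more vanishing moments from \cite{geometricLP} — legitimate, but it is not among the properties available in this paper and should be stated as an input. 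Finally, be explicit about which characterization of $H^a$ you take as the definition for noninteger $a$: in this paper the fractional norm is defined through the $P_k$ only after (and because of) this proposition, so running your argument against the LP-based definition would be circular; you need the independent definition used in \cite{geometricLP} (equivalently your $(-\Delta)^{a/2}$/interpolation characterization), and then the equivalence you are proving is exactly the nontrivial content.
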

We use this result to give an equivalent definition of fractional Sobolev spaces:
\begin{definition}
    Let $P_k$ be a family of projections with $\sum_kP_k^2=I$. We write any $a\geq0$ as $a=[a]+\{a\},$ with $\{a\}\in[0,1).$ For any smooth tensor $F$ we define its Sobolev norm of order $a$ as:
    \[\big\|F\big\|_{H^{a}}^2:=\big\|F\big\|_{H^{[a]}}^2+\big\|\nabla^{[a]} F\big\|_{H^{\{a\}}}^2,\]
    where we define the $H^{\{a\}}$ Sobolev norm by:
    \[\big\|F\big\|_{H^{\{a\}}}^2:=\sum_{k\geq0}2^{2\{a\}k}\big\|P_kF\big\|^2_{L^2}+\big\|F\big\|_{L^2}^2.\]
\end{definition}

Following the ideas of \cite{geometricLP}, we prove additional commutation bounds. As before, we highlight the importance of proving estimates for the commutation with $e_4,$ which allow us to control the change of the projection operators in terms of the time $\tau.$ Moreover, we notice that the following bounds rely on our previous estimates for the heat flow in Lemma \ref{heat lemma}.
\begin{lemma}\label{Litt Paley lemma}
The LP projection operators satisfy the following bounds for $k\geq 0$:
\begin{align}
    [\nabla_t,P_k]&=\frac{t}{2^{2k-1}}[\nabla_4,P_k],\text{ where } t=2^{k}\tau\label{LP est 1}\\
    \big\|[\nabla^m,P_k]F\big\|_{L^2}&\lesssim2^{-k} C\big(\|\slashed{Riem}\|_{H^{m-1}}\big)\cdot\big\|F\big\|_{H^{m-1}}\label{LP est 2}\\
    \big\|\nabla[\nabla^m,P_k]F\big\|_{L^2}&\lesssim2^{-k} C\big(\|\slashed{Riem}\|_{H^{m}}\big)\cdot\big\|F\big\|_{H^{m}}\label{LP est 3}\\
    \big\|[\nabla_4,P_k]F\big\|_{L^2}&\lesssim\big\|F\big\|_{L^2}\label{LP est 4}\\
    \big\|\nabla[\nabla_4,P_k]F\big\|_{L^2}&\lesssim\big\|F\big\|_{H^1}\label{LP est 5}\\
    \big\|[P_k,G]F\big\|_{L^2}&\lesssim2^{-k}\big\|G\big\|_{W^{2,\infty}}\big\|F\big\|_{L^2}\label{LP est 6}\\
    \big\|\nabla[P_k,G]F\big\|_{L^2}&\lesssim\big\|G\big\|_{W^{2,\infty}}\big\|F\big\|_{L^2}.\label{LP est 7}
\end{align}
\end{lemma}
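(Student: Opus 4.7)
The plan is to prove all seven estimates in Lemma~\ref{Litt Paley lemma} by reducing each commutator with $P_k$ to an integral of a commutator with the heat semigroup $U(z)$, and then applying the corresponding bound from Lemma~\ref{heat lemma} together with the scaling $m_k(z) = 2^{2k} m(2^{2k} z)$. The key observation is that for any linear operator $A$ which does not commute with $U(z)$,
\[
[A, P_k]F = \int_0^\infty m_k(z)\, [A, U(z)] F \, dz,
\]
and a change of variables $z' = 2^{2k} z$ converts any factor $z^{\alpha}$ in the heat kernel estimate into a factor $2^{-2\alpha k}$, since $\int_0^\infty 2^{2k}\lvert m(2^{2k}z)\rvert z^{\alpha}\,dz = 2^{-2\alpha k}\int_0^\infty \lvert m(z')\rvert (z')^{\alpha}\,dz'$, where the last integral is finite by the rapid decay of $m\in\mathcal{M}$.

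First I would dispense with \eqref{LP est 1}: since $e_4 = \frac{1}{2\tau}\partial_{\tau}$ and $t = 2^k\tau$, we have $\nabla_t = 2^{-k}\nabla_\tau = 2^{-k+1}\tau\nabla_4 = \frac{t}{2^{2k-1}}\nabla_4$, and because $t/2^{2k-1}$ is a scalar the commutator with $P_k$ simply pulls it out. Next I would prove \eqref{LP est 4} and \eqref{LP est 5} using \eqref{heat equation 7} and \eqref{heat equation 8}: plugging $\|[\nabla_4, U(z)]F\|_{L^2}\lesssim (1+z)\|F\|_{L^2}$ into the integral representation and rescaling gives $\lesssim (1 + 2^{-2k})\|F\|_{L^2}$, which is controlled uniformly in $k\geq 0$. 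Similarly \eqref{LP est 6} and \eqref{LP est 7} follow from \eqref{heat equation 5} and \eqref{heat equation 6}: the factor $(z+\sqrt{z})$ yields $(2^{-2k} + 2^{-k})\|G\|_{W^{2,\infty}}\|F\|_{L^2}\lesssim 2^{-k}\|G\|_{W^{2,\infty}}\|F\|_{L^2}$ for \eqref{LP est 6}, while for \eqref{LP est 7} the factor $(1+\sqrt{z})$ from \eqref{heat equation 6} rescales to something bounded uniformly in $k\geq 0$.

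Then I would turn to \eqref{LP est 2} and \eqref{LP est 3}. Writing
\[
[\nabla^m, P_k]F = -\int_0^\infty m_k(z)\, [U(z),\nabla^m]F \, dz,
\]
and invoking \eqref{heat equation 3} with its factor $(\sqrt{z}+z) C(\|\slashed{Riem}\|_{H^{m-1}})$, the rescaling produces $(2^{-k}+2^{-2k}) C(\|\slashed{Riem}\|_{H^{m-1}})\|F\|_{H^{m-1}}\lesssim 2^{-k}C(\|\slashed{Riem}\|_{H^{m-1}})\|F\|_{H^{m-1}}$, which is exactly \eqref{LP est 2}. The proof of \eqref{LP est 3} is identical but starts from \eqref{heat equation 3.5}.

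The only mildly delicate point will be confirming that the integrals $\int_0^\infty \lvert m(z')\rvert (z')^\alpha\,dz'$ actually converge for the relevant $\alpha\in\{\tfrac{1}{2},1\}$ — this uses the assumption that $m$ decays at infinity and has the vanishing moments property built into the class $\mathcal{M}$. Beyond this bookkeeping there is no real obstacle: the entire lemma is a mechanical consequence of Lemma~\ref{heat lemma} together with the scaling identity for $m_k$, which is exactly the structure already exploited in \cite{geometricLP}.
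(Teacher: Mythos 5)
Your proposal is correct and follows essentially the same route as the paper: each commutator with $P_k$ is written as $\int_0^\infty m_k(z)\,[\,\cdot\,,U(z)]F\,dz$, the corresponding bound from Lemma~\ref{heat lemma} is inserted, and the scaling of $m_k$ converts the factors $\sqrt{z}$, $z$ into $2^{-k}$, $2^{-2k}$, exactly as in the paper's proof (with \eqref{LP est 1} being the same direct computation $\nabla_t=2^{-2k+1}t\nabla_4$).
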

\begin{proof}
    The first formula follows since $\nabla_t=2^{-k}\nabla_{\tau}=2^{-k+1}\tau\nabla_4=2^{-2k+1}t\nabla_4.$ In order to prove (\ref{LP est 2}), we notice that using the bound in (\ref{heat equation 3}) we have that:
    \[\big\|[\nabla^m,P_k]F\big\|_{L^2}\leq\int_0^{\infty}|m_k|\cdot\big\|[\nabla^m,U(z)]F\big\|_{L^2}dz\lesssim 2^{-k} C\big(\|\slashed{Riem}\|_{H^{m-1}}\big)\cdot\big\|F\big\|_{H^{m-1}}.\]
    Similarly, we also have that:
    \[\big\|\nabla[\nabla^m,P_k]F\big\|_{L^2}\leq\int_0^{\infty}|m_k|\cdot\big\|\nabla[\nabla^m,U(z)]F\big\|_{L^2}dz\lesssim 2^{-k} C\big(\|\slashed{Riem}\|_{H^{m}}\big)\cdot\big\|F\big\|_{H^{m}}.\]
    Next, we have that:
    \[\big\|[\nabla_4,P_k]F\big\|_{L^2}\leq\int_0^{\infty}|m_k|\cdot\big\|[\nabla_4,U(z)]F\big\|_{L^2}dz\leq\big\|F\big\|_{L^2}.\]
    Similarly, we also have that:
    \[\big\|\nabla[\nabla_4,P_k]F\big\|_{L^2}\leq\int_0^{\infty}|m_k|\cdot\big\|\nabla[\nabla_4,U(z)]F\big\|_{L^2}dz\leq\big\|F\big\|_{H^1}.\]
    The proofs of (\ref{LP est 6}) and (\ref{LP est 7}) follow from (\ref{heat equation 5}) and (\ref{heat equation 6}).
\end{proof}

One disadvantage of the estimates (\ref{LP est 4}) and (\ref{LP est 5}) is that the right hand side cannot be summed in $k,$ whereas the left hand side can be summed because of the presence of the $P_k$ operator. We address this by proving a refined version of these estimates, which contains a different projection operator on the right hand side. The presence of different projection operators will pose additional difficulties in the analysis of the model systems. We notice that this issue is a consequence of the nontrivial time dependence of the metrics $\slashed{g}.$

We define the symbol $\widetilde{m}\in\mathcal{M}$ given by $\widetilde{m}(z)=zm(z),$ and we denote by $\widetilde{P}_k$ the associated projection operator. Moreover, we also introduce the projection operator $\underline{\widetilde{P}}_k$ which satisfies $\underline{\widetilde{P}}_k^2=\widetilde{P}_k.$

\begin{lemma}\label{Litt Paley lemma refined}
    We have the following estimates for $k\geq 0$:
    \begin{align}
        [\nabla_4,P_k]F&=2^{-2k}\chi\nabla^2\widetilde{P}_kF+O\Big(2^{-k}\big\|F\big\|_{L^2}\Big)\label{LP refined est 1}\\
        \big\|[\nabla_4,P_k]F\big\|_{L^2}&\lesssim\big\|\underline{\widetilde{P}}_kF\big\|_{L^2}+2^{-k}\big\|F\big\|_{L^2}\label{LP refined est 2}\\
        \nabla[\nabla_4,P_k]F&=2^{-2k}\chi\nabla^2\widetilde{P}_k\nabla F+O\Big(2^{-k}\big\|F\big\|_{H^1}\Big)\label{LP refined est 3}\\
        \big\|\nabla[\nabla_4,P_k]F\big\|_{L^2}&\lesssim \big\|\underline{\widetilde{P}}_k\nabla F\big\|_{L^2}+2^{-k}\big\|F\big\|_{H^1}\label{LP refined est 4}.
    \end{align}
\end{lemma}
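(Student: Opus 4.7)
\emph{Plan.} The approach is to combine Duhamel's principle with the semigroup identity $U(z-z')U(z')=U(z)$. Starting from $[\nabla_4,P_k]F=\int_0^\infty m_k(z)[\nabla_4,U(z)]F\,dz$, I apply Duhamel exactly as in the proof of Lemma~\ref{heat lemma} to obtain
\[
[\nabla_4,P_k]F=-\int_0^\infty m_k(z)\int_0^z U(z-z')[\Delta,\nabla_4]U(z')F\,dz'\,dz,
\]
and then substitute the commutation formula \eqref{commutation formula 2} to split $[\Delta,\nabla_4]U(z')F$ into the principal part $\nabla(\chi\nabla U(z')F)$ and a sub-principal remainder $O\big(\|\chi\|_{W^{2,\infty}}(|\nabla U(z')F|+|U(z')F|)\big)$. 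For the sub-principal remainder I use the standard heat-kernel bounds $\|\nabla U(z')F\|_{L^2}\lesssim (z')^{-1/2}\|F\|_{L^2}$ and $\|U(z')F\|_{L^2}\leq\|F\|_{L^2}$ together with $\|U(z-z')\|_{L^2\to L^2}\leq 1$. Integrating in $z'\in[0,z]$ and then against $|m_k(z)|\,dz$, after the substitution $u=2^{2k}z$, produces the factors $2^{-k}\int_0^\infty|m(u)|u^{1/2}du$ and $2^{-2k}\int_0^\infty|m(u)|u\,du$, which are both $\lesssim 2^{-k}\|F\|_{L^2}$.

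The heart of the argument is extracting the principal term from $-\int m_k(z)\int_0^z U(z-z')\nabla(\chi\nabla U(z')F)\,dz'\,dz$. I commute $U(z-z')$ past $\chi$ and past each $\nabla$, using the heat-kernel commutator estimates \eqref{heat equation 2}, \eqref{heat equation 4}, \eqref{heat equation 5}, \eqref{heat equation 6} from Lemma~\ref{heat lemma}; these commutators are controlled by $(z-z')+\sqrt{z-z'}$, while the residual operators collapse via $U(z-z')U(z')=U(z)$ to leave a leading piece $\nabla(\chi\nabla U(z)F)=\chi\nabla^2U(z)F+\nabla\chi\cdot\nabla U(z)F$ independent of $z'$. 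The $z'$ integration then produces a factor of $z$, and using $z\,m_k(z)=2^{-2k}\widetilde m_k(z)$ the leading piece becomes $\pm 2^{-2k}\chi\nabla^2\widetilde P_kF$ (the sign is absorbed in the schematic $\chi\nabla^2$). The lower-order $\nabla\chi\cdot\nabla U(z)F$ piece and all commutator residues give $O(2^{-k}\|F\|_{L^2})$ after the same $u=2^{2k}z$ substitution, establishing \eqref{LP refined est 1}.

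The $L^2$ bound \eqref{LP refined est 2} then follows by applying the finite band property twice together with $\widetilde P_k=\underline{\widetilde P}_k^2$:
\[
\|2^{-2k}\chi\nabla^2\widetilde P_kF\|_{L^2}\lesssim 2^{-2k}\|\chi\|_{L^\infty}\cdot 2^k\|\nabla\underline{\widetilde P}_k(\underline{\widetilde P}_kF)\|_{L^2}\lesssim\|\underline{\widetilde P}_kF\|_{L^2}.
\]
The estimates \eqref{LP refined est 3} and \eqref{LP refined est 4} follow the same two-step scheme with one extra outer $\nabla$: the error contributions now use \eqref{heat equation 4}, \eqref{heat equation 6}, \eqref{heat equation 8} and produce $O(2^{-k}\|F\|_{H^1})$. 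The main obstacle is ensuring the error integrals remain integrable in $z'$ near $z'=0$; a naive expansion would place $\nabla^2 U(z')F$ in the integrand, whose $L^2$ norm blows up like $(z')^{-1}$ and is not integrable. The semigroup collapse $U(z-z')U(z')=U(z)$ is precisely what removes this singular $z'$-dependence from the principal term, and the remaining heat-kernel commutators contribute only factors like $\sqrt{z-z'}$, which integrate against $m_k$ to yield the required $2^{-k}$ gain.
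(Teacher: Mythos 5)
Your proposal is correct and follows essentially the same route as the paper's proof: Duhamel's formula combined with the commutation formula \eqref{commutation formula 2}, the heat-kernel commutator bounds of Lemma~\ref{heat lemma} for the error terms, the semigroup collapse $U(z-z')U(z')=U(z)$ together with $z\,m_k(z)=2^{-2k}\widetilde{m}_k(z)$ to extract the principal term, and the finite band property with $\widetilde{P}_k=\underline{\widetilde{P}}_k^2$ to deduce \eqref{LP refined est 2} and \eqref{LP refined est 4}. The only difference is organizational: you collapse the semigroup before pulling $\chi$ fully outside, so your principal piece is $2^{-2k}\nabla\big(\chi\nabla\widetilde{P}_kF\big)$, whose extra term $2^{-2k}\nabla\chi\cdot\nabla\widetilde{P}_kF$ is harmlessly absorbed into the $O\big(2^{-k}\|F\|_{L^2}\big)$ error by the finite band property.
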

\begin{proof}
    In order to prove (\ref{LP refined est 1}), we compute the following:
    \[[\nabla_4,P_k]F=\int_0^{\infty}m_k(z)[\nabla_4,U(z)]Fdz=\int_0^{\infty}m_k(z)\int_0^zU(z-z')[\nabla_4,\Delta]U(z')Fdz'dz\]
    \[=\int_0^{\infty}m_k(z)\int_0^zU(z-z')\nabla\big(\chi\nabla U(z')F\big)+\int_0^{\infty}m_k(z)\int_0^zU(z-z')\Big([\nabla_4,\Delta]U(z')F-\nabla\big(\chi\nabla U(z')F\big)\Big).\]
    We can bound the second term in $L^2$ by:
    \[\int_0^{\infty}|m_k(z)|\int_0^z\big\|U(z')F\big\|_{H^1}dz'dz\lesssim\big\|F\big\|_{L^2}\int_0^{\infty}|m_k(z)|\int_0^z(z')^{-1/2}dz'dz\lesssim2^{-k}\big\|F\big\|_{L^2}.\]
    Next, we write the first term as:
    \begin{equation}\label{some random equation two terms}
        \int_0^{\infty}m_k(z)\int_0^z\nabla U(z-z')\big(\chi\nabla U(z')F\big)+\int_0^{\infty}m_k(z)\int_0^z[U(z-z'),\nabla]\big(\chi\nabla U(z')F\big).
    \end{equation}
    The second term in (\ref{some random equation two terms}) can be bounded in $L^2$ by:
    \[\int_0^{\infty}|m_k(z)|\int_0^z(z-z')^{1/2}\big\|\chi\nabla U(z')F\big\|_{L^2}dz'dz\lesssim\big\|F\big\|_{L^2}\int_0^{\infty}|m_k(z)|\int_0^z(z-z')^{1/2}(z')^{-1/2}dz'dz\lesssim2^{-k}\big\|F\big\|_{L^2}.\]
    We write the first term in (\ref{some random equation two terms}) as:
    \[\int_0^{\infty}m_k(z)\int_0^z\nabla\big(\chi\cdot U(z-z')\nabla U(z')F\big)+\int_0^{\infty}m_k(z)\int_0^z\nabla[U(z-z'),\chi]\big(\nabla U(z')F\big).\]
    The latter term in the above is bounded in $L^2$ by:
    \[\int_0^{\infty}|m_k(z)|\int_0^z\big\|\nabla U(z')F\big\|_{L^2}dz'dz\lesssim\big\|F\big\|_{L^2}\int_0^{\infty}|m_k(z)|\int_0^z(z')^{-1/2}dz'dz\lesssim2^{-k}\big\|F\big\|_{L^2}.\]
    As a result, we proved that:
    \[[\nabla_4,P_k]F=\chi\cdot\int_0^{\infty}m_k(z)\int_0^z\nabla U(z-z')\nabla U(z')Fdz'dz+O\Big(2^{-k}\big\|F\big\|_{L^2}\Big)\]
    \[=\chi\cdot\nabla^2\int_0^{\infty}zm_k(z)U(z)Fdz+\chi\cdot\int_0^{\infty}m_k(z)\int_0^z\nabla[U(z-z'),\nabla]U(z')Fdz'dz+O\Big(2^{-k}\big\|F\big\|_{L^2}\Big)\]
    \[=2^{-2k}\chi\cdot\nabla^2\int_0^{\infty}\widetilde{m}_k(z)U(z)Fdz+O\Big(2^{-k}\big\|F\big\|_{L^2}\Big)=2^{-2k}\chi\nabla^2\widetilde{P}_kF+O\Big(2^{-k}\big\|F\big\|_{L^2}\Big).\]
    The proof of (\ref{LP refined est 2}) follows from (\ref{LP refined est 1}), since $\underline{\widetilde{P}}_k^2=\widetilde{P}_k.$

    We now prove (\ref{LP refined est 3}). We notice that using \eqref{commutation formula 2} we can write:
    \[\nabla[\nabla_4,P_k]F=\int_0^{\infty}m_k(z)\int_0^z\nabla U(z-z')[\nabla_4,\Delta]U(z')Fdz'dz\]
    \[=\int_0^{\infty}m_k(z)\int_0^z\nabla U(z-z')\big(\chi\nabla^2 U(z')F\big)+\int_0^{\infty}m_k(z)\int_0^z\nabla U(z-z')\Big([\nabla_4,\Delta]U(z')F-\chi\nabla^2 U(z')F\Big).\]
    The second term can be bounded in $L^2$ by:
    \[\int_0^{\infty}|m_k(z)|\int_0^z(z-z')^{-1/2}\big\|U(z')F\big\|_{H^1}dz'dz\lesssim2^{-k}\big\|F\big\|_{H^1}.\]
    The first term in the above expression can be written as:
    \[\chi\int_0^{\infty}m_k(z)\int_0^z\nabla U(z-z')\nabla^2 U(z')Fdz'dz+\nabla\chi\cdot\int_0^{\infty}m_k(z)\int_0^z U(z-z')\nabla^2 U(z')Fdz'dz+\]\[+\int_0^{\infty}m_k(z)\int_0^z\nabla[U(z-z'),\chi]\nabla^2 U(z')Fdz'dz.\]
    To complete the proof of (\ref{LP refined est 3}), we need to write this expression as $2^{-2k}\chi\nabla^2\widetilde{P}_k\nabla F+O\big(2^{-k}\|F\|_{H^1}\big)$. Indeed, we can bound the last two terms in $L^2$ by:
    \[\int_0^{\infty}|m_k(z)|\int_0^z\big\|\nabla^2 U(z')F\big\|_{L^2}dz'dz\lesssim\int_0^{\infty}|m_k(z)|\int_0^z\big\|\nabla[\nabla,U(z')]F\big\|_{L^2}+\int_0^{\infty}|m_k(z)|\int_0^z\big\|\nabla U(z')\nabla F\big\|_{L^2}\]
    \[\lesssim2^{-2k}\big\|F\big\|_{L^2}+2^{-k}\big\|\nabla F\big\|_{L^2}.\]
    Finally, the leading term can be written as:
    \[\chi\int_0^{\infty}m_k(z)\int_0^z\nabla U(z-z')\nabla U(z')\nabla F+\chi\int_0^{\infty}m_k(z)\int_0^z\nabla U(z-z')\nabla[\nabla,U(z')]F=\]
    \[=\chi\int_0^{\infty}m_k(z)\int_0^z\nabla U(z-z')\nabla U(z')\nabla F+O\Big(2^{-k}\big\|F\big\|_{L^2}\Big)=2^{-2k}\chi\nabla^2\widetilde{P}_k\nabla F+O\Big(2^{-k}\big\|F\big\|_{H^1}\Big),\]
    where in the last step we used the same argument as in the proof of (\ref{LP refined est 1}). This completes the proof of (\ref{LP refined est 3}). Moreover, the proof of (\ref{LP refined est 4}) follows from (\ref{LP refined est 3}) as before.
\end{proof}

We also prove estimates where we trade $1/2$ derivatives on $F$ for $2^{k/2}$ growth, which simplify certain error terms in the analysis of the top order singular component of the first model system in the low frequency regime:

\begin{lemma}\label{fractional LP bounds}
    We have the following estimates for $k\geq 0$:
    \begin{equation}\label{fractional LP est 1}
    \big\|[\nabla_4,P_k]\nabla F\big\|_{L^2}\lesssim2^{k/2}\big\|F\big\|_{H^{1/2}}
\end{equation}
\begin{equation}\label{fractional LP est 2}
    \big\|\nabla[\nabla_4,P_k] F\big\|_{L^2}\lesssim2^{k/2}\big\|F\big\|_{H^{1/2}}.
\end{equation}
\end{lemma}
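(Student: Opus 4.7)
The plan is to revisit the Duhamel-based computations in the proof of Lemma~\ref{Litt Paley lemma refined}, adapting them to the setting where $F$ has only $H^{1/2}$ regularity but an extra derivative $\nabla$ appears either inside (for \eqref{fractional LP est 1}) or outside (for \eqref{fractional LP est 2}) the commutator. It is tempting to apply \eqref{LP refined est 2} and \eqref{LP refined est 4} directly with $\nabla F$ in place of $F$, but the resulting errors $2^{-k}\|\nabla F\|_{L^2}$ and $2^{-k}\|\nabla F\|_{H^1}$ are not bounded by $2^{k/2}\|F\|_{H^{1/2}}$ in general, since $F$ may concentrate its frequency above $2^k$. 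So each error bound from the proofs of \eqref{LP refined est 1} and \eqref{LP refined est 3} must be reopened and redone using only $H^{1/2}$ regularity of $F$.

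For the main term, \eqref{LP refined est 1} applied with $F\mapsto\nabla F$ gives $2^{-2k}\chi\nabla^2\widetilde{P}_k\nabla F$, whose $L^2$ norm is at most $\|\widetilde{P}_k\nabla F\|_{L^2}$ by two applications of the finite band property. Writing $\widetilde{P}_k\nabla F=\nabla\widetilde{P}_k F+[\widetilde{P}_k,\nabla]F$ and handling the commutator by the analogue of \eqref{LP est 2} for $\widetilde{P}_k$ reduces this to $2^k\|\widetilde{P}_k F\|_{L^2}$ plus negligible terms. The estimate $\sum_{k\geq 0}2^k\|\widetilde{P}_k F\|_{L^2}^2\lesssim\|F\|_{H^{1/2}}^2$, valid for any LP family (by Corollary~7.12 of \cite{geometricLP}), then yields the required bound $2^k\|\widetilde{P}_k F\|_{L^2}\lesssim 2^{k/2}\|F\|_{H^{1/2}}$. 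The same principal term governs \eqref{fractional LP est 2} via \eqref{LP refined est 3}, modulo outer commutators handled identically.

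The error terms from the proofs of \eqref{LP refined est 1} and \eqref{LP refined est 3} are double integrals $\int|m_k(z)|\int_0^z\|\Phi(z,z')\|_{L^2}\,dz'\,dz$, where $\Phi(z,z')$ is built from $U(z-z')$, factors of $\chi$ and its derivatives, and at most one derivative acting on $U(z')F$. Replacing $F$ by $\nabla F$ produces factors $\|U(z')\nabla F\|_{L^2}$ and $\|\nabla U(z')\nabla F\|_{L^2}$, which I would control via the interpolation estimate $\|\nabla U(z')F\|_{L^2}\lesssim(z')^{-1/4}\|F\|_{H^{1/2}}$ (obtained by interpolating the standard $L^2$--$L^2$ heat bound with the trivial $\|\nabla F\|_{L^2}$ bound) together with \eqref{heat equation 9}, giving $\|\nabla^2 U(z')F\|_{L^2}\lesssim(z')^{-3/4}\|F\|_{H^{1/2}}$. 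Combined with the $(z-z')^{1/2}$ factors from the heat-flow commutators in Lemma~\ref{heat lemma}, the $z'$-integrals yield a positive power of $z$, and the outer integral $\int|m_k(z)|z^{1/4}\,dz\lesssim 2^{-k/2}$ absorbs each error into $2^{k/2}\|F\|_{H^{1/2}}$. The main technical obstacle is ensuring that the worst endpoint singularity $(z')^{-3/4}$ from \eqref{heat equation 9} remains integrable against the $(z-z')^{1/2}$ weights; this is precisely the point at which the $H^{1/2}$ regularity of $F$ is tight but sufficient, as any weaker regularity would yield a non-integrable $(z')^{-a}$ singularity with $a>1$.
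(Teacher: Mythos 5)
Your proposal is correct, and its error-term analysis coincides with what the paper actually does; the difference is in how the principal term is handled. The paper's proof never invokes the structural identities \eqref{LP refined est 1}--\eqref{LP refined est 3} at all: it simply reuses the Duhamel representation of $[\nabla_4,P_k]$ from the proof of Lemma~\ref{Litt Paley lemma refined} and estimates the \emph{entire} expression (principal term included) through $\|U(z-z')\nabla G\|_{L^2}\lesssim (z-z')^{-1/2}\|G\|_{L^2}$ and $\|\nabla U(z')\nabla F\|_{L^2}\lesssim (z')^{-3/4}\|F\|_{H^{1/2}}$ (i.e.\ \eqref{heat equation 9} plus the commutator bounds), so the growth $2^{k/2}$ comes entirely from the outer integral $\int|m_k(z)|z^{-1/4}dz\sim 2^{k/2}$; the same one-line computation gives \eqref{fractional LP est 2} via $\|\nabla^2U(z')F\|_{L^2}\lesssim(z')^{-3/4}\|F\|_{H^{1/2}}$. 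You instead extract the leading term $2^{-2k}\chi\nabla^2\widetilde{P}_k\nabla F$ and produce the $2^{k/2}$ from the LP calculus, $2^k\|Q_kF\|_{L^2}\lesssim 2^{k/2}\|F\|_{H^{1/2}}$ via the Bessel-type inequality for an arbitrary LP family, reserving the heat-flow interpolation bounds for the error terms only. Both routes rest on the same two ingredients (the Duhamel identity and the fractional heat estimate \eqref{heat equation 9}), and your observation that the black-box errors $2^{-k}\|\nabla F\|_{L^2}$, $2^{-k}\|\nabla F\|_{H^1}$ from Lemma~\ref{Litt Paley lemma refined} cannot simply be quoted is exactly right — the paper sidesteps this by never using those conclusions. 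The paper's version is shorter and uniform; yours makes more transparent that $2^{k/2}$ is the finite-band cost of the extra derivative measured against $H^{1/2}$.

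Two small imprecisions in your main-term step, neither fatal: after "two applications of the finite band property" what you actually need is the splitting $\widetilde{P}_k=\underline{\widetilde{P}}_k^2$ together with the second-order bound $\|\nabla^2\underline{\widetilde{P}}_kH\|_{L^2}\lesssim 2^{2k}\|H\|_{L^2}$ (the same estimate the paper uses implicitly to pass from \eqref{LP refined est 1} to \eqref{LP refined est 2}), so the frequency localization ends up on $\underline{\widetilde{P}}_k\nabla F$ rather than $\widetilde{P}_k\nabla F$, and controlling $\|\nabla\underline{\widetilde{P}}_kF\|_{L^2}$ by $2^k\|Q_kF\|_{L^2}$ requires one further square-root splitting (or an integration by parts as in the proof of \eqref{heat equation 9}); since the Bessel-type inequality of \cite{geometricLP} holds for any LP family with symbol in $\mathcal{M}$, the final bound $\lesssim 2^{k/2}\|F\|_{H^{1/2}}$ is unaffected. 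Likewise your interpolation estimate $\|\nabla U(z')F\|_{L^2}\lesssim (z')^{-1/4}\|F\|_{H^{1/2}}$ is not literally in Lemma~\ref{heat lemma}, but it follows by the same LP decomposition argument used there to prove \eqref{heat equation 9}, so it is a one-line addition.
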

\begin{proof} In the previous proof we obtained the identity:
\[[\nabla_4,P_k]\nabla F=\int_0^{\infty}m_k(z)\int_0^zU(z-z')\nabla\big(\chi\nabla U(z')\nabla F\big)+\]\[+\int_0^{\infty}m_k(z)\int_0^zU(z-z')\Big([\nabla_4,\Delta]U(z')\nabla F-\nabla\big(\chi\nabla U(z')\nabla F\big)\Big).\]
As a result, we get from Lemma \ref{heat lemma}:
\[\big\|[\nabla_4,P_k]\nabla F\big\|_{L^2}\lesssim\big\|F\big\|_{L^2}+\int_0^{\infty}|m_k(z)|\int_0^z\big((z-z')^{-1/2}+1\big)\big\|\nabla U(z')\nabla F\big\|_{L^2}dz'dz\]\[\lesssim\big\|F\big\|_{L^2}+\int_0^{\infty}|m_k(z)|\int_0^z\big((z-z')^{-1/2}+1\big)(z')^{-3/4}\big\|F\big\|_{H^{1/2}}dz'dz\lesssim\big\|F\big\|_{H^{1/2}}\bigg(1+\int_0^{\infty}|m_k(z)|z^{-1/4}dz\bigg).\]
A similar proof also gives:
    \[\big\|\nabla[\nabla_4,P_k] F\big\|_{L^2}\lesssim\big\|F\big\|_{L^2}+\int_0^{\infty}|m_k(z)|\int_0^z(z-z')^{-1/2}\big\|\nabla^2 U(z')F\big\|_{L^2}dz'dz\lesssim2^{k/2}\big\|F\big\|_{H^{1/2}}.\]
\end{proof}

\textbf{Convention.} For the remainder of the paper, we fix the projection operator $P_k$ to satisfy $\sum_kP_k^2=I.$ We notice that all the estimates established above in this section are valid for any LP projections with symbols in $\mathcal{M}$.

In order to control a top order bulk term with bad sign in the high frequency estimate for the second model system, we need a refined Poincaré inequality for LP projections. The key aspect of this result is that the projection operators on the right hand side have the same symbol as the one on the left hand side. Moreover, all the frequencies higher than $k$ are contained in the last term, which is lower order.
\begin{lemma} For any $k\geq0,$ and $\delta>0$, we have the inequality:
    \begin{equation}\label{ref Poincarey}
    \big\|P_kF\big\|_{L^2}^2\lesssim\frac{1}{\delta}2^{-2k}\big\|\nabla P_kF\big\|_{L^2}^2+\delta\sum_{0\leq l<k}2^{-9k+7l}\big\|\nabla P_lF\big\|_{L^2}^2+\delta^{-1}2^{-4k}\big\|F\big\|_{L^2}^2.
\end{equation}
\end{lemma}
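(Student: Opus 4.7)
The plan is to decompose $F=\sum_{l\in\mathbb{Z}}P_l^2F$ via the resolution of identity $\sum_l P_l^2=I$, so that $P_kF=\sum_l P_kP_l^2F$, and then control the contribution of each frequency $l$ separately using the two basic tools at hand: the finite band property $\|P_kG\|_{L^2}\lesssim 2^{-k}\|\nabla G\|_{L^2}$ and the $L^2$-almost orthogonality $\|P_kP_l\|_{\mathrm{op}}\lesssim 2^{-4|k-l|}$. Young's inequality with a $k$-dependent parameter will then distribute each bound between the three terms in the conclusion.

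For the diagonal contribution $l=k$, the finite band estimate yields $\|P_k^3F\|_{L^2}\leq 2^{-k}\|\nabla P_k^2F\|_{L^2}$, and commuting $\nabla$ through $P_k$ via \eqref{LP est 2} produces $\|P_k^3F\|_{L^2}\lesssim 2^{-k}\|\nabla P_kF\|_{L^2}+2^{-2k}\|P_kF\|_{L^2}$. Squaring and absorbing the $2^{-2k}\|P_kF\|_{L^2}^2$ piece into the left-hand side (after adjusting constants for small $k$) gives the first term $\delta^{-1}2^{-2k}\|\nabla P_kF\|_{L^2}^2$ upon a standard Young split. The far off-diagonal contributions from $l<0$ are treated directly by almost orthogonality, $\|P_kP_l^2F\|_{L^2}\lesssim 2^{-4(k-l)}\|F\|_{L^2}=2^{-4k}\cdot 2^{4l}\|F\|_{L^2}$, and summing the geometric series $\sum_{l<0}2^{4l}=O(1)$ produces precisely the $\delta^{-1}2^{-4k}\|F\|_{L^2}^2$ term; the $l>k$ tail is handled analogously via Cauchy-Schwarz together with the Bessel inequality $\sum_l\|P_lF\|_{L^2}^2\lesssim\|F\|_{L^2}^2$.

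The most delicate range is $0\leq l<k$, which must produce the weights $2^{-9k+7l}$. Here I would combine the squared almost orthogonality $\|P_kP_l^2F\|_{L^2}^2\lesssim 2^{-8(k-l)}\|P_lF\|_{L^2}^2$ with the Cauchy-Schwarz identity $\|P_lF\|_{L^2}^2=\langle P_l^2F,F\rangle\leq\|P_l^2F\|_{L^2}\|F\|_{L^2}\leq 2^{-l}\|\nabla P_lF\|_{L^2}\|F\|_{L^2}$, where the last step uses finite band applied to $P_l^2F=P_l(P_lF)$. This produces $\|P_kP_l^2F\|_{L^2}^2\lesssim 2^{-8(k-l)-l}\|\nabla P_lF\|_{L^2}\|F\|_{L^2}$, and Young's inequality with parameter $\varepsilon\sim 2^{k}/\delta$ redistributes the product into the desired $\delta\cdot 2^{-9k+7l}\|\nabla P_lF\|_{L^2}^2$ plus a tail of weight $2^{-7(k-l)}\|F\|_{L^2}^2/\delta$ that is geometrically summable in $l$ and can be consolidated with the $2^{-4k}\|F\|_{L^2}^2$ term.

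The main obstacle will be assembling these contributions inside $\|P_kF\|_{L^2}^2=\|\sum_l P_kP_l^2F\|_{L^2}^2$ without introducing unsummable cross-terms when the square of the sum is expanded. This will be addressed by bounding the pairings $\langle P_kP_{l_1}^2F,P_kP_{l_2}^2F\rangle$ through almost orthogonality applied to whichever projection $P_kP_{l_i}$ has the larger $|k-l_i|$, followed by a Schur-type test with summable weights $2^{-|l_1-l_2|}$ to reduce back to diagonal sums. The resulting bookkeeping produces exactly the three terms of \eqref{ref Poincarey}, with the precise exponent $-9k+7l$ emerging from the $\varepsilon\sim 2^k/\delta$ balance in the intermediate range, and no contributions from frequencies higher than $k$ surviving outside the harmless $2^{-4k}\|F\|_{L^2}^2$ term.
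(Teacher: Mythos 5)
There is a genuine gap, and it sits exactly where the lemma has its content: the frequencies $l>k$. In your decomposition $P_kF=\sum_l P_kP_l^2F$, almost orthogonality gives only the \emph{relative} decay $\big\|P_kP_l^2F\big\|_{L^2}\lesssim 2^{-4(l-k)}\big\|P_lF\big\|_{L^2}$. For $l<0$ this automatically carries an absolute factor $2^{-4k}$ (since $|k-l|\geq k$), which is why that range lands in $\delta^{-1}2^{-4k}\|F\|_{L^2}^2$; but for $l>k$ it does not. Your "analogous" Cauchy--Schwarz/Bessel argument only yields $\sum_{l>k}\big\|P_kP_l^2F\big\|_{L^2}\lesssim\|F\|_{L^2}$, with no $2^{-4k}$ gain: already the single term $l=k+1$ is of size $\sim\|P_{k+1}F\|_{L^2}$, which cannot be dominated by $2^{-4k}\|F\|_{L^2}^2$ plus the admissible gradient terms using the tools you invoke (the statement deliberately forbids $\|\nabla P_lF\|$ with $l>k$ on the right). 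So your scheme proves only the much weaker inequality with $\delta^{-1}\|F\|_{L^2}^2$ in place of $\delta^{-1}2^{-4k}\|F\|_{L^2}^2$, which is trivial (as $\|P_kF\|\lesssim\|F\|$) and useless in the application, where the $2^{-4k}$ weight is what makes this term harmless after multiplying by $2^k(\tau')^{-2}$ and summing in $k$. A similar loss occurs in your intermediate range: from $\big\|P_kP_l^2F\big\|_{L^2}^2\lesssim 2^{-8k+7l}\|\nabla P_lF\|_{L^2}\|F\|_{L^2}$, any Young split that keeps the weight $\delta\,2^{-9k+7l}$ on $\|\nabla P_lF\|_{L^2}^2$ forces the complementary term to be $\delta^{-1}2^{-7(k-l)}\|F\|_{L^2}^2$, whose sum over $0\leq l<k$ is $\delta^{-1}O(1)\|F\|_{L^2}^2$, again without the $2^{-4k}$; the geometric-mean constraint shows you cannot do better than this for $l>3k/7$ with these ingredients.

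The paper's proof avoids both problems by a different mechanism. It uses the auxiliary operator $\dot{P}_k$ with symbol $\dot{m}(z)=-\int_z^\infty m(z')dz'$, for which $2^{2k}P_kF=\Delta\dot{P}_kF$; integrating by parts gives $2^{2k}\|P_kF\|_{L^2}^2\leq\|\nabla P_kF\|_{L^2}\|\nabla\dot{P}_kF\|_{L^2}$, so one derivative always falls on $P_kF$ itself. The remaining factor $\nabla\dot{P}_kF$ is then expanded in the fixed family $P_l$, and the crucial point is the one-sided improved bound $\|P_l\dot{P}_kG\|_{L^2}\lesssim 2^{-2(l-k)}\|P_kG\|_{L^2}$ for $l\geq k$ (from the proof of Theorem~5.5 in \cite{geometricLP}): all frequencies $l\geq k$ of $\dot{P}_k\nabla F$ are controlled by $\|P_k\nabla F\|_{L^2}$, i.e.\ by data at frequency $k$, never by $\|P_lF\|$ or $\|\nabla P_lF\|$ at high frequency; the $2^{-4k}\|F\|_{L^2}^2$ term then comes from explicit commutators $[\nabla,P_k]$, $[\nabla,\dot{P}_k]$ carrying $2^{-k}$ gains. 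If you want to keep your decomposition-based approach, you would need an analogue of this $\dot{P}_k$ identity (or some other device converting high-frequency content of $P_kF$ into $\|\nabla P_kF\|$); plain almost orthogonality between members of the same family cannot produce the stated right-hand side.
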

\begin{proof}
    Let $\dot{P}$ be the projection operator with symbol given by $\dot{m}(z)=-\int_z^{\infty}m(z')dz'.$ According to the proof of \cite[Theorem~5.5 (v)]{geometricLP} we have that $2^{2k}P_kF=\Delta\dot{P}_kF.$ This implies the Poincaré inequality:
    \[\big\|P_kF\big\|_{L^2}\lesssim\sqrt{\delta}2^{-k}\big\|\nabla\dot{P}_kF\big\|_{L^2}+\frac{1}{\sqrt{\delta}}2^{-k}\big\|\nabla P_kF\big\|_{L^2}.\]
    Also, for all $l\geq k\geq 0$ we have the following estimate, according to the proof of \cite[Theorem~5.5 (ii)]{geometricLP}:
    \[\big\|P_l\dot{P}_kF\big\|_{L^2}\lesssim2^{-2(l-k)}\big\|P_kF\big\|_{L^2}.\]
    We use these two bounds, together with the other usual bounds for LP projections, to get:
    \[\big\|P_kF\big\|_{L^2}\lesssim\sqrt{\delta}2^{-k}\big\|\nabla \dot{P}_kF\big\|_{L^2}+\frac{1}{\sqrt{\delta}}2^{-k}\big\|\nabla P_kF\big\|_{L^2}\lesssim\sqrt{\delta}2^{-k}\big\|\dot{P}_k\nabla F\big\|_{L^2}+\frac{1}{\sqrt{\delta}}2^{-2k}\big\|F\big\|_{L^2}+\frac{1}{\sqrt{\delta}}2^{-k}\big\|\nabla P_kF\big\|_{L^2}\]\[\lesssim\sqrt{\delta}\sum_{l\in\mathbb{Z}}2^{-k}\big\|P_l^2\dot{P}_k\nabla F\big\|_{L^2}+\frac{1}{\sqrt{\delta}}2^{-2k}\big\|F\big\|_{L^2}+\frac{1}{\sqrt{\delta}}2^{-k}\big\|\nabla P_kF\big\|_{L^2}\]
    \[\lesssim \frac{1}{\sqrt{\delta}}2^{-k}\big\|P_k\nabla F\big\|_{L^2}+\sqrt{\delta}\sum_{0\leq l<k}2^{-k-4|k-l|}\big\|P_l\nabla F\big\|_{L^2}+\frac{1}{\sqrt{\delta}}2^{-2k}\big\|F\big\|_{L^2}+\frac{1}{\sqrt{\delta}}2^{-k}\big\|\nabla P_kF\big\|_{L^2}\]\[\lesssim\frac{1}{\sqrt{\delta}}2^{-k}\big\|\nabla P_kF\big\|_{L^2}+\sqrt{\delta}\sum_{0\leq l<k}2^{-|k-l|/2}\cdot2^{-k-7|k-l|/2}\big\|P_l\nabla F\big\|_{L^2}+\frac{1}{\sqrt{\delta}}2^{-2k}\big\|F\big\|_{L^2}.\]
    We square this inequality and use Cauchy-Schwarz in order to conclude.
\end{proof}

We introduce the $(\log\nabla)$ operator, essential for the renormalization of $h$ in \eqref{renormalization of h}. We define for any smooth tensor $F$ on $S_0$ and $k\geq0$:
\[(\log\nabla)F=\sum_{l\geq0}P_l^2F\cdot\log2^l.\]
We prove the following bound on the $(\log\nabla)$ operator, used in lower order estimates:
\begin{lemma}\label{bound on log nabla lemma} Set $\eta=1/10.$ For any smooth horizontal tensor $F,$ and any $s\geq0,$ we have the estimate:
    \[\big\|(\log\nabla)F\big\|_{H^s}\lesssim \big\|F\big\|_{H^{s+\eta}}.\]
\end{lemma}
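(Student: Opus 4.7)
The plan is to reduce to the LP characterization of fractional Sobolev norms and exploit the fact that $\log 2^l = (\log 2) \cdot l$ grows slower than any positive power of $2^l$. The key observation is that, since $\eta > 0$, one has $\log^2(2^l) \lesssim 2^{2\eta l}$ uniformly in $l \geq 0$ (indeed $\log^2(2^l)$ vanishes at $l = 0$ and grows only polynomially in $l$, while $2^{2\eta l}$ grows exponentially). The choice $\eta = 1/10$ is comfortably compatible with the constraint $\alpha < 2$ that will appear below.

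The heart of the argument is the fractional case: I first show, for any $\alpha \in [0,1)$, the bound $\|(\log\nabla) F\|_{H^\alpha} \lesssim \|F\|_{H^{\alpha + \eta}}$. Using $\sum_l P_l^2 = I$, one has $P_k(\log\nabla) F = \sum_{l \geq 0} \log(2^l) \, P_k P_l^2 F$. The $L^2$-almost orthogonality applied to $P_k P_l \cdot (P_l F)$ yields
\[\big\| P_k(\log\nabla) F \big\|_{L^2} \lesssim \sum_{l \geq 0} 2^{-4|k-l|} \log(2^l) \, \|P_l F\|_{L^2},\]
and Cauchy--Schwarz against the summable weight $2^{-2|k-l|}$ gives
\[\big\| P_k(\log\nabla) F \big\|_{L^2}^2 \lesssim \sum_{l \geq 0} 2^{-4|k-l|} \log^2(2^l) \, \|P_l F\|_{L^2}^2.\]
Multiplying by $2^{2\alpha k}$, summing in $k \geq 0$, and swapping the order, the inner geometric sum satisfies $\sum_{k \geq 0} 2^{2\alpha k - 4|k-l|} \lesssim 2^{2\alpha l}$ since $\alpha < 1 < 2$. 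Applying the pointwise bound $\log^2(2^l) \lesssim 2^{2\eta l}$ and invoking Corollary~7.12 of \cite{geometricLP} to identify the weighted $\ell^2$ sum with $\|F\|_{H^{\alpha+\eta}}^2$, we obtain the claim for $\alpha \in [0,1)$. The $L^2$ piece of the $H^\alpha$-norm is the $\alpha = 0$ instance of the same computation.

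For general $s = [s] + \{s\}$, the definition $\|F\|_{H^s}^2 = \|F\|_{H^{[s]}}^2 + \|\nabla^{[s]} F\|_{H^{\{s\}}}^2$ reduces the problem to controlling $\|\nabla^j (\log\nabla) F\|_{L^2}$ for $0 \leq j \leq [s]$ and $\|\nabla^{[s]} (\log\nabla) F\|_{H^{\{s\}}}$. For each such quantity I write
\[\nabla^j (\log\nabla) F = \sum_{l \geq 0} \log(2^l) \, \nabla^j P_l^2 F = \sum_{l \geq 0} \log(2^l) \big( P_l \nabla^j P_l F + [\nabla^j, P_l] P_l F \big),\]
apply the commutator estimate \eqref{LP est 2} to the error term, and repeat the Cauchy--Schwarz plus almost-orthogonality argument above on the leading term (now applied to $\nabla^j F$ in place of $F$). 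The fractional factor is then handled by the $\alpha \in [0,1)$ bound applied to $\nabla^{[s]} F$.

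The main technical obstacle is controlling commutator terms produced when $\nabla^j$ is moved past $P_l$: the estimate \eqref{LP est 2} produces constants depending on $\|\slashed{Riem}\|_{H^{j-1}}$, which on $\mathcal{M}$ are in turn controlled by the background bound \eqref{preliminary estimates for background metric} (for $j-1 \leq N$), so that these errors contribute only lower-order terms summable against the same geometric weights as the main term. Provided $s$ is not too large relative to $N$, these curvature corrections can be absorbed into the implicit constant, completing the proof.
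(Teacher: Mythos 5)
Your core mechanism is the same one the paper uses: the bound $\log 2^{l}\lesssim 2^{\eta l}$, $L^2$-almost orthogonality of the $P_k$'s, and the one-sided LP characterization $\sum_{l}2^{2al}\|P_lF\|_{L^2}^2\lesssim\|F\|_{H^a}^2$ (which is indeed the only direction of Corollary~7.12 you need, valid for all $a\geq0$). Where you diverge is the treatment of general $s$: you work from the definition of $H^s$ (integer derivatives plus fractional part) and move $\nabla^j$ past $P_l$ via the commutator estimate \eqref{LP est 2}, which drags in constants $C(\|\slashed{Riem}\|_{H^{j-1}})$ and forces your proviso that $s$ not be too large relative to the background control \eqref{preliminary estimates for background metric}. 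The paper avoids covariant-derivative commutators altogether: since $(\log\nabla)$ and $P_l$ are built from the same heat flow they commute exactly, so one writes $\|P_l(\log\nabla)F\|_{L^2}=\|(\log\nabla)P_lF\|_{L^2}$, bounds the resulting double sum $\sum_{l,k}2^{2sl}2^{2\eta k}\|P_kP_lF\|_{L^2}^2$ by splitting into $l\leq k$ and $k\leq l$ and using Bessel, and handles the $L^2$ piece directly by the triangle inequality and Cauchy--Schwarz. That route is cleaner, curvature-free, and does not need your restriction on $s$ (the lemma is stated for all $s\geq0$, and is in fact invoked later with $s$ of size $M$), so your proof as written establishes a slightly weaker statement unless you supply the higher-order curvature control.

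Two smaller points need repair but are easily fixed. First, the $L^2$ piece of the norm is not literally the ``$\alpha=0$ instance'' of your sum over $k\geq0$: the identity $\|G\|_{L^2}^2=\sum_{k\in\mathbb{Z}}\|P_kG\|_{L^2}^2$ includes negative frequencies, so you must either also estimate $k<0$ (almost orthogonality against $l\geq0$ gives a $2^{4k}$ gain there) or, simpler, bound $\|(\log\nabla)F\|_{L^2}\leq\sum_{l\geq0}\log 2^{l}\,\|P_l^2F\|_{L^2}$ directly and use Cauchy--Schwarz, as the paper does. Second, your ``leading term'' $\sum_l\log(2^l)\,P_l\nabla^jP_lF$ is not yet $(\log\nabla)\nabla^jF$; you need one more application of $[\nabla^j,P_l]$ (again via \eqref{LP est 2}, with its $2^{-l}$ gain beating $\log 2^l$) before the fractional-case argument applied to $\nabla^jF$ is legitimate. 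With these adjustments, and keeping in mind the comparison $\|\nabla^{[s]}F\|_{H^{\{s\}+\eta}}\lesssim\|F\|_{H^{s+\eta}}$ also in the case $\{s\}+\eta\geq1$, your argument goes through in the stated range of $s$ your curvature hypotheses allow.
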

\begin{proof} We first notice that we have the inequality:
    \[\big\|(\log\nabla)F\big\|_{L^2}^2\lesssim\bigg(\sum_{k\geq0}\big\|P_k^2F\cdot\log2^k\big\|_{L^2}\bigg)^2\lesssim \bigg(\sum_{k\geq0}2^{k\eta/2}\big\|P_kF\big\|_{L^2}\bigg)^2\lesssim \sum_{k\geq0}2^{2k\eta}\big\|P_kF\big\|_{L^2}^2\lesssim \big\|F\big\|_{H^{\eta}}.\]
    Using this, we can also bound the following:
    \[\big\|(\log\nabla)F\big\|_{H^s}^2\lesssim\big\|(\log\nabla)F\big\|_{L^2}^2+\sum_{l\geq0}2^{2sl}\big\|P_l(\log\nabla)F\big\|_{L^2}^2\lesssim\big\|(\log\nabla)F\big\|_{L^2}^2+\sum_{l\geq0}2^{2sl}\big\|(\log\nabla)P_lF\big\|_{L^2}^2\]\[\lesssim \big\|F\big\|_{H^{\eta}}+\sum_{l,k\geq0}2^{2sl}2^{2\eta k}\big\|P_kP_lF\big\|_{L^2}^2\lesssim\big\|F\big\|_{H^{\eta}}+\sum_{k\geq0}\sum_{l=0}^k2^{2(s+\eta)k}\big\|P_lP_kF\big\|_{L^2}^2+\sum_{l\geq0}\sum_{k=0}^l2^{2(s+\eta)l}\big\|P_kP_lF\big\|_{L^2}^2\]\[\lesssim\big\|F\big\|_{H^{\eta}}+\sum_{k\geq0}2^{2(s+\eta)k}\big\|P_kF\big\|_{L^2}^2\lesssim\big\|F\big\|_{H^{s+\eta}}.\]
\end{proof}

Next, we define the following operator for $k\geq0$:
\begin{equation}\label{definition of R k}
    R_kF=2P_k(\log\nabla)F-2\log2^k\cdot P_kF=2\sum_{l\geq0}\log2\cdot(l-k)\cdot P_kP_l^2F-2\sum_{l<0}\log2^k\cdot P_kP_l^2F.
\end{equation}
The operator $R_k$ appears as a commutation error term when projecting the expansion at $\{\tau=0\}$ of the singular component of $\Phi_0$ in the analysis of the first model system. 

We consider the projection operator $\underline{P}_k$ which satisfies $\underline{P}_k^2=P_k.$ We have the estimates for $R_k:$
\begin{lemma}\label{R k lemma} Let $F$ be any smooth tensor on $S_0$. We extend $R_kF$ to $(0,1]\times S^n$ to be independent of $\tau.$ We also denote $t=2^k\tau.$ Then, for any $k\geq0$ we have:
    \begin{align}
        \big\|\Delta_{\slashed{g}_{\tau}} R_kF\big\|_{L^2}&\lesssim2^{k}\big\|\underline{P}_kF\big\|_{H^1}\label{Rk est 1}\\
        \big\|\nabla R_kF\big\|_{L^2}&\lesssim\big\|\underline{P}_kF\big\|_{H^1}\label{Rk est 2}\\
        2^{k}\big\|R_kF\big\|_{L^2}&\lesssim \big\|\underline{P}_kF\big\|_{H^1}\label{Rk est 3}\\
        \big\|\nabla_tR_kF\big\|_{L^2}&\lesssim2^{-3k}t\big\|\underline{P}_kF\big\|_{H^1}\label{Rk est 4}\\
        \big\|\nabla\nabla_tR_kF\big\|_{L^2}&\lesssim2^{-2k}t\big\|\underline{P}_kF\big\|_{H^1}.\label{Rk est 5}
    \end{align}
\end{lemma}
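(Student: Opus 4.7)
The plan is to reduce each of the five estimates for $R_kF$ to a combination of the $L^2$-almost orthogonality $\|P_kP_lG\|_{L^2}\lesssim 2^{-4|k-l|}\|G\|_{L^2}$, the finite band property $\|P_kG\|_{L^2}\lesssim 2^{-k}\|\nabla G\|_{L^2}$ together with its iterates $\|\nabla P_kG\|_{L^2}\lesssim 2^k\|G\|_{L^2}$ and $\|\Delta P_kG\|_{L^2}\lesssim 2^{2k}\|G\|_{L^2}$, the semigroup identity $\underline{P}_k^2=P_k$, and the commutation identity (\ref{commutation formula 1}). The essential structural feature is that every summand in the defining series for $R_kF$ begins with an outer factor of $P_k$, so $R_kF$ is itself frequency-localized at scale $2^k$ and the finite band tools apply cleanly.

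The heart of the argument is (\ref{Rk est 3}). Using $\sum_lP_l^2=I$, one simplifies $R_kF=2P_k\bigl((\log\nabla)F-\log 2^k\cdot F\bigr)$, and each summand $P_kP_l^2F$ is estimated by almost orthogonality; the identity $P_k=\underline{P}_k^2$ together with the self-adjointness of the LP operators is then used to convert the resulting bound into one featuring $\|\underline{P}_kF\|_{L^2}$ on the right. The linear weights $|l-k|$ for $l\geq 0$ are absorbed by the exponential decay $2^{-4|k-l|}$, and the $\log 2^k$ weight from the $l<0$ tail is dominated by the $2^{-4k}$ decay, giving $\|R_kF\|_{L^2}\lesssim\|\underline{P}_kF\|_{L^2}$. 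The extra $2^k$ factor in (\ref{Rk est 3}) is then produced by applying the finite band bound $\|\underline{P}_kH\|_{L^2}\lesssim 2^{-k}\|\nabla H\|_{L^2}$ to a remaining outer $\underline{P}_k$ with $H=\underline{P}_kF$, yielding $2^k\|R_kF\|_{L^2}\lesssim\|\nabla\underline{P}_kF\|_{L^2}\leq\|\underline{P}_kF\|_{H^1}$. The estimates (\ref{Rk est 1}) and (\ref{Rk est 2}) then follow from (\ref{Rk est 3}) by one further application of finite band on the outer $P_k$ in $R_kF$, in the forms $\|\nabla P_kG\|_{L^2}\lesssim 2^k\|G\|_{L^2}$ and $\|\Delta P_kG\|_{L^2}\lesssim 2^{2k}\|G\|_{L^2}$.

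For (\ref{Rk est 4}) and (\ref{Rk est 5}), the crucial observation is that $R_kF$ is defined on $S^n$ and extended constantly in $\tau$, so $\nabla_4R_kF$ records only the $\tau$-variation of the horizontal structure of $(S_\tau,\slashed{g})$, which is encoded by the renormalized second fundamental form $\chi$. Using the preliminary bound on $\chi$ from (\ref{preliminary estimates for background psi}) gives $\|\nabla_4R_kF\|_{L^2}\lesssim\|R_kF\|_{L^2}$, and a further application of (\ref{commutation formula 1}) yields $\|\nabla\nabla_4R_kF\|_{L^2}\lesssim\|R_kF\|_{H^1}$. Combining these with the scaling $\nabla_t=2^{-2k+1}t\nabla_4$ established in (\ref{LP est 1}), together with the already-proved bounds (\ref{Rk est 2})--(\ref{Rk est 3}), produces (\ref{Rk est 4}) and (\ref{Rk est 5}) with the claimed $t$- and $2^k$-weights.

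The main obstacle I anticipate is the first step of (\ref{Rk est 3}): converting the naive almost-orthogonality bound, which most directly yields $\|P_lF\|_{L^2}$ or $\|F\|_{L^2}$ on the right, into one featuring the frequency-localized norm $\|\underline{P}_kF\|_{L^2}$. This conversion is essential so that the estimate can later be summed cleanly in $k$ when it is applied to control commutator errors in the low-frequency regime for the singular component of $\Phi_0$, and it will require careful manipulation of the self-adjointness of the LP projections together with the two-sided almost orthogonality between the families $P_k$ and $\underline{P}_k$, as well as a verification that the $\log 2^k$ prefactor arising from the $l<0$ tail is genuinely absorbed by the $2^{-4k}$ exponential decay.
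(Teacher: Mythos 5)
Your overall architecture agrees with the paper's proof, and your treatment of \eqref{Rk est 4}--\eqref{Rk est 5} (using that $R_kF$ is extended $\tau$-independently, so $\nabla_4R_kF$ reduces to a $\chi\cdot R_kF$ term, together with $\nabla_t=2^{-2k+1}t\nabla_4$ and the earlier bounds) is correct. The gap is in the central step of \eqref{Rk est 3} and in your reduction for \eqref{Rk est 1}--\eqref{Rk est 2}. For \eqref{Rk est 3} you first obtain $\|R_kF\|_{L^2}\lesssim\|\underline{P}_kF\|_{L^2}$ and then claim the extra factor $2^k$ comes from applying the finite band property to ``a remaining outer $\underline{P}_k$ with $H=\underline{P}_kF$.'' These two steps compete for the same operator factor: in the summand $P_kP_l^2F=(\underline{P}_kP_l)(P_l\underline{P}_kF)$, the almost-orthogonality bound that produces the decay $2^{-4|k-l|}$ (which you need to absorb the unbounded weights $|l-k|$ and $\log 2^k$) consumes the pair $\underline{P}_kP_l$ and leaves only the scalar norm $\|\underline{P}_kF\|_{L^2}$ on the right, after which there is no outer $\underline{P}_k$ left to exploit; and the missing inequality $2^k\|\underline{P}_kF\|_{L^2}\lesssim\|\nabla\underline{P}_kF\|_{L^2}$ is a same-symbol reverse Poincar\'e for a single geometric LP projection, which is precisely what this framework does not provide (this failure is why the paper proves the refined Poincar\'e inequality \eqref{ref Poincarey} elsewhere). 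Conversely, if you keep the composition $\underline{P}_k(\underline{P}_kF)$ intact so that finite band can act on the outer $\underline{P}_k$, you forfeit the $2^{-4|k-l|}$ decay and the sums over $l$ (with weights $|l-k|$, resp. $\log 2^k$) diverge. The correct placement, and what the paper does, is to apply finite band to the \emph{inner} factor: $\|P_l\underline{P}_kF\|_{L^2}\lesssim2^{-l}\|\nabla\underline{P}_kF\|_{L^2}$ for $l\geq0$ (plain boundedness for the $l<0$ tail), so that $\sum_{l\geq0}|l-k|\,2^{-c|k-l|}2^{-l}\lesssim2^{-k}$ and the $l<0$ tail is killed by $2^{-c|k-l|}$, yielding \eqref{Rk est 3}.

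The same issue blocks your route to \eqref{Rk est 1}--\eqref{Rk est 2}. Applying $\|\nabla P_kG\|_{L^2}\lesssim2^k\|G\|_{L^2}$ or $\|\Delta P_kG\|_{L^2}\lesssim2^{2k}\|G\|_{L^2}$ ``to the outer $P_k$ in $R_kF$'' forces $G=2(\log\nabla)F-2\log2^k\,F$, whose $L^2$ norm is neither frequency localized nor comparable to $2^{-k}\|\underline{P}_kF\|_{H^1}$, so the stated right-hand sides do not follow; and a bound of the form $\|\nabla R_kF\|_{L^2}\lesssim2^k\|R_kF\|_{L^2}$ is again a single-projection reverse Bernstein that is unavailable. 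The paper instead proves the Laplacian bound directly from the series: since all projections are functions of $\Delta$ through the heat flow, $\Delta$ may be placed on the inner $P_l$, where finite band gives $\|\Delta P_l\underline{P}_kF\|_{L^2}\lesssim2^{l}\|\nabla\underline{P}_kF\|_{L^2}$, and the weights $|l-k|\,2^{-c|k-l|}2^{l}$ resum to $2^{k}$; the gradient bound \eqref{Rk est 2} then follows from the $L^2$ and Laplacian bounds. Note also that \eqref{Rk est 1} involves $\Delta_{\slashed{g}_{\tau}}$ while $R_kF$ and the projections live at $\tau=0$, so an additional comparison of $\Delta_{\slashed{g}_{\tau}}$ with $\Delta_{\slashed{g}_{0}}$ via \eqref{preliminary estimates for background metric} is required, a point your sketch omits. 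With the finite band property relocated to the inner projection and this comparison added, your argument becomes the paper's proof.
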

\begin{proof} Because of \eqref{preliminary estimates for background metric}, we can express the LHS of (\ref{Rk est 1}) using derivatives at $\{\tau=0\}$:
\[2^{-k}\big\|\Delta_{\slashed{g}_{\tau}} R_kF\big\|_{L^2}\lesssim2^{-k}\big\|R_kF\big\|_{H^2(S_0)}\lesssim2^{-k}\big\|\Delta_{\slashed{g}_{0}} R_kF\big\|_{L^2(S_0)}+2^{-k}\big\|R_kF\big\|_{H^1(S_0)}\]
Using the finite band property of \cite{geometricLP} we have:
    \[2^{k}\big\|R_kF\big\|_{L^2(S_0)}\lesssim\sum_{l\geq0}2^{k}|l-k|\cdot \big\|\underline{P}_kP_lP_l\underline{P}_kF\big\|_{L^2}+\sum_{l<0}k2^{k}\cdot \big\|\underline{P}_kP_lP_l\underline{P}_kF\big\|_{L^2}\lesssim\]\[\lesssim\sum_{l\geq0}\frac{2^{k-l}|l-k|}{2^{2|l-k|}}\cdot \big\|\nabla\underline{P}_kF\big\|_{L^2}+\sum_{l<0}k2^{-k+2l}\cdot \big\|\underline{P}_kF\big\|_{L^2}\lesssim\big\|\underline{P}_kF\big\|_{H^1(S_0)},\]
    \[2^{-k}\big\|\Delta_{\slashed{g}_{0}}R_kF\big\|_{L^2}\lesssim\sum_{l\geq0}2^{-k}|l-k|\cdot \big\|\underline{P}_kP_l\Delta_{\slashed{g}_{0}} P_l\underline{P}_kF\big\|_{L^2}+\sum_{l<0}k2^{-k}\cdot \big\|\underline{P}_kP_l\Delta_{\slashed{g}_{0}} P_l\underline{P}_kF\big\|_{L^2}\lesssim\]\[\lesssim\sum_{l\geq0}\frac{2^{l-k}|l-k|}{2^{2|l-k|}}\cdot \big\|\nabla\underline{P}_kF\big\|_{L^2}+\sum_{l<0}k2^{-3k+l}\cdot \big\|\nabla\underline{P}_kF\big\|_{L^2}\lesssim\big\|\underline{P}_kF\big\|_{H^1(S_0)}.\]
    Combining these two estimates, we also have that:
    \[\big\|\nabla R_kF\big\|_{L^2(S_0)}\lesssim\big\|\underline{P}_kF\big\|_{H^1(S_0)}.\]
    So far we proved (\ref{Rk est 1}), (\ref{Rk est 2}), and (\ref{Rk est 3}). In order to prove \eqref{Rk est 4} and \eqref{Rk est 5}, we notice that for any horizontal $k$-tensor $\Phi$ we have the formula:
    \begin{equation}\label{formula for lie time derivative}
        \nabla_{\tau}\Phi=\mathcal{L}_{\tau}\Phi+\tau\chi\cdot\Phi.
    \end{equation}
    Using this, we get:
    \[\big\|\nabla_tR_kF\big\|_{L^2}\lesssim\big\|(\nabla_t-\mathcal{L}_t)R_kF\big\|_{L^2}\lesssim\frac{t}{2^{2k}}\big\|R_kF\big\|_{L^2}\lesssim\frac{t}{2^{3k}}\big\|\underline{P}_kF\big\|_{H^1(S_0)}.\]
    Finally, we have:
    \[\big\|\nabla\nabla_tR_kF\big\|_{L^2}\lesssim\big\|\nabla(\nabla_t-\mathcal{L}_t)R_kF\big\|_{L^2}\lesssim\frac{t}{2^{2k}}\big\|R_kF\big\|_{H^1}\lesssim\frac{t}{2^{2k}}\big\|\underline{P}_kF\big\|_{H^1(S_0)}.\]   
\end{proof}

Finally, we prove the following result which implies that it is equivalent whether we project the expansions at $\tau=0$ with respect to $\slashed{g}_{0}=\slashed{g}(0)$ or $\slashed{g}_{\tau}=\slashed{g}(\tau).$
\begin{lemma}\label{difference of projections lemma}
    We consider $F$ to be a smooth tensor on $S_0,$ extended to be independent of $\tau.$ Denote by $^{(\slashed{g}_{0})}P_k$ the projection with respect to $\big(S^n_{0},\slashed{g}_{0}\big)$ and by $^{(\slashed{g}_{\tau})}P_k$ the projection with respect to $\big(S^n_{\tau},\slashed{g}_{\tau}\big)$. Then we have the estimate for any $s\geq0,\ k\geq0$:
    \begin{equation}\label{difference of Pk}
        \big\|\big(\ ^{(\slashed{g}_{0})}P_k-\ ^{(\slashed{g}_{\tau})}P_k\big)F\big\|_{H^{s}}\lesssim_s\tau^2\big\|F\big\|_{H^{s+2}}.
    \end{equation} 
\end{lemma}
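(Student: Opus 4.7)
The plan is to write the difference of projections using Duhamel's formula applied to the difference of the two heat flows. Denote by $U^{(\tau)}(z)$ and $U^{(0)}(z)$ the heat flows associated with $\Delta_{\slashed{g}_{\tau}}$ and $\Delta_{\slashed{g}_{0}}$ respectively, and set $D_{\tau} := \Delta_{\slashed{g}_{\tau}} - \Delta_{\slashed{g}_{0}}$. The expansion \eqref{expansion in terms of  tau} gives $\slashed{g}_{\tau} - \slashed{g}_{0} = \tau^2 \slashed{g}_1 + O(\tau^4)$ together with all its angular derivatives, so $D_{\tau}$ is a second-order differential operator on $S^n$ whose coefficients (and all their derivatives) are of size $O(\tau^2)$. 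Consequently, for any $s\geq 0$:
\[\big\|D_{\tau}G\big\|_{H^s}\lesssim_s\tau^2\big\|G\big\|_{H^{s+2}}.\]

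Next, setting $v(z) := U^{(\tau)}(z)F - U^{(0)}(z)F$, this quantity satisfies the inhomogeneous heat equation $\partial_z v - \Delta_{\slashed{g}_{\tau}}v = D_{\tau}U^{(0)}(z)F$ with $v(0)=0$, so Duhamel's formula yields:
\[v(z)=\int_0^z U^{(\tau)}(z-z')\,D_{\tau}\,U^{(0)}(z')F\,dz'.\]
Applying the $H^s$-boundedness of the heat flows, which follows from Lemma~\ref{heat lemma} applied uniformly in $\tau$ (the bounds \eqref{preliminary estimates for background metric} hold throughout $\mathcal{M}$, so the relevant geometric norms are controlled uniformly), together with the displayed bound on $D_{\tau}$, one obtains $\|v(z)\|_{H^s}\lesssim_s \tau^2 z\,\|F\|_{H^{s+2}}$.

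Finally, using the definition of the LP projections,
\[\big(\,^{(\slashed{g}_{\tau})}P_k-\,^{(\slashed{g}_{0})}P_k\big)F=\int_0^{\infty}m_k(z)\,v(z)\,dz,\]
and the rapid decay of $m$ combined with $m_k(z)=2^{2k}m(2^{2k}z)$ gives $\int_0^{\infty}z\,|m_k(z)|\,dz\lesssim 2^{-2k}\lesssim 1$. Inserting the bound on $v(z)$ yields the claimed estimate \eqref{difference of Pk}.

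The main obstacle will be the treatment of non-integer $s$, since Lemma~\ref{heat lemma} most naturally gives integer-order bounds, whereas for fractional $s$ the norm $\|\cdot\|_{H^s}$ is defined via the LP decomposition. To address this, one can first establish the estimate at all integer orders by the above argument, and then extend to fractional $s$ either by interpolating between consecutive integer orders, or by using the LP characterization of $\|\cdot\|_{H^{\{s\}}}$ from Section~\ref{lp bounds section} together with the commutator estimates of Lemma~\ref{Litt Paley lemma} to commute the outer LP projection past $\nabla^{[s]}$.
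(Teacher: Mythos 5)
Your proposal is correct and follows essentially the same route as the paper: Duhamel's formula for the difference of the two heat flows, the $O(\tau^2)$ bound on $\Delta_{\slashed{g}_{\tau}}-\Delta_{\slashed{g}_{0}}$ coming from the expansion of $\slashed{g}_{\tau}$ at $\tau=0$, boundedness of the heat semigroups in Sobolev norms, and integrability of $z\,|m_k(z)|$ (the paper merely places the frozen and time-dependent heat flows in the opposite order inside the Duhamel integral, which is immaterial). Your closing remark on handling fractional $s$ via the LP characterization is a reasonable way to fill in a detail the paper leaves implicit.
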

\begin{proof}
    The bound follows using Duhamel's formula as in \cite{duhamelLP}: 
    \[\big\|\big(\ ^{(\slashed{g}_{0})}P_k-\ ^{(\slashed{g}_{\tau})}P_k\big)F\big\|_{H^s}\lesssim\int_0^{\infty}|m_k|\cdot\big\|\ ^{(\slashed{g}_{0})}U(z)F-\ ^{(\slashed{g}_{\tau})}U(z)F\big\|_{H^s}dz\]
    \[\lesssim\int_0^{\infty}|m_k|\int_0^z\Big\|\ ^{(\slashed{g}_{0})}U(z-z')\big(\Delta_{\slashed{g}_{\tau}}-\Delta_{\slashed{g}_{0}}\big)\ ^{(\slashed{g}_{\tau})}U(z')F\Big\|_{H^s}dz'dz\lesssim_s\int_0^{\infty}|m_k|\int_0^z\tau^2\big\|\ ^{(\slashed{g}_{\tau})}U(z')F\big\|_{H^{s+2}}dz'dz,\]
    where we also used the expansion of $\slashed{g}_{\tau}$ at $\tau=0.$ The last term is bounded by $\tau^2\big\|F\big\|_{H^{s+2}}$.
\end{proof}

\section{The First Model System}\label{model system for direction section}

In this section we prove Theorem~\ref{main theorem first system}, obtaining estimates for solutions of the first model system at $\tau\in(0,1]$ in terms of the asymptotic data at $\mathcal{I}^-.$ We follow the strategy from Section~\ref{first model system intro section} of the Introduction, and we advise the reader to refer to the outline for assistance while reading the proof below.

We recall the decomposition of $\Phi_0$ into its singular and regular components. For each $m\leq M$ we have:
\[\nabla^m\Phi_0=\big(\nabla^m\Phi_0\big)_Y+\big(\nabla^m\Phi_0\big)_J,\]
where each singular component $\big(\nabla^m\Phi_0\big)_Y$ satisfies \eqref{equation for Phi Y} and decouples from the rest of the system. In Section~\ref{existence of singular component section} we prove the existence and uniqueness of the singular component.

We prove estimates separately for the regular quantities $(\Phi_0)_J,\Phi_1,\ldots,\Phi_I$ and the singular quantity $(\Phi_0)_Y.$ In Section~\ref{lower order estimates section} we prove the lower order estimates outlined in Section~\ref{first model system intro section}. We prove \eqref{lower order regular estimate intro} in Proposition~\ref{standard estimates regular components propositionn} and \eqref{lower order singular estimate intro} in Proposition~\ref{practical estimate for Phi Y proposition}. We also prove estimates for the commutator term $\mathcal{C}=\big(\nabla^M\Phi_0\big)_Y-\nabla\big(\nabla^{M-1}\Phi_0\big)_Y$ and $H^{1/2}$ estimates for $\nabla(\nabla^{M-1}\Phi_0)_Y$ and $\nabla_{\tau}(\nabla^{M-1}\Phi_0)_Y$ in Section~\ref{fractional estimates section}.

In Section~\ref{first system top order estimates section} we prove the top order estimates outlined in Section~\ref{first model system intro section}. We prove the low frequency regime estimates for the regular quantities in Section~\ref{first system low fre estimates section}, and the corresponding high frequency regime estimates in Section~\ref{first system high fre estimates section}. In Section~\ref{singular component outline section} we state the top order estimates for the singular component which are proved in \cite[Section~7]{Cmain}. Finally, we combine the estimates in Section~\ref{first system combined estimates section} to complete the proof of Theorem~\ref{main theorem first system}.

\textbf{Notation.} Unless otherwise noted, in this section we write $A\lesssim B$ for some quantities $A,B>0$ if there exists a constant $C>0$ depending only on the constants $M,C_0,C_1$ defined in the Introduction, such that $A\leq CB.$ 

\subsection{Construction of the singular component}\label{existence of singular component section}
In this section, we prove an existence and uniqueness result for solutions of \eqref{linear equation for singular component} with asymptotic data at $\mathcal{I}^-.$ In particular, this implies the existence and uniqueness of the singular component defined by \eqref{equation for Phi Y}.

We first remark that we frequently use $\nabla_{\tau}$ as a multiplier to obtain energy estimates. The following lemma implies that the additional terms resulting from differentiating the volume form or the metric can be controlled using Gronwall for $\tau\in(0,1].$ We point out that we usually bound these terms implicitly.

\begin{lemma}\label{volume form lemma}
    For any smooth horizontal tensor $F$ defined on $\mathcal{M},$ we have:
    \[\frac{1}{2}\frac{d}{d\tau}\big\|F\big\|_{L^2(S_{\tau})}^2=\int_{S_{\tau}}F\cdot\nabla_{\tau}FdVol_{\slashed{g}}+O\Big(\tau\big\|F\big\|_{L^2(S_{\tau})}^2\Big).\]
\end{lemma}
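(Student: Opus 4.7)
The plan is to differentiate $\|F\|_{L^2(S_\tau)}^2 = \int_{S^n} |F|^2 \sqrt{\det \slashed{g}}\, d\theta$ under the integral sign and organize the resulting three contributions: the derivative of the pointwise norm $|F|^2$ (which involves $\partial_\tau \slashed{g}^{AB}$ and $\partial_\tau F$), the derivative of the volume element $\sqrt{\det \slashed{g}}$, and isolate the main term $2\int F\cdot \nabla_\tau F\, dVol_{\slashed{g}}$ from lower-order corrections that scale like $\tau\chi \cdot |F|^2$.

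First I would use the relation $\chi = \mathcal{L}_{e_4}\slashed{g}$ together with $e_4 = \frac{1}{2\tau}\partial_\tau$ to deduce $\partial_\tau \slashed{g}_{AB} = 2\tau\chi_{AB}$, hence $\partial_\tau \slashed{g}^{AB} = -2\tau \chi^{AB}$ and $\partial_\tau \sqrt{\det \slashed{g}} = \tau \tr(\chi)\sqrt{\det \slashed{g}}$. Next, since $F$ is a horizontal tensor with components viewed as functions of $(\tau,\theta)$, I have $\partial_\tau F = \mathcal{L}_{\partial_\tau} F$, and applying \eqref{formula for lie time derivative} gives $\partial_\tau F = \nabla_\tau F - \tau \chi\cdot F$. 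Combining these pieces, the expansion of $\partial_\tau(|F|^2 \sqrt{\det \slashed{g}})$ produces the term $2F\cdot \nabla_\tau F\, \sqrt{\det \slashed{g}}$ plus remainders, each one a product of $\tau$, a component of $\chi$, $|F|^2$, and $\sqrt{\det\slashed{g}}$.

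Finally, to show the remainders are $O(\tau \|F\|_{L^2(S_\tau)}^2)$ I would bound $\|\chi\|_{L^\infty(S_\tau)}$ via Sobolev embedding on $S^n$ from the assumption \eqref{preliminary estimates for background psi}, which gives $\|\chi\|_{L^\infty(S_\tau)} \lesssim C_0$ since $N$ is chosen large enough relative to $n$; integrating pointwise bounds then yields the stated estimate. There is no genuine obstacle here, this is an accounting of the $\tau$-variation of the geometric quantities; the only care needed is that the Lie-derivative correction in $\partial_\tau F$ and the metric variations both combine into an $O(\tau|\chi||F|^2)$ error, which is already of the form claimed.
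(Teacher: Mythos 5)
Your proposal is correct and is essentially the paper's argument: a first-variation computation in which $\chi=\mathcal{L}_{e_4}\slashed{g}$ controls the $\tau$-derivative of the metric and volume form, the main term $\int F\cdot\nabla_{\tau}F$ is isolated, and the remaining $O(\tau|\chi||F|^2)$ contributions are bounded using the uniform control of $\chi$ from \eqref{preliminary estimates for background psi}. The only cosmetic difference is that the paper differentiates the scalar $|F|^2$ (after passing to a time variable in which $e_4$ is the coordinate vector field), so it never needs the Lie-versus-covariant correction \eqref{formula for lie time derivative} that you track explicitly; both routes produce the same error terms.
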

\begin{proof}
    We denote $v=\sqrt{\tau},$ and compute that $\partial_{\tau}=2\tau\partial_v$ and $e_4=\partial_v.$ Since $\chi=\mathcal{L}_{e_4}\slashed{g},$ we use the standard formula for $\frac{d}{dv}\int_{S^n}|F|^2(v)dVol_{\slashed{g}}$ to get:
    \[\frac{1}{2}\frac{d}{d\tau}\int_{S_{\tau}}|F|^2=\frac{1}{2}\int_{S_{\tau}}\nabla_{\tau}|F|^2+\int_{S_{\tau}}\tau\text{tr}\chi|F|^2=\int_{S_{\tau}}F\cdot\nabla_{\tau}F+O\Big(\tau\big\|F\big\|_{L^2(S_{\tau})}^2\Big).\]
\end{proof}

The following result implies that we can decompose the solution $\nabla^m\Phi_0$ into its regular and singular components, for each $m\leq M.$
\begin{proposition}\label{existence of singular component proposition}
    For any $\Phi^0,\Phi^1\in C^{\infty}(S^n)$ smooth tensors, there exists a unique solution on $\mathcal{M}$ of:
    \begin{equation}\label{linear equation for singular component}
        \nabla_{\tau}\big(\nabla_{\tau}\Phi\big)+\frac{1}{{\tau}}\nabla_{\tau}\Phi-4\Delta\Phi=\psi\nabla\Phi,
    \end{equation}
    \[\Phi({\tau})=\Phi^0\log({\tau})+\Phi^1+O\big({\tau}^2|\log({\tau})|^2\big),\ \nabla_{\tau}\Phi({\tau})=\frac{\Phi^0}{\tau}+O\big({\tau}|\log({\tau})|^2\big)\text{ in }C^{\infty}(S^n).\]
\end{proposition}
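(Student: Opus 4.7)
The approach is the standard one for asymptotic-data problems: construct a formal asymptotic series $\Phi_{\text{app}}$ matching the prescribed expansions at $\mathcal{I}^-$ to sufficient order, then obtain the exact solution as $\Phi = \Phi_{\text{app}} + \Psi$ where $\Psi$ solves an inhomogeneous version of \eqref{linear equation for singular component} with small forcing and vanishes rapidly at $\tau = 0$.

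First I would write an ansatz
\[
    \Phi_{\text{app}}(\tau) = \Phi^0\log\tau + \Phi^1 + \sum_{k=1}^N \tau^{2k}\sum_{j=0}^{2k}(\log\tau)^j A_{k,j},
\]
with coefficients $A_{k,j}$ horizontal tensors on $S^n$ to be determined, and let $\mathcal{L}\Phi := \nabla_\tau^2\Phi + \tau^{-1}\nabla_\tau\Phi - 4\Delta\Phi - \psi\nabla\Phi$ denote the operator in \eqref{linear equation for singular component}. The key observation is that $\mathcal{L}$ applied to a monomial $\tau^{2k}(\log\tau)^j$ with $k \geq 1$ produces a leading contribution proportional to $\tau^{2k-2}(\log\tau)^j$ with nonzero numerical coefficient, plus terms of lower order in $\log\tau$ at the same power of $\tau$ and terms at strictly higher powers of $\tau$ coming from the expansion of $\psi$ and $\slashed{g}$ near $\mathcal{I}^-$. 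This permits each $A_{k,j}$ to be solved for algebraically in terms of previously determined coefficients, $\Phi^0$, and $\Phi^1$; crucially, the leading cancellation $-\Phi^0/\tau^2 + \Phi^0/\tau^2 = 0$ at order $\tau^{-2}$ imposes no constraint on $\Phi^0$ or $\Phi^1$. Choosing $N$ large enough yields a residual
\[
    \big\|\mathcal{L}\Phi_{\text{app}}\big\|_{H^s(S_\tau)} \lesssim_{s,N} \tau^{2N}|\log\tau|^{2N+2}, \qquad s \geq 0.
\]

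For existence I would solve the Cauchy problem $\mathcal{L}\Psi = -\mathcal{L}\Phi_{\text{app}}$ on each slab $[\epsilon, 1] \times S^n$ with vanishing Cauchy data at $\tau = \epsilon$ via standard hyperbolic theory, and then pass to the limit $\epsilon \to 0$. Multiplying by $\nabla_\tau\Psi$, integrating over $S_\tau$ using Lemma~\ref{volume form lemma}, and integrating by parts the Laplacian term, one obtains the energy identity
\[
    \frac{d}{d\tau}\Big(\big\|\nabla_\tau\Psi\big\|_{L^2}^2 + 4\big\|\nabla\Psi\big\|_{L^2}^2\Big) + \frac{2}{\tau}\big\|\nabla_\tau\Psi\big\|_{L^2}^2 \lesssim \big\|\mathcal{L}\Phi_{\text{app}}\big\|_{L^2}^2 + \big\|\Psi\big\|_{H^1}^2,
\]
where the positive sign of the $\tau^{-1}$ damping term is essential. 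Gronwall combined with the residual bound yields $\epsilon$-uniform $H^1$ control on $\Psi^\epsilon$ with sharp decay as $\tau \to 0$. Higher-order estimates follow by commuting $\mathcal{L}$ with $\nabla^m$ and $\nabla_\tau$ using \eqref{commutation formula 1}--\eqref{commutation formula 2}, so that by Sobolev embedding one recovers $C^\infty(S^n)$ bounds. A Cauchy-sequence argument in $\epsilon$ produces a limit $\Psi$ on $(0,1] \times S^n$ with the required vanishing at $\mathcal{I}^-$, and $\Phi := \Phi_{\text{app}} + \Psi$ has the desired asymptotic expansion.

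For uniqueness, given two solutions $\Phi_1, \Phi_2$ with the same asymptotic data, the difference $\Psi := \Phi_1 - \Phi_2$ solves the homogeneous equation and satisfies $\Psi = O(\tau^2|\log\tau|^2)$, $\nabla_\tau\Psi = O(\tau|\log\tau|^2)$ in $C^\infty(S^n)$. The same energy identity with zero source, integrated from $\epsilon$ to any $\tau \in (0,1]$ and combined with Gronwall, yields $E(\tau) \leq C E(\epsilon)$; the decay hypotheses force $E(\epsilon) \to 0$ as $\epsilon \to 0$, so $\Psi \equiv 0$. The main obstacle I anticipate is the bookkeeping in the asymptotic recursion, specifically tracking the logarithmic powers generated at each step and handling the schematic $\psi$ factors (which carry their own $\tau$-expansions coming from \eqref{preliminary estimates for background metric}--\eqref{preliminary estimates for background psi}), so that the residual estimate truly closes at each order in $(\tau,\log\tau)$. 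Once this combinatorial step is secured, the energy estimate is a routine consequence of the favorable $\tau^{-1}$ damping.
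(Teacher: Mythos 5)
Your overall skeleton is the same as the paper's: subtract an approximate solution, solve the remainder on $[\epsilon,1]\times S^n$ with vanishing data at $\tau=\epsilon$ using the $\nabla_{\tau}$ multiplier (the $+\tfrac{1}{\tau}\nabla_{\tau}\Phi$ term has the favorable sign in this direction), apply Gronwall, pass to the limit $\epsilon\to0$, and prove uniqueness by the same energy estimate for a difference of solutions with identical asymptotics. The one genuine difference is the choice of approximate solution. You build a high-order polyhomogeneous series $\Phi_{\mathrm{app}}$ with residual $O(\tau^{2N}|\log\tau|^{2N+2})$, which is more than the statement requires and is exactly where your anticipated difficulties live: the recursion needs the background $\slashed{g},\psi$ to be polyhomogeneous to arbitrary order (true qualitatively by the cited Fefferman--Graham theory, but not provided by the quantitative assumptions \eqref{preliminary estimates for background metric}--\eqref{preliminary estimates for background psi}), and you must specify how the coefficient tensors $\Phi^0,\Phi^1,A_{k,j}$ are extended off $\mathcal{I}^-$, since $\nabla_{\tau}$ of a Lie-transported tensor is not zero but $\tau\chi\cdot(\cdot)$ by \eqref{formula for lie time derivative}; these terms must be fed into the recursion and into the residual bound. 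The paper sidesteps all of this by subtracting only the explicit leading part, $\widetilde{\Phi}=\Phi-\Phi^0\log\tau-\Phi^1$: the resulting forcing is merely $O_k(1+|\log\tau|)$, and a two-step argument (energy estimate linear in $\|\nabla_{\tau}\widetilde{\Phi}\|_{H^k}$, then taking the supremum in $\tau$ to get $\|\nabla_{\tau}\widetilde{\Phi}\|_{H^k}=O(\tau(1+|\log\tau|))$ and integrating once more) already yields the required $O(\tau^2|\log\tau|^2)$ remainder, with the limit taken by weak compactness at the cost of finitely many derivatives. So your argument can be made to work, but the formal-series step you identify as the main obstacle is avoidable; if you keep it, the residual estimate and the treatment of the $\tau$-dependence of the extended coefficients are the points that must be carried out in full, and your schematic energy identity should be stated with the full energy (including $\|\nabla_{\tau}\Psi\|_{L^2}^2$) on the right before invoking Gronwall.
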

\begin{proof}
    It suffices to prove that for any $K>0$ there exists a unique solution of (\ref{linear equation for singular component}) such that the above expansions hold in $H^K(S^n).$ We introduce the quantity $\widetilde{\Phi}=\Phi-\Phi^0\log({\tau})-\Phi^1,$ which satisfies the equation:
    \begin{equation}\label{linear equation for renormalized singular component}
        \nabla_{\tau}\big(\nabla_{\tau}\widetilde{\Phi}\big)+\frac{1}{{\tau}}\nabla_{\tau}\widetilde{\Phi}-4\Delta\widetilde{\Phi}=\psi\nabla\widetilde{\Phi}+F_1(\Phi^0,\Phi^1)\cdot\log(\tau)+F_2(\Phi^0,\Phi^1).
    \end{equation}
    where $F_1$ and $F_2$ are bounded functions of $\Phi^0,\Phi^1,$ and their angular derivatives. To obtain this equation we use the fact that $\Phi^0$ and $\Phi^1$ are Lie transported in time, so by \eqref{formula for lie time derivative} we have $\nabla_{\tau}\Phi^0=\tau\chi\cdot\Phi_0$ and $\nabla_{\tau}\Phi^1=\tau\chi\cdot\Phi_1.$
    For any $k\geq0$ we obtain the commuted equation:
    \[\nabla_{\tau}\big(\nabla^k\nabla_{\tau}\widetilde{\Phi}\big)+\frac{1}{{\tau}}\nabla^k\nabla_{\tau}\widetilde{\Phi}-4\Delta\nabla^k\widetilde{\Phi}=\psi\nabla\nabla^k\widetilde{\Phi}+F_1^k(\Phi^0,\Phi^1)\cdot\log(\tau)+F_2^k(\Phi^0,\Phi^1)+F_3^k(\widetilde{\Phi}),\]
    where we denote the error terms:
    \[F_1^k(\Phi^0,\Phi^1)=\nabla^k\big(F_1(\Phi^0,\Phi^1)\big),\ F_2^k(\Phi^0,\Phi^1)=\nabla^k\big(F_2(\Phi^0,\Phi^1)\big),\ F_3^k(\widetilde{\Phi})=[\nabla_{\tau},\nabla^k]\nabla_{\tau}\widetilde{\Phi}-4[\Delta,\nabla^k]\widetilde{\Phi}-[\psi\nabla,\nabla^k]\widetilde{\Phi}.\]
    We remark that using the smoothness of $\Phi^0,\Phi^1,$ and the background metric $\slashed{g}$, we obtain that: \[F_1^k(\Phi^0,\Phi^1)\cdot\log(\tau)+F_2^k(\Phi^0,\Phi^1)=O_k\big(1+|\log\tau|\big).\]
    
    We define $\widetilde{\Phi}_{\epsilon}$ to be the solution to (\ref{linear equation for renormalized singular component}) on $[\epsilon,1]\times S^n$ with initial data $\widetilde{\Phi}_{\epsilon}|_{\tau=\epsilon}=\nabla_{\tau}\widetilde{\Phi}_{\epsilon}|_{\tau=\epsilon}=0.$ Contracting the commuted equation for $\widetilde{\Phi}_{\epsilon}$ with $\nabla^k\nabla_{\tau}\widetilde{\Phi}_{\epsilon}$, we obtain the standard energy estimate:
    \[\big\|\nabla^k\nabla_{\tau}\widetilde{\Phi}_{\epsilon}\big\|^2_{L^2}+\big\|\nabla\nabla^k\widetilde{\Phi}_{\epsilon}\big\|^2_{L^2}\lesssim_k\int_{\epsilon}^{\tau}\big\|\nabla^k\nabla_{\tau}\widetilde{\Phi}_{\epsilon}\big\|_{L^2}\big\|\nabla \nabla^k\widetilde{\Phi}_{\epsilon}\big\|_{L^2}+\int_{\epsilon}^{\tau}\big\|\nabla \nabla^k\widetilde{\Phi}_{\epsilon}\big\|_{L^2}\big\|[\nabla^{k+1},\nabla_{\tau}]\widetilde{\Phi}_{\epsilon}\big\|_{L^2}+\]\[+\int_{\epsilon}^{\tau}|\log(\tau')|\cdot\big\|\nabla^k\nabla_{\tau}\widetilde{\Phi}_{\epsilon}\big\|_{L^2}\big\|F_1^k(\Phi_0,\Phi_1)\big\|_{L^2}+\int_{\epsilon}^{\tau}\big\|\nabla^k\nabla_{\tau}\widetilde{\Phi}_{\epsilon}\big\|_{L^2}\big\|F_2^k(\Phi_0,\Phi_1)\big\|_{L^2}+\int_{\epsilon}^{\tau}\big\|\nabla^k\nabla_{\tau}\widetilde{\Phi}_{\epsilon}\big\|_{L^2}\big\|F_3^k(\widetilde{\Phi}_{\epsilon})\big\|_{L^2}\]
    We notice that in the above estimate we dropped the bulk term with a favorable sign. Also, it is essential that $\widetilde{\Phi}_{\epsilon}$ vanishes to sufficiently high order at $\tau=0$ in order to not have any initial data contribution. Moreover, due to Lemma \ref{volume form lemma}, we also have the terms 
    $\int_{\epsilon}^{\tau}\tau'\|\nabla\nabla^k\widetilde{\Phi}_{\epsilon}\|_{L^2}^2$ and $\int_{\epsilon}^{\tau}\tau'\|\nabla^k\nabla_{\tau}\widetilde{\Phi}_{\epsilon}\|_{L^2}^2$ on the RHS, but these can be bounded using Gronwall. 

    Using the higher order version of the commutation formula \eqref{commutation formula 1}, the smoothness of the background metric $\slashed{g}$, and the Gronwall inequality, we obtain the estimate:
    \[\big\|\nabla^k\nabla_{\tau}\widetilde{\Phi}_{\epsilon}\big\|^2_{L^2}+\big\|\nabla\nabla^k\widetilde{\Phi}_{\epsilon}\big\|^2_{L^2}\lesssim_k\int_{\epsilon}^{\tau}\big\|\widetilde{\Phi}_{\epsilon}\big\|_{H^{k+1}}^2d\tau'+\int_{\epsilon}^{\tau}\big\|\nabla_{\tau}\widetilde{\Phi}_{\epsilon}\big\|_{H^{k}}^2d\tau'+\int_{\epsilon}^{\tau}\big\|\nabla_{\tau}\widetilde{\Phi}_{\epsilon}\big\|_{H^{k}}\cdot\big(1+|\log\tau'|\big)d\tau'.\]
    where the implicit constant depends on $k\geq0.$ On the other hand, using Lemma \ref{volume form lemma}, Gronwall, and the commutation formulas, we also have:
    \[\big\|\widetilde{\Phi}_{\epsilon}\big\|^2_{H^k}\lesssim_k\int_{\epsilon}^{\tau}\big\|\widetilde{\Phi}_{\epsilon}\big\|_{H^{k}}^2d\tau'+\int_{\epsilon}^{\tau}\big\|\nabla_{\tau}\widetilde{\Phi}_{\epsilon}\big\|_{H^{k}}^2d\tau'.\]
    We use our previous two estimates and Gronwall to obtain that for any $\tau\in[\epsilon,1]$:
    \[\big\|\nabla_{\tau}\widetilde{\Phi}_{\epsilon}\big\|^2_{H^k}+\big\|\widetilde{\Phi}_{\epsilon}\big\|^2_{H^{k+1}}\lesssim_k\int_{\epsilon}^{\tau}\big\|\nabla_{\tau}\widetilde{\Phi}_{\epsilon}\big\|_{H^{k}}\cdot\big(1+|\log\tau'|\big)d\tau'.\]
    By taking the supremum on $[\epsilon,\tau]$ in the above inequality for each $\tau\in[\epsilon,1]$, we obtain that:
    \[\big\|\nabla_{\tau}\widetilde{\Phi}_{\epsilon}\big\|_{H^k}=O_k\big(\tau(1+|\log\tau|)\big).\]
    We use this and a similar argument for $\big\|\widetilde{\Phi}_{\epsilon}\big\|_{H^{k}}$ to obtain the bound:
    \[\big\|\widetilde{\Phi}_{\epsilon}\big\|_{H^{k}}=O_k\big(\tau^2(1+|\log\tau|)^2\big).\]
    Using the Banach-Alaoglu theorem and compactness, we obtain that for $K\ll k$ there exists $\widetilde{\Phi}$ a solution of (\ref{linear equation for renormalized singular component}) on $\mathcal{M}$ such that:
    \[\big\|\nabla_{\tau}\widetilde{\Phi}\big\|_{H^K}=O_K\big(\tau(1+|\log\tau|)^2\big),\ \big\|\widetilde{\Phi}\big\|_{H^{K}}=O_K\big(\tau^2(1+|\log\tau|)^2\big).\]
    As a result, we obtain that $\Phi=\widetilde{\Phi}+\Phi^0\log({\tau})+\Phi^1$ is a solution of (\ref{linear equation for singular component}) and satisfies the desired expansions. Finally, we remark that uniqueness follows by using our standard energy estimate on $[0,\tau]$ for the difference of two solutions with the same asymptotic expansion.
\end{proof}

\subsection{Lower order estimates}\label{lower order estimates section}

The goal of this section is to establish estimates that are lower order in terms of the number of angular derivatives. We point out that the estimates of this section are not sharp, but it is essential that we use only the quantities that appear on the right hand side of the estimates in Theorem~\ref{main theorem first system}.

The lower order estimates are carried out in two parts. First, we prove estimates for an integer number of angular derivatives $m<M,$ establishing \eqref{lower order regular estimate intro}, \eqref{lower order singular estimate intro}, and the bounds for the commutator term $\mathcal{C}.$ We then also prove estimates for $M-\frac{1}{2}$ derivatives of the singular component $\big(\Phi_0\big)_Y.$

\subsubsection{Standard estimates}

We first prove the lower order estimates for the singular component using the strategy outlined in Section~\ref{first model system intro section}. We further decompose for every $m<M$:
\[\big(\nabla^m\Phi_0\big)_Y=\big(\nabla^m\Phi_0\big)_Y^1+\big(\nabla^m\Phi_0\big)_Y^2,\]
where using Proposition \ref{existence of singular component proposition} we have that $\big(\nabla^m\Phi_0\big)_Y^1,\ \big(\nabla^m\Phi_0\big)_Y^2$ are the solutions of (\ref{equation for Phi Y}) such that:
\begin{align*}
    &\big(\nabla^m\Phi_0\big)_Y^1({\tau})=2\nabla^m\mathcal{O}\log({\tau})+ O\big({\tau}^2|\log({\tau})|^2\big),\ \nabla_{\tau}\big(\nabla^m\Phi_0\big)_Y^1({\tau})=\frac{2\nabla^m\mathcal{O}}{\tau}+ O\big({\tau}|\log({\tau})|^2\big),\\
    &\big(\nabla^m\Phi_0\big)_Y^2({\tau})=2(\log\nabla)\nabla^m\mathcal{O}+ O\big({\tau}^2|\log({\tau})|^2\big),\ \nabla_{\tau}\big(\nabla^m\Phi_0\big)_Y^2({\tau})=O\big({\tau}|\log({\tau})|^2\big).
\end{align*}
For convenience of notation, we often write $\nabla^m\Phi_{0Y}^1$ and $\nabla^m\Phi_{0Y}^2$ instead of  $\big(\nabla^m\Phi_0\big)_Y^1$ and $\big(\nabla^m\Phi_0\big)_Y^2$. Using this decomposition, we prove the following lower order estimates on the singular component:
\begin{proposition}\label{practical estimate for Phi Y proposition}
Set $\eta=1/10$. The singular component satisfies the following estimates for any $m<M$:
\begin{align*}
    \bigg\|\nabla_{\tau}\frac{\nabla^m\Phi_{0Y}^1}{\log\tau}\bigg\|^2_{L^2}+\bigg\|\frac{\nabla^m\Phi_{0Y}^1}{\log\tau}\bigg\|_{H^1}^2&\lesssim\big\|\mathcal{O}\big\|^2_{H^{m+1}}\text{ for }\tau\in\bigg(0,\frac{1}{2}\bigg],\\
    \big\|\nabla_{\tau}\nabla^m\Phi_{0Y}^1\big\|^2_{L^2}+\big\|\nabla^m\Phi_{0Y}^1\big\|^2_{H^1}&\lesssim\big\|\mathcal{O}\big\|^2_{H^{m+1}}\text{ for }\tau\in\bigg[\frac{1}{2},1\bigg],\\
    \big\|\nabla_{\tau}\nabla^m\Phi_{0Y}^2\big\|^2_{L^2}+\big\|\nabla^m\Phi_{0Y}^2\big\|^2_{H^1}&\lesssim\big\|\mathcal{O}\big\|^2_{H^{m+1+\eta}}\text{ for }\tau\in(0,1].
\end{align*}
In particular, the singular component satisfies the estimate \eqref{lower order singular estimate intro} for any $m<M$:
    \[\big\|\nabla^m\Phi_{0Y}\big\|^2_{H^1}\lesssim\big(1+|\log\tau|^2\big)\big\|\mathcal{O}\big\|^2_{H^{m+1+\eta}}.\]
\end{proposition}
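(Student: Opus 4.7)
The plan is to treat the three bullet estimates separately, each by applying a $\nabla_\tau$ multiplier to an appropriately renormalized quantity so that the initial energy at $\tau=0$ is finite, following the approach of \cite{linearwave}. Once the bounds on $\nabla^m\Phi_{0Y}^1$ and $\nabla^m\Phi_{0Y}^2$ are in hand, the final combined estimate for $\nabla^m\Phi_{0Y}$ follows by the triangle inequality and Lemma~\ref{bound on log nabla lemma}.

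For the first bullet, I would introduce $u := \nabla^m\Phi_{0Y}^1/\log\tau$ on $(0,1/2]$. Using the prescribed expansions at $\mathcal{I}^-$, a direct calculation shows that $u \to 2\nabla^m\mathcal{O}$ and $\nabla_\tau u = O(\tau|\log\tau|)$ as $\tau \to 0^+$, so the initial energy is finite and controlled by $\|\mathcal{O}\|_{H^{m+1}}^2$. Substituting $\nabla^m\Phi_{0Y}^1 = u\log\tau$ into \eqref{equation for Phi Y} and dividing through by $\log\tau$ shows that $u$ satisfies
\begin{equation*}
    \nabla_\tau(\nabla_\tau u) + \frac{1}{\tau}\nabla_\tau u + \frac{2}{\tau\log\tau}\nabla_\tau u - 4\Delta u = \psi\nabla u.
\end{equation*}
Contracting against $\nabla_\tau u$, integrating over $S_\tau$ (using Lemma~\ref{volume form lemma} to handle the volume form), and integrating in $\tau$ from $0$ to the target time would then yield an energy identity whose bulk coefficient in front of $\|\nabla_\tau u\|_{L^2}^2$ is $\frac{1}{\tau}\bigl(1 + \frac{2}{\log\tau}\bigr)$. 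The $\psi\nabla u$ source term is absorbed into the energy by Cauchy--Schwarz, the commutators $[\nabla_\tau,\nabla]$ arising on integration by parts are $O(\tau)$ by \eqref{commutation formula 1}, and Gronwall closes the estimate.

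For the second bullet, I would use the endpoint control from the first bullet, $\|\nabla^m\Phi_{0Y}^1\|_{H^1}^2(1/2) \lesssim \|\mathcal{O}\|_{H^{m+1}}^2$, as initial data at $\tau=1/2$ and propagate forward to $\tau=1$ via a direct $\nabla_\tau$ multiplier in \eqref{equation for Phi Y}; on $[1/2,1]$ the coefficient $\frac{1}{\tau}$ is bounded and this propagation is routine. For the third bullet, the data at $\tau=0$ is $2(\log\nabla)\nabla^m\mathcal{O}$ with vanishing $\nabla_\tau$ part, so by Lemma~\ref{bound on log nabla lemma} the initial energy is controlled by $\|\mathcal{O}\|_{H^{m+1+\eta}}^2$, which accounts for the $\eta$ loss. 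A $\nabla_\tau$ multiplier in \eqref{equation for Phi Y}, integrated from $0$ to $\tau$ and combined with Gronwall, gives the bound on $(0,1]$.

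The main obstacle I expect is controlling the variable-sign bulk term $\frac{2}{\tau\log\tau}\|\nabla_\tau u\|_{L^2}^2$ in the energy identity for $u$. The combined coefficient $\frac{1}{\tau}\bigl(1 + \frac{2}{\log\tau}\bigr)$ is nonnegative on $(0,e^{-2})$ where it can simply be discarded, but becomes negative on $[e^{-2},1/2]$; on this short interval the coefficient is bounded in absolute value, so its contribution is absorbed by Gronwall. Finally, for the combined bound on $\nabla^m\Phi_{0Y} = \nabla^m\Phi_{0Y}^1 + \nabla^m\Phi_{0Y}^2$, the inequality $\|u\log\tau\|_{H^1}\leq|\log\tau|\,\|u\|_{H^1}$ together with the three bullets gives the $(1+|\log\tau|^2)\|\mathcal{O}\|_{H^{m+1+\eta}}^2$ estimate for $\tau \in (0,1/2]$; on $[1/2,1]$ the logarithmic factor is bounded, so the estimate follows directly from the second and third bullets.
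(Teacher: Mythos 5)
Your proposal is correct and follows essentially the same route as the paper: the same renormalization $u=\nabla^m\Phi_{0Y}^1/\log\tau$ with the identical equation and the same treatment of the variable-sign bulk coefficient $\frac{1}{\tau}\big(1+\frac{2}{\log\tau}\big)$ (favorable sign near $\tau=0$, bounded and Gronwall-absorbed on the short interval up to $\tau=1/2$; the paper cuts at $\tau=1/10$ rather than $e^{-2}$), the same $\nabla_\tau$-multiplier propagation from data at $\tau=1/2$ for the second bullet and from $\tau=0$ with Lemma~\ref{bound on log nabla lemma} for the third, and the same recombination via $\nabla^m\Phi_{0Y}=\nabla^m\Phi_{0Y}^1+\nabla^m\Phi_{0Y}^2$. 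The only step left implicit is the standard transport estimate recovering the $L^2$ norm of the solution itself (the $\nabla_\tau$ multiplier alone controls only $\|\nabla_\tau u\|_{L^2}$ and $\|\nabla u\|_{L^2}$), which the paper carries out explicitly but is routine.
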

\begin{proof}
    We start with the estimate for $\nabla^m\Phi_{0Y}^2.$ Using the standard $\nabla_{\tau}$ multiplier in \eqref{equation for Phi Y}, we get for $\tau\in(0,1]$:
    \begin{align*}
        \big\|\nabla_{\tau}\nabla^m\Phi_{0Y}^2\big\|^2_{L^2}+\big\|\nabla \nabla^m\Phi_{0Y}^2\big\|^2_{L^2}&\lesssim\big\|\nabla \nabla^m\Phi_{0Y}^2\big\|^2_{L^2}\big|_{\tau=0}+\int_0^{\tau}\big\|\nabla_{\tau}\nabla^m\Phi_{0Y}^2\big\|_{L^2}\big\|\nabla\nabla^m\Phi_{0Y}^2\big\|_{L^2}d\tau'+\\
        &+\int_0^{\tau}\big\|\nabla\nabla^m\Phi_{0Y}^2\big\|_{L^2}\big\|[\nabla,\nabla_{\tau}]\nabla^m\Phi_{0Y}^2\big\|_{L^2}d\tau'.
    \end{align*}
    We notice that due to Lemma \ref{volume form lemma}, we also have the terms 
    $\int_0^{\tau}\tau'\|\nabla\nabla^m\Phi_{0Y}^2\|_{L^2}^2$ and $\int_0^{\tau}\tau'\|\nabla_{\tau}\nabla^m\Phi_{0Y}^2\|_{L^2}^2$ on the RHS, but these can be bounded using Gronwall. By the commutation formula \eqref{commutation formula 1}, Lemma \ref{bound on log nabla lemma}, and Gronwall, we get:
    \[\big\|\nabla_{\tau}\nabla^m\Phi_{0Y}^2\big\|^2_{L^2}+\big\|\nabla \nabla^m\Phi_{0Y}^2\big\|^2_{L^2}\lesssim\big\|\mathcal{O}\big\|^2_{H^{m+1+\eta}}+\int_0^{\tau}\big\|\nabla^m\Phi_{0Y}^2\big\|^2_{L^2}d\tau'.\]
    On the other hand, we also have the estimate:
    \begin{align*}
        \big\|\nabla^m\Phi_{0Y}^2\big\|^2_{L^2}&\lesssim\big\|\mathcal{O}\big\|^2_{H^{m+\eta}}+\int_0^{\tau}\big\|\nabla^m\Phi_{0Y}^2\big\|_{L^2}\big\|\nabla_{\tau}\nabla^m\Phi_{0Y}^2\big\|_{L^2}d\tau'+\int_0^{\tau}\big\|\nabla^m\Phi_{0Y}^2\big\|^2_{L^2}d\tau'\\
        &\lesssim\big\|\mathcal{O}\big\|^2_{H^{m+\eta}}+\int_0^{\tau}\big\|\nabla_{\tau}\nabla^m\Phi_{0Y}^2\big\|^2_{L^2}d\tau'.
    \end{align*}
    Combining the last two bounds, we obtain the desired estimate for $\big(\nabla^m\Phi_{0}\big)_Y^2.$

    We notice that $\nabla^m\Phi_{0Y}^1$ satisfies the equation:
    \[\nabla_{\tau}\bigg(\nabla_{\tau}\frac{\nabla^m\Phi_{0Y}^1}{\log\tau}\bigg)+\frac{1}{\tau}\bigg(1+\frac{2}{\log\tau}\bigg)\nabla_{\tau}\frac{\nabla^m\Phi_{0Y}^1}{\log\tau}-4\Delta\frac{\nabla^m\Phi_{0Y}^1}{\log\tau}=\psi\nabla\frac{\nabla^m\Phi_{0Y}^1}{\log\tau}.\]
    We use $\nabla_{\tau}$ as a multiplier to obtain the estimate for any $\tau\in(0,1/2]$:
    \begin{align*}
        \bigg\|\nabla_{\tau}\frac{\nabla^m\Phi_{0Y}^1}{\log\tau}\bigg\|^2_{L^2}+\bigg\|\nabla\frac{\nabla^m\Phi_{0Y}^1}{\log\tau}\bigg\|^2_{L^2}\lesssim\bigg\|\nabla\frac{\nabla^m\Phi_{0Y}^1}{\log\tau}\bigg\|^2_{L^2}\bigg|_{\tau=0}+\int_0^{\tau}\frac{\textbf{1}_{[1/10,1/2]}}{\tau'|\log\tau'|}\bigg\|\nabla_{\tau}\frac{\nabla^m\Phi_{0Y}^1}{\log\tau}\bigg\|^2_{L^2}d\tau'+&\\
        +\int_0^{\tau}\bigg\|\nabla_{\tau}\frac{\nabla^m\Phi_{0Y}^1}{\log\tau}\bigg\|_{L^2}\bigg\|\nabla\frac{\nabla^m\Phi_{0Y}^1}{\log\tau}\bigg\|_{L^2}d\tau'+\int_0^{\tau}\bigg\|\nabla\frac{\nabla^m\Phi_{0Y}^1}{\log\tau}\bigg\|_{L^2}\bigg\|[\nabla,\nabla_{\tau}]\frac{\nabla^m\Phi_{0Y}^1}{\log\tau}\bigg\|_{L^2}d\tau'&.
    \end{align*}
    We point out that the error term $\frac{1}{\tau'|\log\tau'|}\big\|\nabla_{\tau}\nabla^m\Phi_{0Y}^1/\log\tau\big\|^2_{L^2}\cdot\textbf{1}_{[1/10,1/2]}$ appears on the right hand side because for $\tau\in[0,1/10]$ we have $1+2/\log\tau\gtrsim1,$ so the bulk term has a favorable sign. The error terms are estimated as usual, and we use Gronwall to obtain:
    \[\bigg\|\nabla_{\tau}\frac{\nabla^m\Phi_{0Y}^1}{\log\tau}\bigg\|^2_{L^2}+\bigg\|\nabla\frac{\nabla^m\Phi_{0Y}^1}{\log\tau}\bigg\|^2_{L^2}\lesssim\big\|\mathcal{O}\big\|^2_{H^{m+1}}+\int_0^{\tau}\bigg\|\frac{\nabla^m\Phi_{0Y}^1}{\log\tau}\bigg\|_{L^2}^2d\tau'.\]
    We also have the estimate:
    \[\bigg\|\frac{\nabla^m\Phi_{0Y}^1}{\log\tau}\bigg\|_{L^2}^2\lesssim\big\|\mathcal{O}\big\|^2_{H^{m}}+\int_0^{\tau}\bigg\|\frac{\nabla^m\Phi_{0Y}^1}{\log\tau}\bigg\|_{L^2}\bigg\|\nabla_{\tau}\frac{\nabla^m\Phi_{0Y}^1}{\log\tau}\bigg\|_{L^2}+\int_0^{\tau}\bigg\|\frac{\nabla^m\Phi_{0Y}^1}{\log\tau}\bigg\|_{L^2}^2\lesssim\big\|\mathcal{O}\big\|^2_{H^{m}}+\int_0^{\tau}\bigg\|\nabla_{\tau}\frac{\nabla^m\Phi_{0Y}^1}{\log\tau}\bigg\|_{L^2}^2.\]
    Combining the last two bounds, we obtain the desired estimate for $\big(\nabla^m\Phi_{0}\big)_Y^1$ on $\tau\in(0,1/2].$

    In the case of $\nabla^m\Phi_{0Y}^1$ for $\tau\in[1/2,1],$ we use the same estimates as for $\nabla^m\Phi_{0Y}^2$, but with data at $\tau=1/2.$ This allows us to obtain the desired estimates for $\big(\nabla^m\Phi_{0}\big)_Y^1$ on $\tau\in[1/2,1].$
\end{proof}

\begin{remark}
    We notice that the only place in the above proof where we use an inequality that is not sharp is when bounding the $(\log\nabla)$ operator using Lemma \ref{bound on log nabla lemma}. In order to prove top order estimates, we will take a different approach to avoid this issue in Section~\ref{first system top order estimates section}.
\end{remark}

Next, we prove the following lower order estimates for the regular components, establishing \eqref{lower order regular estimate intro}:
\begin{proposition}\label{standard estimates regular components propositionn}
Set $\eta=1/10.$ For any $m<M$, we have the following estimate:
\[\big\|\nabla_{\tau}\nabla^m\Phi_{0J}\big\|^2_{L^2}+\big\|\nabla^m\Phi_{0J}\big\|^2_{H^1}+\sum_{i=1}^I\big\|\nabla_{\tau}\nabla^m\Phi_i\big\|^2_{L^2}+\sum_{i=1}^I\big\|\nabla^m\Phi_i\big\|^2_{H^1}\lesssim\]
\[\lesssim\big\|\mathfrak{h}\big\|^2_{H^{m+1}}+\sum_{i=1}^I\big\| \Phi_i^0\big\|^2_{H^{m+1}}+\big\|\mathcal{O}\big\|^2_{H^{m+1+\eta}}+\sum_{i=0}^I\int_0^{\tau}\big\|F_m^i\big\|_{L^2}^2d\tau'.\]
\end{proposition}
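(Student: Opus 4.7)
The plan is to set up a joint energy estimate for the coupled system $\{(\nabla^m\Phi_0)_J,\nabla^m\Phi_1,\ldots,\nabla^m\Phi_I\}$ by contracting each of \eqref{equation for Phi J} and \eqref{equation for Phi i} with the corresponding $\nabla_{\tau}$-derivative and summing, performing an induction on $m$ so that strictly lower-order commutator errors are absorbed by the inductive hypothesis. The favorable sign $\sigma=1$ means both equations carry $+\tfrac{1}{\tau}\nabla_{\tau}$, so the $\nabla_{\tau}$-multiplier produces a bulk term $\tfrac{1}{\tau}\|\nabla_{\tau}(\cdot)\|_{L^2}^2$ with nonnegative sign on the LHS, which we simply discard. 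The natural energy to control is
$$E_m(\tau) := \big\|\nabla_{\tau}(\nabla^m\Phi_0)_J\big\|_{L^2}^2 + \big\|(\nabla^m\Phi_0)_J\big\|_{H^1}^2 + \sum_{i=1}^I\Bigl(\big\|\nabla_{\tau}\nabla^m\Phi_i\big\|_{L^2}^2 + \big\|\nabla^m\Phi_i\big\|_{H^1}^2\Bigr),$$
and the volume-form $\tau$-corrections from Lemma \ref{volume form lemma} produce only a harmless $\tau' E_m$ contribution absorbed by Gronwall.

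The initial data at $\tau=0$ is read off from the prescribed expansions: $\nabla_{\tau}(\nabla^m\Phi_0)_J|_{\tau=0}=0$ and $\nabla_{\tau}\nabla^m\Phi_i|_{\tau=0}=0$ (from the $O(\tau|\log\tau|^2)$ remainders), with spatial values $\mathfrak{h}_m$ and $\nabla^m\Phi_i^0$ respectively. Writing $\mathfrak{h}_m-\nabla^m\mathfrak{h} = -2[\log\nabla,\nabla^m]\mathcal{O}$ and invoking Lemma \ref{bound on log nabla lemma} yields $\|\mathfrak{h}_m\|_{H^1}^2 \lesssim \|\mathfrak{h}\|_{H^{m+1}}^2 + \|\mathcal{O}\|_{H^{m+1+\eta}}^2$, which supplies the first and third stated terms on the RHS. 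For the source terms I would decompose $\nabla^{m+1}\Phi_0 = \nabla(\nabla^m\Phi_0)_J + \nabla(\nabla^m\Phi_0)_Y$ wherever it appears in the RHS of \eqref{equation for Phi i}: every $\psi\nabla(\nabla^m\Phi_0)_J$ piece, together with every $\psi\nabla^{m+1}\Phi_j$ with $j\geq 1$ (rewritten as $\psi\nabla(\nabla^m\Phi_j)$ up to lower-order commutators), is controlled by Cauchy-Schwarz against $\sqrt{E_m}$ and closes by Gronwall.

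The only term not handled by $E_m$ itself is the singular contribution $\psi\nabla(\nabla^m\Phi_0)_Y$ arising from the $\Phi_i$ equations, and this is where Proposition \ref{practical estimate for Phi Y proposition} is invoked: at angular order $m$ it gives
$$\int_0^{\tau}\big\|\nabla(\nabla^m\Phi_0)_Y\big\|_{L^2}^2\,d\tau' \lesssim \big\|\mathcal{O}\big\|_{H^{m+1+\eta}}^2\int_0^{\tau}\bigl(1+|\log\tau'|^2\bigr)d\tau' \lesssim \big\|\mathcal{O}\big\|_{H^{m+1+\eta}}^2,$$
matching the stated dependence on $\mathcal{O}$, while Cauchy-Schwarz on the $F_m^i$ pieces yields the final $\sum_{i=0}^I\int_0^{\tau}\|F_m^i\|_{L^2}^2\,d\tau'$ contribution. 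The main nuisance will be the bookkeeping of commutators $[\nabla^m,\nabla_{\tau}]$, $[\nabla^m,\Delta]$, and $[\nabla^m,\psi\nabla]$; their top-order content is absorbed by the background bounds \eqref{preliminary estimates for background metric}--\eqref{preliminary estimates for background psi}, and their strictly lower-order content by the inductive hypothesis. Since we are restricted to $m<M$, no Littlewood-Paley refinement is required at this stage.
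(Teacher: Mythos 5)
Your proposal is correct and follows essentially the same route as the paper: the $\nabla_{\tau}$ multiplier with the favorable $\sigma=1$ bulk term dropped, the data term $\mathfrak{h}_m$ controlled through the $[\log\nabla,\nabla^m]\mathcal{O}$ commutator, the singular contribution $\psi\nabla\big(\nabla^m\Phi_0\big)_Y$ in the $\Phi_i$ equations absorbed via Proposition~\ref{practical estimate for Phi Y proposition} together with the integrability of $1+|\log\tau'|^2$, and Gronwall to close. The only cosmetic differences are that the paper runs the two energy estimates separately before combining and invokes Lemma~\ref{commute with log lemma} for the sharper bound $\big\|\mathfrak{h}_m\big\|_{H^1}\lesssim\big\|\mathfrak{h}\big\|_{H^{m+1}}+\big\|\mathcal{O}\big\|_{H^{m}}$ (your cruder bound via Lemma~\ref{bound on log nabla lemma} suffices since $\big\|\mathcal{O}\big\|_{H^{m+1+\eta}}$ appears on the right-hand side anyway), and that no induction on $m$ or $[\nabla^m,\cdot]$ commutators are needed, since Definition~\ref{model systems definition} already supplies the equations in commuted form for each $m\leq M$, leaving only the single $[\nabla,\nabla_{\tau}]$ commutator from the integration by parts.
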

\begin{proof}
    Using $\nabla_{\tau}$ as a multiplier in the equations \eqref{equation for Phi J} and \eqref{equation for Phi i} satisfied by the regular quantities, we get:
    \[\big\|\nabla_{\tau}\nabla^m\Phi_{0J}\big\|^2_{L^2}+\big\|\nabla \nabla^m\Phi_{0J}\big\|^2_{L^2}\lesssim\big\|\mathfrak{h}_m\big\|^2_{H^{1}}+\sum_{i=0}^I\int_0^{\tau}\big\|F_m^i\big\|_{L^2}^2+\sum_{i=1}^I\int_0^{\tau}\big\|\nabla^{m+1}\Phi_i\big\|_{L^2}^2+\int_0^{\tau}\big\|\nabla^m\Phi_{0J}\big\|_{L^2}^2,\]
    \[\sum_{i=1}^I\big\|\nabla_{\tau}\nabla^m\Phi_i\big\|^2_{L^2}+\sum_{i=1}^I\big\| \nabla^{m+1}\Phi_i\big\|^2_{L^2}\lesssim\]\[\lesssim\sum_{i=1}^I\big\| \Phi_i^0\big\|^2_{H^{m+1}}+\sum_{i=0}^I\int_0^{\tau}\big\|F_m^i\big\|_{L^2}^2+\int_0^{\tau}\big\|\nabla\nabla^m\Phi_{0Y}\big\|_{L^2}^2+\sum_{i=1}^I\int_0^{\tau}\big\|\nabla^m\Phi_i\big\|_{L^2}^2+\int_0^{\tau}\big\|\nabla\nabla^m\Phi_{0J}\big\|_{L^2}^2.\]
    In the second estimate, we dropped the bulk term with a favorable sign obtained because $\sigma=1$ in \eqref{equation for Phi i}. Using the above two bounds, together with Lemma \ref{commute with log lemma} and Gronwall, we obtain:
    \[\big\|\nabla_{\tau}\nabla^m\Phi_{0J}\big\|^2_{L^2}+\big\|\nabla \nabla^m\Phi_{0J}\big\|^2_{L^2}+\sum_{i=1}^I\big\|\nabla_{\tau}\nabla^m\Phi_i\big\|^2_{L^2}+\sum_{i=1}^I\big\|\nabla^{m+1}\Phi_i\big\|^2_{L^2}\lesssim\]\[\lesssim\big\|\mathfrak{h}\big\|^2_{H^{m+1}}+\sum_{i=1}^I\big\| \Phi_i^0\big\|^2_{H^{m+1}}+\big\|\mathcal{O}\big\|^2_{H^{m+1+\eta}}+\sum_{i=0}^I\int_0^{\tau}\big\|F_m^i\big\|_{L^2}^2+\sum_{i=1}^I\int_0^{\tau}\big\|\nabla^m\Phi_i\big\|_{L^2}^2+\int_0^{\tau}\big\|\nabla^m\Phi_{0J}\big\|_{L^2}^2.\]
    We can estimate the last two error terms on the RHS as before, which implies the conclusion.    
\end{proof}

In the above proof, we used the following bound on $\mathfrak{h}_m$:
\begin{lemma}\label{commute with log lemma} We have the estimate for any $m<M$:
    \[\big\|\mathfrak{h}_m\big\|_{H^1}\lesssim\big\|\mathfrak{h}\big\|_{H^{m+1}}+\big\|\mathcal{O}\big\|_{H^{m}}.\]
\end{lemma}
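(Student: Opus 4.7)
The plan is to rewrite $\mathfrak{h}_m$ in terms of $\nabla^m\mathfrak{h}$ plus a commutator error. Writing $h=\mathfrak{h}+2(\log\nabla)\mathcal{O}$ from \eqref{renormalization of h} and substituting into the definition $\mathfrak{h}_m=\nabla^mh-2(\log\nabla)\nabla^m\mathcal{O}$, we obtain the identity
\[
\mathfrak{h}_m=\nabla^m\mathfrak{h}+2\big[\nabla^m,\log\nabla\big]\mathcal{O}.
\]
The first term is bounded trivially by $\|\mathfrak{h}\|_{H^{m+1}}$, so the entire task reduces to showing
\[
\big\|[\nabla^m,\log\nabla]\mathcal{O}\big\|_{H^1}\lesssim\|\mathcal{O}\|_{H^m}.
\]

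Using the definition $(\log\nabla)F=\sum_{k\geq0}(\log 2^k)P_k^2F$, we expand
\[
[\nabla^m,\log\nabla]\mathcal{O}=\sum_{k\geq 0}(\log 2^k)\cdot\big([\nabla^m,P_k]P_k+P_k[\nabla^m,P_k]\big)\mathcal{O}.
\]
This is the key step: the operator $\log\nabla$ would commute exactly with $\nabla^m$ in the flat setting, so the commutator only picks up the geometric error from each $P_k$, which crucially gains a factor of $2^{-k}$ thanks to Lemma~\ref{Litt Paley lemma}. Concretely, \eqref{LP est 2} gives $\|[\nabla^m,P_k]F\|_{L^2}\lesssim 2^{-k}\|F\|_{H^{m-1}}$, and since $P_k$ is $L^2$-bounded, the $L^2$ piece of each term in the sum is bounded by $2^{-k}\|\mathcal{O}\|_{H^{m-1}}$. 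Summing against the weight $\log 2^k\sim k$ yields $\|[\nabla^m,\log\nabla]\mathcal{O}\|_{L^2}\lesssim\sum_k k\cdot 2^{-k}\|\mathcal{O}\|_{H^{m-1}}\lesssim\|\mathcal{O}\|_{H^m}$.

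For the $H^1$ part one needs $\|\nabla[\nabla^m,P_k^2]\mathcal{O}\|_{L^2}$. The anticipated obstacle here is the term $\nabla P_k[\nabla^m,P_k]\mathcal{O}$, where naively using the finite band property would cost a factor $2^k$ that cancels the commutator gain and produces a divergent $\sum_k k$ sum. To handle it, I would split $\nabla P_k=P_k\nabla+[\nabla,P_k]$: the first piece becomes $P_k\nabla[\nabla^m,P_k]\mathcal{O}$, which by \eqref{LP est 3} is bounded by $2^{-k}\|\mathcal{O}\|_{H^m}$; the second piece $[\nabla,P_k][\nabla^m,P_k]\mathcal{O}$ is bounded by $2^{-2k}\|\mathcal{O}\|_{H^{m-1}}$ using \eqref{LP est 2} with $m=1$ composed with itself. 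The remaining contribution $\nabla[\nabla^m,P_k]P_k\mathcal{O}$ is directly controlled by \eqref{LP est 3}. All these bounds are summable against $\log 2^k$, giving $\|\nabla[\nabla^m,\log\nabla]\mathcal{O}\|_{L^2}\lesssim\|\mathcal{O}\|_{H^m}$. Combining with the $L^2$ bound completes the proof.
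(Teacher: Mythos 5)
Your proposal is correct and follows essentially the same route as the paper: the identity $\mathfrak{h}_m=\nabla^m\mathfrak{h}+2[\nabla^m,\log\nabla]\mathcal{O}$, the $2^{-k}$ gain from the commutator estimates \eqref{LP est 2}--\eqref{LP est 3}, and the summation of $2^{-k}\log 2^k$. The only cosmetic difference is that the paper applies these estimates directly to the projection $P_k^2$ (all the bounds of Section~\ref{lp bounds section} hold for any LP projection with symbol in $\mathcal{M}$), whereas you expand $[\nabla^m,P_k^2]=[\nabla^m,P_k]P_k+P_k[\nabla^m,P_k]$ and treat the pieces separately, which is equally valid.
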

\begin{proof} We can write:
    \[\mathfrak{h}_m=\nabla^mh-2(\log\nabla)\nabla^m\mathcal{O}=\nabla^m\mathfrak{h}-2[(\log\nabla),\nabla^m]\mathcal{O}.\]
    Using (\ref{LP est 2}) and (\ref{LP est 3}) applied to the projection operator $P_k^2,$ we get for all $k\geq0$:
    \[\big\|[\nabla^m,P_k^2]\mathcal{O}\big\|_{H^1}\lesssim2^{-k} C\big(\|\slashed{Riem}_0\|_{H^m}\big)\cdot\big\|\mathcal{O}\big\|_{H^{m}}.\]
    As a result, we have that:
     \[\big\|[(\log\nabla),\nabla^m]\mathcal{O}\big\|_{H^1}\lesssim C\big(\|\slashed{Riem}_0\|_{H^m}\big)\cdot\big\|\mathcal{O}\big\|_{H^{m}}\sum_{k\geq0}2^{-k}\log2^k.\]
     Moreover, we notice that $\|\slashed{Riem}_0\|_{H^{M-1}}\leq C_1\lesssim1$ by our bounds on the background spacetime in Theorem~\ref{main theorem first system}.
\end{proof}

We combine the above results to obtain the following lower order version of \eqref{practical estimate first model system}:
\begin{corollary}
    We have the lower order estimate for $\nabla^m\Phi_0$, with $m<M:$
    \[\big\|\nabla^m\Phi_0\big\|^2_{H^1}\lesssim\big(1+|\log\tau|^2\big)\big\|\mathcal{O}\big\|^2_{H^{m+1+\eta}}+\big\|\mathfrak{h}\big\|^2_{H^{m+1}}+\sum_{i=1}^I\big\| \Phi_i^0\big\|^2_{H^{m+1}}+\sum_{i=0}^I\int_0^{\tau}\big\|F_m^i\big\|_{L^2}^2d\tau'.\]
\end{corollary}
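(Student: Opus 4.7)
The plan is to obtain this as an immediate consequence of the decomposition $\nabla^m\Phi_0 = \big(\nabla^m\Phi_0\big)_Y + \big(\nabla^m\Phi_0\big)_J$ introduced in Section~\ref{existence of singular component section}, together with the two lower order estimates already proved in this section. Since $H^1$ is a normed space, the triangle inequality gives
\[
\big\|\nabla^m\Phi_0\big\|_{H^1}^2 \;\lesssim\; \big\|\big(\nabla^m\Phi_0\big)_Y\big\|_{H^1}^2 + \big\|\big(\nabla^m\Phi_0\big)_J\big\|_{H^1}^2,
\]
so the only task is to bound each piece.

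For the singular piece, I would invoke the last displayed estimate of Proposition~\ref{practical estimate for Phi Y proposition}, namely
\[
\big\|\nabla^m\Phi_{0Y}\big\|_{H^1}^2 \;\lesssim\; \big(1+|\log\tau|^2\big)\big\|\mathcal{O}\big\|_{H^{m+1+\eta}}^2,
\]
which is exactly the shape of the first term on the right hand side of the claim. For the regular piece I would use Proposition~\ref{standard estimates regular components propositionn}, which directly provides
\[
\big\|\nabla^m\Phi_{0J}\big\|_{H^1}^2 \;\lesssim\; \big\|\mathfrak{h}\big\|_{H^{m+1}}^2 + \sum_{i=1}^I\big\|\Phi_i^0\big\|_{H^{m+1}}^2 + \big\|\mathcal{O}\big\|_{H^{m+1+\eta}}^2 + \sum_{i=0}^I\int_0^{\tau}\big\|F_m^i\big\|_{L^2}^2\,d\tau'.
\]
Summing the two bounds yields precisely the advertised inequality, after absorbing the $\|\mathcal{O}\|_{H^{m+1+\eta}}^2$ contribution from the regular component into the $(1+|\log\tau|^2)\|\mathcal{O}\|_{H^{m+1+\eta}}^2$ term coming from the singular component.

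There is no real obstacle here: the proposition is a packaging corollary whose content has been fully digested in the two preceding propositions. The only point that deserves explicit mention in the write-up is that the $\eta=1/10$ loss on $\mathcal{O}$ is inherited from Proposition~\ref{practical estimate for Phi Y proposition} (and independently from Proposition~\ref{standard estimates regular components propositionn}, via Lemma~\ref{commute with log lemma}), reflecting the non-sharp use of Lemma~\ref{bound on log nabla lemma} at lower orders. This loss will be removed at top order in Section~\ref{first system top order estimates section} by the more refined Littlewood--Paley analysis.
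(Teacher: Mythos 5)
Your proposal is correct and is exactly what the paper does: the corollary is stated immediately after the phrase ``We combine the above results,'' i.e.\ it follows by the triangle inequality for the decomposition $\nabla^m\Phi_0=(\nabla^m\Phi_0)_Y+(\nabla^m\Phi_0)_J$ together with Propositions~\ref{practical estimate for Phi Y proposition} and \ref{standard estimates regular components propositionn}, with the extra $\|\mathcal{O}\|^2_{H^{m+1+\eta}}$ from the regular piece absorbed into the $(1+|\log\tau|^2)$ term. Your remark about the $\eta$-loss coming from Lemma~\ref{bound on log nabla lemma} and being removed at top order is also consistent with the paper's own comment following Proposition~\ref{practical estimate for Phi Y proposition}.
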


We also need estimates for the commutator term $\mathcal{C}=\big(\nabla^M\Phi_0\big)_Y-\nabla\big(\nabla^{M-1}\Phi_0\big)_Y$. To compute the equation satisfied by $\mathcal{C},$ we first need to commute the equation of $\big(\nabla^{M-1}\Phi_0\big)_Y$ using the commutation formula \eqref{commutation formula 1}:
\[\nabla_{\tau}\big(\nabla_{\tau}\nabla\big(\nabla^{M-1}\Phi_0\big)_Y\big)+\frac{1}{{\tau}}\nabla_{\tau}\nabla\big(\nabla^{M-1}\Phi_0\big)_Y-4\Delta\nabla\big(\nabla^{M-1}\Phi_0\big)_Y=\psi\nabla\nabla\big(\nabla^{M-1}\Phi_0\big)_Y+\]\[+O\Big(\big|\big(\nabla^{M-1}\Phi_0\big)_Y\big|+\big|\nabla\big(\nabla^{M-1}\Phi_0\big)_Y\big|+\big|\tau\nabla_{\tau}\big(\nabla^{M-1}\Phi_0\big)_Y\big|+\big|\tau\nabla_{\tau}\nabla\big(\nabla^{M-1}\Phi_0\big)_Y\big|\Big),\]
where we used \eqref{preliminary estimates for background metric} and \eqref{preliminary estimates for background psi}. As a result, we obtain that $\mathcal{C}$ satisfies the equation:
\begin{align*}
    \nabla_{\tau}\big(\nabla_{\tau}\mathcal{C}\big)+\frac{1}{{\tau}}\nabla_{\tau}\mathcal{C}-4\Delta\mathcal{C}&=\psi\nabla\mathcal{C}+O\Big(\big|\tau\nabla_{\tau}\mathcal{C}\big|\Big)+\\
    &+O\Big(\big|\big(\nabla^{M-1}\Phi_0\big)_Y\big|+\big|\nabla\big(\nabla^{M-1}\Phi_0\big)_Y\big|+\big|\tau\nabla_{\tau}\big(\nabla^{M-1}\Phi_0\big)_Y\big|+\big|\tau\nabla_{\tau}\big(\nabla^{M}\Phi_0\big)_Y\big|\big).
\end{align*}
Moreover, we notice that $\mathcal{C}$ is a regular quantity at $\tau=0,$ satisfying the expansion:
\[\mathcal{C}=2[\log\nabla,\nabla]\nabla^{M-1}\mathcal{O}+O\big({\tau}^2|\log({\tau})|^2\big),\ \nabla_{\tau}\mathcal{C}=O\big({\tau}|\log({\tau})|^2\Big).\]
Note that as in the proof of Lemma \ref{commute with log lemma} we get:
\[\big\|[\log\nabla,\nabla]\nabla^{M-1}\mathcal{O}\big\|_{H^1}\lesssim\big\|\mathcal{O}\big\|_{H^M}.\]
We conclude the section by proving the estimate:
\begin{proposition}\label{singular component commutator proposition}
    The commutator term $\mathcal{C}=\big(\nabla^M\Phi_0\big)_Y-\nabla\big(\nabla^{M-1}\Phi_0\big)_Y$ satisfies the following estimate:
    \[\big\|\nabla_{\tau}\mathcal{C}\big\|^2_{L^2}+\big\|\mathcal{C}\big\|^2_{H^1}\lesssim\big\|\mathcal{O}\big\|^2_{H^{M+\eta}}+\int_0^{\tau}\big\|\tau'\nabla_{\tau}\nabla^{M}\Phi_{0Y}\big\|_{L^2}^2d\tau'.\]
    Additionally, for any $k\geq0$ and $\tau\in(0,2^{-k-1}]$ we have:
    \[\big\|\mathcal{C}\big\|^2_{L^2}\lesssim\big\|\mathcal{O}\big\|^2_{H^{M+\eta}}+2^{-k}\int_0^{\tau}\big\|\tau'\nabla_{\tau}\nabla^{M}\Phi_{0Y}\big\|_{L^2}^2d\tau'.\]
\end{proposition}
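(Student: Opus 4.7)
\textbf{Proof plan for Proposition~\ref{singular component commutator proposition}.} The first estimate follows from a standard energy estimate for the equation satisfied by $\mathcal{C}$, in the same spirit as the proof of Proposition~\ref{practical estimate for Phi Y proposition} for $\nabla^m\Phi_{0Y}^2$. I would contract the equation for $\mathcal{C}$ with $\nabla_\tau\mathcal{C}$ and integrate over $S_\tau$, using Lemma~\ref{volume form lemma} to control the volume form contributions. The term $\frac{1}{\tau}\nabla_\tau\mathcal{C}$ produces a favorable bulk term $\int_0^\tau \frac{1}{\tau'}\|\nabla_\tau\mathcal{C}\|_{L^2}^2 d\tau'$ that can be dropped. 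Since $\nabla_\tau\mathcal{C}=O(\tau|\log\tau|^2)$ vanishes at $\mathcal{I}^-$, the only initial data contribution is $\|\nabla\mathcal{C}(0)\|_{L^2}^2 \lesssim \|[\log\nabla,\nabla]\nabla^{M-1}\mathcal{O}\|_{H^1}^2 \lesssim \|\mathcal{O}\|_{H^M}^2 \lesssim \|\mathcal{O}\|_{H^{M+\eta}}^2$ (using the same argument as in Lemma~\ref{commute with log lemma}).

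Next, I would handle the right-hand side error terms: the $\psi\nabla\mathcal{C}$ and $O(\tau|\nabla_\tau\mathcal{C}|)$ terms, as well as the commutator $[\nabla,\nabla_\tau]\mathcal{C}$ from computing $\|\nabla\mathcal{C}\|_{L^2}^2$, are absorbed via Cauchy–Schwarz and Gronwall. The lower-order inhomogeneous factors $|(\nabla^{M-1}\Phi_0)_Y|$, $|\nabla(\nabla^{M-1}\Phi_0)_Y|$, and $|\tau\nabla_\tau(\nabla^{M-1}\Phi_0)_Y|$ are controlled using Proposition~\ref{practical estimate for Phi Y proposition}: each of their $L^2$ norms is $\lesssim (1+|\log\tau'|)\|\mathcal{O}\|_{H^{M+\eta}}$, and since $|\log\tau'|^2$ is integrable on $(0,1]$, their contribution is bounded by $\|\mathcal{O}\|_{H^{M+\eta}}^2$. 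Finally, the $|\tau\nabla_\tau(\nabla^M\Phi_0)_Y|$ factor produces precisely the bulk term $\int_0^\tau \|\tau'\nabla_\tau\nabla^M\Phi_{0Y}\|_{L^2}^2 d\tau'$ appearing on the RHS of the estimate, after applying Cauchy–Schwarz in the form $ab\leq\epsilon a^2+\epsilon^{-1}b^2$ and absorbing the $\|\nabla_\tau\mathcal{C}\|_{L^2}^2$ piece into the LHS via Gronwall.

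For the refined $L^2$ estimate in the low-frequency regime $\tau\in(0,2^{-k-1}]$, I would not use a multiplier directly but instead integrate in time: since $\mathcal{C}(\tau)=\mathcal{C}(0)+\int_0^\tau\nabla_\tau\mathcal{C}\,d\tau'$, Cauchy--Schwarz gives
\[
\|\mathcal{C}(\tau)\|_{L^2}^2\lesssim \|\mathcal{C}(0)\|_{L^2}^2+\tau\int_0^\tau\|\nabla_\tau\mathcal{C}\|_{L^2}^2\,d\tau'.
\]
Inserting the first estimate of the proposition into the integrand and using $\tau\leq 2^{-k-1}$, the factor $\tau$ combined with the already present time integral produces a $\tau^2\leq 2^{-2k}\leq 2^{-k}$ prefactor in front of the double time integral $\int_0^\tau\int_0^{\tau'}\|\tau''\nabla_\tau\nabla^M\Phi_{0Y}\|_{L^2}^2\,d\tau''d\tau'$, which is bounded by $\tau\int_0^\tau\|\tau''\nabla_\tau\nabla^M\Phi_{0Y}\|_{L^2}^2 d\tau''$; the same $\tau$ factor also converts the $\|\mathcal{O}\|_{H^{M+\eta}}^2$ contribution to an admissible piece.

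I do not expect a serious obstacle: the equation for $\mathcal{C}$ has the same favorable structure as the singular equation treated in Proposition~\ref{practical estimate for Phi Y proposition}, and the novelty is only bookkeeping of the additional source terms produced by commuting $\nabla$ through the equation for $(\nabla^{M-1}\Phi_0)_Y$. The only mildly delicate point is the refined $L^2$ estimate, where one must resist the temptation of using another multiplier and instead exploit the shortness of the interval $(0,2^{-k-1}]$ via the fundamental theorem of calculus to extract the $2^{-k}$ gain.
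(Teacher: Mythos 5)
Your proposal is correct and follows essentially the same route as the paper: the first bound is the standard $\nabla_{\tau}$ multiplier energy estimate for the equation satisfied by $\mathcal{C}$, with the initial data term controlled by $\big\|[\log\nabla,\nabla]\nabla^{M-1}\mathcal{O}\big\|_{H^1}\lesssim\big\|\mathcal{O}\big\|_{H^M}$ and the lower order source terms absorbed via Proposition~\ref{practical estimate for Phi Y proposition}, keeping only the $\int_0^{\tau}\big\|\tau'\nabla_{\tau}\nabla^M\Phi_{0Y}\big\|_{L^2}^2$ term. Your treatment of the refined $L^2$ bound (transport/fundamental theorem of calculus estimate on $(0,2^{-k-1}]$, extracting the $2^{-k}$ from the smallness of $\tau$ after inserting the first estimate) is exactly the "as before" step the paper leaves implicit.
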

\begin{proof}
    Using the standard $\nabla_{\tau}$ multiplier and the previous lower order estimates we get:
    \[\big\|\nabla_{\tau}\mathcal{C}\big\|^2_{L^2}+\big\|\mathcal{C}\big\|^2_{H^1}\lesssim\big\|\mathcal{O}\big\|^2_{H^{M}}+\int_0^{\tau}\big\|\nabla^{M-1}\Phi_{0Y}\big\|_{H^1}^2+\int_0^{\tau}\big\|\tau'\nabla_{\tau}\nabla^{M-1}\Phi_{0Y}\big\|_{L^2}^2+\int_0^{\tau}\big\|\tau'\nabla_{\tau}\nabla^{M}\Phi_{0Y}\big\|_{L^2}^2\]
    \[\lesssim\big\|\mathcal{O}\big\|^2_{H^{M+\eta}}+\int_0^{\tau}\big\|\tau'\nabla_{\tau}\nabla^{M}\Phi_{0Y}\big\|_{L^2}^2\]
    As before, we can also estimate $\big\|\mathcal{C}\big\|^2_{L^2}$ in order to obtain the conclusion.
\end{proof}

\subsubsection{Fractional estimates}\label{fractional estimates section}
In this section, we prove estimates in $H^{1/2}$ for $\nabla(\nabla^{M-1}\Phi_0)_Y$ and $\nabla_{\tau}(\nabla^{M-1}\Phi_0)_Y$. As explained in Section~[Section~7]\ref{first model system intro section}, the error terms obtained in \cite{Cmain} in the proof of the top order estimates for the singular component $\big(\nabla^M\Phi_0\big)_Y$ can be simplified significantly using the fractional estimates proved in this section.

For the rest of the section we prove the following result:
\begin{proposition}
    The singular component $(\nabla^{M-1}\Phi_0)_Y$ satisfies the estimates:
    \begin{align*}
        \big\|\nabla(\nabla^{M-1}\Phi_0)_Y\big\|^2_{H^{1/2}}&\lesssim\big(1+|\log\tau|^2\big)\big\|\mathcal{O}\big\|^2_{H^{M+1/2+\eta}}\\
        \big\|\nabla_{\tau}(\nabla^{M-1}\Phi_0)_Y\big\|^2_{H^{1/2}}&\lesssim\frac{1}{\tau^2}\big\|\mathcal{O}\big\|^2_{H^{M+1/2+\eta}}.
    \end{align*}
\end{proposition}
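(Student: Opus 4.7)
The plan is to adapt the proof of Proposition~\ref{practical estimate for Phi Y proposition} at $m = M - 1$, but apply an LP projection $P_k$ to the equations and track $2^k$ weights so as to obtain $H^{1/2}$-level bounds after summing over $k \geq 0$. I would first write $(\nabla^{M-1}\Phi_0)_Y = (\nabla^{M-1}\Phi_0)_Y^1 + (\nabla^{M-1}\Phi_0)_Y^2$ exactly as in the lower order proof, so that the two components can be treated separately.

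For the regular-at-$\tau=0$ component $(\nabla^{M-1}\Phi_0)_Y^2$, I would apply $P_k$ to the wave-type equation \eqref{equation for Phi Y}, collecting commutator terms $[P_k,\nabla_\tau]$, $[P_k,\Delta]$, $[P_k,\psi\nabla]$ controlled by Lemmas~\ref{Litt Paley lemma} and \ref{Litt Paley lemma refined} (in particular the refined bounds \eqref{LP refined est 2}, \eqref{LP refined est 4}). The $\nabla_\tau$ multiplier estimate would give a bound of the form
\[
\big\|\nabla_\tau P_k(\nabla^{M-1}\Phi_0)_Y^2\big\|_{L^2}^2 + \big\|P_k(\nabla^{M-1}\Phi_0)_Y^2\big\|_{H^1}^2 \lesssim \big\|P_k \cdot 2(\log\nabla)\nabla^{M-1}\mathcal{O}\big\|_{H^1}^2 + (\textrm{commutator errors}).
\]
Multiplying by $2^k$, summing over $k \geq 0$, and applying Lemma~\ref{bound on log nabla lemma} with $s = 3/2$ on the leading data term yields $\|\mathcal{O}\|_{H^{M+1/2+\eta}}^2$ on the right. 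A closing Gronwall absorbs the remaining error terms.

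For $(\nabla^{M-1}\Phi_0)_Y^1$, I would handle the logarithmic singularity as in the lower order proof by running the LP-projected energy estimate for $P_k\big((\nabla^{M-1}\Phi_0)_Y^1/\log\tau\big)$ on $\tau \in (0,1/2]$, exploiting the favorable bulk term on $[0,1/10]$ and absorbing the localized error on $[1/10,1/2]$ by Gronwall; on $[1/2,1]$ the same argument as for the $Y^2$ component, started from data at $\tau = 1/2$, would conclude. Summing with $2^k$ weights and using $\|2\nabla^{M-1}\mathcal{O}\|_{H^{3/2}} \lesssim \|\mathcal{O}\|_{H^{M+1/2}}$, then reinstating the $\log\tau$ factor, produces the $|\log\tau|^2$ prefactor in the target bound. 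The $H^{1/2}$ estimate on $\nabla_\tau(\nabla^{M-1}\Phi_0)_Y$ with the $1/\tau^2$ weight is produced simultaneously from the same energy identities, since the leading $2\nabla^{M-1}\mathcal{O}/\tau$ term in the expansion of $\nabla_\tau(\nabla^{M-1}\Phi_0)_Y^1$ dictates the $1/\tau^2$ factor and dominates the $|\log\tau|^2/\tau^2$ subleading contribution on $(0,1]$.

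The main obstacle will be the appearance of \emph{different} LP projection operators (such as $\underline{\widetilde{P}}_k$) on the right-hand side of the commutation bounds: these cannot be bounded at fixed $k$, but, as noted in Section~\ref{LP theory intro}, both sides of \eqref{LP refined est 2} and \eqref{LP refined est 4} are summable in $k$, so after weighted summation the resulting expressions can be recognized as $H^{1/2}$-type quantities and absorbed by Gronwall at the end. A secondary subtlety is the $\eta$-loss introduced by Lemma~\ref{bound on log nabla lemma} when bounding the $(\log\nabla)$ factor in the data of the $Y^2$ component, but this is consistent with the $\eta$ appearing in the statement.
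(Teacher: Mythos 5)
Your proposal is correct and follows essentially the same route as the paper: the same decomposition into $(\nabla^{M-1}\Phi_0)_Y^1$ and $(\nabla^{M-1}\Phi_0)_Y^2$, LP-projected $\nabla_{\tau}$-multiplier estimates with the commutation bounds of Lemmas~\ref{Litt Paley lemma} and \ref{Litt Paley lemma refined}, $2^k$-weighted summation closed by Gronwall with the lower order estimates, the $\eta$-loss from Lemma~\ref{bound on log nabla lemma} on the $(\log\nabla)$ data, and the treatment of the singular part via $\mathcal{B}/\log\tau$ on $(0,1/2]$ and a restart from $\tau=1/2$. This matches the paper's proofs of Propositions~\ref{A proposition} and \ref{B proposition}.
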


In order to prove this result we treat separately the components $\big(\nabla^{M-1}\Phi_0\big)_Y^1$ and $\big(\nabla^{M-1}\Phi_0\big)_Y^2$. The above fractional estimates will be a consequence of Propositions~\ref{A proposition} and \ref{B proposition}. We start with the component $\mathcal{A}:=\big(\nabla^{M-1}\Phi_0\big)_Y^2,$ which satisfies the equation:
\[\nabla_{\tau}\big(\nabla_{\tau}\mathcal{A}\big)+\frac{1}{{\tau}}\nabla_{\tau}\mathcal{A}-4\Delta\mathcal{A}=\psi\nabla\mathcal{A},\]
\[\mathcal{A}({\tau})=2(\log\nabla)\nabla^{M-1}\mathcal{O}+ O\big({\tau}^2|\log({\tau})|^2\big),\ \nabla_{\tau}\mathcal{A}({\tau})=O\big({\tau}|\log({\tau})|^2\big).\]
\begin{proposition}\label{A proposition}
    $\mathcal{A}$ satisfies the estimate:
    \[\big\|\nabla_{\tau}\mathcal{A}\big\|^2_{H^{1/2}}+\big\|\nabla \mathcal{A}\big\|^2_{H^{1/2}}\lesssim\big\|\mathcal{O}\big\|^2_{H^{M+1/2+\eta}}.\]
\end{proposition}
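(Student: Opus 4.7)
The plan is to adapt the $\nabla_\tau$-multiplier argument from Proposition~\ref{practical estimate for Phi Y proposition} by first projecting the equation for $\mathcal{A}$ with $P_k$, then summing the resulting frequency-localized estimates against the weight $2^k$ to recover $H^{1/2}$ norms. Since $[\Delta,P_k]=0$ on each sphere $S_\tau$, applying $P_k$ to the equation for $\mathcal{A}$ yields
\[
\nabla_\tau(\nabla_\tau P_k \mathcal{A}) + \frac{1}{\tau}\nabla_\tau P_k \mathcal{A} - 4\Delta P_k \mathcal{A} = P_k(\psi\nabla \mathcal{A}) + \nabla_\tau\big([\nabla_\tau,P_k]\mathcal{A}\big) + [\nabla_\tau,P_k]\nabla_\tau \mathcal{A} + \frac{1}{\tau}[\nabla_\tau,P_k]\mathcal{A},
\]
where the commutators $[\nabla_\tau,P_k]=2\tau[\nabla_4,P_k]$ are controlled by the refined bounds \eqref{LP refined est 2} and \eqref{LP refined est 4}, producing schematic errors of the form $\tau\big\|\underline{\widetilde{P}}_k \mathcal{A}\big\|_{L^2}+2^{-k}\tau\big\|\mathcal{A}\big\|_{L^2}$ (with $H^1$ versions when a spatial derivative falls on the commutator).

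Using $\nabla_\tau P_k \mathcal{A}$ as multiplier and integrating on $[0,\tau]\times S_{\tau'}$, I would discard the favorable bulk term $\int_0^\tau \frac{1}{\tau'}\|\nabla_\tau P_k \mathcal{A}\|_{L^2}^2\,d\tau'$ to reach
\[
\big\|\nabla_\tau P_k \mathcal{A}\big\|_{L^2}^2 + \big\|\nabla P_k \mathcal{A}\big\|_{L^2}^2 \lesssim \big\|\nabla P_k \mathcal{A}(0)\big\|_{L^2}^2 + \text{(integrated commutator and RHS errors)}.
\]
The initial data is handled by writing $P_k \mathcal{A}(0) = 2\log(2^k)P_k\nabla^{M-1}\mathcal{O} + R_k\nabla^{M-1}\mathcal{O}$ from the expansion of $\mathcal{A}$ and the definition \eqref{definition of R k} of $R_k$: the first piece carries a logarithmic factor $\log^2(2^k)\lesssim 2^{2k\eta}$ absorbed by the $\eta$ margin in the Sobolev index, while Lemma~\ref{R k lemma} controls the second by $\|\underline{P}_k\nabla^{M-1}\mathcal{O}\|_{H^1}$. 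The forcing $P_k(\psi\nabla \mathcal{A})$ is handled by commuting $P_k$ through the $\nabla$ and the multiplication by $\psi$ (using Lemma~\ref{Litt Paley lemma}), at the cost of further $\underline{\widetilde{P}}_k$-type error terms of the same shape as the $[\nabla_\tau,P_k]$ errors.

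Multiplying the frequency-localized bound by $2^k$ and summing over $k\geq 0$, the data contribution closes up at $\|\mathcal{O}\|^2_{H^{M+1/2+\eta}}$ after combining with the $L^2$ bounds already provided by Proposition~\ref{practical estimate for Phi Y proposition} at order $m=M-1$. The main obstacle is that the frequency-localized commutator errors unavoidably involve LP operators with different symbols, so the estimate cannot be closed one frequency at a time. As emphasized in Section~\ref{LP theory intro}, one must first sum over $k$ so that the $\underline{\widetilde{P}}_k$ cross-terms repackage into time-integrated copies of $\|\mathcal{A}\|^2_{H^{1/2}}$ and $\|\nabla \mathcal{A}\|^2_{H^{1/2}}$, and only then invoke Gronwall against the lower-order bound of Proposition~\ref{practical estimate for Phi Y proposition} to absorb the remaining $L^2$ norms and obtain the claimed estimate.
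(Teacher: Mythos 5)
Your overall architecture is the same as the paper's: project the equation for $\mathcal{A}$ with $P_k$, run the $\nabla_\tau$-multiplier energy estimate, control commutators with Lemmas~\ref{Litt Paley lemma} and \ref{Litt Paley lemma refined}, handle the data $\mathcal{A}(0)=2(\log\nabla)\nabla^{M-1}\mathcal{O}$ (your $\log(2^k)+R_k$ splitting is just a by-hand version of Lemma~\ref{bound on log nabla lemma}, and it works), multiply by $2^k$, sum over $k$ so the $\underline{\widetilde{P}}_k$ cross-terms repackage into $H^{1/2}$ norms, and close with Gronwall against the lower-order bound of Proposition~\ref{practical estimate for Phi Y proposition}. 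That part is fine.

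The gap is in how you set up the projected equation. By writing it for $\nabla_\tau P_k\mathcal{A}$ rather than for $P_k\nabla_\tau\mathcal{A}$, you generate the source term $\nabla_\tau\big([\nabla_\tau,P_k]\mathcal{A}\big)$, and you claim all commutators are covered by \eqref{LP refined est 2} and \eqref{LP refined est 4}. They are not: those bounds control $[\nabla_4,P_k]F$, $[\nabla_4,P_k]\nabla F$ and $\nabla[\nabla_4,P_k]F$, but $\nabla_\tau\big([\nabla_\tau,P_k]\mathcal{A}\big)=2[\nabla_4,P_k]\mathcal{A}+2\tau\big([\nabla_4,P_k]\nabla_\tau\mathcal{A}+[\nabla_\tau,[\nabla_4,P_k]]\mathcal{A}\big)$ contains a genuine double commutator $[\nabla_4,[\nabla_4,P_k]]\mathcal{A}$, i.e.\ a time derivative hitting the projection operator itself, for which no estimate is available in Section~\ref{LP section}. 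As written, this step is unjustified. The paper avoids the issue by never commuting $P_k$ inside the time derivatives: it estimates the mixed quantities $P_k\nabla_\tau\mathcal{A}$ and $\nabla P_k\mathcal{A}$, so the only time-commutator error is $[\nabla_\tau,P_k]\nabla_\tau\mathcal{A}$ (covered by \eqref{LP refined est 2}), while $\nabla[\nabla_4,P_k]\mathcal{A}$ appears only through the spatial integration by parts (covered by \eqref{LP refined est 4}); the difference $\nabla_\tau P_k\mathcal{A}-P_k\nabla_\tau\mathcal{A}=2\tau[\nabla_4,P_k]\mathcal{A}$ is harmless at the end. Your argument could be repaired either by adopting that formulation, or by integrating the offending term by parts in $\tau$ and re-substituting the equation (which reduces it to controlled commutators plus boundary terms), or by proving a new second-order commutator estimate — but some such step must be supplied; the cited lemmas alone do not close it.
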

\begin{proof}
For any $k\geq0,$ we apply $P_k$ to the equation satisfied by $\mathcal{A}$ to get:
    \[\nabla_{\tau}\big(P_k\nabla_{\tau}\mathcal{A}\big)+\frac{1}{\tau}P_k\nabla_{\tau}\mathcal{A}-4\Delta P_k\mathcal{A}=\psi\nabla P_k \mathcal{A}+[P_k,\psi]\nabla\mathcal{A}+\psi[P_k,\nabla]\mathcal{A}+[\nabla_{\tau},P_k]\nabla_{\tau}\mathcal{A}.\]
    We contract this equation with $P_k\nabla_{\tau}\mathcal{A}$ and integrate by parts to obtain the energy estimate:
    \[\big\|P_k\nabla_{\tau}\mathcal{A}\big\|^2_{L^2}+\big\|\nabla P_k\mathcal{A}\big\|^2_{L^2}\lesssim\big\|\nabla P_k\mathcal{A}|_{\tau=0}\big\|^2_{L^2}+\int_0^{\tau}\tau'\big\|\nabla P_k\mathcal{A}\big\|_{L^2}\cdot\big\|\nabla[P_k,\nabla_4]\mathcal{A}\big\|_{L^2}+\]\[+\int_0^{\tau}\tau'\big\|\nabla P_k\mathcal{A}\big\|_{L^2}\cdot\big\|[\nabla,\nabla_4]P_k\mathcal{A}\big\|_{L^2}+\int_0^{\tau}\big\|\nabla P_k\mathcal{A}\big\|_{L^2}\cdot\big\|P_k\nabla_{\tau}\mathcal{A}\big\|_{L^2}+\int_0^{\tau}\big\|[P_k,\psi]\nabla\mathcal{A}\big\|_{L^2}\cdot\big\|P_k\nabla_{\tau}\mathcal{A}\big\|_{L^2}\]\[+\int_0^{\tau}\big\|[P_k,\nabla]\mathcal{A}\big\|_{L^2}\cdot\big\|P_k\nabla_{\tau}\mathcal{A}\big\|_{L^2}+\int_0^{\tau}\tau'\big\|[\nabla_{4},P_k]\nabla_{\tau}\mathcal{A}\big\|_{L^2}\cdot\big\|P_k\nabla_{\tau}\mathcal{A}\big\|_{L^2}.\]
    We use Gronwall:
    \[\big\|P_k\nabla_{\tau}\mathcal{A}\big\|^2_{L^2}+\big\|\nabla P_k\mathcal{A}\big\|^2_{L^2}\lesssim\big\|\nabla P_k\mathcal{A}|_{\tau=0}\big\|^2_{L^2}+\int_0^{\tau}\big\|\nabla[P_k,\nabla_4]\mathcal{A}\big\|_{L^2}^2+\]\[+\int_0^{\tau}\big\|[\nabla,\nabla_4]P_k\mathcal{A}\big\|_{L^2}^2+\int_0^{\tau}\big\|[P_k,\psi]\nabla\mathcal{A}\big\|_{L^2}^2+\int_0^{\tau}\big\|[P_k,\nabla]\mathcal{A}\big\|_{L^2}^2+\int_0^{\tau}\big\|[\nabla_{4},P_k]\nabla_{\tau}\mathcal{A}\big\|_{L^2}^2.\]
    We use the bounds in Lemma \ref{Litt Paley lemma} and Lemma \ref{Litt Paley lemma refined} to control the commutation terms:
    \[\big\|P_k\nabla_{\tau}\mathcal{A}\big\|^2_{L^2}+\big\|\nabla P_k\mathcal{A}\big\|^2_{L^2}\lesssim\big\|\nabla P_k\mathcal{A}|_{\tau=0}\big\|^2_{L^2}+\int_0^{\tau}\big\|\underline{\widetilde{P}}_k\nabla\mathcal{A}\big\|_{L^2}^2+\int_0^{\tau}2^{-2k}\big\|\mathcal{A}\big\|_{H^1}^2\]\[+\int_0^{\tau}\big\|P_k\mathcal{A}\big\|_{H^1}^2+\int_0^{\tau}\big\|\underline{\widetilde{P}}_k\nabla_{\tau}\mathcal{A}\big\|_{L^2}^2+\int_0^{\tau}2^{-2k}\big\|\nabla_{\tau}\mathcal{A}\big\|_{L^2}^2.\]
    As a result, we get that:
    \[\big\|P_k\nabla_{\tau}\mathcal{A}\big\|^2_{L^2}+\big\|P_k\nabla\mathcal{A}\big\|^2_{L^2}\lesssim\big\|P_k\nabla\mathcal{A}|_{\tau=0}\big\|^2_{L^2}+2^{-2k}\big\|\mathcal{A}|_{\tau=0}\big\|^2_{L^2}+\int_0^{\tau}\big\|\underline{\widetilde{P}}_k\nabla\mathcal{A}\big\|_{L^2}^2+\]\[+\int_0^{\tau}2^{-2k}\big\|\mathcal{A}\big\|_{H^1}^2+\int_0^{\tau}\big\|P_k\mathcal{A}\big\|_{L^2}^2+\int_0^{\tau}\big\|\underline{\widetilde{P}}_k\nabla_{\tau}\mathcal{A}\big\|_{L^2}^2+\int_0^{\tau}2^{-2k}\big\|\nabla_{\tau}\mathcal{A}\big\|_{L^2}^2.\]
    We multiply by $2^k$ and sum over all $k\geq0.$ This amounts to taking half of a derivative.
    \[\sum_{k\geq0}2^k\Big(\big\|P_k\nabla_{\tau}\mathcal{A}\big\|^2_{L^2}+\big\|P_k\nabla\mathcal{A}\big\|^2_{L^2}\Big)\lesssim\big\|\mathcal{A}|_{\tau=0}\big\|^2_{L^2}+\int_0^{\tau}\big\|\mathcal{A}\big\|_{H^1}^2+\int_0^{\tau}\big\|\nabla_{\tau}\mathcal{A}\big\|_{L^2}^2+\sum_{k\geq0}2^k\big\|P_k\nabla\mathcal{A}|_{\tau=0}\big\|^2_{L^2}+\]\[+\int_0^{\tau}\sum_{k\geq0}2^k\big\|\underline{\widetilde{P}}_k\nabla\mathcal{A}\big\|_{L^2}^2+\int_0^{\tau}\sum_{k\geq0}2^k\big\|P_k\mathcal{A}\big\|_{L^2}^2+\int_0^{\tau}\sum_{k\geq0}2^k\big\|\underline{\widetilde{P}}_k\nabla_{\tau}\mathcal{A}\big\|_{L^2}^2.\]
    We recall that from the standard lower order estimates we have:
    \[\big\|\nabla_{\tau}\mathcal{A}\big\|^2_{L^2}+\big\|\mathcal{A}\big\|^2_{H^1}\lesssim\big\|\mathcal{O}\big\|^2_{H^{M+\eta}}.\]
    Since $\sum_kP_k^2=I$, we obtain using our definition of fractional Sobolev spaces in Section \ref{LP section}:
    \begin{align*}
        \big\|\nabla_{\tau}\mathcal{A}\big\|^2_{H^{1/2}}+\big\|\nabla\mathcal{A}\big\|^2_{H^{1/2}}&\lesssim\big\|\mathcal{O}\big\|^2_{H^{M+\eta}}+\big\|\nabla\mathcal{A}|_{\tau=0}\big\|^2_{H^{1/2}}+\int_0^{\tau}\big\|\nabla_{\tau}\mathcal{A}\big\|^2_{H^{1/2}}+\big\|\nabla\mathcal{A}\big\|^2_{H^{1/2}}d\tau'\\
        &\lesssim\big\|\mathcal{O}\big\|^2_{H^{M+1/2+\eta}}+\int_0^{\tau}\big\|\nabla_{\tau}\mathcal{A}\big\|^2_{H^{1/2}}+\big\|\nabla\mathcal{A}\big\|^2_{H^{1/2}}d\tau'.
    \end{align*}
    Finally, we obtain the desired conclusion by Gronwall.
\end{proof}

Next, we consider the component $\mathcal{B}:=\big(\nabla^{M-1}\Phi_0\big)_Y^1,$ which satisfies the equation:
\[\nabla_{\tau}\big(\nabla_{\tau}\mathcal{B}\big)+\frac{1}{{\tau}}\nabla_{\tau}\mathcal{B}-4\Delta\mathcal{B}=\psi\nabla\mathcal{B},\]
\[\mathcal{B}({\tau})=2\nabla^{M-1}\mathcal{O}\log({\tau})+ O\big({\tau}^2|\log({\tau})|^2\big),\ \nabla_{\tau}\mathcal{B}({\tau})=\frac{2\nabla^{M-1}\mathcal{O}}{\tau}+O\big({\tau}|\log({\tau})|^2\big).\]
\begin{proposition}\label{B proposition}
    $\mathcal{B}$ satisfies the estimates:
    \begin{align*}
        \big\|\nabla\mathcal{B}\big\|^2_{H^{1/2}}&\lesssim\big(1+|\log\tau|^2\big)\big\|\mathcal{O}\big\|^2_{H^{M+1/2}},\\
        \big\|\nabla_{\tau}\mathcal{B}\big\|^2_{H^{1/2}}&\lesssim\frac{1}{\tau^2}\big\|\mathcal{O}\big\|^2_{H^{M+1/2}}.
    \end{align*}
\end{proposition}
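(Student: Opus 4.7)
The plan is to adapt the strategy used for $\mathcal{A}$ in Proposition \ref{A proposition}, but first handle the logarithmic blow-up of $\mathcal{B}$ at $\mathcal{I}^-$ by working with the renormalized quantity $u := \mathcal{B}/\log\tau$ on the range $\tau \in (0, 1/2]$. From the expansion of $\mathcal{B}$, $u$ is regular at $\tau=0$ with $u(\tau) = 2\nabla^{M-1}\mathcal{O} + O(\tau^2|\log\tau|)$ and $\nabla_\tau u(\tau) = O(\tau|\log\tau|)$, and as in the proof of Proposition \ref{practical estimate for Phi Y proposition} it satisfies
\[
\nabla_\tau(\nabla_\tau u) + \frac{1}{\tau}\left(1 + \frac{2}{\log\tau}\right)\nabla_\tau u - 4\Delta u = \psi\nabla u.
\]

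First I would apply $P_k$ to this equation and contract with $P_k\nabla_\tau u$, following the energy estimate in the proof of Proposition \ref{A proposition}. The additional bulk term $\frac{2}{\tau\log\tau}\,|P_k\nabla_\tau u|^2$ has favorable sign on $(0,1/10]$ since $1 + 2/\log\tau \gtrsim 1$ there, while on $[1/10,1/2]$ the same trick from Proposition \ref{practical estimate for Phi Y proposition} produces a controllable error $\int \frac{\mathbf{1}_{[1/10,1/2]}}{\tau'|\log\tau'|}\|P_k\nabla_\tau u\|_{L^2}^2 d\tau'$ absorbed by Gronwall. The commutator errors $[P_k,\nabla_4]$, $[P_k,\nabla]$, $[P_k,\psi]$, and $[\nabla_\tau,P_k]$ are bounded exactly as before using Lemmas \ref{Litt Paley lemma} and \ref{Litt Paley lemma refined}. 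Multiplying by $2^k$, summing over $k \geq 0$, invoking the lower order bound $\|\nabla_\tau u\|_{L^2}^2 + \|u\|_{H^1}^2 \lesssim \|\mathcal{O}\|_{H^M}^2$ from Proposition \ref{practical estimate for Phi Y proposition}, and applying Gronwall should yield
\[
\|\nabla_\tau u\|_{H^{1/2}}^2 + \|\nabla u\|_{H^{1/2}}^2 \lesssim \|\mathcal{O}\|_{H^{M+1/2}}^2 \quad \text{on } (0, 1/2].
\]
On $[1/2,1]$, I would run the argument of Proposition \ref{A proposition} directly on $\mathcal{B}$ with data at $\tau=1/2$ (already controlled by $\|\mathcal{O}\|_{H^{M+1/2}}$ via the previous step, using the identity $\mathcal{B}|_{\tau=1/2} = u|_{\tau=1/2}\cdot\log(1/2)$), obtaining the desired bounds on this range.

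Finally, on $(0,1/2]$ I translate back via $\mathcal{B} = u\log\tau$, using $\nabla\mathcal{B} = \log\tau\cdot\nabla u$ and $\nabla_\tau\mathcal{B} = \log\tau\cdot\nabla_\tau u + u/\tau$. The first identity immediately gives the $(1+|\log\tau|^2)\|\mathcal{O}\|_{H^{M+1/2}}^2$ bound for $\|\nabla\mathcal{B}\|_{H^{1/2}}^2$; the second, combined with the elementary inequality $|\log\tau|\leq 1/(e\tau)$ valid on $(0,1/2]$, yields the $\tau^{-2}\|\mathcal{O}\|_{H^{M+1/2}}^2$ bound for $\|\nabla_\tau\mathcal{B}\|_{H^{1/2}}^2$.

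The main obstacle will be controlling the new bulk term from the singular coefficient $\frac{1}{\tau}(1+\frac{2}{\log\tau})$ at the level of the LP-projected energy identity and checking that neither the favorable sign near $\tau=0$ nor the Gronwall absorption on $[1/10,1/2]$ is spoiled after multiplying by $2^k$ and summing; the key point is that both mechanisms are frequency-independent, so they survive the $2^k$-weighted summation that upgrades to the sharp $H^{1/2}$ bound.
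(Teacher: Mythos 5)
Your proposal is correct and follows essentially the same route as the paper: the paper also renormalizes to $\mathcal{D}=\mathcal{B}/\log\tau$ on $(0,1/2]$, runs the LP-projected energy estimate of Proposition~\ref{A proposition} with the extra bulk term handled by its favorable sign on $(0,1/10]$ and by Gronwall via the $\frac{\mathbf{1}_{[1/10,1/2]}}{\tau'|\log\tau'|}$ error on $[1/10,1/2]$, then repeats the $\mathcal{A}$-type estimate directly on $\mathcal{B}$ over $[1/2,1]$ with data at $\tau=1/2$, and finally translates back through $\nabla\mathcal{B}=\log\tau\,\nabla\mathcal{D}$ and $\nabla_\tau\mathcal{B}=\log\tau\,\nabla_\tau\mathcal{D}+\mathcal{D}/\tau$. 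The only differences are cosmetic (notation and the explicit use of $|\log\tau|\lesssim\tau^{-1}$), so no changes are needed.
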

\begin{proof} We denote $\mathcal{D}=\mathcal{B}/\log\tau.$ Then $\mathcal{D}$ satisfies the equation:
    \[\nabla_{\tau}\big(\nabla_{\tau}\mathcal{D}\big)+\frac{1}{\tau}\bigg(1+\frac{2}{\log\tau}\bigg)\nabla_{\tau}\mathcal{D}-4\Delta\mathcal{D}=\psi\nabla\mathcal{D},\]
    \[\mathcal{D}({\tau})=2\nabla^{M-1}\mathcal{O}+O\big({\tau}^2|\log({\tau})|^2\big),\ \nabla_{\tau}\mathcal{D}({\tau})=O\big({\tau}|\log({\tau})|^2\big).\]
    This has the same properties needed to do energy estimates as the equation satisfied by $\mathcal{A}.$ Thus, an analogous proof gives for $\tau\in(0,1/2]$ and any $k\geq0$:
    \[\big\|P_k\nabla_{\tau}\mathcal{D}\big\|^2_{L^2}+\big\|\nabla P_k\mathcal{D}\big\|^2_{L^2}\lesssim\big\|\nabla P_k\mathcal{D}|_{\tau=0}\big\|^2_{L^2}+\int_0^{\tau}\big\|\underline{\widetilde{P}}_k\nabla\mathcal{D}\big\|_{L^2}^2+\int_0^{\tau}2^{-2k}\big\|\mathcal{D}\big\|_{H^1}^2\]\[+\int_0^{\tau}\big\|P_k\mathcal{D}\big\|_{H^1}^2+\int_0^{\tau}\big\|\underline{\widetilde{P}}_k\nabla_{\tau}\mathcal{D}\big\|_{L^2}^2+\int_0^{\tau}2^{-2k}\big\|\nabla_{\tau}\mathcal{D}\big\|_{L^2}^2+\int_0^{\tau}\frac{\textbf{1}_{[1/10,1/2]}}{\tau'|\log\tau'|}\big\|P_k\nabla_{\tau}\mathcal{D}\big\|^2_{L^2}d\tau'.\]
    We point out that the error term $\frac{1}{\tau'|\log\tau'|}\big\|P_k\nabla_{\tau}\mathcal{D}\big\|^2_{L^2}\cdot\textbf{1}_{[1/10,1/2]}$ appears on the RHS because for $\tau\in[0,1/10]$ we have $1+2/\log\tau\gtrsim1.$ This can be estimated as usual using Gronwall. We multiply by $2^k$ and sum over $k:$
    \begin{align*}
        \sum_{k\geq0}2^k\Big(\big\|P_k\nabla_{\tau}\mathcal{D}\big\|^2_{L^2}+\big\|P_k\nabla\mathcal{D}\big\|^2_{L^2}\Big)&\lesssim\big\|\mathcal{D}|_{\tau=0}\big\|^2_{L^2}+\int_0^{\tau}\big\|\mathcal{D}\big\|_{H^1}^2+\int_0^{\tau}\big\|\nabla_{\tau}\mathcal{D}\big\|_{L^2}^2+\sum_{k\geq0}2^k\big\|P_k\nabla\mathcal{D}|_{\tau=0}\big\|^2_{L^2}+\\
        &+\int_0^{\tau}\sum_{k\geq0}2^k\big\|\underline{\widetilde{P}}_k\nabla\mathcal{D}\big\|_{L^2}^2+\int_0^{\tau}\sum_{k\geq0}2^k\big\|P_k\mathcal{D}\big\|_{L^2}^2+\int_0^{\tau}\sum_{k\geq0}2^k\big\|\underline{\widetilde{P}}_k\nabla_{\tau}\mathcal{D}\big\|_{L^2}^2.
    \end{align*}
    We recall that from the standard lower order estimates we have for $\tau\in(0,1/2]$:
    \[\big\|\nabla_{\tau}\mathcal{D}\big\|^2_{L^2}+\big\|\mathcal{D}\big\|^2_{H^1}\lesssim\big\|\mathcal{O}\big\|^2_{H^{M}}.\]
    Since $\sum_kP_k^2=I$,  we obtain using our definition of fractional Sobolev spaces in Section \ref{LP section}:
    \begin{align*}
        \big\|\nabla_{\tau}\mathcal{D}\big\|^2_{H^{1/2}}+\big\|\nabla\mathcal{D}\big\|^2_{H^{1/2}}&\lesssim\big\|\mathcal{O}\big\|^2_{H^{M}}+\big\|\nabla\mathcal{D}|_{\tau=0}\big\|^2_{H^{1/2}}+\int_0^{\tau}\big\|\nabla_{\tau}\mathcal{D}\big\|^2_{H^{1/2}}+\big\|\nabla\mathcal{D}\big\|^2_{H^{1/2}}d\tau'\\
        &\lesssim\big\|\mathcal{O}\big\|^2_{H^{M+1/2}}+\int_0^{\tau}\big\|\nabla_{\tau}\mathcal{D}\big\|^2_{H^{1/2}}+\big\|\nabla\mathcal{D}\big\|^2_{H^{1/2}}d\tau'.
    \end{align*}
    We obtain by Gronwall that for any $\tau\in(0,1/2]$:
    \[\big\|\nabla_{\tau}\mathcal{D}\big\|^2_{H^{1/2}}+\big\|\nabla\mathcal{D}\big\|^2_{H^{1/2}}\lesssim\big\|\mathcal{O}\big\|^2_{H^{M+1/2}}.\]
    This implies that for any $\tau\in(0,1/2]$:
    \[\big\|\nabla\mathcal{B}\big\|^2_{H^{1/2}}\lesssim|\log\tau|^2\big\|\mathcal{O}\big\|^2_{H^{M+1/2}},\]
    \[\big\|\nabla_{\tau}\mathcal{B}\big\|^2_{H^{1/2}}\lesssim|\log\tau|^2\big\|\mathcal{O}\big\|^2_{H^{M+1/2}}+\frac{1}{\tau^2|\log\tau|^2}\big\|\mathcal{B}\big\|^2_{H^{1}}\lesssim|\log\tau|^2\big\|\mathcal{O}\big\|^2_{H^{M+1/2}}+\frac{1}{\tau^2}\big\|\mathcal{O}\big\|^2_{H^{M}}.\]
    In particular, this also implies that:
    \[\big\|\nabla_{\tau}\mathcal{B}\big\|^2_{H^{1/2}}\big|_{\tau=1/2}+\big\|\nabla\mathcal{B}\big\|^2_{H^{1/2}}\big|_{\tau=1/2}\lesssim\big\|\mathcal{O}\big\|^2_{H^{M+1/2}}.\]
    Using the equation for $\mathcal{B}$ on the time interval $\tau\in[1/2,1]$, we repeat the energy estimate that we did for $\mathcal{A}:$
    \[\sum_{k\geq0}2^k\Big(\big\|P_k\nabla_{\tau}\mathcal{B}\big\|^2_{L^2}+\big\|P_k\nabla\mathcal{B}\big\|^2_{L^2}\Big)\lesssim\big\|\mathcal{B}\big|_{\tau=\frac{1}{2}}\big\|^2_{L^2}+\int_{\frac{1}{2}}^{\tau}\big\|\mathcal{B}\big\|_{H^1}^2+\int_{\frac{1}{2}}^{\tau}\big\|\nabla_{\tau}\mathcal{B}\big\|_{L^2}^2+\sum_{k\geq0}2^k\big\|P_k\nabla\mathcal{B}\big|_{\tau=\frac{1}{2}}\big\|^2_{L^2}+\]\[+\sum_{k\geq0}2^k\big\|P_k\nabla_{\tau}\mathcal{B}\big|_{\tau=\frac{1}{2}}\big\|^2_{L^2}+\int_{\frac{1}{2}}^{\tau}\sum_{k\geq0}2^k\big\|\underline{\widetilde{P}}_k\nabla\mathcal{B}\big\|_{L^2}^2+\int_{\frac{1}{2}}^{\tau}\sum_{k\geq0}2^k\big\|P_k\mathcal{B}\big\|_{L^2}^2+\int_{\frac{1}{2}}^{\tau}\sum_{k\geq0}2^k\big\|\underline{\widetilde{P}}_k\nabla_{\tau}\mathcal{B}\big\|_{L^2}^2.\]
    We recall that from the standard lower order estimates we have for $\tau\in[1/2,1]$:
    \[\big\|\nabla_{\tau}\mathcal{B}\big\|^2_{L^2}+\big\|\mathcal{B}\big\|^2_{H^1}\lesssim\big\|\mathcal{O}\big\|^2_{H^{M}}.\]
    As a result, we have that for $\tau\in[1/2,1]$:
    \[\big\|\nabla_{\tau}\mathcal{B}\big\|^2_{H^{1/2}}+\big\|\nabla\mathcal{B}\big\|^2_{H^{1/2}}\lesssim\big\|\mathcal{O}\big\|^2_{H^{M+1/2}}+\int_{\frac{1}{2}}^{\tau}\big\|\nabla_{\tau}\mathcal{B}\big\|^2_{H^{1/2}}+\int_{\frac{1}{2}}^{\tau}\big\|\nabla\mathcal{B}\big\|^2_{H^{1/2}}.\]
    We combine the estimates for $\tau\in(0,1/2]$ and $\tau\in[1/2,1]$ to obtain the conclusion.
\end{proof}

\subsection{Top order estimates}\label{first system top order estimates section}

We prove top order estimates for the regular quantities $\big(\nabla^M\Phi_0\big)_J,\nabla^M\Phi_1,\ldots\nabla^M\Phi_I$ as outlined in Section~\ref{first model system intro section}. The analysis requires a precise understanding of the behavior of the $P_k$ projections of each component. We treat separately the low frequency regime $\tau\in(0,2^{-k-1}]$ in Section~\ref{first system low fre estimates section} and the high frequency regime $\tau\in[2^{-k-1},1]$ in Section~\ref{first system high fre estimates section}. In Section~\ref{singular component outline section}, we state the top order estimates of \cite[Section~7]{Cmain} for the singular component, and we explain their proof in analogy to the argument of the present paper. Finally, we combine the estimates in Section~\ref{first system combined estimates section} to complete the proof of Theorem~\ref{main theorem first system}.

In this section, we often work with the new time variable $t=2^k\tau$. We notice that we have a similar result to Lemma~\ref{volume form lemma}, which allows us to control the error terms resulting from time derivatives of the metric and volume form. We note that for these terms we can apply Gronwall on the whole interval $t\in(0,2^k].$

\begin{lemma}\label{volume form lemma t}
    For any smooth horizontal tensor $F$ defined on $\mathcal{M},$ we have:
    \[\frac{1}{2}\frac{d}{dt}\big\|F\big\|_{L^2(S_t)}^2=\int_{S_{t}}F\cdot\nabla_{t}FdVol_{\slashed{g}}+O\Big(2^{-2k}t\big\|F\big\|_{L^2(S_t)}^2\Big).\]
\end{lemma}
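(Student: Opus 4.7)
The plan is to derive this statement directly from Lemma \ref{volume form lemma} by a simple change of variables. Since $t = 2^k \tau$, the chain rule gives $\frac{d}{dt} = 2^{-k} \frac{d}{d\tau}$, and similarly the horizontal tensor derivative satisfies $\nabla_t = 2^{-k} \nabla_\tau$ because $t$ and $\tau$ differ only by a constant factor (here $k$ is fixed).

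I will begin by writing
\[
\frac{1}{2}\frac{d}{dt}\big\|F\big\|_{L^2(S_t)}^2 = 2^{-k}\cdot\frac{1}{2}\frac{d}{d\tau}\big\|F\big\|_{L^2(S_\tau)}^2,
\]
and then apply Lemma \ref{volume form lemma} to rewrite the right-hand side as
\[
2^{-k}\int_{S_\tau} F\cdot\nabla_\tau F\, dVol_{\slashed{g}} + O\big(2^{-k}\tau\,\|F\|_{L^2(S_\tau)}^2\big).
\]
Bringing the factor $2^{-k}$ into the integrand gives exactly $\int_{S_t} F\cdot\nabla_t F\, dVol_{\slashed{g}}$, and rewriting $2^{-k}\tau = 2^{-2k}\cdot (2^k\tau) = 2^{-2k} t$ converts the error term into the desired $O(2^{-2k}t\,\|F\|_{L^2(S_t)}^2)$.

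There is no real obstacle here: the lemma is purely a bookkeeping restatement of Lemma \ref{volume form lemma} adapted to the rescaled time variable that is used in the forthcoming high frequency regime estimates. Its utility, as remarked after the statement, is that the resulting $2^{-2k}$ prefactor in the error term allows one to apply Gronwall on the whole interval $t \in (0, 2^k]$ without losing any powers of $k$, since $\int_0^{2^k} 2^{-2k} t\, dt = O(1)$.
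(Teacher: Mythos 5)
Your proof is correct and is exactly the paper's argument: the paper's one-line proof also just invokes Lemma~\ref{volume form lemma} together with $\partial_t=2^{-k}\partial_\tau$, which is precisely your change-of-variables computation (with the bookkeeping $2^{-k}\tau=2^{-2k}t$ spelled out). Your closing remark about why the $2^{-2k}t$ weight makes Gronwall harmless on $t\in(0,2^k]$ matches the paper's stated purpose for the lemma.
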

\begin{proof}
    The proof follows from Lemma~\ref{volume form lemma}, since $\partial_{t}=2^{-k}\partial_{\tau}.$
\end{proof}

\subsubsection{Low frequency regime estimates}\label{first system low fre estimates section}
We prove a low frequency regime estimate for the regular components $P_k\big(\nabla^M\Phi_0\big)_J,P_k\nabla^M\Phi_1,\ldots,P_k\nabla^M\Phi_I$, for each $k\geq0$. The main idea is to propagate for $\tau\in(0,2^{-k-1}]$ the $L^2$ bounds satisfied by the asymptotic data at $\mathcal{I}^-$, using $\nabla_{\tau}$ as a multiplier.
\begin{proposition}\label{low frequency regular proposition} For any $k\geq0$ and $\tau\leq2^{-k-1}$, we have the estimate:
\begin{align*}
    &\big\|P_k\nabla_{\tau}\big(\nabla^M\Phi_0\big)_J\big\|^2_{L^2}+\big\|\nabla P_k\big(\nabla^M\Phi_0\big)_J\big\|^2_{L^2}+2^{2k}\big\|P_k\big(\nabla^M\Phi_0\big)_J\big\|^2_{L^2}+\\
    &+\sum_{i=1}^I\bigg(\big\|P_k\nabla_{\tau}\nabla^M\Phi_i\big\|^2_{L^2}+\big\|\nabla P_k\nabla^M\Phi_i\big\|^2_{L^2}+2^{2k}\big\|P_k\nabla^M\Phi_i\big\|^2_{L^2}\bigg)\lesssim\\
    &\lesssim\big\|\nabla P_k\mathfrak{h}_M\big\|^2_{L^2}+2^{2k}\big\|P_k\mathfrak{h}_M\big\|^2_{L^2}+\sum_{i=1}^I\big\|\nabla P_k\nabla^M\Phi_i^0\big\|^2_{L^2}+\sum_{i=1}^I2^{2k}\big\|P_k\nabla^M\Phi_i^0\big\|^2_{L^2}+\\
    &+\sum_{i=1}^I\int_0^{\tau}\bigg(\frac{1}{2^{3k}}\big\|\nabla^M\Phi_i\big\|_{H^1}^2+\frac{(\tau')^2}{2^{k}}\big\|\nabla_{\tau}\nabla^M\Phi_i\big\|_{L^2}^2\bigg)d\tau'+\int_0^{\tau}\frac{(\tau')^2}{2^{k}}\big\|\nabla_{\tau}\big(\nabla^M\Phi_0\big)_J\big\|_{L^2}^2d\tau'+\\
    &+\int_0^{\tau}\frac{1}{2^{3k}}\big\|\big(\nabla^M\Phi_0\big)_J\big\|_{H^1}^2d\tau'+\sum_{i=0}^I\int_0^{\tau}\frac{1}{2^{k}}\big\|P_kF_M^i\big\|_{L^2}^2d\tau'+\int_0^{\tau}\frac{1}{2^{k}}\big\|P_k\big(\psi\nabla\big(\nabla^M\Phi_{0}\big)_Y\big)\big\|_{L^2}^2d\tau'.
\end{align*}
\end{proposition}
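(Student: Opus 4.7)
The strategy is to commute each regular equation with the geometric LP projection $P_k$, use $P_k \nabla_\tau \Phi$ as a multiplier, and exploit the low-frequency constraint $\tau \leq 2^{-k-1}$ to close. Apply $P_k$ to \eqref{equation for Phi J} and to \eqref{equation for Phi i} with $\sigma=1$. Since $P_k$ is defined via the heat flow associated to the same $\Delta$, we have $[P_k,\Delta]=0$, so the commuted equations read
\[
\nabla_\tau(P_k\nabla_\tau(\nabla^M\Phi_0)_J) + \frac{1}{\tau}P_k\nabla_\tau(\nabla^M\Phi_0)_J - 4\Delta P_k(\nabla^M\Phi_0)_J = P_k(\text{RHS}_0) + [\nabla_\tau,P_k]\nabla_\tau(\nabla^M\Phi_0)_J,
\]
and similarly for $P_k\nabla^M\Phi_i$. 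The RHS contains $P_k(\psi\nabla(\nabla^M\Phi_0)_J)$ and $\sum_{j\ge 1}P_k(\psi\nabla^{M+1}\Phi_j)$ for the $J$-equation, and $\sum_{j\ge 0}P_k(\psi\nabla^{M+1}\Phi_j)$ for the $\Phi_i$ equation; the latter is where $(\nabla^M\Phi_0)_Y$ enters as a source, producing the term $\tfrac{1}{2^k}\|P_k(\psi\nabla(\nabla^M\Phi_0)_Y)\|_{L^2}^2$ in the final bound.

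Contract each equation against its $P_k\nabla_\tau$ multiplier and integrate in space using Lemma~\ref{volume form lemma}, then integrate in $\tau'\in(0,\tau]$. The bulk term $\tfrac{1}{\tau'}|P_k\nabla_\tau\Phi|^2$ has favorable sign (this is exactly why $\sigma=1$ was assumed) and can be discarded. The initial data at $\tau'=0$ are computed from the asymptotic expansions: $(\nabla^M\Phi_0)_J(0)=\mathfrak h_M$, $\nabla^M\Phi_i(0)=\nabla^M\Phi_i^0$, while $\nabla_\tau(\nabla^M\Phi_0)_J(0)=0$ and $\nabla_\tau\nabla^M\Phi_i(0)=0$ thanks to the $O(\tau|\log\tau|^2)$ behavior in Definition~\ref{model systems definition} and \eqref{equation for Phi J}. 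This yields
\[
\|P_k\nabla_\tau\Phi\|_{L^2}^2 + \|\nabla P_k\Phi\|_{L^2}^2 \lesssim \|\nabla P_k(\text{data})\|_{L^2}^2 + (\text{bulk error terms}).
\]

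To extract the $2^{2k}\|P_k\Phi\|^2$ component of the energy, I will use the fundamental theorem of calculus together with Lemma~\ref{volume form lemma}: $\|P_k\Phi(\tau)\|_{L^2}^2 \lesssim \|P_k\Phi(0)\|_{L^2}^2 + \tau\int_0^\tau\|P_k\nabla_{\tau'}\Phi\|_{L^2}^2\,d\tau'$, and multiply by $2^{2k}$. The low-frequency condition $\tau\le 2^{-k-1}$ gives $2^{2k}\tau\cdot\tau\le 1/4$, so the second term is absorbed by the energy already bounded above (modulo Gronwall). The initial-data contribution is then $2^{2k}\|P_k\mathfrak h_M\|_{L^2}^2$ and $2^{2k}\|P_k\nabla^M\Phi_i^0\|_{L^2}^2$, matching the statement.

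The main technical work lies in the error terms: writing $P_k(\psi\nabla\Phi)=\psi\nabla P_k\Phi+\psi[P_k,\nabla]\Phi+[P_k,\psi]\nabla\Phi$ and bounding the last two using Lemma~\ref{Litt Paley lemma} (estimates \eqref{LP est 2}, \eqref{LP est 6}) produces the $2^{-k}$ factors that, after Cauchy–Schwarz with appropriate weights (pairing with $P_k\nabla_\tau$ terms absorbable by the LHS), yield the $\tfrac{1}{2^{3k}}\|\cdot\|_{H^1}^2$ weights. The commutator $[\nabla_\tau,P_k]=2\tau[\nabla_4,P_k]$ supplies the explicit $\tau$ factor, and Lemma~\ref{Litt Paley lemma refined} then gives the $\widetilde{\underline P}_k$ projection terms plus a $2^{-k}$ remainder, producing the $\tfrac{(\tau')^2}{2^k}$ weights on $\nabla_\tau$ terms. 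The key obstacle, as anticipated in the outline of Section~\ref{first model system intro section}, is that these commutator bounds introduce \emph{different} projection symbols on the right-hand side (namely $\widetilde{\underline P}_k$), so they cannot be closed at frequency $k$ in isolation; one simply accepts them here and defers the absorption to the combined-regime step in Section~\ref{first system combined estimates section}, where one sums in $k$ and appeals to Gronwall. Finally, Gronwall in $\tau'$ closes the remaining lower-order errors involving $\|P_k\Phi\|_{L^2}^2$ itself.
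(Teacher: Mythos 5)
Your plan is essentially the paper's proof: project the $J$- and $\Phi_i$-equations with $P_k$ (using that $P_k$ commutes with $\Delta$ at fixed $\tau$), use the $P_k\nabla_\tau$ multiplier, drop the favorable $\sigma=1$ bulk term, take data from the expansions at $\mathcal{I}^-$ (with $\mathfrak h_M$ and $\nabla^M\Phi_i^0$, vanishing $\nabla_\tau$-data), treat $\psi\nabla(\nabla^M\Phi_0)_Y$ as a known source, and recover the $2^{2k}\|P_k\cdot\|^2$ piece by a separate transport/Gronwall step using $2^{2k}\tau^2\lesssim1$. The only cosmetic difference is that the paper rescales time to $t=2^k\tau$ and runs Gronwall on $t\in[0,1/2]$, which is the same as your weighted Cauchy--Schwarz with weight $2^k$ on $\tau\in(0,2^{-k-1}]$.

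One correction, though, to the last part of your plan: in this low frequency regime you should not invoke Lemma~\ref{Litt Paley lemma refined} and then "defer" error terms carrying the mixed-symbol projection $\underline{\widetilde{P}}_k$. The proposition as stated has no such terms on its right-hand side, so your version would prove a modified statement rather than the one claimed; and the deferral is unnecessary, because the commutator $[\nabla_\tau,P_k]=2\tau[\nabla_4,P_k]$ already carries the factor $\tau\leq2^{-k-1}$, so the plain bounds \eqref{LP est 1} and \eqref{LP est 4} (or, if you prefer the refined form \eqref{LP refined est 2}, the trivial bound $\|\underline{\widetilde{P}}_kF\|_{L^2}\lesssim\|F\|_{L^2}$) give $\|[\nabla_\tau,P_k]\nabla_\tau\xi\|_{L^2}\lesssim\tau\|\nabla_\tau\xi\|_{L^2}$, which after the weight-$2^k$ Cauchy--Schwarz lands exactly on the stated $\int_0^\tau\frac{(\tau')^2}{2^k}\|\nabla_\tau\xi\|_{L^2}^2$ term. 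The genuinely unavoidable $\underline{\widetilde{P}}_k$ terms appear only in the high frequency regime (Proposition~\ref{high frequency forward estimate}), where there is no smallness in $\tau$ to spend; keeping the low frequency estimate free of them is what makes the later combination step clean.
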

\begin{proof}
    We denote $\xi_{0}=\big(\nabla^M\Phi_0\big)_J$ and for 
    $1\leq i\leq I$ we denote $\xi_i=\nabla^M\Phi_i$. We introduce the new time variable $t=2^k\tau$. Equations \eqref{equation for Phi i} and \eqref{equation for Phi J} with $\sigma=1,m=M$ can be written as follows for all $0\leq i\leq I$:
    \[\nabla_{t}\big(\nabla_{t}\xi_i\big)+\frac{1}{t}\nabla_{t}\xi_i-\frac{4}{2^{2k}}\cdot\Delta\xi_i=\sum_{j=0}^I\frac{1}{2^{2k}}\cdot\psi\nabla\xi_j+\frac{1}{2^{2k}}\cdot F_M^i+\frac{1}{2^{2k}}\cdot\psi\nabla\big(\nabla^M\Phi_{0}\big)_Y.\]
    For any $k\geq0,$ we apply $P_k$ to each equation:
    \begin{align*}
        &\nabla_{t}\big(P_k\nabla_{t}\xi_i\big)+\frac{1}{t}P_k\nabla_{t}\xi_i-\frac{4}{2^{2k}}\cdot\Delta P_k\xi_i=\sum_{j=0}^I\frac{1}{2^{2k}}\cdot\psi\nabla P_k\xi_j+\sum_{j=0}^I\frac{1}{2^{2k}}\psi[P_k,\nabla]\xi_j+\\
        &+\sum_{j=0}^I\frac{1}{2^{2k}}[P_k,\psi]\nabla\xi_j+\frac{1}{2^{2k}}\cdot P_kF_M^i+[\nabla_t,P_k]\nabla_{t}\xi_i+\frac{1}{2^{2k}}\cdot P_k\big(\psi\nabla\big(\nabla^M\Phi_0\big)_Y\big).
    \end{align*}
    We contract each equation with $P_k\nabla_t\xi_i$ and integrate by parts to obtain the energy estimate:
    \[\sum_{i=0}^I\big\|P_k\nabla_{t}\xi_i\big\|^2_{L^2}+\sum_{i=0}^I\frac{1}{2^{2k}}\big\|\nabla P_k\xi_i\big\|^2_{L^2}\lesssim\sum_{i=0}^I\frac{1}{2^{2k}}\big\|\nabla P_k\xi_i^0\big\|^2_{L^2}+\sum_{i=0}^I\int_0^t\frac{1}{2^{2k}}\big\|\nabla P_k\xi_i\big\|_{L^2}\cdot\big\|\nabla[P_k,\nabla_t]\xi_i\big\|_{L^2}+\]\[+\sum_{i=0}^I\int_0^t\frac{1}{2^{2k}}\big\|\nabla P_k\xi_i\big\|_{L^2}\cdot\big\|[\nabla,\nabla_t]P_k\xi_i\big\|_{L^2}+\sum_{i=0}^I\int_0^t\frac{1}{2^{2k}}\big\|\nabla P_k\xi_i\big\|_{L^2}\cdot\big\|P_k\nabla_t\xi_i\big\|_{L^2}+\]\[+\sum_{i=0}^I\int_0^t\frac{1}{2^{2k}}\big\|P_k\nabla_t\xi_i\big\|_{L^2}\cdot\big\|[P_k,\nabla]\xi_i\big\|_{L^2}+\sum_{i=0}^I\int_0^t\frac{1}{2^{2k}}\big\|P_k\nabla_t\xi_i\big\|_{L^2}\cdot\big\|[P_k,\psi]\nabla\xi_i\big\|_{L^2}+\]\[+\sum_{i=0}^I\int_0^t\big\|P_k\nabla_t\xi_i\big\|_{L^2}\cdot\big\|[\nabla_t,P_k]\nabla_{t}\xi_i\big\|_{L^2}+\sum_{i=0}^I\int_0^t\frac{1}{2^{2k}}\big\|P_k\nabla_t\xi_i\big\|_{L^2}\cdot\bigg(\big\|P_kF_M^i\big\|_{L^2}+\big\|P_k\big(\psi\nabla\big(\nabla^M\Phi_{0}\big)_Y\big)\big\|_{L^2}\bigg)dt'.\]
    We note that we dropped the bulk term with a favorable sign in the above estimate. We use Gronwall for $t\in\big[0,1/2\big]:$
    \[\sum_{i=0}^I\big\|P_k\nabla_{t}\xi_i\big\|^2_{L^2}+\sum_{i=0}^I\frac{1}{2^{2k}}\big\|\nabla P_k\xi_i\big\|^2_{L^2}\lesssim\sum_{i=0}^I\frac{1}{2^{2k}}\big\|\nabla P_k\xi_i^0\big\|^2_{L^2}+\sum_{i=0}^I\int_0^t\frac{1}{2^{2k}}\big\|\nabla[P_k,\nabla_t]\xi_i\big\|_{L^2}^2+\]\[+\sum_{i=0}^I\int_0^t\frac{1}{2^{2k}}\big\|[\nabla,\nabla_t]P_k\xi_i\big\|_{L^2}^2+\sum_{i=0}^I\int_0^t\frac{1}{2^{4k}}\big\|[P_k,\nabla]\xi_i\big\|_{L^2}^2+\sum_{i=0}^I\int_0^t\frac{1}{2^{4k}}\big\|[P_k,\psi]\nabla\xi_i\big\|_{L^2}^2+\]\[+\sum_{i=0}^I\int_0^t\big\|[\nabla_t,P_k]\nabla_{t}\xi_i\big\|_{L^2}^2+\sum_{i=0}^I\int_0^t\frac{1}{2^{4k}}\big\|P_kF_i\big\|_{L^2}^2+\int_0^t\frac{1}{2^{4k}}\big\|P_k\big(\psi\nabla\big(\nabla^M\Phi_{0}\big)_Y\big)\big\|_{L^2}^2.\]
    We use Lemma \ref{Litt Paley lemma} in order to bound the commutation terms. Thus, we get:
    \begin{align*}
        &\sum_{i=0}^I\big\|P_k\nabla_{t}\xi_i\big\|^2_{L^2}+\sum_{i=0}^I\frac{1}{2^{2k}}\big\|\nabla P_k\xi_i\big\|^2_{L^2}\lesssim\sum_{i=0}^I\frac{1}{2^{2k}}\big\|\nabla P_k\xi_i^0\big\|^2_{L^2}+\sum_{i=0}^I\int_0^t\frac{1}{2^{6k}}\big\|\xi_i\big\|_{H^1}^2+\\
        &+\sum_{i=0}^I\int_0^t\frac{(t')^2}{2^{4k}}\big\|\nabla_{t}\xi_i\big\|_{L^2}^2+\sum_{i=0}^I\int_0^t\frac{1}{2^{4k}}\big\|P_kF_M^i\big\|_{L^2}^2+\int_0^t\frac{1}{2^{4k}}\big\|P_k\big(\psi\nabla\big(\nabla^M\Phi_{0}\big)_Y\big)\big\|_{L^2}^2.
    \end{align*}
    We change variables to $\tau$ and get for all $\tau\in[0,2^{-k-1}]:$
    \begin{align*}
        &\sum_{i=0}^I\big\|P_k\nabla_{\tau}\xi_i\big\|^2_{L^2}+\big\|\nabla P_k\xi_i\big\|^2_{L^2}\lesssim\sum_{i=0}^I\big\|\nabla P_k\xi_i^0\big\|^2_{L^2}+\sum_{i=0}^I\int_0^{\tau}\frac{1}{2^{3k}}\big\|\xi_i\big\|_{H^1}^2+\\
        &+\sum_{i=0}^I\int_0^{\tau}\frac{(\tau')^2}{2^{k}}\big\|\nabla_{\tau}\xi_i\big\|_{L^2}^2+\sum_{i=0}^I\int_0^{\tau}\frac{1}{2^{k}}\big\|P_kF_M^i\big\|_{L^2}^2+\int_0^{\tau}\frac{1}{2^{k}}\big\|P_k\big(\psi\nabla\big(\nabla^M\Phi_{0}\big)_Y\big)\big\|_{L^2}^2.
    \end{align*}
    Next, using Lemma~\ref{volume form lemma} and \eqref{LP refined est 2}, we also have the bound for all $\tau\in[0,2^{-k-1}]:$
    \begin{align*}
        \sum_{i=0}^I\big\|P_k\xi_i\big\|^2_{L^2}&\lesssim\sum_{i=0}^I\big\|P_k\xi_i^0\big\|^2_{L^2}+\sum_{i=0}^I2^{-k}\int_0^{\tau}\big\|\nabla_{\tau}P_k\xi_i\big\|^2_{L^2}+\sum_{i=0}^I2^{k}\int_0^{\tau}\big\|P_k\xi_i\big\|^2_{L^2}\\
        &\lesssim\sum_{i=0}^I\big\|P_k\xi_i^0\big\|^2_{L^2}+\sum_{i=0}^I2^{-k}\int_0^{\tau}\big\|P_k\nabla_{\tau}\xi_i\big\|^2_{L^2}+\sum_{i=0}^I2^{-5k}\int_0^{\tau}\big\|\xi_i\big\|^2_{H^1}+\sum_{i=0}^I2^{k}\int_0^{\tau}\big\|P_k\xi_i\big\|^2_{L^2}.
    \end{align*}
    Using Gronwall and the previous estimate, we conclude that:
    \[\sum_{i=0}^I2^{2k}\big\|P_k\xi_i\big\|^2_{L^2}\lesssim\sum_{i=0}^I2^{2k}\big\|P_k\xi_i^0\big\|^2_{L^2}+\sum_{i=0}^I\big\|\nabla P_k\xi_i^0\big\|^2_{L^2}+\sum_{i=0}^I\int_0^{\tau}\frac{1}{2^{3k}}\big\|\xi_i\big\|_{H^1}^2+\]\[+\sum_{i=0}^I\int_0^{\tau}\frac{(\tau')^2}{2^{k}}\big\|\nabla_{\tau}\xi_i\big\|_{L^2}^2+\sum_{i=0}^I\int_0^{\tau}\frac{1}{2^{k}}\big\|P_kF_M^i\big\|_{L^2}^2+\int_0^{\tau}\frac{1}{2^{k}}\big\|P_k\big(\psi\nabla\big(\nabla^M\Phi_{0}\big)_Y\big)\big\|_{L^2}^2.\]
\end{proof}

\subsubsection{High frequency regime estimates}\label{first system high fre estimates section}
We prove a high frequency regime estimate for the regular components $P_k\big(\nabla^M\Phi_0\big)_J,P_k\nabla^M\Phi_1,\ldots,P_k\nabla^M\Phi_I$, for each $k\geq0$. The idea of the proof is to use $2^k\tau\nabla_{\tau}$ as a multiplier for $\tau\in[2^{-k-1},1].$ As in the previous section, our estimates are simplified by the presence of bulk terms with favorable signs. We also notice that the same argument applies to prove the high frequency regime estimate for the singular component $P_k\big(\nabla^M\Phi_0\big)_Y$, see already Section~\ref{singular component outline section}.

\begin{proposition}\label{high frequency forward estimate} For any $k\geq0,\ \tau\in\big[2^{-k-1},1\big]$ we have the estimate for the regular components:
    \[\tau\big\|P_k\nabla_{\tau}\big(\nabla^M\Phi_{0}\big)_J\big\|^2_{L^2}+\frac{1}{\tau}\big\|P_k\big(\nabla^M\Phi_{0}\big)_J\big\|^2_{L^2}+\tau\big\|\nabla P_k\big(\nabla^M\Phi_{0}\big)_J\big\|^2_{L^2}+\]\[+\sum_{i=1}^I\tau\big\|P_k\nabla_{\tau}\nabla^M\Phi_i\big\|^2_{L^2}+\sum_{i=1}^I\frac{1}{\tau}\big\|P_k\nabla^M\Phi_i\big\|^2_{L^2}+\sum_{i=1}^I\tau\big\|\nabla P_k\nabla^M\Phi_i\big\|^2_{L^2}\lesssim\]\[\lesssim\frac{1}{2^k}\bigg(\big\|P_k\nabla_{\tau}\big(\nabla^M\Phi_{0}\big)_J\big\|^2_{L^2}+2^{2k}\big\|P_k\big(\nabla^M\Phi_{0}\big)_J\big\|^2_{L^2}+\big\|\nabla P_k\big(\nabla^M\Phi_{0}\big)_J\big\|^2_{L^2}\bigg)\bigg|_{\tau=2^{-k-1}}+\int_{2^{-k-1}}^{\tau}\tau'\big\|\underline{\widetilde{P}}_k\big(\nabla^M\Phi_{0}\big)_J\big\|_{L^2}^2\]\[+\sum_{i=1}^I\frac{1}{2^k}\bigg(\big\|P_k\nabla_{\tau}\nabla^M\Phi_i\big\|^2_{L^2}+2^{2k}\big\|P_k\nabla^M\Phi_i\big\|^2_{L^2}+\big\|\nabla P_k\nabla^M\Phi_i\big\|^2_{L^2}\bigg)\bigg|_{\tau=2^{-k-1}}+\sum_{i=1}^I\int_{2^{-k-1}}^{\tau}\tau'\big\|\underline{\widetilde{P}}_k\nabla^M\Phi_i\big\|_{L^2}^2\]\[+\int_{2^{-k-1}}^{\tau}(\tau')^3\big\|\underline{\widetilde{P}}_k\nabla\big(\nabla^M\Phi_{0}\big)_J\big\|_{L^2}^2+\int_{2^{-k-1}}^{\tau}(\tau')^3\big\|\underline{\widetilde{P}}_k\nabla_{\tau}\big(\nabla^M\Phi_{0}\big)_J\big\|_{L^2}^2+\int_{2^{-k-1}}^{\tau}\frac{\tau'}{2^{2k}}\big\|\big(\nabla^M\Phi_{0}\big)_J\big\|_{H^1}^2\]\[+\sum_{i=1}^I\int_{2^{-k-1}}^{\tau}\bigg((\tau')^3\big\|\underline{\widetilde{P}}_k\nabla^{M+1}\Phi_i\big\|_{L^2}^2+(\tau')^3\big\|\underline{\widetilde{P}}_k\nabla_{\tau}\nabla^M\Phi_i\big\|_{L^2}^2+\frac{\tau'}{2^{2k}}\big\|\nabla^{M}\Phi_i\big\|_{H^1}^2+\frac{(\tau')^3}{2^{2k}}\big\|\nabla_{\tau}\nabla^M\Phi_i\big\|_{L^2}^2\bigg)d\tau'\]\[+\int_{2^{-k-1}}^{\tau}\frac{(\tau')^3}{2^{2k}}\big\|\nabla_{\tau}\big(\nabla^M\Phi_{0}\big)_J\big\|_{L^2}^2+\int_{2^{-k-1}}^{\tau}\tau'\big\|P_k\big(\psi\nabla\big(\nabla^M\Phi_{0}\big)_Y\big)\big\|_{L^2}^2+\sum_{i=0}^I\int_{2^{-k-1}}^{\tau}\tau'\big\|P_kF_M^i\big\|_{L^2}^2.\]
\end{proposition}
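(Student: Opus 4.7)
The plan is to apply $P_k$ to the equations \eqref{equation for Phi J} and \eqref{equation for Phi i} (with $\sigma = 1$, $m=M$) and use $2^k\tau\nabla_\tau$ as a multiplier on the interval $[2^{-k-1},1]$, following the strategy sketched in Section~\ref{first model system intro section}. Denoting $\xi_0 = (\nabla^M\Phi_0)_J$ and $\xi_i = \nabla^M\Phi_i$ for $1\leq i\leq I$, the projected equations read schematically
\[
\nabla_\tau(P_k\nabla_\tau\xi_i) + \tfrac{1}{\tau}P_k\nabla_\tau\xi_i - 4\Delta P_k\xi_i = \sum_j \psi\nabla P_k\xi_j + E_k^i + P_kF_M^i + \delta_{i=0}\cdot P_k(\psi\nabla(\nabla^M\Phi_0)_Y),
\]
where $E_k^i$ collects the commutators $[P_k,\nabla]$, $[P_k,\psi]$, and $[\nabla_\tau,P_k]\nabla_\tau$.

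Contracting each equation with $2^k\tau\cdot P_k\nabla_\tau\xi_i$ and integrating by parts in $\tau$ produces, after integrating by parts the $\Delta$ term in space (and trading $\nabla\nabla_\tau = \nabla_\tau\nabla + [\nabla,\nabla_\tau]$), the energy identity
\[
\partial_\tau\Bigl(2^k\tau\|P_k\nabla_\tau\xi_i\|_{L^2}^2 + 4\cdot 2^k\tau\|\nabla P_k\xi_i\|_{L^2}^2\Bigr) + 2\cdot 2^k\|P_k\nabla_\tau\xi_i\|_{L^2}^2 = \cdots,
\]
where the coefficient $2\cdot 2^k$ of the bulk term is strictly positive precisely because $\sigma=1$, so that the contributions of $\tfrac{1}{\tau}\nabla_\tau$ and the $2^k$ derivative of the weight add with the same sign. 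After dividing through by $2^k$, this favorable bulk can be dropped. For the $\tfrac{1}{\tau}\|P_k\xi_i\|_{L^2}^2$ piece on the left-hand side, I will run an auxiliary estimate obtained by contracting with $2^k\tau^{-1}P_k\xi_i$ (or equivalently applying Hardy in time with respect to the favorable bulk), which costs only $\tau\|\nabla P_k\xi_i\|^2$ and $\tau\|P_k\nabla_\tau\xi_i\|^2$ after absorption and so is controlled by the main energy.

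All commutator error terms are handled using Lemma~\ref{Litt Paley lemma} together with the refined bounds \eqref{LP refined est 2} and \eqref{LP refined est 4}. Writing $[\nabla_\tau,P_k] = 2\tau[\nabla_4,P_k]$, the estimates
\[
\|[\nabla_\tau,P_k]\xi_i\|_{L^2} \lesssim \tau\bigl(\|\underline{\widetilde{P}}_k\xi_i\|_{L^2} + 2^{-k}\|\xi_i\|_{L^2}\bigr), \quad \|\nabla[\nabla_\tau,P_k]\xi_i\|_{L^2}\lesssim \tau\bigl(\|\underline{\widetilde{P}}_k\nabla\xi_i\|_{L^2} + 2^{-k}\|\xi_i\|_{H^1}\bigr)
\]
produce, after applying Cauchy--Schwarz with the weight $2^k\tau$ and dividing by $2^k$, exactly the bulk terms $\int (\tau')^3 \|\underline{\widetilde{P}}_k\nabla\xi_i\|^2$, $\int (\tau')^3\|\underline{\widetilde{P}}_k\nabla_\tau\xi_i\|^2$, and the $2^{-2k}$-weighted $H^1$ remainders appearing on the right-hand side of the proposition. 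The commutators $[P_k,\nabla]$ and $[P_k,\psi]$ contribute only $2^{-2k}$-weighted terms via Lemma~\ref{Litt Paley lemma}, while the inhomogeneous terms $F_M^i$ and the coupling $P_k(\psi\nabla(\nabla^M\Phi_0)_Y)$ are absorbed by Cauchy--Schwarz against $2^k\tau\|P_k\nabla_\tau\xi_i\|_{L^2}$. The error terms coming from differentiating the volume form, as in Lemma~\ref{volume form lemma}, are of the form $\int \tau'\|P_k\nabla_\tau\xi_i\|^2 + \int \tau'\|\nabla P_k\xi_i\|^2$ and are absorbed via Gronwall on $[2^{-k-1},1]$.

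The main obstacle is bookkeeping rather than new ideas: the commutator errors on the right-hand side involve \emph{different} projection symbols $\underline{\widetilde{P}}_k$ and so cannot be absorbed at fixed $k$. They must be carried along and handled only after summing in $k$ in the combined estimate of Section~\ref{first system combined estimates section}, together with the corresponding low-frequency bounds of Proposition~\ref{low frequency regular proposition} and the lower-order estimates of Section~\ref{lower order estimates section}. At this stage I therefore do not attempt to close the estimate in $k$; rather, I allow these terms to appear on the right-hand side exactly as stated, and simply verify that the weights ($\tau^3$ for high-frequency commutators, $\tau/2^{2k}$ for lower-order remainders) emerge correctly from the computation. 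Finally, the data terms at $\tau = 2^{-k-1}$ appear with weight $2^{-k}$ because the multiplier weight $2^k\tau$ evaluated there is of size $1$, while the target energy scaling at a general $\tau$ is of size $2^k\tau$.
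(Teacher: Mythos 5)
Your treatment of the commutators, the inhomogeneous terms, the volume-form errors, and the decision to carry the $\underline{\widetilde{P}}_k$-terms to the summation stage all match what is needed. However, there is a genuine gap at the core of the argument: the claimed energy identity for the multiplier $2^k\tau\nabla_\tau$ is wrong, and the missing term is precisely the obstruction that the paper's proof is designed to circumvent. Contracting the projected equation with $2\cdot 2^k\tau\, P_k\nabla_\tau\xi_i$ gives, schematically,
\begin{align*}
\partial_\tau\Big(2^k\tau\big\|P_k\nabla_\tau\xi_i\big\|_{L^2}^2+4\cdot 2^k\tau\big\|\nabla P_k\xi_i\big\|_{L^2}^2\Big)
+2^k\big\|P_k\nabla_\tau\xi_i\big\|_{L^2}^2
-4\cdot 2^k\big\|\nabla P_k\xi_i\big\|_{L^2}^2=\cdots,
\end{align*}
because differentiating the $\tau$-weight in front of the gradient energy necessarily produces the term $-4\cdot 2^k\|\nabla P_k\xi_i\|_{L^2}^2$, which your identity omits. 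This bulk term has the wrong sign, it is top order, and it cannot be removed: relative to the energy $2^k\tau\|\nabla P_k\xi_i\|_{L^2}^2$ it carries the factor $1/\tau'$, whose integral over $[2^{-k-1},1]$ is of size $k$, so Gronwall yields a loss $2^{Ck}$ that destroys the estimate; bounding it by the finite band property brings in $2^{3k}\|\xi_i\|_{L^2}^2$, which is not summable in $k$ either. Already for the flat model $\partial_t^2\alpha+\frac1t\partial_t\alpha+\alpha=0$ one sees that the naive multiplier $t\partial_t\alpha$ leaves the bulk $\frac12|\partial_t\alpha|^2-\frac12|\alpha|^2$, whose bad part is only compensated through equipartition of the oscillating solution, not by sign or absorption; your favorable $\nabla_\tau$-bulk does not cancel it.

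The paper's proof implements this cancellation algebraically: after passing to $t=2^k\tau$, the equation is multiplied by $\sqrt{t}$ and rewritten for the unknown $\xi_i\sqrt{t}$, so that the damping $\frac1t\nabla_t$ is absorbed into $\nabla_t\nabla_t(\xi_i\sqrt t)+\frac{1}{4t^2}\xi_i\sqrt t$. With the multiplier $P_k\nabla_t(\xi_i\sqrt t)$ the gradient term then enters as an exact time derivative $\frac{d}{dt}\big[\tfrac{2}{2^{2k}}\|\nabla P_k(\xi_i\sqrt t)\|_{L^2}^2\big]$ with no leftover bad bulk, while the potential $\frac{1}{4t^2}$ produces both the $\frac1\tau\|P_k\xi_i\|_{L^2}^2$ term in the energy and the favorable bulk $\int\frac{1}{(t')^2}\|P_k\xi_i\|_{L^2}^2\,dt'$ that is simply dropped (so no auxiliary Hardy-type multiplier is needed for that term, as you propose). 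Without this $\sqrt t$-conjugation, or an equivalent normal-form device, the high-frequency boundedness does not follow from your computation, so the proposal as written does not prove the proposition.
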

\begin{proof}We denote $\xi_{0}=\big(\nabla^M\Phi_{0}\big)_J$ and for 
$1\leq i\leq I$ we denote $\xi_i=\nabla^M\Phi_i$. We introduce the new time variable $t=2^k\tau$. As before, equations \eqref{equation for Phi i} and \eqref{equation for Phi J} with $\sigma=1,m=M$ can be written for all $0\leq i\leq I$:
    \[\nabla_{t}\big(\nabla_{t}\xi_i\big)+\frac{1}{t}\nabla_{t}\xi_i-\frac{4}{2^{2k}}\cdot\Delta\xi_i=\sum_{j=0}^I\frac{1}{2^{2k}}\cdot\psi\nabla\xi_j+\frac{1}{2^{2k}}\cdot F_M^i+\frac{1}{2^{2k}}\cdot\psi\nabla\big(\nabla^M\Phi_{0}\big)_Y.\]
    We multiply by $\sqrt{t}$ to get for all $0\leq i\leq I$:
    \[\nabla_{t}\big(\nabla_{t}(\xi_i\sqrt{t})\big)+\frac{1}{4t^2}\xi_i\sqrt{t}-\frac{4}{2^{2k}}\cdot\Delta\xi_i\sqrt{t}=\sum_{j=0}^I\frac{1}{2^{2k}}\cdot\psi\nabla\xi_j\sqrt{t}+\frac{1}{2^{2k}}\cdot F_M^i\sqrt{t}+\frac{1}{2^{2k}}\cdot\psi\nabla\big(\nabla^M\Phi_{0}\big)_Y\sqrt{t}.\]
    For any $k\geq0$, we apply $P_k$ to obtain the equations for all $0\leq i\leq I$:
    \[\nabla_{t}\big(P_k\nabla_{t}(\xi_i\sqrt{t})\big)+\frac{1}{4t^2}P_k\xi_i\sqrt{t}-\frac{4}{2^{2k}}\Delta P_k\xi_i\sqrt{t}=\sum_{j=0}^I\frac{1}{2^{2k}}\psi\nabla P_k\xi_j\sqrt{t}+\sum_{j=0}^I\frac{1}{2^{2k}}\psi[P_k,\nabla]\xi_j\sqrt{t}+\]\[+\sum_{j=0}^I\frac{1}{2^{2k}}[P_k,\psi]\nabla \xi_j\sqrt{t}+\frac{P_k\big(F_M^i\sqrt{t}+\psi\nabla\big(\nabla^M\Phi_{0}\big)_Y\sqrt{t}\big)}{2^{2k}}+[\nabla_t,P_k]\nabla_{t}\xi_i\sqrt{t}\]
    We contract each equation with $P_k\nabla_t(\xi_i\sqrt{t})$ and integrate by parts to obtain the following energy estimate:
    \[\big\|P_k\nabla_{t}\xi_i\sqrt{t}\big\|^2_{L^2}+\frac{1}{t^2}\big\|P_k\xi_i\sqrt{t}\big\|^2_{L^2}+\frac{1}{2^{2k}}\big\|\nabla P_k\xi_i\sqrt{t}\big\|^2_{L^2}+\int_{1/2}^t\frac{1}{(t')^2}\big\|P_k\xi_i\big\|_{L^2}^2dt'\lesssim\]\[\lesssim\bigg(\big\|P_k\nabla_{t}\xi_i\big\|^2_{L^2}+\big\|P_k\xi_i\big\|^2_{L^2}+\frac{1}{2^{2k}}\big\|\nabla P_k\xi_i\big\|^2_{L^2}\bigg)\bigg|_{t=1/2}+\]\[+\int_{1/2}^t\int_{S^n}\frac{1}{(t')^2}\big|P_k\xi_i\sqrt{t'}\big|\cdot\big|[P_k,\nabla_t]\xi_i\sqrt{t'}\big|+\int_{1/2}^t\int_{S^n}\frac{1}{2^{2k}}\big|\nabla P_k\xi_i\sqrt{t'}\big|\cdot\big|\nabla[P_k,\nabla_t]\xi_i\sqrt{t'}\big|+\]\[+\int_{1/2}^t\int_{S^n}\frac{1}{2^{2k}}\big|\nabla P_k\xi_i\sqrt{t'}\big|\cdot\big|[\nabla,\nabla_t]P_k\xi_i\sqrt{t'}\big|+\sum_{j=0}^I\int_{1/2}^t\int_{S^n}\frac{1}{2^{2k}}\big|P_k\nabla_t\xi_i\sqrt{t'}\big|\cdot\big|\nabla P_k\xi_j\sqrt{t'}\big|+\]\[+\sum_{j=0}^I\int_{1/2}^t\int_{S^n}\frac{1}{2^{2k}}\big|P_k\nabla_t\xi_i\sqrt{t'}\big|\cdot\big|[P_k,\nabla]\xi_j\sqrt{t'}\big|+\sum_{j=0}^I\int_{1/2}^t\int_{S^n}\frac{1}{2^{2k}}\big|P_k\nabla_t\xi_i\sqrt{t'}\big|\cdot\big|[P_k,\psi]\nabla\xi_j\sqrt{t'}\big|+\]\[+\int_{1/2}^t\int_{S^n}\big|P_k\nabla_t\xi_i\sqrt{t'}\big|\cdot\big|[\nabla_t,P_k]\nabla_{t}\xi_i\sqrt{t'}\big|+\int_{1/2}^t\int_{S^n}\frac{1}{2^{2k}}\big|P_k\nabla_t\xi_i\sqrt{t'}\big|\cdot\big|P_k\big(F_M^i\sqrt{t'}+\psi\nabla\big(\nabla^M\Phi_{0}\big)_Y\sqrt{t'}\big)\big|.\]
    We point out that the bulk term with a favorable sign provides a significant simplification for our analysis. On the other hand, in the analysis of the second model system the corresponding term will create several complications. We use Gronwall for $t\in\big[1/2,2^k\big],$ and the bounds in Lemma \ref{Litt Paley lemma refined} to get for all $0\leq i\leq I$:
    \[\big\|P_k\nabla_{t}\xi_i\sqrt{t}\big\|^2_{L^2}+\frac{1}{t^2}\big\|P_k\xi_i\sqrt{t}\big\|^2_{L^2}+\frac{1}{2^{2k}}\big\|\nabla P_k\xi_i\sqrt{t}\big\|^2_{L^2}\lesssim\bigg(\big\|P_k\nabla_{t}\xi_i\big\|^2_{L^2}+\big\|P_k\xi_i\big\|^2_{L^2}+\frac{1}{2^{2k}}\big\|\nabla P_k\xi_i\big\|^2_{L^2}\bigg)\bigg|_{t=1/2}\]\[+\int_{1/2}^t\frac{1}{2^{2k}t'}\big\|P_k\xi_i\sqrt{t'}\big\|_{L^2}\cdot\Big(\big\|\underline{\widetilde{P}}_k\xi_i\sqrt{t'}\big\|_{L^2}+2^{-k}\big\|\xi_i\sqrt{t'}\big\|_{L^2}\Big)+\sum_{j=0}^I\int_{1/2}^t\frac{1}{2^{2k}}\big\|P_k\nabla_t\xi_i\sqrt{t'}\big\|_{L^2}\cdot\big\|\nabla P_k\xi_j\sqrt{t'}\big\|_{L^2}+\]\[+\int_{1/2}^t\frac{t'}{2^{4k}}\big\|\nabla P_k\xi_i\sqrt{t'}\big\|_{L^2}\cdot\bigg(\big\|\underline{\widetilde{P}}_k\nabla\xi_i\sqrt{t'}\big\|_{L^2}+2^{-k}\big\|\xi_i\sqrt{t'}\big\|_{H^1}\bigg)+\sum_{j=0}^I\int_{1/2}^t\frac{1}{2^{3k}}\big\|P_k\nabla_t\xi_i\sqrt{t'}\big\|_{L^2}\cdot\big\|\xi_j\sqrt{t'}\big\|_{H^1}+\]\[+\int_{1/2}^t\frac{t'}{2^{2k}}\big\|P_k\nabla_t\xi_i\sqrt{t'}\big\|_{L^2}\Big(\big\|\underline{\widetilde{P}}_k\nabla_t\xi_i\sqrt{t'}\big\|_{L^2}+2^{-k}\big\|\nabla_t\xi_i\sqrt{t'}\big\|_{L^2}\Big)+\]
    \[+\int_{1/2}^t\frac{1}{2^{2k}}\big\|P_k\nabla_t\xi_i\sqrt{t'}\big\|_{L^2}\big\|P_kF_M^i\sqrt{t'}\big\|_{L^2}+\int_{1/2}^t\frac{1}{2^{2k}}\big\|P_k\nabla_t\xi_i\sqrt{t'}\big\|_{L^2}\big\|P_k\big(\psi\nabla\big(\nabla^M\Phi_0\big)_Y\big)\sqrt{t'}\big\|_{L^2}\]
    Once again we use Gronwall for $t\in\big[1/2,2^k\big]$ to get for all $0\leq i\leq I$:
    \[\big\|P_k\nabla_{t}\xi_i\sqrt{t}\big\|^2_{L^2}+\frac{1}{t^2}\big\|P_k\xi_i\sqrt{t}\big\|^2_{L^2}+\frac{1}{2^{2k}}\big\|\nabla P_k\xi_i\sqrt{t}\big\|^2_{L^2}\lesssim\bigg(\big\|P_k\nabla_{t}\xi_i\big\|^2_{L^2}+\big\|P_k\xi_i\big\|^2_{L^2}+\frac{1}{2^{2k}}\big\|\nabla P_k\xi_i\big\|^2_{L^2}\bigg)\bigg|_{t=1/2}\]\[+\int_{1/2}^t\frac{1}{2^{3k}}\big\|\underline{\widetilde{P}}_k\xi_i\sqrt{t'}\big\|_{L^2}^2+\int_{1/2}^t\frac{(t')^2}{2^{5k}}\big\|\underline{\widetilde{P}}_k\nabla\xi_i\sqrt{t'}\big\|_{L^2}^2+\int_{1/2}^t\frac{(t')^2}{2^{3k}}\big\|\underline{\widetilde{P}}_k\nabla_t\xi_i\sqrt{t'}\big\|_{L^2}^2+\sum_{j=0}^I\int_{1/2}^t\frac{1}{2^{3k}}\big\|\nabla P_k\xi_j\sqrt{t'}\big\|_{L^2}^2\]\[+\sum_{j=0}^I\int_{1/2}^t\frac{1}{2^{5k}}\big\|\xi_j\sqrt{t'}\big\|_{H^1}^2+\int_{1/2}^t\frac{(t')^2}{2^{5k}}\big\|\nabla_t\xi_i\sqrt{t'}\big\|_{L^2}^2+\int_{1/2}^t\frac{1}{2^{3k}}\big\|P_kF_M^i\sqrt{t'}\big\|_{L^2}^2+\int_{1/2}^t\frac{1}{2^{3k}}\big\|P_k\big(\psi\nabla\big(\nabla^M\Phi_0\big)_Y\big)\sqrt{t'}\big\|_{L^2}^2\]
    Finally, we sum the above estimates for all $0\leq i\leq I$ to obtain:
    \[\sum_{i=0}^It\big\|P_k\nabla_{t}\xi_i\big\|^2_{L^2}+\sum_{i=0}^I\frac{1}{t}\big\|P_k\xi_i\big\|^2_{L^2}+\sum_{i=0}^I\frac{t}{2^{2k}}\big\|\nabla P_k\xi_i\big\|^2_{L^2}\lesssim\]\[\lesssim\sum_{i=0}^I\bigg(\big\|P_k\nabla_{t}\xi_i\big\|^2_{L^2}+\big\|P_k\xi_i\big\|^2_{L^2}+\frac{1}{2^{2k}}\big\|\nabla P_k\xi_i\big\|^2_{L^2}\bigg)\bigg|_{t=1/2}+\sum_{i=0}^I\int_{1/2}^t\frac{1}{2^{3k}}\big\|\underline{\widetilde{P}}_k\xi_i\sqrt{t'}\big\|_{L^2}^2+\]\[+\sum_{i=0}^I\int_{1/2}^t\frac{(t')^2}{2^{5k}}\big\|\underline{\widetilde{P}}_k\nabla\xi_i\sqrt{t'}\big\|_{L^2}^2+\sum_{i=0}^I\int_{1/2}^t\frac{(t')^2}{2^{3k}}\big\|\underline{\widetilde{P}}_k\nabla_t\xi_i\sqrt{t'}\big\|_{L^2}^2+\sum_{i=0}^I\int_{1/2}^t\frac{1}{2^{5k}}\big\|\xi_i\sqrt{t'}\big\|_{H^1}^2+\]\[+\sum_{i=0}^I\int_{1/2}^t\frac{(t')^2}{2^{5k}}\big\|\nabla_t\xi_i\sqrt{t'}\big\|_{L^2}^2+\sum_{i=0}^I\int_{1/2}^t\frac{t'}{2^{3k}}\big\|P_kF_M^i\big\|_{L^2}^2+\int_{1/2}^t\frac{t'}{2^{3k}}\big\|P_k\big(\psi\nabla\big(\nabla^M\Phi_0\big)_Y\big)\big\|_{L^2}^2.\]
    We change variables to $\tau$ in order to obtain the conclusion.
\end{proof}

\subsubsection{Singular component estimates}\label{singular component outline section}

The singular component decouples from the rest of the system, so it can be studied separately from the regular quantities. Relying on the lower order estimates for the singular component in Section~\ref{lower order estimates section}, the proof of the top order estimates is done in \cite[Theorem~7.2]{Cmain}, where we prove:
\begin{align}
    &\tau^2\big\|\nabla_{\tau}\big(\nabla^M\Phi_0\big)_Y\big\|^2_{H^{1/2}}+\tau^2\big\|\nabla\big(\nabla^M\Phi_0\big)_Y\big\|^2_{H^{1/2}}\lesssim\big\|\mathcal{O}\big\|^2_{H^{M+1}},\label{main estimate PHI Y top order} \\
    &\sum_{m=0}^M\big\|\big(\nabla^m\Phi_0\big)_Y\big\|^2_{H^{1}}\lesssim\big(1+|\log\tau|^2\big) \big\|\mathcal{O}\big\|^2_{H^{M+1}}.\label{practical estimate PHI Y top order}
\end{align}

We outline the strategy of this proof, which follows similar steps to the above argument for the regular components. In the high frequency regime, the estimate is entirely analogous to the one proved in Proposition~\ref{high frequency forward estimate}. Moreover, we point out that the only error terms on the right hand side come from the singular component itself, since there are no inhomogeneous terms in \eqref{equation for Phi Y}. Additionally, once we prove \eqref{main estimate PHI Y top order} and \eqref{practical estimate PHI Y top order}, we also have the high frequency estimate:
\begin{equation}\label{high freq estimate improved singular component}
    \sum_{\tau>2^{-k-1}}2^k\tau\big\|\nabla P_k\big(\nabla^M\Phi_{0}\big)_Y\big\|_{L^2}^2\lesssim\big\|\mathcal{O}\big\|_{H^{M+1}}^2
\end{equation}

In the low frequency regime we take a similar approach to the regular components. However, we need to account for the singular behavior of the solution at $\tau=0,$ which can be seen in the expansions:
\begin{align*}
    &P_k\big(\nabla^M\Phi_0\big)_Y({\tau})=2P_k\nabla^M\mathcal{O}\log({2^k\tau})+R_k\nabla^M\mathcal{O}+O\big({\tau}^2|\log({\tau})|^2\big),\\
    &P_k\nabla_{2^{-k}\partial_\tau}\big(\nabla^M\Phi_0\big)_Y({\tau})=2P_k\nabla^M\mathcal{O}\frac{1}{2^k\tau}+O\big({\tau}|\log({\tau})|^2\big).
\end{align*}
We notice that $R_k\nabla^M\mathcal{O}$ on the right hand side is defined at $\tau=0$ and extended by Lie transport in time, according to Lemma \ref{difference of projections lemma}. The strategy is to prove estimates on the equation for $P_k\big(\nabla^M\Phi_0\big)_Y/\log t$ where $t=2^k\tau.$ It is essential to subtract off the term with $R_k\nabla^M\mathcal{O},$ which is lower order in terms of angular derivatives by Lemma~\ref{R k lemma}. We also use the lower order estimates in Section~\ref{lower order estimates section} in order to deal with the error terms arising from commutation.

Finally, we combine the low frequency and high frequency regime estimates for the singular component in order to obtain \eqref{main estimate PHI Y top order} and \eqref{practical estimate PHI Y top order}, similarly to the approach used in the next section.

\subsubsection{The proof of Theorem~\ref{main theorem first system}}\label{first system combined estimates section}
In this section, we combine the low frequency regime and the high frequency regime estimates for the regular components to establish top order estimates. Together with \eqref{main estimate PHI Y top order}, \eqref{practical estimate PHI Y top order}, and the lower order estimates, these complete the proof of Theorem~\ref{main theorem first system}.

\begin{proof}[Proof of Theorem~\ref{main theorem first system}.]
    Using \eqref{main estimate PHI Y top order}, \eqref{practical estimate PHI Y top order}, and the lower order estimates in Propositions~\ref{practical estimate for Phi Y proposition} and \ref{standard estimates regular components propositionn}, we notice that in order to establish Theorem~\ref{main theorem first system} it suffices to prove the estimate for all $\tau\in(0,1]:$
    \[\tau\big\|\nabla_{\tau}\big(\nabla^M\Phi_{0}\big)_J\big\|^2_{H^{1/2}}+ \tau\big\|\nabla(\nabla^M\Phi_{0}\big)_J\big\|^2_{H^{1/2}}+\sum_{i=1}^I\tau\big\|\nabla_{\tau}\nabla^M\Phi_i\big\|^2_{H^{1/2}}+\sum_{i=1}^I\tau\big\|\nabla^{M+1}\Phi_i\big\|^2_{H^{1/2}}+\big\|\big(\nabla^M\Phi_{0}\big)_J\big\|^2_{H^1}+\]\[+\sum_{i=1}^I\big\|\nabla^M\Phi_i\big\|^2_{H^1}\lesssim\big\|\mathcal{O}\big\|_{H^{M+1}}^2+\big\|\mathfrak{h}\big\|_{H^{M+1}}^2+\sum_{i=1}^I\big\|\Phi_i^0\big\|^2_{H^{M+1}}+\sum_{i=0}^I\int_0^{\tau}\big\|F_M^i\big\|_{L^2}^2+\tau'\big\|F_M^i\big\|_{H^{1/2}}^2d\tau'.\]
    
    For the rest of the proof we show this estimate. The first step is to combine the high frequency regime estimate in Proposition~\ref{high frequency forward estimate} with the low frequency regime estimate in Proposition~\ref{low frequency regular proposition}. We denote $\xi_{0}=\big(\nabla^M\Phi_{0}\big)_J$ and $\xi_i=\nabla^M\Phi_i$ for $1\leq i\leq I$. Using the high frequency regime estimate in Proposition~\ref{high frequency forward estimate}, we get that for $k\geq0$ and $\tau\in\big[2^{-k-1},1\big]$:
    \[\sum_{i=0}^I2^k\tau\big\|P_k\nabla_{\tau}\xi_i\big\|^2_{L^2}+\sum_{i=0}^I\frac{2^k}{\tau}\big\|P_k\xi_i\big\|^2_{L^2}+\sum_{i=0}^I2^k\tau\big\|\nabla P_k\xi_i\big\|^2_{L^2}\lesssim\]
    \[\lesssim\sum_{i=0}^I\bigg(\big\|P_k\nabla_{\tau}\xi_i\big\|^2_{L^2}+2^{2k}\big\|P_k\xi_i\big\|^2_{L^2}+\big\|\nabla P_k\xi_i\big\|^2_{L^2}\bigg)\bigg|_{\tau=2^{-k-1}}+\sum_{i=0}^I\int_{2^{-k-1}}^{\tau}2^k\tau'\big\|\underline{\widetilde{P}}_k\xi_i\big\|_{L^2}^2\]\[+\sum_{i=0}^I\int_{2^{-k-1}}^{\tau}2^k(\tau')^3\big\|\underline{\widetilde{P}}_k\nabla\xi_i\big\|_{L^2}^2+\sum_{i=0}^I\int_{2^{-k-1}}^{\tau}2^k(\tau')^3\big\|\underline{\widetilde{P}}_k\nabla_{\tau}\xi_i\big\|_{L^2}^2+\sum_{i=0}^I\int_{2^{-k-1}}^{\tau}\frac{\tau'}{2^{k}}\big\|\xi_i\big\|_{H^1}^2\]\[+\sum_{i=0}^I\int_{2^{-k-1}}^{\tau}\frac{(\tau')^3}{2^{k}}\big\|\nabla_{\tau}\xi_i\big\|_{L^2}^2+\sum_{i=0}^I\int_{2^{-k-1}}^{\tau}2^k\tau'\big\|P_kF_M^i\big\|_{L^2}^2+\int_{2^{-k-1}}^{\tau}2^k\tau'\big\|P_k\big(\psi\nabla\big(\nabla^M\Phi_{0}\big)_Y\big)\big\|_{L^2}^2.\]
    We use the low frequency regime estimate in Proposition~\ref{low frequency regular proposition} and the bound \eqref{practical estimate PHI Y top order} for the singular component. Thus, we get for $k\geq0$ and $\tau\in\big[2^{-k-1},1\big]$ the following high frequency regime estimate:
    \[\sum_{i=0}^I2^k\tau\big\|P_k\nabla_{\tau}\xi_i\big\|^2_{L^2}+\sum_{i=0}^I\frac{2^k}{\tau}\big\|P_k\xi_i\big\|^2_{L^2}+\sum_{i=0}^I2^k\tau\big\|\nabla P_k\xi_i\big\|^2_{L^2}\lesssim\]
    \[\lesssim\sum_{i=0}^I\big\|\nabla P_k\xi_i^0\big\|^2_{L^2}+\sum_{i=0}^I2^{2k}\big\|P_k\xi_i^0\big\|^2_{L^2}+2^{-k}\big\|\mathcal{O}\big\|^2_{H^{M+1}}+\sum_{i=0}^I\int_0^{2^{-k-1}}\frac{1}{2^{3k}}\big\|\xi_i\big\|_{H^1}^2+\sum_{i=0}^I\int_0^{2^{-k-1}}\frac{(\tau')^2}{2^{k}}\big\|\nabla_{\tau}\xi_i\big\|_{L^2}^2\]\[+\sum_{i=0}^I\int_0^{2^{-k-1}}\frac{1}{2^{k}}\big\|P_kF_M^i\big\|_{L^2}^2+\sum_{i=0}^I\int_{2^{-k-1}}^{\tau}\bigg(2^k\tau'\big\|\underline{\widetilde{P}}_k\xi_i\big\|_{L^2}^2+2^k(\tau')^3\big\|\underline{\widetilde{P}}_k\nabla\xi_i\big\|_{L^2}^2+2^k(\tau')^3\big\|\underline{\widetilde{P}}_k\nabla_{\tau}\xi_i\big\|_{L^2}^2\bigg)d\tau'\]\[+\sum_{i=0}^I\int_{2^{-k-1}}^{\tau}\frac{\tau'}{2^{k}}\big\|\xi_i\big\|_{H^1}^2+\sum_{i=0}^I\int_{2^{-k-1}}^{\tau}\frac{(\tau')^3}{2^{k}}\big\|\nabla_{\tau}\xi_i\big\|_{L^2}^2+\sum_{i=0}^I\int_{2^{-k-1}}^{\tau}2^k\tau'\big\|P_kF_M^i\big\|_{L^2}^2+\int_{2^{-k-1}}^{\tau}2^k\tau'\big\|\nabla P_k\big(\nabla^M\Phi_{0}\big)_Y\big\|_{L^2}^2.\]
    
    The second step is to prove a bound for the sum of the non-negative frequencies. We define the following energy for all $k\geq0$:
    \[2^kE_k^2(\tau)=\sum_{i=0}^I2^k\tau\big\|P_k\nabla_{\tau}\xi_i\big\|^2_{L^2}+\sum_{i=0}^I2^k\big\|P_k\xi_i\big\|^2_{L^2}+\sum_{i=0}^I2^k\tau\big\|\nabla P_k\xi_i\big\|^2_{L^2}+\sum_{i=0}^I\big\|\nabla P_k\xi_i\big\|^2_{L^2}.\]
    Using the singular component high frequency regime estimate \eqref{high freq estimate improved singular component}, we get that for all $k\geq0$:
    \[\sum_{\tau>2^{-k-1}}\int_{2^{-k-1}}^{\tau}2^k\tau'\big\|\nabla P_k\big(\nabla^M\Phi_{0}\big)_Y\big\|_{L^2}^2d\tau'\lesssim\int_{0}^{\tau}\sum_{\tau'>2^{-k-1}}2^k\tau'\big\|\nabla P_k\big(\nabla^M\Phi_{0}\big)_Y\big\|_{L^2}^2d\tau'\lesssim\big\|\mathcal{O}\big\|_{H^{M+1}}^2.\]
    As a result, the above high frequency regime estimate implies the bound:
    \[\sum_{\tau>2^{-k-1}}2^kE_k^2\lesssim\big\|\mathcal{O}\big\|_{H^{M+1}}^2+\sum_{i=0}^I\big\|\xi_i^0\big\|^2_{H^{1}}+\sum_{i=0}^I\int_{0}^{\tau}\big\|\xi_i\big\|_{H^1}^2+\sum_{i=0}^I\int_0^{\tau}(\tau')^2\big\|\nabla_{\tau}\xi_i\big\|_{H^{1/2}}^2+\]\[+\sum_{i=0}^I\int_{0}^{\tau}(\tau')^2\big\|\nabla\xi_i\big\|_{H^{1/2}}^2+\int_{0}^{\tau}\tau'\big\|\xi_i\big\|_{H^{1/2}}^2+\sum_{i=0}^I\sum_{k\geq0}\int_{0}^{\tau}\big(2^k\tau'+2^{-k}\big)\big\|P_kF_M^i\big\|_{L^2}^2.\]
    The low frequency estimates in Proposition~\ref{standard estimates regular components propositionn} and \eqref{practical estimate PHI Y top order} imply the bound:
    \[\sum_{\tau\leq2^{-k-1}}2^kE_k^2\lesssim\big\|\mathcal{O}\big\|_{H^{M+1}}^2+\sum_{i=0}^I\big\|\xi_i^0\big\|^2_{H^{1}}+\sum_{i=0}^I\int_0^{\tau}\big\|\xi_i\big\|_{H^1}^2+\sum_{i=0}^I\int_0^{\tau}(\tau')^2\big\|\nabla_{\tau}\xi_i\big\|_{L^2}^2+\sum_{i=0}^I\sum_{k\geq0}\int_0^{\tau}\frac{1}{2^{k}}\big\|P_kF_M^i\big\|_{L^2}^2.\]
    As a result, we completed the second step of the proof and showed that:
    \begin{align*}
        \sum_{k\geq0}2^kE_k^2&\lesssim\big\|\mathcal{O}\big\|_{H^{M+1}}^2+\sum_{i=0}^I\big\|\xi_i^0\big\|^2_{H^{1}}+\sum_{i=0}^I\int_{0}^{\tau}\big\|\xi_i\big\|_{H^1}^2+\sum_{i=0}^I\int_0^{\tau}(\tau')^2\big\|\nabla_{\tau}\xi_i\big\|_{H^{1/2}}^2+\\
        &+\sum_{i=0}^I\int_{0}^{\tau}(\tau')^2\big\|\nabla\xi_i\big\|_{H^{1/2}}^2+\sum_{i=0}^I\int_{0}^{\tau}\big\|F_M^i\big\|_{L^2}^2+\tau'\big\|F_M^i\big\|_{H^{1/2}}^2d\tau'.
    \end{align*}

    The final step of the proof is dealing with the negative frequencies $k<0.$ We remark that we can repeat the proof of Proposition~\ref{standard estimates regular components propositionn} for $m=M$ and use \eqref{practical estimate PHI Y top order} in order to deal with the singular component. Thus, to get:
    \[\sum_{i=0}^I\big\|\nabla_{\tau}\xi_i\big\|^2_{L^2}+\sum_{i=0}^I\big\|\xi_i\big\|^2_{H^1}\lesssim\big\|\mathcal{O}\big\|^2_{H^{M+1}}+\sum_{i=0}^I\big\| \xi_i^0\big\|^2_{H^{1}}+\sum_{i=0}^I\int_0^{\tau}\big\|F_M^i\big\|_{L^2}^2d\tau'.\]
    We notice that by \cite{geometricLP}, we have that $\|P_k\nabla F\|_{L^2}\lesssim2^k\|F\|_{L^2}$ for any $k<0$. We obtain the following bound for the negative frequencies:
    \[\sum_{i=0}^I\sum_{k<0}\Big(2^k\tau\big\|P_k\nabla_{\tau}\xi_i\big\|^2_{L^2}+2^k\big\|P_k\xi_i\big\|^2_{L^2}+2^k\tau\big\|\nabla P_k\xi_i\big\|^2_{L^2}+\big\|\nabla P_k\xi_i\big\|^2_{L^2}\Big)\lesssim\sum_{i=0}^I\big\|\xi_i\big\|^2_{L^2}+\big\|\nabla_{\tau}\xi_i\big\|^2_{L^2}\]\[\lesssim\big\|\mathcal{O}\big\|^2_{H^{M+1}}+\sum_{i=0}^I\big\| \xi_i^0\big\|^2_{H^{1}}+\sum_{i=0}^I\int_0^{\tau}\big\|F_M^i\big\|_{L^2}^2d\tau'.\]

    To conclude the proof of Theorem~\ref{main theorem first system}, we combine the estimates proved for non-negative frequencies and negative frequencies. We then apply Gronwall to obtain:
    \[\sum_{i=0}^I\tau\big\|\nabla_{\tau}\xi_i\big\|^2_{H^{1/2}}+\sum_{i=0}^I\big\|\xi_i\big\|^2_{H^1}+\sum_{i=0}^I\tau\big\|\nabla\xi_i\big\|^2_{H^{1/2}}\lesssim\big\|\mathcal{O}\big\|_{H^{M+1}}^2+\sum_{i=0}^I\big\|\xi_i^0\big\|^2_{H^{1}}+\sum_{i=0}^I\int_0^{\tau}\big\|F_M^i\big\|_{L^2}^2+\tau'\big\|F_M^i\big\|_{H^{1/2}}^2.\]
\end{proof}

\section{Estimates for the Second Model System}\label{model system back direction section}

In this section we prove Theorem~\ref{main theorem second system}, obtaining estimates for solutions of the second model system at $\tau\in(0,1)$ in terms of the solution at $\tau=1.$ We also prove estimates for the asymptotic data at $\mathcal{I}^-.$ We follow the steps outlined in the Section~\ref{second model system intro section} of the Introduction, and we advise the reader to refer to this section for assistance while reading the proof below.

To complete the statement of Theorem~\ref{main theorem second system}, we first define in detail the energy $\mathcal{E}_{II}$ of the solution on $S_{\tau}:$
\begin{align}\label{definition of E II}
    \mathcal{E}_{II}(\tau)=&\tau\big\|\Phi_0\big\|_{H^{M+1/2}}^2+\tau^2\big\|\Phi_0\big\|_{H^{M+3/2}}^2+\tau^2\big\|\nabla_{\tau}\nabla^M\Phi_0\big\|_{H^{1/2}}^2+\sum_{m=0}^{M-1}\tau^2\big\|\nabla_{\tau}\nabla^m\Phi_0\big\|_{L^2}^2+\int_{\tau}^1\tau'\big\|\Phi_0\big\|_{H^{M+1}}^2d\tau'\notag\\
    &+\sum_{i=1}^I\big\|\Phi_i\big\|_{H^{M+3/2}}^2+\sum_{i=1}^I\sum_{m=0}^{M}\big\|\nabla_{\tau}\nabla^m\Phi_i\big\|_{H^{1/2}}^2+\sum_{i=1}^I\sum_{m=0}^{M}\int_{\tau}^1\frac{1}{\tau'}\big\|\nabla_{\tau}\nabla^m\Phi_i\big\|_{H^{1/2}}^2d\tau'.
\end{align}

In Section~\ref{good quantities estimates subsection}, we prove estimates for the regular quantities $\Phi_1,\ldots,\Phi_I$ which satisfy the equations \eqref{equation for Phi i} with $\sigma=2$ and decouple from the singular quantity $\Phi_0.$ We prove \eqref{good quantities estimate intro} and \eqref{good quantities asymptotic data intro} in Proposition~\ref{good quantities estimates proposition}. In Section~\ref{back singular preliminary section}, we prove the preliminary bound \eqref{back singular preliminary intro} in Proposition~\ref{backward direction basic estimate proposition}. In Section~\ref{low frequency backward section singular}, we prove the low frequency regime estimates in Propositions~\ref{preliminary low low frequency estimate proposition} and \ref{prelim low freq singular proposition}. In Section~\ref{prelim high frequency estimate singular section}, we prove the preliminary high frequency regime estimate in Proposition~\ref{high freq back estimate}. We improve this in Section~\ref{improved high frequency estimates section} to obtain the high frequency regime estimate in Proposition~\ref{improved high frequency estimate proposition}. Combining the previous estimates in Section~\ref{main estimate back direction proof section}, we complete the proof of the main estimate \eqref{main estimate second model system} in Theorem~\ref{main theorem second system}. Finally, we establish the estimates \eqref{main estimate asymptotic data} and \eqref{main estimate asymptotic data h} for the asymptotic data at $\mathcal{I}^-$ in Section~\ref{asympt data estimates section}. We prove \eqref{estimate for O intro} in Proposition~\ref{estimate for O proposition} and \eqref{estimate for h intro} in Proposition~\ref{estimate for PkhM proposition}.

\textbf{Notation.} Unless otherwise noted, in this section we write $A\lesssim B$ for some quantities $A,B>0$ if there exists a constant $C>0$ depending only on the constants $M,C_0,C_2$ defined in the Introduction, such that $A\leq CB.$ 

We point out that the main difficulty in our argument is dealing with the top order quantity $\xi=\nabla^M\Phi_0.$ For this part we use as a guideline the toy problem considered in \cite[Section~9]{Cmain}, where we studied the equation satisfied by $\xi=\nabla^M\Phi_0,$ but we dropped the terms $F'_M=F_M^0+\sum_{i=1}^I\psi\nabla^{M+1}\Phi_i$ for simplicity.

\subsection{Estimates for the regular quantities}\label{good quantities estimates subsection}
In this section, we prove the main estimates for all the regular quantities $\Phi_i$, with $1\leq i\leq I:$
\begin{proposition}\label{good quantities estimates proposition}
For all $0\leq m\leq M$ and $1\leq i\leq I$ we have:
    \[\big\|\nabla_{\tau}\nabla^m\Phi_i\big\|_{H^{1/2}}^2+\big\|\nabla^{m}\Phi_i\big\|_{H^{3/2}}^2+\int_{\tau}^1\frac{1}{\tau'}\big\|\nabla_{\tau}\nabla^m\Phi_i\big\|_{H^{1/2}}^2d\tau'\lesssim\]\[\lesssim \sum_{j=1}^I\bigg(\big\|\Phi_j\big\|_{H^{m+3/2}}^2\big|_{\tau=1}+\big\|\nabla_{\tau}\Phi_j\big\|_{H^{m+1/2}}^2\big|_{\tau=1}+\sum_{k=0}^m\int_{\tau}^1\tau'\big\|F_k^j\big\|_{H^{1/2}}^2d\tau'\bigg)\lesssim\mathcal{D}_{II}+\mathcal{F}_{II}(\tau).\]
\end{proposition}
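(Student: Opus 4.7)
The plan is to use $\nabla_\tau\nabla^m\Phi_i$ as a multiplier in the commuted equation \eqref{equation for Phi i} with $\sigma=2$, integrating backward from $\tau=1$. Two structural features make this clean: since $\sigma-1=1$, the sum on the RHS starts at $j=1$, so the equations for $\Phi_1,\ldots,\Phi_I$ decouple entirely from the singular quantity $\Phi_0$; and the choice of sign $\sigma=2$ means the first-order time term is $-\tfrac{1}{\tau}\nabla_\tau\nabla^m\Phi_i$, which upon backward integration produces a bulk contribution $+\int_\tau^1\tfrac{1}{\tau'}\|\nabla_\tau\nabla^m\Phi_i\|_{L^2}^2\,d\tau'$ with a favorable sign — precisely the bulk term appearing on the LHS of the claimed estimate. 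The standard $\nabla_\tau$ multiplier thus yields the $L^2$-level analogue of the proposition, with errors of the form $\int_\tau^1\|F_m^i\|_{L^2}\|\nabla_\tau\nabla^m\Phi_i\|_{L^2}d\tau'$ and coupling terms $\int_\tau^1\sum_j\|\nabla^{m+1}\Phi_j\|_{L^2}\|\nabla_\tau\nabla^m\Phi_i\|_{L^2}d\tau'$, plus lower-order commutator contributions from $[\nabla_\tau,\nabla^m]$ and the volume-form error of Lemma~\ref{volume form lemma}. A straightforward backward Gronwall then closes this at $L^2$ regularity.

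To upgrade to $H^{1/2}$, apply the projection $P_k$ for $k\geq 0$ and repeat the argument with multiplier $P_k\nabla_\tau\nabla^m\Phi_i$. The commutator errors are controlled by the LP estimates in Lemmas~\ref{Litt Paley lemma} and \ref{Litt Paley lemma refined}, with the sharp bounds involving $\underline{\widetilde{P}}_k$ used in exactly the same manner as in Propositions~\ref{A proposition} and \ref{B proposition}. After multiplying by $2^k$ and summing over $k\geq 0$, the $\underline{\widetilde{P}}_k$-type errors sum to give $\int_\tau^1\|\nabla_\tau\nabla^m\Phi_i\|_{H^{1/2}}^2+\|\nabla^{m+1}\Phi_i\|_{H^{1/2}}^2\,d\tau'$ and analogous lower-order integrals, while the inhomogeneous errors become $\int_\tau^1\tau'\|P_kF_m^i\|_{L^2}^2\,d\tau'$ weighted by $2^k$ — which summed in $k$ is controlled by $\int_\tau^1\tau'\|F_m^i\|_{H^{1/2}}^2\,d\tau'$, matching $\mathcal{F}_{II}$. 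The $k<0$ modes are handled by the $L^2$-level estimate already obtained.

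Summing the resulting estimate over all $i=1,\ldots,I$ and over all $m'\leq m$, the top-order coupling term $\sum_j\|\nabla^{m+1}\Phi_j\|_{H^{1/2}}^2$ from the equation at level $m$ is exactly controlled by $\sum_j\|\Phi_j\|_{H^{m+3/2}}^2$, which is the quantity we are bounding; we therefore absorb this via the backward Gronwall inequality applied to the combined energy $\sum_i\sum_{m'\leq m}(\|\nabla_\tau\nabla^{m'}\Phi_i\|_{H^{1/2}}^2+\|\nabla^{m'}\Phi_i\|_{H^{3/2}}^2)$. The lowest-order piece $\|\nabla^m\Phi_i\|_{L^2}^2$ inside the $H^{3/2}$ norm is recovered from data at $\tau=1$ together with the already-bounded $\|\nabla_\tau\nabla^m\Phi_i\|_{L^2}^2$ by the fundamental theorem of calculus in $\tau$. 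The second inequality of the proposition then follows since the data terms at $\tau=1$ and the inhomogeneous integrals fit exactly inside $\mathcal{D}_{II}$ and $\mathcal{F}_{II}(\tau)$, noting that enlarging the sum over $j$ from $j=1,\ldots,I$ to $j=0,\ldots,I$ only weakens the bound.

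The main technical obstacle is the top-order closing at $m=M$: the RHS of \eqref{equation for Phi i} contains $\nabla^{M+1}\Phi_j$ at the same regularity as the quantity being estimated, so the argument is not a clean induction on $m$ but requires simultaneous treatment of all $i$ and $m\leq M$, with Gronwall applied to the total energy. Everything else — the LP commutator bookkeeping, the $k<0$ modes, and the passage from the $L^2$-level estimate to $H^{1/2}$ — proceeds as in the fractional-estimate proofs in Section~\ref{fractional estimates section}, with the only notable simplification being the absence of a singular contribution from $\Phi_0$ on the RHS.
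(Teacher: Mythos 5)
Your proposal is correct and follows essentially the same route as the paper: an $L^2$-level estimate with the $\nabla_\tau$ multiplier exploiting the favorable bulk term from the $\sigma=2$ sign and the decoupling from $\Phi_0$, then the $P_k$-projected version with the commutator bounds of Lemmas~\ref{Litt Paley lemma} and \ref{Litt Paley lemma refined}, multiplication by $2^k$, summation, and backward Gronwall on the summed energy to absorb the top-order coupling $\sum_j\|\nabla^{m+1}\Phi_j\|_{H^{1/2}}^2$. The only cosmetic differences are that the paper works at fixed $m$ (summing only over $i$, since $\|\nabla^{m+1}\Phi_j\|_{H^{1/2}}$ is already part of the level-$m$ energy) and handles the low modes through the $L^2$ part of the fractional norm, and it uses the assumption $\|\psi\|_{H^{M+1}(S_1)}\leq C_2$ together with \eqref{commutation formula 1} when matching the data terms at $\tau=1$ to $\mathcal{D}_{II}$.
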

\begin{proof} For the purpose of this proof we fix $0\leq m\leq M$ and we denote $\xi_i=\nabla^m\Phi_i.$ We can rewrite \eqref{equation for Phi i} with $\sigma=2$ for all $1\leq i\leq I:$
    \begin{equation}\label{eq for xi i}
        \nabla_{\tau}\big(\nabla_{\tau}\xi_i\big)-\frac{1}{\tau}\nabla_{\tau}\xi_i-4\Delta\xi_i=\sum_{j=1}^I\psi\nabla\xi_j+F_m^i.
    \end{equation}
    
    \paragraph{Preliminary estimates.} We first prove the following preliminary estimates for all $1\leq i\leq I:$
    \[\big\|\nabla_{\tau}\nabla^m\Phi_i\big\|_{L^2}^2+\big\|\nabla^m\Phi_i\big\|_{H^1}^2+\int_{\tau}^1\frac{1}{\tau'}\big\|\nabla_{\tau}\nabla^m\Phi_i\big\|_{L^2}^2d\tau'\lesssim\sum_{j=1}^I\bigg(\big\|\Phi_j\big\|_{H^{m+1}}^2\big|_{\tau=1}+\big\|\nabla_{\tau}\Phi_j\big\|_{H^{m}}^2\big|_{\tau=1}+\int_{\tau}^1\tau'\big\|F_m^j\big\|_{L^2}^2\bigg).\]
    To prove this, we contract each equation \eqref{eq for xi i} by $\nabla_{\tau}\xi_i.$ Using Lemma \ref{volume form lemma}, we obtain the energy estimate:
    \[\big\|\nabla_{\tau}\xi_i\big\|_{L^2}^2+\big\|\nabla\xi_i\big\|_{L^2}^2+\int_{\tau}^1\frac{1}{\tau'}\big\|\nabla_{\tau}\xi_i\big\|_{L^2}^2d\tau'\lesssim \Big(\big\|\xi_i\big\|_{H^1}^2+\big\|\nabla_{\tau}\xi_i\big\|_{L^2}^2\bigg)\Big|_{\tau=1}+\]\[+\int_{\tau}^1\tau'\big\|\nabla\xi_i\big\|_{L^2}\big\|[\nabla,\nabla_4]\xi_i\big\|_{L^2}d\tau'+\sum_{j=1}^I\int_{\tau}^1\big\|\nabla\xi_j\big\|_{L^2}\big\|\nabla_{\tau}\xi_i\big\|_{L^2}+\int_{\tau}^1\big\|\nabla_{\tau}\xi_i\big\|_{L^2}\big\|F_m^i\big\|_{L^2}d\tau'.\]
    We use Cauchy-Schwarz, Gronwall, and the bulk term to obtain:
    \[\big\|\nabla_{\tau}\xi_i\big\|_{L^2}^2+\big\|\nabla\xi_i\big\|_{L^2}^2+\int_{\tau}^1\frac{1}{\tau'}\big\|\nabla_{\tau}\xi_i\big\|_{L^2}^2d\tau'\lesssim \Big(\big\|\xi_i\big\|_{H^1}^2+\big\|\nabla_{\tau}\xi_i\big\|_{L^2}^2\bigg)\Big|_{\tau=1}+\]\[+\int_{\tau}^1(\tau')^2\big\|\xi_i\big\|_{L^2}^2d\tau'+\sum_{j=1}^I\int_{\tau}^1\big\|\nabla\xi_j\big\|_{L^2}^2d\tau'+\int_{\tau}^1\tau'\big\|F_m^i\big\|_{L^2}^2d\tau'.\]
    On the other hand, we also have the bound:
    \[\big\|\xi_i\big\|_{L^2}^2\lesssim\big\|\xi_i\big\|_{L^2}^2\Big|_{\tau=1}+\int_{\tau}^1\big\|\xi_i\big\|_{L^2}\big\|\nabla_{\tau}\xi_i\big\|_{L^2}d\tau'\lesssim\big\|\xi_i\big\|_{L^2}^2\Big|_{\tau=1}+\int_{\tau}^1\big\|\xi_i\big\|_{L^2}^2d\tau'+\int_{\tau}^1\big\|\nabla_{\tau}\xi_i\big\|_{L^2}^2d\tau'.\]
    We sum the last two inequalities for all $1\leq i\leq I:$
    \[\sum_{i=1}^I\big\|\nabla_{\tau}\xi_i\big\|_{L^2}^2+\sum_{i=1}^I\big\|\xi_i\big\|_{H^1}^2+\sum_{i=1}^I\int_{\tau}^1\frac{1}{\tau'}\big\|\nabla_{\tau}\xi_i\big\|_{L^2}^2d\tau'\lesssim \sum_{i=1}^I\Big(\big\|\xi_i\big\|_{H^1}^2+\big\|\nabla_{\tau}\xi_i\big\|_{L^2}^2\bigg)\Big|_{\tau=1}+\]\[+\sum_{i=1}^I\int_{\tau}^1\big\|\xi_i\big\|_{H^1}^2d\tau'+\sum_{i=1}^I\int_{\tau}^1\big\|\nabla_{\tau}\xi_i\big\|_{L^2}^2d\tau'+\sum_{i=1}^I\int_{\tau}^1\tau'\big\|F_m^i\big\|_{L^2}^2d\tau'.\]
    We use Gronwall to complete the proof of the above preliminary estimate. For the data terms at $\tau=1$ we also use the commutation formula \eqref{commutation formula 1} and the assumption on the background spacetime $\|\psi\|_{H^{M+1}(S_1)}\leq C_2$.

    \paragraph{The main estimates.} For each $k\geq0$, we apply $P_k$ to equation \eqref{eq for xi i} to obtain:
     \[\nabla_{\tau}\big(P_k\nabla_{\tau}\xi_i\big)-\frac{1}{\tau}P_k\nabla_{\tau}\xi_i-4\Delta P_k\xi_i=\sum_{j=1}^IP_k\big(\psi\nabla\xi_j\big)+P_kF_m^i+[\nabla_{\tau},P_k]\nabla_{\tau}\xi_i.\]
    We contract each equation by $P_k\nabla_{\tau}\xi_i,$ in order to obtain the energy estimate:
    \[\big\|P_k\nabla_{\tau}\xi_i\big\|_{L^2}^2+\big\|\nabla P_k\xi_i\big\|_{L^2}^2+\int_{\tau}^1\frac{1}{\tau'}\big\|P_k\nabla_{\tau}\xi_i\big\|_{L^2}^2d\tau'\lesssim \Big(\big\|\nabla P_k\xi_i\big\|_{L^2}^2+\big\|P_k\nabla_{\tau}\xi_i\big\|_{L^2}^2\bigg)\Big|_{\tau=1}+\]
    \[+\int_{\tau}^1\tau'\big\|\nabla P_k\xi_i\big\|_{L^2}\big\|\nabla[P_k,\nabla_4]\xi_i\big\|_{L^2}d\tau'+\int_{\tau}^1\tau'\big\|\nabla P_k\xi_i\big\|_{L^2}\big\|[\nabla,\nabla_4]P_k\xi_i\big\|_{L^2}d\tau'\]
    \[+\sum_{j=1}^I\int_{\tau}^1\big\|P_k(\psi\nabla\xi_j)\big\|_{L^2}\big\|P_k\nabla_{\tau}\xi_i\big\|_{L^2}+\int_{\tau}^1\big\|P_k\nabla_{\tau}\xi_i\big\|_{L^2}\big\|P_kF_m^i\big\|_{L^2}d\tau'+\int_{\tau}^1\big\|P_k\nabla_{\tau}\xi_i\big\|_{L^2}\big\|[\nabla_{\tau},P_k]\nabla_{\tau}\xi_i\big\|_{L^2}d\tau'.\]
    We use Cauchy-Schwarz and Gronwall to obtain:
    \[\big\|P_k\nabla_{\tau}\xi_i\big\|_{L^2}^2+\big\|\nabla P_k\xi_i\big\|_{L^2}^2+\int_{\tau}^1\frac{1}{\tau'}\big\|P_k\nabla_{\tau}\xi_i\big\|_{L^2}^2d\tau'\lesssim \Big(\big\|\nabla P_k\xi_i\big\|_{L^2}^2+\big\|P_k\nabla_{\tau}\xi_i\big\|_{L^2}^2\bigg)\Big|_{\tau=1}+\]
    \[+\int_{\tau}^1(\tau')^2\big\|\nabla[P_k,\nabla_4]\xi_i\big\|_{L^2}^2d\tau'+\int_{\tau}^1(\tau')^2\big\|P_k\xi_i\big\|_{L^2}^2d\tau'+\sum_{j=1}^I\int_{\tau}^1\big\|[P_k,\psi]\nabla\xi_j\big\|_{L^2}^2d\tau'+\]
    \[+\sum_{j=1}^I\int_{\tau}^1\big\|P_k\nabla\xi_j\big\|_{L^2}^2d\tau'+\int_{\tau}^1\tau'\big\|P_kF_m^i\big\|_{L^2}^2d\tau'+\int_{\tau}^1(\tau')^3\big\|[\nabla_{4},P_k]\nabla_{\tau}\xi_i\big\|_{L^2}^2d\tau'.\]
    We use Lemma \ref{Litt Paley lemma} and Lemma \ref{Litt Paley lemma refined} to get:
    \[\big\|P_k\nabla_{\tau}\xi_i\big\|_{L^2}^2+\big\|\nabla P_k\xi_i\big\|_{L^2}^2+\int_{\tau}^1\frac{1}{\tau'}\big\|P_k\nabla_{\tau}\xi_i\big\|_{L^2}^2d\tau'\lesssim \Big(\big\|\nabla P_k\xi_i\big\|_{L^2}^2+\big\|P_k\nabla_{\tau}\xi_i\big\|_{L^2}^2\bigg)\Big|_{\tau=1}+\]
    \[+\int_{\tau}^1(\tau')^2\big\|\underline{\widetilde{P}}_k\nabla\xi_i\big\|_{L^2}^2d\tau'+\int_{\tau}^1(\tau')^2\big\|P_k\xi_i\big\|_{L^2}^2d\tau'+\sum_{j=1}^I\int_{\tau}^12^{-2k}\big\|\xi_j\big\|_{H^1}^2d\tau'+\sum_{j=1}^I\int_{\tau}^1\big\|P_k\nabla\xi_j\big\|_{L^2}^2d\tau'+\]\[+\int_{\tau}^1\tau'\big\|P_kF_m^i\big\|_{L^2}^2d\tau'+\int_{\tau}^1(\tau')^3\big\|\underline{\widetilde{P}}_k\nabla_{\tau}\xi_i\big\|_{L^2}^2d\tau'+\int_{\tau}^12^{-2k}(\tau')^3\big\|\nabla_{\tau}\xi_i\big\|_{L^2}^2d\tau'.\]
    We multiply each inequality by $2^k$ and sum for all $1\leq i\leq I:$
    \[\sum_{i=1}^I2^k\big\|P_k\nabla_{\tau}\xi_i\big\|_{L^2}^2+\sum_{i=1}^I2^k\big\|\nabla P_k\xi_i\big\|_{L^2}^2+\sum_{i=1}^I\int_{\tau}^1\frac{2^k}{\tau'}\big\|P_k\nabla_{\tau}\xi_i\big\|_{L^2}^2d\tau'\lesssim \sum_{i=1}^I2^k\Big(\big\|\nabla P_k\xi_i\big\|_{L^2}^2+\big\|P_k\nabla_{\tau}\xi_i\big\|_{L^2}^2\bigg)\Big|_{\tau=1}\]
    \[+\sum_{i=1}^I2^k\int_{\tau}^1(\tau')^2\big\|\underline{\widetilde{P}}_k\nabla\xi_i\big\|_{L^2}^2d\tau'+\sum_{i=1}^I2^k\int_{\tau}^1(\tau')^2\big\|P_k\xi_i\big\|_{L^2}^2d\tau'+\sum_{i=1}^I\int_{\tau}^12^{-k}\big\|\xi_i\big\|_{H^1}^2d\tau'+\]\[+\sum_{i=1}^I2^k\int_{\tau}^1\tau'\big\|P_kF_m^i\big\|_{L^2}^2d\tau'+\sum_{i=1}^I2^k\int_{\tau}^1(\tau')^3\big\|\underline{\widetilde{P}}_k\nabla_{\tau}\xi_i\big\|_{L^2}^2d\tau'+\sum_{i=1}^I\int_{\tau}^12^{-k}(\tau')^3\big\|\nabla_{\tau}\xi_i\big\|_{L^2}^2d\tau'.\]
    We then sum for all $k\geq0,$ using the preliminary estimate as well to obtain:
    \[\sum_{i=1}^I\big\|\nabla_{\tau}\xi_i\big\|_{H^{1/2}}^2+\sum_{i=1}^I\big\|\xi_i\big\|_{H^{3/2}}^2+\sum_{i=1}^I\int_{\tau}^1\frac{1}{\tau'}\big\|\nabla_{\tau}\xi_i\big\|_{H^{1/2}}^2d\tau'\lesssim \sum_{i=1}^I\Big(\big\|\xi_i\big\|_{H^{3/2}}^2+\big\|\nabla_{\tau}\xi_i\big\|_{H^{1/2}}^2\bigg)\Big|_{\tau=1}+\]
    \[+\sum_{i=1}^I\int_{\tau}^1\big\|\xi_i\big\|_{H^{3/2}}^2d\tau'+\sum_{i=1}^I\int_{\tau}^1\tau'\big\|F_m^i\big\|_{H^{1/2}}^2d\tau'+\sum_{i=1}^I\int_{\tau}^1(\tau')^3\big\|\nabla_{\tau}\xi_i\big\|_{H^{1/2}}^2d\tau'.\]
    We use Gronwall and we bound the initial data term at $\tau=1$ as before (using $C_2$) to obtain the conclusion.
\end{proof}
\subsection{Preliminary estimates for the singular quantities}\label{back singular preliminary section}
For every $0\leq m\leq M$, we write the equation \eqref{equation for Phi 0} for $\nabla^m\Phi_0$ as:
\begin{equation}\label{backward direction linear wave equation}
    \nabla_{\tau}\big(\nabla_{\tau}\nabla^m\Phi_0\big)+\frac{1}{\tau}\nabla_{\tau}\nabla^m\Phi_0-4\Delta\nabla^m\Phi_0=\psi\nabla^{m+1}\Phi_0+F_{m}',
\end{equation}
where we denote $F_m'=\psi\sum_{i=1}^I\nabla^{m+1}\Phi_i+F_{m}^{0}.$ We prove the preliminary estimate \eqref{back singular preliminary intro} as outlined in Section~\ref{second model system intro section} of the Introduction:
\begin{proposition}\label{backward direction basic estimate proposition}
    For all $0\leq m\leq M$ and $\tau\in[0,1]$ we have the estimates:
    \begin{align*}
        \big\|\tau\nabla_{\tau}\nabla^m\Phi_0\big\|_{L^2}^2+\big\|\tau\nabla^{m+1}\Phi_0\big\|_{L^2}^2+\tau\big\|\nabla^m\Phi_0\big\|_{L^2}^2+\int_{\tau}^1\tau'\big\|\nabla^{m+1}\Phi_0\big\|_{L^2}^2d\tau'\lesssim \\ \lesssim \bigg(\big\|\Phi_0\big\|_{H^{m+3/2}}^2+\big\|\nabla_{\tau}\Phi_0\big\|_{H^{m+1/2}}^2\bigg)\bigg|_{\tau=1}+\int_{\tau}^1(\tau')^2\big\|F_m'\big\|_{L^2}^2d\tau'.
    \end{align*}
\end{proposition}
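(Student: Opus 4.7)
The plan is to run a backward energy estimate for $u = \nabla^m\Phi_0$ satisfying \eqref{backward direction linear wave equation}, using the time-weighted multiplier $\tau^2\nabla_\tau u$. The key algebraic observation is the exact cancellation
\[\tau^2(\nabla_\tau u)\nabla_\tau(\nabla_\tau u) + \tau^2(\nabla_\tau u)\cdot\tfrac{1}{\tau}\nabla_\tau u = \tfrac{1}{2}\partial_\tau\!\left(\tau^2|\nabla_\tau u|^2\right),\]
so the friction term $\tfrac{1}{\tau}\nabla_\tau u$ in \eqref{backward direction linear wave equation} absorbs the term $-\tau|\nabla_\tau u|^2$ arising from differentiating the weight $\tau^2$. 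Integrating the $-4\tau^2(\nabla_\tau u)\Delta u$ term by parts on $S_{\tau'}$, using \eqref{commutation formula 1} for $[\nabla,\nabla_\tau]u$ and Lemma~\ref{volume form lemma} for the volume form variation, produces the principal term $\partial_{\tau}(2\tau^2\|\nabla u\|^2)-4\tau\|\nabla u\|^2$. Integrating the resulting identity from $\tau$ to $1$ therefore yields
\[\tfrac{1}{2}\tau^2\|\nabla_\tau u\|_{L^2}^2+2\tau^2\|\nabla u\|_{L^2}^2+\int_\tau^1 4\tau'\|\nabla u\|_{L^2}^2\,d\tau'\le \text{data at }\tau=1 \;+\; \text{bulk errors},\]
where the bulk on the LHS has the favorable sign needed to recover $\int_\tau^1\tau'\|\nabla^{m+1}\Phi_0\|_{L^2}^2\,d\tau'$.

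The bulk errors decompose into three types. First, the $\psi\nabla u$ source gives $\int_\tau^1\tau'^2\|\nabla_{\tau'} u\|\,\|\nabla u\|\,d\tau'$, which I split by Cauchy--Schwarz as $\tfrac{1}{2}\int_\tau^1\tau'^2\|\nabla_{\tau'}u\|^2+\tfrac{1}{2}\int_\tau^1\tau'^2\|\nabla u\|^2$; the latter is absorbed into the positive bulk since $\tau'^2\le\tau'$. Second, the $F_m'$ source contributes $\varepsilon\int_\tau^1\tau'^2\|\nabla_{\tau'}u\|^2+C_\varepsilon\int_\tau^1\tau'^2\|F_m'\|_{L^2}^2$. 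Third, Lemma~\ref{volume form lemma} and the commutator in \eqref{commutation formula 1} contribute lower order terms of the form $\int_\tau^1\tau'^3(\|\nabla_{\tau'}u\|^2+\|\nabla u\|^2+\|u\|^2)\,d\tau'$, again comparable to $\int_\tau^1 E(\tau')\,d\tau'$ for $E(\tau):=\tau^2\|\nabla_\tau u\|^2+\tau^2\|\nabla u\|^2$. Applying the backward Gronwall inequality to
\[E(\tau)+\int_\tau^1\tau'\|\nabla u\|_{L^2}^2\,d\tau' \lesssim \text{data at }1 + \int_\tau^1 E(\tau')\,d\tau' + \int_\tau^1(\tau')^2\|F_m'\|_{L^2}^2\,d\tau'\]
then closes the estimate for the first, second and fourth quantities on the LHS of the proposition. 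The data term is controlled by $\|\Phi_0\|_{H^{m+1}}^2|_{\tau=1}+\|\nabla_\tau\Phi_0\|_{H^m}^2|_{\tau=1}$ (commuting $\nabla^m$ past $\nabla_\tau$ at $\tau=1$ costs only lower order in view of \eqref{preliminary estimates for background psi}), which is in turn dominated by $\|\Phi_0\|_{H^{m+3/2}}^2+\|\nabla_\tau\Phi_0\|_{H^{m+1/2}}^2$ at $\tau=1$.

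The remaining term $\tau\|u\|_{L^2}^2$ I handle with a separate argument. Writing $u(\tau)=u(1)-\int_\tau^1\nabla_{\tau'}u\,d\tau'+\text{lower order}$ and using $\tau'\|\nabla_{\tau'}u\|_{L^2}\le\sqrt{E(\tau')}\le\sqrt{\text{RHS of proposition}}$, one obtains
\[\|u(\tau)\|_{L^2}\le \|u(1)\|_{L^2}+\sqrt{\sup_{\tau'\ge\tau}E(\tau')}\int_\tau^1\frac{d\tau'}{\tau'},\]
so $\tau\|u\|_{L^2}^2\lesssim \tau\|u(1)\|_{L^2}^2+\tau|\log\tau|^2\sup E$, and the factor $\tau|\log\tau|^2$ is bounded on $(0,1]$. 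The main obstacle is arranging the Cauchy--Schwarz splits so the error $\int_\tau^1\tau'^2\|\nabla_{\tau'}u\|^2$ is exactly of Gronwall form in $E(\tau')$ (rather than picking up a $1/\tau'$ kernel), which is ensured by the weight $\tau^2$ in the multiplier matching the weight in the definition of $E(\tau)$.
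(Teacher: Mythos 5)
Your multiplier $\tau^2\nabla_\tau u$ reproduces the paper's argument exactly: the paper rewrites \eqref{backward direction linear wave equation} as $\nabla_{\tau}\big(\tau\nabla_{\tau}\xi_m\big)-4\tau\Delta\xi_m=\tau\psi\nabla\xi_m+\tau F_m'$ and contracts with $\tau\nabla_{\tau}\xi_m$, which is the same energy identity, with the same favorable bulk $\int_\tau^1\tau'\|\nabla\xi_m\|_{L^2}^2$, the same treatment of the $\psi\nabla u$ and $F_m'$ sources, and a final step for $\tau\|u\|_{L^2}^2$ that differs from yours only cosmetically (the paper uses a $\sqrt{\tau'}$-weighted Gronwall instead of your $|\log\tau|$ bound, both of which work).

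There is, however, one step that does not go through as written. You assert that the commutator/volume-form errors are ``of the form $\int_\tau^1(\tau')^3(\|\nabla_{\tau'}u\|^2+\|\nabla u\|^2+\|u\|^2)$, again comparable to $\int_\tau^1 E(\tau')$''; but $E(\tau)=\tau^2\|\nabla_\tau u\|^2+\tau^2\|\nabla u\|^2$ has no zeroth-order piece, so the contribution of $[\nabla,\nabla_\tau]u=2\tau\big(\nabla\chi\cdot u+\chi\cdot\nabla u\big)$ from \eqref{commutation formula 1}, namely $\int_\tau^1(\tau')^3\|u\|_{L^2}\|\nabla u\|_{L^2}\,d\tau'$, leaves after any Cauchy--Schwarz split a term like $\int_\tau^1(\tau')^4\|u\|_{L^2}^2\,d\tau'$ that is \emph{not} Gronwall-able in $E$; and you only obtain control of $\tau\|u\|_{L^2}^2$ after the Gronwall has closed, so invoking it there is circular. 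The paper closes precisely this loop by first proving the auxiliary bound $\|u(\tau)\|_{L^2}^2\lesssim\|u(1)\|_{L^2}^2+\int_\tau^1\|u\|_{L^2}\|\nabla_{\tau'}u\|_{L^2}\,d\tau'$, hence by weighted Cauchy--Schwarz $\tau^4\|u(\tau)\|_{L^2}^2\lesssim\|u(1)\|_{L^2}^2+\int_\tau^1(\tau')^4\|\nabla_{\tau'}u\|_{L^2}^2\,d\tau'$, and substituting this into the energy inequality \emph{before} the second Gronwall; since $(\tau')^4\|\nabla_{\tau'}u\|^2\le(\tau')^2E(\tau')$, the zeroth-order error then really is of Gronwall form. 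Your own last-step estimate, $\|u(\tau')\|_{L^2}\lesssim\|u(1)\|_{L^2}+\big(\int_{\tau'}^1(\tau'')^{-2}d\tau''\big)^{1/2}\big(\int_{\tau'}^1(\tau'')^2\|\nabla_{\tau''}u\|_{L^2}^2d\tau''\big)^{1/2}$, serves the same purpose if you deploy it at this earlier stage, so the gap is fixable with a tool you already have, but as ordered the argument is incomplete. A minor further point: to commute $\nabla^m$ past $\nabla_\tau$ in the data term at $\tau=1$ you need up to $m\le M$ angular derivatives of $\chi$, which \eqref{preliminary estimates for background psi} (only $H^N$, $N<M$) does not supply; the correct hypothesis is $\|\psi\|_{H^{M+1}(S_1)}\le C_2$ from Theorem~\ref{main theorem second system}.
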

\begin{proof}
    For the purpose of this proof, we denote $\xi_m=\nabla^m\Phi_0.$ We can rewrite equation (\ref{backward direction linear wave equation}) as:
    \[\nabla_{\tau}\big(\tau\nabla_{\tau}\xi_m\big)-4\tau\Delta\xi_m=\tau\psi\nabla\xi_m+\tau F_m'.\]
    Using Lemma \ref{volume form lemma}, we obtain the energy estimate:
    \[\big\|\tau\nabla_{\tau}\xi_m\big\|_{L^2}^2+\big\|\tau\nabla\xi_m\big\|_{L^2}^2+\int_{\tau}^1\tau'\big\|\nabla\xi_m\big\|_{L^2}^2d\tau'\lesssim \Big(\big\|\xi_m\big\|_{H^1}^2+\big\|\nabla_{\tau}\xi_m\big\|_{L^2}^2\bigg)\Big|_{\tau=1}\]\[+\int_{\tau}^1\tau'\big\|\nabla\xi_m\big\|_{L^2}\Big(\tau'\big\|\nabla_{\tau}\xi_m\big\|_{L^2}+(\tau')^2\big\|[\nabla,\nabla_4]\xi_m\big\|_{L^2}\Big)d\tau'+\int_{\tau}^1(\tau')^2\big\|F_m'\big\|_{L^2}^2d\tau'.\]
    Using Gronwall, we obtain:
    \[\big\|\tau\nabla_{\tau}\xi_m\big\|_{L^2}^2+\big\|\tau\nabla\xi_m\big\|_{L^2}^2+\int_{\tau}^1\tau'\big\|\nabla\xi_m\big\|_{L^2}^2d\tau'\lesssim\Big(\big\|\xi_m\big\|_{H^1}^2+\big\|\nabla_{\tau}\xi_m\big\|_{L^2}^2\bigg)\Big|_{\tau=1}+\int_{\tau}^1(\tau')^4\big\|\xi_m\big\|_{L^2}^2+\int_{\tau}^1(\tau')^2\big\|F_m'\big\|_{L^2}^2.\]
    Similarly, we have the estimate:
    \[\big\|\xi_m\big\|_{L^2}^2\lesssim\|\xi_m\big\|_{L^2}^2\Big|_{\tau=1}+\int_{\tau}^1\big\|\xi_m\big\|_{L^2}\big\|\nabla_{\tau}\xi_m\big\|_{L^2}d\tau'.\]
    In particular, this implies:
    \[\tau^4\big\|\xi_m\big\|_{L^2}^2\lesssim \|\xi_m\big\|_{L^2}^2\Big|_{\tau=1}+\int_{\tau}^1(\tau')^4\big\|\nabla_{\tau}\xi_m\big\|_{L^2}^2d\tau',\]
    which gives in the above inequality:
    \[\big\|\tau\nabla_{\tau}\xi_m\big\|_{L^2}^2+\big\|\tau\nabla\xi_m\big\|_{L^2}^2+\int_{\tau}^1\tau'\big\|\nabla\xi_m\big\|_{L^2}^2d\tau'\lesssim \Big(\big\|\xi_m\big\|_{H^1}^2+\big\|\nabla_{\tau}\xi_m\big\|_{L^2}^2\bigg)\Big|_{\tau=1}+\]\[+\int_{\tau}^1(\tau')^4\big\|\nabla_{\tau}\xi_m\big\|_{L^2}^2d\tau'+\int_{\tau}^1(\tau')^2\big\|F_m'\big\|_{L^2}^2d\tau'.\]
    By Gronwall, we obtain:
    \[\big\|\tau\nabla_{\tau}\xi_m\big\|_{L^2}^2+\big\|\tau\nabla\xi_m\big\|_{L^2}^2+\int_{\tau}^1\tau'\big\|\nabla\xi_m\big\|_{L^2}^2d\tau'\lesssim \Big(\big\|\xi_m\big\|_{H^1}^2+\big\|\nabla_{\tau}\xi_m\big\|_{L^2}^2\bigg)\Big|_{\tau=1}+\int_{\tau}^1(\tau')^2\big\|F_m'\big\|_{L^2}^2d\tau'.\]
    Using this, we also have:
    \[\tau\big\|\xi_m\big\|_{L^2}^2\lesssim \|\xi_m\big\|_{L^2}^2\Big|_{\tau=1}+\int_{\tau}^1\tau'\big\|\xi_m\big\|_{L^2}\cdot\big\|\nabla_{\tau}\xi_m\big\|_{L^2}\lesssim \|\xi_m\big\|_{L^2}^2\Big|_{\tau=1}+ \int_{\tau}^1\sqrt{\tau'}\big\|\xi_m\big\|_{L^2}^2+\int_{\tau}^1(\tau')^{3/2}\big\|\nabla_{\tau}\xi_m\big\|_{L^2}^2\]
    \[\lesssim \Big(\big\|\xi_m\big\|_{H^1}^2+\big\|\nabla_{\tau}\xi_m\big\|_{L^2}^2\bigg)\Big|_{\tau=1}+\int_{\tau}^1(\tau')^2\big\|F_m'\big\|_{L^2}^2d\tau'+\int_{\tau}^1\sqrt{\tau'}\big\|\xi_m\big\|_{L^2}^2d\tau'.\]
    We obtain the conclusion using Gronwall and bounding the initial data term at $\tau=1$.
\end{proof}

\subsection{Low frequency regime estimates}\label{low frequency backward section singular}
For the rest of Section~\ref{model system back direction section}, we denote the top order term $\xi=\nabla^M\Phi_0.$ We write \eqref{equation for Phi 0} as:
\begin{equation}\label{back main lin wave equation}
    \nabla_{\tau}\big(\nabla_{\tau}\xi\big)+\frac{1}{\tau}\nabla_{\tau}\xi-4\Delta\xi=\psi\nabla\xi+F_{M}',
\end{equation}
where we denote $F_M'=\psi\sum_{i=1}^I\nabla^{M+1}\Phi_i+F_{M}^{0}.$ 

As outlined in Section~\ref{second model system intro section} of the Introduction, we consider $X=2^{x+1}$ to be a large constant, to be chosen later depending only on $M,C_0,C_2.$ We consider the following regimes:
\begin{itemize}
    \item Negative frequencies $k<0$;
    \item Low frequency regime $0\leq k<x$ for all $\tau\in[0,1]$;
    \item Low frequency regime $k\geq x$ for $\tau\in[0,X2^{-k-1}]$;
    \item High frequency regime $k\geq x$ for $\tau\in[X2^{-k-1},1]$.
\end{itemize}
\textbf{Notation.} In addition to our previous notation convention, we write $A\lesssim_X B$ for some quantities $A,B>0$ if there exists a constant $C>0$ depending only on the constants $M,C_0,C_2$ and $X$, such that $A\leq CB.$ Otherwise, if we write $A\lesssim B$ then the implicit constant $C$ is independent of $X$. 

In order to deal with the negative frequencies, we can simply use the preliminary estimate \eqref{back singular preliminary intro}. Similarly, we have the following bound in the low frequency regime $0\leq k<x$:
\begin{proposition}\label{preliminary low low frequency estimate proposition}
    For $0\leq k<x$, we have the low frequency regime estimate for all $\tau\in[0,1]$:
    \[\big\|\tau P_k\nabla_{\tau}\xi\big\|_{L^2}^2+\big\|\tau\nabla P_k\xi\big\|_{L^2}^2\lesssim_X2^{-3k}\mathcal{D}_{II}+2^{-3k}\int_{\tau}^{1}(\tau')^2\big\|F_M'\big\|_{L^2}^2d\tau'.\]
\end{proposition}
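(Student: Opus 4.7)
\textbf{Proof plan for Proposition~\ref{preliminary low low frequency estimate proposition}.} The first observation is that the weight $2^{-3k}$ on the right-hand side is cosmetic in this regime. Since $X=2^{x+1}$ is a fixed constant depending only on $M,C_0,C_2$, for $0\leq k<x$ we have $2^{-3k}\geq 2^{-3(x-1)}=64/X^{3}$, which is a positive constant depending only on $X$. Consequently, the claim is equivalent to showing
\[
\big\|\tau P_k\nabla_{\tau}\xi\big\|_{L^2}^2+\big\|\tau\nabla P_k\xi\big\|_{L^2}^2\lesssim_{X}\mathcal{D}_{II}+\int_{\tau}^{1}(\tau')^{2}\big\|F_{M}'\big\|_{L^2}^{2}d\tau',
\]
with no $k$-dependence on the RHS. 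The purpose of keeping the factor $2^{-3k}$ in the statement is purely notational: it matches the form of the high frequency regime estimates in Section~\ref{improved high frequency estimates section}, so that the bounds for all $k\geq 0$ can be summed uniformly.

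The plan is then to reduce this simplified inequality directly to the preliminary estimate of Proposition~\ref{backward direction basic estimate proposition}. First, by the $L^{2}$-boundedness of $P_{k}$,
\[
\big\|\tau P_{k}\nabla_{\tau}\xi\big\|_{L^2}^{2}\leq\big\|\tau\nabla_{\tau}\xi\big\|_{L^2}^{2}.
\]
For the gradient term, one writes $\nabla P_{k}\xi=P_{k}\nabla\xi+[\nabla,P_{k}]\xi$ and applies the commutator bound \eqref{LP est 2} with $m=1$, which gives $\big\|[\nabla,P_{k}]\xi\big\|_{L^2}\lesssim 2^{-k}\|\xi\|_{L^2}$, so that
\[
\big\|\tau\nabla P_{k}\xi\big\|_{L^2}^{2}\lesssim\big\|\tau\nabla\xi\big\|_{L^2}^{2}+2^{-2k}\big\|\tau\xi\big\|_{L^2}^{2}.
\]
Finally, Proposition~\ref{backward direction basic estimate proposition} applied with $m=M$ yields
\[
\big\|\tau\nabla_{\tau}\xi\big\|_{L^2}^{2}+\big\|\tau\nabla\xi\big\|_{L^2}^{2}+\tau\|\xi\|_{L^2}^{2}\lesssim\mathcal{D}_{II}+\int_{\tau}^{1}(\tau')^{2}\big\|F_{M}'\big\|_{L^2}^{2}d\tau',
\]
and since $\tau\in[0,1]$ one has $\|\tau\xi\|_{L^2}^{2}\leq\tau\|\xi\|_{L^2}^{2}$, so this last inequality absorbs the $2^{-2k}\|\tau\xi\|_{L^2}^{2}$ error as well.

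There is essentially no obstacle at this step: the heavy lifting was carried out in Proposition~\ref{backward direction basic estimate proposition}, whose proof in turn is the standard $\tau\nabla_{\tau}$-multiplier energy estimate for \eqref{backward direction linear wave equation} relying on the favorable bulk sign $+\int_{\tau}^{1}\tau'\|\nabla\xi\|_{L^2}^{2}$ arising from the $+\tau^{-1}\nabla_{\tau}$ coefficient in \eqref{back main lin wave equation}. One could instead prove the proposition directly by commuting $P_{k}$ through \eqref{back main lin wave equation}, running the same multiplier argument on the projected equation, and controlling the commutators $[\nabla_{\tau},P_{k}]\xi$, $[\nabla_{\tau},P_{k}]\nabla_{\tau}\xi$ via Lemmas~\ref{Litt Paley lemma} and \ref{Litt Paley lemma refined}. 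However, this adds no information beyond the reduction above because every error term produced that way is itself bounded by Proposition~\ref{backward direction basic estimate proposition}; the only genuine role of the proposition is to package the preliminary estimate in a form ready for summation against the high frequency regime bounds of Section~\ref{improved high frequency estimates section}.
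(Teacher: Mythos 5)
Your proof is correct and follows essentially the same route as the paper: both arguments observe that $2^{-3k}\gtrsim_X 1$ for $0\leq k<x$ and then reduce everything to the preliminary estimate of Proposition~\ref{backward direction basic estimate proposition}, using only elementary boundedness properties of $P_k$ (the paper invokes the finite band property $\|\nabla P_k\xi\|_{L^2}\lesssim 2^k\|\xi\|_{L^2}$ where you use $[\nabla,P_k]$, an immaterial difference). No gap.
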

\begin{proof}
    Since $0\leq k<x,$ we have using the preliminary estimate in Proposition~\ref{backward direction basic estimate proposition}:
    \[\big\|\tau\nabla P_k\xi\big\|_{L^2}^2+\big\|\tau P_k\nabla_{\tau}\xi\big\|_{L^2}^2\lesssim_X2^{-3k}\Big(\big\|\tau\xi\big\|_{L^2}^2+\big\|\tau\nabla_{\tau}\xi\big\|_{L^2}^2\Big)\lesssim_X2^{-3k}\mathcal{D}_{II}+2^{-3k}\int_{\tau}^{1}(\tau')^2\big\|F_M'\big\|_{L^2}^2d\tau'.\]
\end{proof}

We prove the main estimate in the low frequency regime $k\geq x$ for $\tau\in[0,X2^{-k-1}]$. The idea is to prove a similar estimate to \eqref{back singular preliminary intro} for $P_k\xi.$ We note that we follow the argument for the toy problem in \cite[Section~9]{Cmain}, while keeping track of the inhomogeneous terms.
\begin{proposition}\label{prelim low freq singular proposition}
    For any $0\leq\tau<X2^{-k-1}\leq1$ we have the low frequency regime estimate:
    \begin{align*}
        \big\|\tau P_k\nabla_{\tau}\xi\big\|_{L^2}^2+\big\|\tau\nabla P_k\xi\big\|_{L^2}^2\lesssim &X^22^{-2k}\Big(\big\|P_k\nabla_{\tau}\xi\big\|^2_{L^2}+\big\|\nabla P_k\xi\big\|^2_{L^2}\Big)\Big|_{\tau=X2^{-k-1}}+C_X2^{-3k}\mathcal{D}_{II}+\\
        &+C_X2^{-k}\int_{\tau}^{X2^{-k-1}}(\tau')^2\big\|P_kF_M'\big\|_{L^2}^2d\tau'+C_X2^{-3k}\int_{\tau}^{1}(\tau')^2\big\|F_M'\big\|_{L^2}^2d\tau'.
    \end{align*}
\end{proposition}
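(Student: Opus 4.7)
The plan is to mimic the energy estimate of Proposition~\ref{backward direction basic estimate proposition}, applied now to the projected quantity $P_k\xi$ and on the short interval $[\tau,X2^{-k-1}]$ rather than $[\tau,1]$. I would first apply $P_k$ to equation \eqref{back main lin wave equation}, using $[\Delta,P_k]=0$ (which follows from the heat-kernel construction of $P_k$), and multiply by $\tau$ to obtain
\[\nabla_{\tau}\big(\tau P_k\nabla_{\tau}\xi\big) - 4\tau\Delta P_k\xi = \tau P_k(\psi\nabla\xi)+\tau P_k F_M' + \tau[\nabla_\tau,P_k]\nabla_\tau\xi,\]
which has the same structure as the equation for $\xi_m$ used in the proof of Proposition~\ref{backward direction basic estimate proposition}, modulo the extra term $\tau[\nabla_\tau,P_k]\nabla_\tau\xi$. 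Contracting with $P_k\nabla_\tau\xi$, integrating by parts in $\nabla$, and applying Lemma~\ref{volume form lemma} produces an energy identity of the form
\[\big\|\tau P_k\nabla_\tau\xi\big\|_{L^2}^2+\big\|\tau\nabla P_k\xi\big\|_{L^2}^2+\int_\tau^{X2^{-k-1}}\!\tau'\big\|\nabla P_k\xi\big\|_{L^2}^2\,d\tau' \lesssim X^{2}2^{-2k}\Big(\big\|P_k\nabla_\tau\xi\big\|^2+\big\|\nabla P_k\xi\big\|^2\Big)\Big|_{\tau=X2^{-k-1}} + \mathrm{RHS},\]
where the data term arises by evaluating the LHS energy at the endpoint $\tau'=X2^{-k-1}$, and $\mathrm{RHS}$ collects the forcing and commutator contributions.

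Next, I would decompose $P_k(\psi\nabla\xi)=\psi\nabla P_k\xi+[P_k,\psi]\nabla\xi+\psi[P_k,\nabla]\xi$ and control each piece. The main-order $\tau\psi\nabla P_k\xi$ pairs with $P_k\nabla_\tau\xi$ and is split by Cauchy--Schwarz: one half is absorbed into the favorable bulk $\int\tau'\|\nabla P_k\xi\|^2$, and the other half is of Gronwall form on the short interval of length $\lesssim X 2^{-k}$, so closes with a Gronwall constant of order $O(1)$. The forcing $\tau P_kF_M'$ yields the stated $C_X 2^{-k}\int(\tau')^2\|P_kF_M'\|_{L^2}^2\,d\tau'$ after a weighted Cauchy--Schwarz. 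The remaining commutators $\tau[P_k,\psi]\nabla\xi$, $\tau\psi[P_k,\nabla]\xi$, $\tau[\nabla_\tau,P_k]\nabla_\tau\xi$, and the integration-by-parts commutator $\tau[\nabla,\nabla_4]P_k\xi$ from \eqref{commutation formula 1}, are controlled using estimates \eqref{LP est 2} (with $m=1$), \eqref{LP est 4}, and \eqref{LP est 6} of Lemma~\ref{Litt Paley lemma}, each giving a factor $2^{-k}$ relative to an un-projected quantity $\|\nabla\xi\|_{L^2}$, $\|\nabla_\tau\xi\|_{L^2}$, or $\|\xi\|_{L^2}$. These remaining un-projected quantities are in turn bounded using Proposition~\ref{backward direction basic estimate proposition} with $m=M$, which gives $\|\tau\nabla\xi\|^2+\|\tau\nabla_\tau\xi\|^2+\tau\|\xi\|^2\lesssim \mathcal{D}_{II}+\int_\tau^1(\tau')^2\|F_M'\|_{L^2}^2\,d\tau'$.

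The $2^{-3k}$ weight on the $\mathcal{D}_{II}$ and global forcing contributions arises, after a Young inequality, from combining three small factors: one $2^{-k}$ from the LP commutator, a further $2^{-k}$ from bounding $(\tau')^2\|\nabla\xi\|^2\lesssim\mathcal{D}_{II}+\int(\tau'')^2\|F_M'\|^2$ against the preliminary estimate, and an additional factor $\sim X2^{-k}$ coming from the length of the integration interval in the Cauchy--Schwarz in time. The main technical obstacle is organizing the Cauchy--Schwarz splittings consistently so that (i) every occurrence of the LHS quantity $\|P_k\nabla_\tau\xi\|_{L^2}$ on the right-hand side carries at least one factor of $\tau'$, so that Gronwall on $[\tau,X2^{-k-1}]$ does not produce an inverse power of $\tau$, and (ii) every LP commutator is paired against a preliminary-estimate-controlled factor with the $(\tau')^2$ weight needed to unlock the full $2^{-3k}$ gain. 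Once this bookkeeping is carried out, Gronwall's inequality on the short interval $[\tau,X2^{-k-1}]$ closes the estimate and yields Proposition~\ref{prelim low freq singular proposition}.
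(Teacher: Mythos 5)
Your proposal is correct and follows essentially the same route as the paper: apply $P_k$ to \eqref{back main lin wave equation}, run the backward $\tau$-weighted energy estimate on the short interval $[\tau,X2^{-k-1}]$ with data at $\tau=X2^{-k-1}$, bound the commutators $[P_k,\psi]\nabla\xi$, $\psi[P_k,\nabla]\xi$, $[\nabla_\tau,P_k]\nabla_\tau\xi$ and the integration-by-parts commutators via Lemma~\ref{Litt Paley lemma}, close with weighted Cauchy--Schwarz plus Gronwall, and convert the unprojected error terms using Proposition~\ref{backward direction basic estimate proposition}. The only quibble is a bookkeeping slip in your accounting of the $2^{-3k}$ gain (the preliminary estimate itself contributes no factor of $2^{-k}$; after Young's inequality the gain is the squared commutator factor $2^{-2k}$ times the $2^{-k}$ from the Cauchy--Schwarz weight, equivalently the interval length), but the count comes out the same and the argument closes as in the paper.
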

\begin{proof}
    We apply $P_k$ to \eqref{back main lin wave equation} to obtain for any $k\geq x$:
    \[\nabla_{\tau}\big(\tau P_k\nabla_{\tau}\xi\big)-4\tau\Delta P_k\xi=\tau P_k\big(\psi\nabla\xi\big)+\tau P_kF_M'+[\nabla_{\tau},P_k]\tau\nabla_{\tau}\xi.\]
    We contract the equation with $\tau P_k\nabla_{\tau}\xi$ and integrate by parts to obtain the following energy estimate:
    \[\big\|\tau P_k\nabla_{\tau}\xi\big\|_{L^2}^2+\big\|\tau\nabla P_k\xi\big\|_{L^2}^2\lesssim\]\[\lesssim X^22^{-2k}\Big(\big\|P_k\nabla_{\tau}\xi\big\|^2_{L^2}+\big\|\nabla P_k\xi\big\|^2_{L^2}\Big)\Big|_{X2^{-k-1}}+\int_{\tau}^{X2^{-k-1}}(\tau')^3\big\|\nabla P_k\xi\big\|_{L^2}\cdot\big\|\nabla [P_k,\nabla_4]\xi\big\|_{L^2}d\tau'+\]\[+\int_{\tau}^{X2^{-k-1}}(\tau')^3\big\|\nabla P_k\xi\big\|_{L^2}\cdot\big\|[\nabla,\nabla_4]P_k\xi\big\|_{L^2}d\tau'+\int_{\tau}^{X2^{-k-1}}(\tau')^2\big\|P_k\nabla_{\tau}\xi\big\|_{L^2}\cdot\big\|P_k\big(\psi\nabla\xi\big)\big\|_{L^2}d\tau'+\]\[+\int_{\tau}^{X2^{-k-1}}(\tau')^2\big\|P_k\nabla_{\tau}\xi\big\|_{L^2}\cdot\big\|[P_k,\nabla_4]\tau'\nabla_{\tau}\xi\big\|_{L^2}d\tau'+\int_{\tau}^{X2^{-k-1}}(\tau')^2\big\|P_k\nabla_{\tau}\xi\big\|_{L^2}\cdot\big\|P_kF_M'\big\|_{L^2}d\tau'.\]
    We point out that similarly to the proof of Proposition~\ref{backward direction basic estimate proposition}, we obtain a bulk term with a favorable sign, which we drop. We use Lemma \ref{Litt Paley lemma} to bound the commutation terms:
    \begin{align*}
        \big\|\tau P_k\nabla_{\tau}\xi\big\|_{L^2}^2+\big\|\tau\nabla P_k\xi\big\|_{L^2}^2\lesssim X^22^{-2k}\Big(\big\|P_k\nabla_{\tau}\xi\big\|^2_{L^2}+\big\|\nabla P_k\xi\big\|^2_{L^2}\Big)\Big|_{X2^{-k-1}}+\int_{\tau}^{X2^{-k-1}}(\tau')^3\big\|\nabla P_k\xi\big\|_{L^2}\big\|\xi\big\|_{H^1}d\tau'\\
        +\int_{\tau}^{X2^{-k-1}}(\tau')^2\big\|P_k\nabla_{\tau}\xi\big\|_{L^2}\big\|\nabla P_k\xi\big\|_{L^2}d\tau'+\int_{\tau}^{X2^{-k-1}}(\tau')^22^{-k}\big\|P_k\nabla_{\tau}\xi\big\|_{L^2}\big\|\xi\big\|_{H^1}d\tau'+\\
        +\int_{\tau}^{X2^{-k-1}}(\tau')^2\big\|P_k\nabla_{\tau}\xi\big\|_{L^2}\big\|\tau'\nabla_{\tau}\xi\big\|_{L^2}d\tau'+\int_{\tau}^{X2^{-k-1}}(\tau')^2\big\|P_k\nabla_{\tau}\xi\big\|_{L^2}\big\|P_kF_M'\big\|_{L^2}d\tau'.
    \end{align*}
    Using Cauchy-Schwarz, we obtain the bound:
    \[\big\|\tau P_k\nabla_{\tau}\xi\big\|_{L^2}^2+\big\|\tau\nabla P_k\xi\big\|_{L^2}^2\lesssim X^22^{-2k}\Big(\big\|P_k\nabla_{\tau}\xi\big\|^2_{L^2}+\big\|\nabla P_k\xi\big\|^2_{L^2}\Big)\Big|_{X2^{-k-1}}+\]
    \[+\int_{\tau}^{X2^{-k-1}}\frac{2^k}{X}\big\|\tau'\nabla P_k\xi\big\|_{L^2}^2d\tau'+\int_{\tau}^{X2^{-k-1}}\frac{2^k}{X}\big\|\tau'P_k\nabla_{\tau}\xi\big\|_{L^2}^2d\tau'+C_X\int_{\tau}^{X2^{-k-1}}2^{-3k}(\tau')^2\big\|\xi\big\|_{H^1}^2d\tau'+\]\[+C_X\int_{\tau}^{X2^{-k-1}}2^{-k}(\tau')^4\big\|\nabla_{\tau}\xi\big\|_{L^2}^2d\tau'+2^{-k}C_X\int_{\tau}^{X2^{-k-1}}(\tau')^2\big\|P_kF_M'\big\|_{L^2}^2d\tau'.\]
    Using the Gronwall inequality for $\tau\leq X2^{-k-1}\leq1$ we get:
    \[\big\|\tau\nabla P_k\xi\big\|_{L^2}^2+\big\|\tau P_k\nabla_{\tau}\xi\big\|_{L^2}^2\lesssim X^22^{-2k}\Big(\big\|P_k\nabla_{\tau}\xi\big\|^2_{L^2}+\big\|\nabla P_k\xi\big\|^2_{L^2}\Big)\Big|_{X2^{-k-1}}+\]\[+C_X\int_{\tau}^{X2^{-k-1}}2^{-3k}(\tau')^2\big\|\xi\big\|_{H^1}^2+C_X\int_{\tau}^{X2^{-k-1}}2^{-k}(\tau')^4\big\|\nabla_{\tau}\xi\big\|_{L^2}^2+2^{-k}C_X\int_{\tau}^{X2^{-k-1}}(\tau')^2\big\|P_kF_M'\big\|_{L^2}^2.\]
    Finally, we use the above estimate \eqref{back singular preliminary intro} to complete the proof.
\end{proof}

\subsection{High frequency regime estimates}\label{prelim high frequency estimate singular section}

In this section, we prove a high frequency regime estimate for the top order term $\xi=\nabla^M\Phi_0.$ The proof is similar to that of Proposition \ref{high frequency forward estimate}, but in the current case the bulk term has an unfavorable sign, which will create several complications in Section~\ref{improved high frequency estimates section}. Moreover, it is essential that the implicit constant in the estimate obtained is independent of the parameter $X.$

\begin{proposition}\label{high freq back estimate} $\xi$ satisfies the high frequency regime estimate for any $\tau\in\big[X2^{-k-1},1\big]:$
    \[2^k\tau\big\|P_k\nabla_{\tau}\xi\big\|^2_{L^2}+\frac{2^k}{\tau}\big\|P_k\xi\big\|^2_{L^2}+2^k\tau\big\|\nabla P_k\xi\big\|^2_{L^2}\lesssim\bigg(2^k\big\|P_k\nabla_{\tau}\xi\big\|^2_{L^2}+2^k\big\|P_k\xi\big\|^2_{L^2}+2^k\big\|\nabla P_k\xi\big\|^2_{L^2}\bigg)\bigg|_{\tau=1}\]\[+\int_{\tau}^1\frac{2^k}{(\tau')^2}\big\|P_k\xi\big\|_{L^2}^2d\tau'+\int_{\tau}^12^k\tau'\big\|\underline{\widetilde{P}}_k\xi\big\|_{L^2}^2d\tau'+\int_{\tau}^12^k(\tau')^3\big\|\underline{\widetilde{P}}_k\nabla\xi\big\|_{L^2}^2d\tau'+\int_{\tau}^12^k(\tau')^3\big\|\underline{\widetilde{P}}_k\nabla_{\tau}\xi\big\|_{L^2}^2d\tau'+\]\[+\int_{\tau}^1\frac{\tau'}{2^{k}}\big\|\xi\big\|_{H^1}^2d\tau'+\int_{\tau}^1\frac{(\tau')^3}{2^{k}}\big\|\nabla_{\tau}\xi\big\|_{L^2}^2d\tau'+\int_{\tau}^12^k\tau'\big\|P_kF'_M\big\|_{L^2}^2d\tau'.\]
\end{proposition}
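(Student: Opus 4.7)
The plan is to adapt the energy-estimate argument of Proposition~\ref{high frequency forward estimate} to the backward-in-time direction, propagating from data at $\tau=1$ down to $\tau=X2^{-k-1}$. I would first apply $P_k$ to equation~\eqref{back main lin wave equation} and switch to the rescaled time variable $t=2^k\tau$; the equation then reads
\[
\nabla_t(\nabla_t\xi)+\frac{1}{t}\nabla_t\xi-\frac{4}{2^{2k}}\Delta\xi=\frac{1}{2^{2k}}\bigl(\psi\nabla\xi+F_M'\bigr).
\]
Multiplying by $\sqrt{t}$ normalizes the first-order term and produces
\[
\nabla_t^2(\xi\sqrt{t})+\frac{\xi\sqrt{t}}{4t^2}-\frac{4}{2^{2k}}\Delta(\xi\sqrt{t})=\frac{\sqrt{t}}{2^{2k}}\bigl(\psi\nabla\xi+F_M'\bigr),
\]
which has exactly the form handled in Proposition~\ref{high frequency forward estimate}. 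Contracting with the multiplier $P_k\nabla_t(\xi\sqrt{t})$ and integrating by parts in space and time, I obtain an energy identity for the density
\[
E_k(t)=\tfrac12\bigl\|P_k\nabla_t(\xi\sqrt{t})\bigr\|_{L^2}^2+\tfrac{1}{8t^2}\bigl\|P_k(\xi\sqrt{t})\bigr\|_{L^2}^2+\tfrac{2}{2^{2k}}\bigl\|\nabla P_k(\xi\sqrt{t})\bigr\|_{L^2}^2,
\]
which, after the final rescaling by $2^{2k}$ and the change of variable back to $\tau$, reproduces the three quantities on the LHS of the statement.

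The crucial structural difference with Proposition~\ref{high frequency forward estimate} is that the identity $dE_k/dt=\ldots$ is now integrated over $t\in[t(\tau),2^k]$ starting from data at $t=2^k$ and propagating backward. Differentiating the weight $\tfrac{1}{8t^2}$ in $E_k$ produces a contribution $-\tfrac{|P_k\xi|^2}{4t^2}$ to $dE_k/dt$; in the forward case this term has the \emph{favorable} sign and can be dropped from the LHS, but here the backward integration flips its sign, so that after rescaling by $2^{2k}$ it appears on the RHS as exactly
\[
\int_\tau^1\frac{2^k}{(\tau')^2}\bigl\|P_k\xi\bigr\|_{L^2}^2\,d\tau'.
\]
This is the main obstacle: no favorable bulk term is available at this stage to absorb this top-order quantity. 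My plan is simply to retain it on the right-hand side, deferring its treatment to Section~\ref{improved high frequency estimates section}, where it will be bounded via the refined Poincar\'e inequality~\eqref{ref Poincarey} combined with the Gronwall-type Lemma~\ref{grwonwall type lemma}.

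The remaining terms I would treat in parallel with the proof of Proposition~\ref{high frequency forward estimate}. The commutators $[\nabla_t,P_k]$, $[\nabla,\nabla_4]$, $[P_k,\nabla]$, and $[P_k,\psi]$ are controlled using Lemma~\ref{Litt Paley lemma} and Lemma~\ref{Litt Paley lemma refined}; the refined LP bounds generate the error terms with $\underline{\widetilde P}_k$ projections and cubic weights in $\tau'$, while their remainder parts yield the lower-order contributions carrying favorable negative powers of $2^k$, such as the $\int 2^{-k}\tau'\|\xi\|_{H^1}^2$ and $\int 2^{-k}(\tau')^3\|\nabla_\tau\xi\|_{L^2}^2$ terms in the statement. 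The inhomogeneous source is handled by Cauchy--Schwarz, producing the $\int 2^k\tau'\|P_kF_M'\|_{L^2}^2$ contribution, and the whole inequality is closed via two successive Gronwall applications on $[t(\tau),2^k]$. Throughout, I would monitor carefully that the implicit constants remain independent of $X$: the parameter $X$ enters only through the lower endpoint $t(\tau)\geq X/2$ of integration and never appears in any constant of the energy identity, and this $X$-independence is indispensable for the interplay with the low-frequency regime in Section~\ref{improved high frequency estimates section}, where $X$ will be chosen large.
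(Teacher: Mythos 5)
Your proposal is correct and follows essentially the same route as the paper: the multiplier $P_k\nabla_t(\xi\sqrt{t})$ after the $\sqrt{t}$-renormalization, retention of the sign-flipped bulk term $\int_\tau^1\frac{2^k}{(\tau')^2}\|P_k\xi\|_{L^2}^2\,d\tau'$ on the right-hand side for later treatment, commutator control via Lemmas~\ref{Litt Paley lemma} and \ref{Litt Paley lemma refined} producing the $\underline{\widetilde{P}}_k$ and $2^{-k}$-weighted errors, Cauchy--Schwarz for $F_M'$, and two successive Gronwall applications with constants independent of $X$. The only deviation is cosmetic: you rescale by $t=2^k\tau$ (so the lower endpoint is $X/2$) while the paper uses $t=X^{-1}2^k\tau$ (lower endpoint $1/2$) and tracks the explicit $X^2/2^{2k}$ factors, which amounts to the same estimate.
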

\begin{proof} We introduce the new time variable $t=X^{-1}2^k\tau\in[1/2,X^{-1}2^k]$. The equation \eqref{back main lin wave equation} becomes:
    \[\nabla_{t}\big(\nabla_{t}\xi\big)+\frac{1}{t}\nabla_{t}\xi-4\frac{X^2}{2^{2k}}\cdot\Delta\xi=\frac{X^2}{2^{2k}}\cdot\psi\nabla\xi+\frac{X^2}{2^{2k}}\cdot F_M'.\]
    We multiply by $\sqrt{t}$ to get:
    \[\nabla_{t}\big(\nabla_{t}(\xi\sqrt{t})\big)+\frac{1}{4t^2}\xi\sqrt{t}-4\frac{X^2}{2^{2k}}\cdot\Delta\xi\sqrt{t}=\frac{X^2}{2^{2k}}\cdot\psi\nabla\xi\sqrt{t}+\frac{X^2}{2^{2k}}\cdot F_M'\sqrt{t}.\]
    We apply $P_k$ to the equation:
    \begin{align*}
        \nabla_{t}\big(P_k\nabla_{t}(\xi\sqrt{t})\big)+\frac{1}{4t^2}P_k\xi\sqrt{t}-4\frac{X^2}{2^{2k}}\Delta P_k\xi\sqrt{t}=&\frac{X^2}{2^{2k}}\psi\nabla P_k\xi\sqrt{t}+\frac{X^2}{2^{2k}}\psi[P_k,\nabla]\xi\sqrt{t}+\\
        &+\frac{X^2}{2^{2k}}[P_k,\psi]\nabla \xi\sqrt{t}+\frac{X^2P_kF_M'\sqrt{t}}{2^{2k}}+[\nabla_t,P_k]\nabla_{t}\xi\sqrt{t}.
    \end{align*}
    We contract the equation with $P_k\nabla_t(\xi\sqrt{t})$ and integrate by parts. We notice that using the analogue of Lemma \ref{volume form lemma t} for the new time variable $t$ does not introduce any constants that depend on $X.$ We obtain the energy estimate:
    \[\big\|P_k\nabla_{t}\xi\sqrt{t}\big\|^2_{L^2}+\frac{1}{t^2}\big\|P_k\xi\sqrt{t}\big\|^2_{L^2}+\frac{X^2}{2^{2k}}\big\|\nabla P_k\xi\sqrt{t}\big\|^2_{L^2}\lesssim\]\[\lesssim\bigg(X^{-1}2^k\big\|P_k\nabla_{t}\xi\big\|^2_{L^2}+\frac{X}{2^{k}}\big\|P_k\xi\big\|^2_{L^2}+\frac{X}{2^{k}}\big\|\nabla P_k\xi\big\|^2_{L^2}\bigg)\bigg|_{t=X^{-1}2^k}+\int_t^{X^{-1}2^k}\frac{1}{(t')^2}\big\|P_k\xi\big\|_{L^2}^2dt'+\]\[+\int_t^{X^{-1}2^k}\int_{S^n}\frac{1}{(t')^2}\big|P_k\xi\sqrt{t}\big|\cdot\big|[P_k,\nabla_t]\xi\sqrt{t'}\big|dt'+\int_t^{X^{-1}2^k}\int_{S^n}\frac{X^2}{2^{2k}}\big|\nabla P_k\xi\sqrt{t'}\big|\cdot\big|\nabla[P_k,\nabla_t]\xi\sqrt{t'}\big|dt'+\]\[+\int_t^{X^{-1}2^k}\int_{S^n}\frac{X^2}{2^{2k}}\big|\nabla P_k\xi\sqrt{t'}\big|\cdot\big|[\nabla,\nabla_t]P_k\xi\sqrt{t'}\big|dt'+\int_t^{X^{-1}2^k}\int_{S^n}\frac{X^2}{2^{2k}}\big|P_k\nabla_t\xi\sqrt{t'}\big|\cdot\big|\nabla P_k\xi\sqrt{t'}\big|dt'+\]\[+\int_t^{X^{-1}2^k}\int_{S^n}\frac{X^2}{2^{2k}}\big|P_k\nabla_t\xi\sqrt{t'}\big|\cdot\big|[P_k,\nabla]\xi\sqrt{t'}\big|dt'+\int_t^{X^{-1}2^k}\int_{S^n}\frac{X^2}{2^{2k}}\big|P_k\nabla_t\xi\sqrt{t'}\big|\cdot\big|[P_k,\psi]\nabla\xi\sqrt{t'}\big|dt'+\]\[+\int_t^{X^{-1}2^k}\int_{S^n}\frac{X^2}{2^{2k}}\big|P_k\nabla_t\xi\sqrt{t'}\big|\cdot\big|P_kF_M'\sqrt{t'}\big|dt'+\int_t^{X^{-1}2^k}\int_{S^n}\big|P_k\nabla_t\xi\sqrt{t'}\big|\cdot\big|[\nabla_t,P_k]\nabla_{t}\xi\sqrt{t'}\big|dt'.\]
    We use Lemma \ref{Litt Paley lemma refined}, and Gronwall for $t\in\big[1/2,X^{-1}2^k\big]$ (to deal with the fifth and sixth error terms):
    \[\big\|P_k\nabla_{t}\xi\sqrt{t}\big\|^2_{L^2}+\frac{1}{t^2}\big\|P_k\xi\sqrt{t}\big\|^2_{L^2}+\frac{X^2}{2^{2k}}\big\|\nabla P_k\xi\sqrt{t}\big\|^2_{L^2}\lesssim\]\[\lesssim\bigg(X^{-1}2^k\big\|P_k\nabla_{t}\xi\big\|^2_{L^2}+\frac{X}{2^{k}}\big\|P_k\xi\big\|^2_{L^2}+\frac{X}{2^{k}}\big\|\nabla P_k\xi\big\|^2_{L^2}\bigg)\bigg|_{t=X^{-1}2^k}+\int_t^{X^{-1}2^k}\frac{1}{(t')^2}\big\|P_k\xi\big\|_{L^2}^2dt'+\]\[+\int_t^{X^{-1}2^k}\frac{X^2}{2^{2k}t'}\big\|P_k\xi\sqrt{t'}\big\|_{L^2}\cdot\Big(\big\|\underline{\widetilde{P}}_k\xi\sqrt{t'}\big\|_{L^2}+2^{-k}\big\|\xi\sqrt{t'}\big\|_{L^2}\Big)dt'+\]\[+\int_t^{X^{-1}2^k}\frac{X^4t'}{2^{4k}}\big\|\nabla P_k\xi\sqrt{t'}\big\|_{L^2}\cdot\bigg(\big\|\underline{\widetilde{P}}_k\nabla\xi\sqrt{t'}\big\|_{L^2}+2^{-k}\big\|\xi\sqrt{t'}\big\|_{H^1}\bigg)+\int_t^{X^{-1}2^k}\frac{X^2}{2^{3k}}\big\|P_k\nabla_t\xi\sqrt{t'}\big\|_{L^2}\cdot\big\|\xi\sqrt{t'}\big\|_{L^2}+\]\[+\int_t^{X^{-1}2^k}\frac{X^2}{2^{3k}}\big\|P_k\nabla_t\xi\sqrt{t'}\big\|_{L^2}\cdot\big\|\nabla\xi\sqrt{t'}\big\|_{L^2}dt'+\int_t^{X^{-1}2^k}\frac{X^2}{2^{2k}}\big\|P_k\nabla_t\xi\sqrt{t'}\big\|_{L^2}\big\|P_kF_M'\sqrt{t'}\big\|_{L^2}dt'+\]\[+\int_t^{X^{-1}2^k}\frac{X^2t'}{2^{2k}}\big\|P_k\nabla_t\xi\sqrt{t'}\big\|_{L^2}\Big(\big\|\underline{\widetilde{P}}_k\nabla_t\xi\sqrt{t'}\big\|_{L^2}+2^{-k}\big\|\nabla_t\xi\sqrt{t'}\big\|_{L^2}\Big)dt'.\]
    We use Gronwall for $t\in\big[1/2,X^{-1}2^k\big]$ to get:
    \[\big\|P_k\nabla_{t}\xi\sqrt{t}\big\|^2_{L^2}+\frac{1}{t^2}\big\|P_k\xi\sqrt{t}\big\|^2_{L^2}+\frac{X^2}{2^{2k}}\big\|\nabla P_k\xi\sqrt{t}\big\|^2_{L^2}\lesssim\]\[\lesssim\bigg(X^{-1}2^k\big\|P_k\nabla_{t}\xi\big\|^2_{L^2}+\frac{X}{2^{k}}\big\|P_k\xi\big\|^2_{L^2}+\frac{X}{2^{k}}\big\|\nabla P_k\xi\big\|^2_{L^2}\bigg)\bigg|_{t=X^{-1}2^k}+\int_t^{X^{-1}2^k}\frac{1}{(t')^2}\big\|P_k\xi\big\|_{L^2}^2dt'+\]\[+\int_t^{X^{-1}2^k}\frac{X^3}{2^{3k}}\big\|\underline{\widetilde{P}}_k\xi\sqrt{t'}\big\|_{L^2}^2dt'+\int_t^{X^{-1}2^k}\frac{X^5(t')^2}{2^{5k}}\big\|\underline{\widetilde{P}}_k\nabla\xi\sqrt{t'}\big\|_{L^2}^2dt'+\int_t^{X^{-1}2^k}\frac{X^3(t')^2}{2^{3k}}\big\|\underline{\widetilde{P}}_k\nabla_t\xi\sqrt{t'}\big\|_{L^2}^2dt'+\]\[+\int_t^{X^{-1}2^k}\frac{X^3t'}{2^{5k}}\big\|\xi\big\|_{H^1}^2dt'+\int_t^{X^{-1}2^k}\frac{X^3(t')^3}{2^{5k}}\big\|\nabla_t\xi\big\|_{L^2}^2dt'+\int_t^{X^{-1}2^k}\frac{X^3t'}{2^{3k}}\big\|P_kF_M'\big\|_{L^2}^2dt'.\]
    We change variables to $\tau$ and we obtain the conclusion.
\end{proof}

\subsection{Improved high frequency estimates}\label{improved high frequency estimates section}

The goal of this section is to improve the high frequency regime estimate of Proposition \ref{high freq back estimate} in order to prove estimates in the high frequency regime only in terms of data and the inhomogeneous terms.
\begin{proposition}\label{improved high frequency estimate proposition}
    We denote by $\mathbf{1}_{k,\tau}$ the characteristic function of $\{1\geq\tau\geq X2^{-k-1}\},$ and we denote:
    \[a_k(\tau):=\tau\big\|P_k\nabla_{\tau}\xi\big\|^2_{L^2}+\frac{1}{\tau}\big\|P_k\xi\big\|^2_{L^2}+\tau\big\|\nabla P_k\xi\big\|^2_{L^2}.\]
    We have the improved high frequency estimate for any $\tau\in(0,1]$:
    \begin{equation}\label{optimal high frequency estimate}
        \sum_{\tau\geq X2^{-k-1}}2^ka_k(\tau)\mathbf{1}_{k,\tau}\lesssim \mathcal{D}_{II}+\int_{\tau}^1\tau'\big\|F_M'\big\|_{H^{1/2}}^2d\tau'.
    \end{equation}
\end{proposition}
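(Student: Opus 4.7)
[Proof proposal for Proposition~\ref{improved high frequency estimate proposition}]
The plan is to start from the preliminary high frequency regime estimate in Proposition~\ref{high freq back estimate} and deal with the single unfavorable top order term $\int_{\tau}^1 2^k(\tau')^{-2}\|P_k\xi\|_{L^2}^2\,d\tau'$, which is the only obstruction to a closed estimate in terms of data and inhomogeneous terms. To bound this term, I would apply the refined Poincaré inequality \eqref{ref Poincarey} to $\xi$ with a small parameter $\delta>0$ to be chosen. This produces three types of contributions on the right-hand side: (i) a self-referential high frequency piece of the form $\delta^{-1}\int_{\tau}^1 2^{-k}(\tau')^{-2}\|\nabla P_k\xi\|_{L^2}^2\,d\tau'$ which, thanks to the restriction $\tau'\geq X2^{-k-1}$, is bounded by $C\delta^{-1}X^{-2}\int_{\tau}^1 2^k\tau'\|\nabla P_k\xi\|_{L^2}^2\,d\tau'/(\tau')^2\cdot(\tau')^2\leq C\delta^{-1}X^{-2}\int_{\tau}^{1}2^k(\tau')^{-1}\|\nabla P_k\xi\|_{L^2}^2\,d\tau'$, which is of the same type as the LHS and can be absorbed for $X$ large and $\delta$ fixed appropriately; (ii) genuinely low frequency error terms $\delta\sum_{0\leq l<k}\int_{\tau}^1 2^{-8k+7l}(\tau')^{-2}\|\nabla P_l\xi\|_{L^2}^2\,d\tau'$, which will be handled by the estimates of Section~\ref{low frequency backward section singular}; and (iii) a lower order term $\delta^{-1}\int_{\tau}^{1}2^{-3k}(\tau')^{-2}\|\xi\|_{L^2}^2\,d\tau'$, which is controlled by the preliminary bound in Proposition~\ref{backward direction basic estimate proposition}.

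Next, I would sum the resulting inequality over $k\geq x$ with the characteristic weight $\mathbf{1}_{k,\tau}$. The data contributions at $\tau=1$ sum by the standard equivalence of $\|\cdot\|_{H^{s}}$ with the Littlewood-Paley square function, yielding $\mathcal{D}_{II}$, and the inhomogeneous terms $\sum_{k\geq x}\int_{\tau}^{1}2^k\tau'\|P_kF_M'\|_{L^2}^2\,d\tau'$ sum to $\int_{\tau}^{1}\tau'\|F_M'\|_{H^{1/2}}^2\,d\tau'$. The cross-frequency error terms involving $\underline{\widetilde P}_k$ (from the commutator estimates of Lemma~\ref{Litt Paley lemma refined}) as well as the low frequency pieces of type (ii) are bounded after summation, using the fact that both sides of \eqref{LP refined est 2}--\eqref{LP refined est 4} are summable in $k$: this produces, after exchanging the order of summation, a tail controlled by $\sum_{l\geq 0}2^l\int_{\tau}^{1}\tau'\|\nabla P_l\xi\|_{L^2}^2\,d\tau'$-type expressions, part of which is absorbed and part of which is estimated in the low frequency regime.

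To handle the boundary values at $\tau'=X2^{-k-1}$ produced by Proposition~\ref{prelim low freq singular proposition} when estimating the low frequency pieces, I would exploit the observation that these are precisely quantities already controlled by the high frequency estimate at the same time — so the high and low frequency estimates form a coupled system. The coupling is resolved via the novel Gronwall-type inequality in Lemma~\ref{grwonwall type lemma}, which allows the self-referential integrals (after absorption against $X^{-2}$) to be closed, while the remaining low frequency sums are treated by discrete Gronwall in the index $k$ to deal with the weights $2^{-8k+7l}$. Throughout, the lower order terms appearing in the right-hand side (involving $\|\xi\|_{L^2}$ and $\|\nabla_\tau\xi\|_{L^2}$ without a $P_k$) are controlled by Proposition~\ref{backward direction basic estimate proposition} and contribute only $\mathcal{D}_{II}+\mathcal{F}_{II}(\tau)$.

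The main obstacle, and the technical heart of the proof, is the absorption of the bad term through the refined Poincaré inequality: one must simultaneously (a) gain the smallness $X^{-2}$ needed to absorb the self-referential high frequency piece after applying a Gronwall-type inequality, (b) control the low-frequency fallout in a way that, once summed against the weights $2^{-8k+7l}$, produces a convergent tail bounded by the data, and (c) keep track of the cross-symbol LP commutator errors, which are only summable (not pointwise controllable) in $k$. The technical device of Lemma~\ref{grwonwall type lemma} is essential here since the usual Gronwall inequality does not apply directly to the coupled system obtained after the frequency decomposition. Once this absorption is achieved, the desired bound \eqref{optimal high frequency estimate} follows by combining the summed high frequency estimate with the low frequency estimates from Section~\ref{low frequency backward section singular} and the preliminary bound from Section~\ref{back singular preliminary section}.
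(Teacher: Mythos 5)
Your proposal follows essentially the same route as the paper's proof: the refined Poincaré inequality \eqref{ref Poincarey} to treat the bad bulk term, closing the same-frequency piece via the largeness of $X$ (the paper does this through a continuous Gronwall in $\tau$ with $\exp(C'/(\delta X^2))\leq 2$), the low frequency estimates of Section~\ref{low frequency backward section singular} and the preliminary bound of Proposition~\ref{backward direction basic estimate proposition} for the fallout and lower order terms, Lemma~\ref{grwonwall type lemma} and the discrete Gronwall inequality for the resulting coupled sums, and a final summation over $k\geq x$ to handle the cross-symbol $\underline{\widetilde{P}}_k$ errors. The only discrepancy is cosmetic: you assign Lemma~\ref{grwonwall type lemma} to the coupling through the boundary values $a_l(X2^{-l-1})$ and the discrete Gronwall to the $2^{-8k+7l}$-weighted sums, whereas in the paper Lemma~\ref{grwonwall type lemma} handles the time-integrated sum over lower frequencies $l<k$ in the high frequency regime and the discrete Gronwall handles the sum of dyadic boundary terms $2^{6m}a_m(X2^{-m-1})$.
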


The rest of this section is dedicated to the proof of this proposition. We follow the detailed outline of the proof from Section~\ref{second model system intro section} of the Introduction, and we divide the proof into multiple steps. We note that the proof is very similar to the toy problem considered in \cite[Section~9]{Cmain}, but in the current case we also need to keep track of the inhomogeneous terms. Additionally, in the present proof we fill in several of the details that we omitted in \cite{Cmain}, such as the proof of the Gronwall-like inequality in Lemma~\ref{grwonwall type lemma}.

Throughout the proof we use the schematic notation $\{d_k\}_{k\geq0}$ for data terms at $\tau=1$ which satisfy: \[\sum_{k\geq0} d_k\lesssim\mathcal{D}_{II}.\]
\paragraph{Consequences of Proposition~\ref{high freq back estimate}.} The starting point is the preliminary high frequency regime estimate in Proposition~\ref{high freq back estimate}. Using the above notation, we have for all $\tau\in[X2^{-k-1},1]$:
\[2^ka_k(\tau)\mathbf{1}_{k,\tau}\lesssim2^ka_k(1)+ \mathbf{1}_{k,\tau}\int_{\tau}^1\frac{2^k}{(\tau')^2}\big\|P_k\xi\big\|_{L^2}^2d\tau'+\mathbf{1}_{k,\tau}\int_{\tau}^1e_k(\tau')+\mathbf{1}_{k,\tau}\int_{\tau}^1\overline{e_k}(\tau')+\int_{\tau}^12^k\tau'\big\|P_kF_M'\big\|_{L^2}^2,\]
where we define the energies:
\begin{align*}
e_k&=2^k\tau^3\big\|\underline{\widetilde{P}}_k\nabla\xi\big\|_{L^2}^2+2^k\tau^3\big\|\underline{\widetilde{P}}_k\nabla_{\tau}\xi\big\|_{L^2}^2,\\
    \overline{e_k}&=2^k\tau\big\|\underline{\widetilde{P}}_k\xi\big\|_{L^2}^2+\frac{\tau}{2^{k}}\big\|\xi\big\|_{H^1}^2+\frac{\tau^3}{2^{k}}\big\|\nabla_{\tau}\xi\big\|_{L^2}^2.
\end{align*}

The error term containing $e_k$ is similar to the error terms with different projection operators of Section~\ref{model system for direction section}, as explained in Section~\ref{second model system intro section}. We deal with these terms towards the end of our argument, when summing the estimates obtained for all $k\geq x.$ In the meantime we simply keep track of these terms, similarly to the inhomogeneous terms.

On the other hand, we can already bound the error terms containing $\overline{e_k}$ using the preliminary estimate in Proposition~\ref{backward direction basic estimate proposition}. We can bound the first term in $\overline{e_k}$ by the second term, using the finite band property for LP projections. We then apply (\ref{back singular preliminary intro}) to get:
\[\int_{\tau}^1\overline{e_k}(\tau')d\tau'\lesssim_X d_k+2^{-k}\int_{\tau}^1(\tau')^2\big\|F_M'\big\|_{L^2}^2d\tau'.\]
As a consequence, we proved that for all $\tau\in[X2^{-k-1},1]$ we have the high frequency regime estimate:
\[2^ka_k(\tau)\mathbf{1}_{k,\tau}\lesssim C_Xd_k+ \mathbf{1}_{k,\tau}\int_{\tau}^1\frac{2^k}{(\tau')^2}\big\|P_k\xi\big\|_{L^2}^2+\mathbf{1}_{k,\tau}\int_{\tau}^1e_k(\tau')+C_X2^{-k}\int_{\tau}^1(\tau')^2\big\|F_M'\big\|_{L^2}^2+\int_{\tau}^12^k\tau'\big\|P_kF_M'\big\|_{L^2}^2.\]

\paragraph{Applying the refined Poincaré inequality.} According to Section~\ref{second model system intro section} of the Introduction, the second error term in the above estimate causes significant challenges. We also explained in Section~\ref{LP theory intro} that we could not bound this term directly using the finite band property for LP projections. Instead, we use the refined Poincaré inequality for LP projections \eqref{ref Poincarey}. As a result, we have for all $\tau\in[X2^{-k-1},1]$ and $\delta>0$:
\[\int_{\tau}^1\frac{2^k}{(\tau')^2}\big\|P_k\xi\big\|_{L^2}^2d\tau'\lesssim \frac{1}{\delta}\int_{\tau}^1\frac{2^{-k}}{(\tau')^2}\big\|\nabla P_k\xi\big\|_{L^2}^2d\tau'+\delta\int_{\tau}^1\frac{1}{(\tau')^2}\sum_{l=0}^{k-1}2^{-8k+7l}\big\|\nabla P_l\xi\big\|_{L^2}^2d\tau'+\frac{1}{\delta}\int_{\tau}^1\frac{2^{-3k}}{(\tau')^2}\big\|\xi\big\|_{L^2}^2d\tau'.\]
The last term in this inequality is bounded using the preliminary estimate (\ref{back singular preliminary intro}):
\[\int_{\tau}^1\frac{2^{-3k}}{(\tau')^2}\big\|\xi\big\|_{L^2}^2d\tau'\lesssim_X 2^{-k}\mathcal{D}_{II}+2^{-k}\int_{\tau}^1(\tau')^2\big\|F_M'\big\|_{L^2}^2d\tau'.\]
As a result, there exist constants $C',C_{X},C_{X,\delta}>0$ such that for all $k\geq x$ and $\tau\in[X2^{-k-1},1]$:
\[2^ka_k(\tau)\leq C_{X,\delta}d_k+\frac{C'}{\delta}\int_{\tau}^1\frac{2^{-2k}}{(\tau')^3}\cdot2^ka_k(\tau')d\tau'+C'\delta\int_{\tau}^1\frac{1}{(\tau')^2}\sum_{l=0}^{k-1}2^{-8k+7l}\big\|\nabla P_l\xi\big\|_{L^2}^2d\tau'+\]\[+C_{X,\delta}\int_{\tau}^1e_k(\tau')d\tau'+C_{X,\delta}2^{-k}\int_{\tau}^1(\tau')^2\big\|F_M'\big\|_{L^2}^2d\tau'+C_{X,\delta}\int_{\tau}^12^k\tau'\big\|P_kF_M'\big\|_{L^2}^2d\tau'.\]

We can deal with the second error term by using Gronwall for $\tau\in[X2^{-k-1},1]$. We compute:
\[\exp\bigg(\frac{C'}{\delta}\int_{\tau}^1\frac{2^{-2k}}{(\tau')^3}d\tau'\bigg)\leq\exp\bigg(\frac{C'}{\delta}\cdot\frac{1}{X^2}\bigg)\leq2,\]
where we fix $X>0$ large enough, depending on $C'$ and $\delta$, such that:
\begin{equation}\label{fix constant X}
    \frac{C'}{\delta}\cdot\frac{1}{X^2}\leq\log(2).
\end{equation}
Therefore, we have for all $\tau\in[X2^{-k-1},1]$:
\[2^ka_k(\tau)\leq C_{\delta}d_k+2C'\delta\int_{\tau}^1\frac{1}{(\tau')^2}\sum_{l=0}^{k-1}2^{-8k+7l}\big\|\nabla P_l\xi\big\|_{L^2}^2+C_{\delta}\int_{\tau}^1e_k(\tau')+C_{\delta}\int_{\tau}^12^{-k}(\tau')^2\big\|F_M'\big\|_{L^2}^2+2^k\tau'\big\|P_kF_M'\big\|_{L^2}^2.\]

\paragraph{Bounding the low frequency regime error terms.} As a consequence of the refined Poincaré inequality, the second term in the above has both a low frequency regime and a high frequency regime part. We want to separate these two, and apply the low frequency regime estimates in Section~\ref{low frequency backward section singular}. This process creates a sum of discrete error terms which pose additional difficulties. We notice that we can write:
\[\mathbf{1}_{k,\tau}\int_{\tau}^1\frac{1}{(\tau')^3}\sum_{l=0}^{k-1}2^{-8k+7l}\tau'\big\|\nabla P_l\xi\big\|_{L^2}^2d\tau'\leq\mathbf{1}_{k,\tau}\int_{\tau}^1\frac{1}{(\tau')^3}\sum_{l=x}^{k-1}2^{-8k+6l}\cdot 2^la_l(\tau')\mathbf{1}_{l,\tau'}d\tau'+\]\[+\mathbf{1}_{k,\tau}\int_{\tau}^1\frac{1}{(\tau')^4}\sum_{l=0}^{x-1}2^{-8k+7l}\big\|\tau'\nabla P_l\xi\big\|_{L^2}^2d\tau'+\mathbf{1}_{k,\tau}\int_{\tau}^1\frac{1}{(\tau')^4}\sum_{\tau'<X2^{-l-1}\leq1}2^{-8k+7l}\big\|\tau'\nabla P_l\xi\big\|_{L^2}^2d\tau',\]
where the first term is in the high frequency regime and the last two terms are in the low frequency regime. We use Proposition~\ref{preliminary low low frequency estimate proposition} to obtain:
\[\mathbf{1}_{k,\tau}\int_{\tau}^1\frac{1}{(\tau')^4}\sum_{l=0}^{x-1}2^{-8k+7l}\big\|\tau'\nabla P_l\xi\big\|_{L^2}^2\lesssim_{X}\mathcal{D}_{II}\cdot\mathbf{1}_{k,\tau}\frac{2^{-8k}}{\tau^7}+\mathbf{1}_{k,\tau}\frac{2^{-8k}}{\tau^7}\int_{\tau}^1(\tau')^2\big\|F_M'\big\|_{L^2}^2\lesssim_X d_k+2^{-k}\int_{\tau}^1(\tau')^2\big\|F_M'\big\|_{L^2}^2.\]
Similarly, we use Proposition \ref{prelim low freq singular proposition} to obtain:
\[\mathbf{1}_{k,\tau}\int_{\tau}^1\frac{1}{(\tau')^4}\sum_{\tau'<X2^{-l-1}\leq1}2^{-8k+7l}\big\|\tau'\nabla P_l\xi\big\|_{L^2}^2d\tau'=\mathbf{1}_{k,\tau}\sum_{\tau<X2^{-l-1}\leq1}\int_{\tau}^{X2^{-l-1}}\frac{1}{(\tau')^4}2^{-8k+7l}\big\|\tau'\nabla P_l\xi\big\|_{L^2}^2d\tau'\]
\[\lesssim \mathbf{1}_{k,\tau}X\sum_{\tau<X2^{-l-1}\leq1}\frac{2^{-8k+6l}}{\tau^3}a_l(X2^{-l-1})+C_X\mathcal{D}_{II}\cdot\mathbf{1}_{k,\tau}\frac{2^{-8k}}{\tau^7}+\]\[+C_X\mathbf{1}_{k,\tau}\frac{2^{-8k}}{\tau^7}\int_{\tau}^1(\tau')^2\big\|F_M'\big\|_{L^2}^2d\tau'+C_X\mathbf{1}_{k,\tau}\frac{2^{-8k}}{\tau^7}\int_{\tau}^1\tau'\big\|F_M'\big\|_{H^{1/2}}^2d\tau'.\]
We notice that in the above estimate the last term is obtained by bounding:
\[\sum_{\tau<X2^{-l-1}\leq1}\int_{\tau}^{X2^{-l-1}}\frac{1}{(\tau')^4}2^{-8k+6l}\int_{\tau'}^{X2^{-l-1}}\Tilde{\tau}^2\big\|P_lF_M'\big\|_{L^2}^2d\Tilde{\tau}d\tau'\lesssim_X\sum_{\tau<X2^{-l-1}\leq1}\frac{1}{\tau^3}2^{-8k+4l}\int_{\tau}^{1}\Tilde{\tau}2^l\big\|P_lF_M'\big\|_{L^2}^2d\Tilde{\tau}.\]

As a result, we proved that there exist constants $C'',C_{\delta}>0$ such that for all $\tau\in[X2^{-k-1},1]$:
\[2^ka_k(\tau)\leq C_{\delta}d_k+C''\delta\cdot\mathbf{1}_{k,\tau}\int_{\tau}^1\frac{1}{(\tau')^3}\sum_{l=x}^{k-1}2^{-8k+6l}\cdot 2^la_l(\tau')\mathbf{1}_{l,\tau'}d\tau'+C''\delta X\mathbf{1}_{k,\tau}\sum_{\tau<X2^{-l-1}\leq1}\frac{2^{-8k+6l}}{\tau^3}a_l(X2^{-l-1})\]\[+C_{\delta}\int_{\tau}^1e_k(\tau')d\tau'+C_{\delta}\int_{\tau}^12^k\tau'\big\|P_kF_M'\big\|_{L^2}^2d\tau'+C_{\delta}2^{-k}\int_{\tau}^1(\tau')^2\big\|F_M'\big\|_{L^2}^2d\tau'+C_{\delta}2^{-k}\int_{\tau}^1\tau'\big\|F_M'\big\|_{H^{1/2}}^2d\tau'.\]

\paragraph{Gronwall-like inequality.} Our next goal is to deal with the second term in the above estimate using a suitable Gronwall-like inequality. We set-up the problem in order to isolate this error term. We consider $\delta>0$ to be small enough, so that:
\begin{equation}\label{fix constant delta 1}
    C''\delta<\frac{1}{10}.
\end{equation}
We introduce some notation for the error terms on the right hand side of the above estimate:
\begin{align*}
    S_k(\tau)&=\mathbf{1}_{k,\tau}C''\delta X\sum_{\tau<X2^{-l-1}\leq1}\frac{2^{-8k+6l}}{\tau^3}a_l(X2^{-l-1}),\\
    E_k(\tau)&=\mathbf{1}_{k,\tau}C_{\delta}\int_{\tau}^1e_k(\tau')d\tau',\\
    I_k(\tau)&=\mathbf{1}_{k,\tau}C_{\delta}2^{-k}\int_{\tau}^1\tau'\big\|F_M'\big\|_{H^{1/2}}^2d\tau'+\mathbf{1}_{k,\tau}C_{\delta}\int_{\tau}^12^k\tau'\big\|P_kF_M'\big\|_{L^2}^2d\tau',\\
    A_k(\tau)&=C_{\delta}d_k+S_k(\tau)+E_k(\tau)+I_k(\tau).
\end{align*}
Using this notation, we have that for all $k\geq x$ and $\tau\in[X2^{-k-1},1]$:
\begin{equation}\label{second schematic eq}
    2^ka_k(\tau)\mathbf{1}_{k,\tau}\leq A_k(\tau)+\frac{1}{10}\cdot\mathbf{1}_{k,\tau}\int_{\tau}^1\frac{1}{(\tau')^3}\sum_{l=x}^{k-1}2^{-8k+6l}\cdot 2^la_l(\tau')\mathbf{1}_{l,\tau'}d\tau'.
\end{equation}
This motivates us to prove the following Gronwall-like lemma:
\begin{lemma}\label{grwonwall type lemma}
    We consider the functions $u,A,b,c:\mathbb{N}\times[0,1]\rightarrow[0,\infty),$ which for all $k\geq x,\ \tau\in(0,1]$ satisfy the inequality:
    \begin{equation}\label{inequality assumption}
        u(k,\tau)\leq A(k,\tau)+b(k)\int_{\tau}^1\sum_{l=x}^{k-1}c(l,\tau')u(l,\tau')d\tau'
    \end{equation}
    Then, we have that for all $k\geq x,\ \tau\in(0,1]$:
    \begin{equation}\label{inequality conclusion}
        u(k,\tau)\leq A(k,\tau)+b(k)\int_{\tau}^1\sum_{l=x}^{k-1}c(l,\tau')A(l,\tau')\prod_{j=l+1}^{k-1}\bigg(1+\int_{\tau}^{\tau'}b(j)c(j,\tau'')d\tau''\bigg)d\tau'.
    \end{equation}
\end{lemma}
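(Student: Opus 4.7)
The plan is to argue by induction on $k \geq x$, with the base case $k = x$ being immediate since the sum $\sum_{l=x}^{x-1}$ and the product $\prod_{j=l+1}^{x-1}$ are both empty, so \eqref{inequality assumption} and \eqref{inequality conclusion} both reduce to $u(x,\tau) \leq A(x,\tau)$. For the inductive step, I would fix $k > x$, assume \eqref{inequality conclusion} holds for all indices $x \leq l < k$, and plug the corresponding bound for $u(l,\tau')$ into the right-hand side of \eqref{inequality assumption}.

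After this substitution, one obtains
\[
u(k,\tau) \leq A(k,\tau) + b(k)\int_\tau^1 \sum_{l=x}^{k-1} c(l,\tau')A(l,\tau')\, d\tau' + b(k)\int_\tau^1 \sum_{l=x}^{k-1} c(l,\tau')b(l)\int_{\tau'}^1 \sum_{m=x}^{l-1} c(m,s)A(m,s)\,P_l^{\tau'}(s)\, ds\, d\tau',
\]
where $P_l^{\sigma}(s) := \prod_{j=m+1}^{l-1}\big(1 + \int_{\sigma}^{s} b(j)c(j,r)\,dr\big)$. The first step of the inductive argument is to upgrade $P_l^{\tau'}(s) \leq P_l^{\tau}(s)$, which is valid since $\tau \leq \tau' \leq s$ and each factor is increasing as the lower limit shrinks. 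Next, I would apply Fubini to the double integral over the region $\{\tau \leq \tau' \leq s \leq 1\}$, exchanging $\int_\tau^1 \int_{\tau'}^1 \cdots\, ds\, d\tau'$ into $\int_\tau^1 \int_\tau^s \cdots\, d\tau'\, ds$ and relabeling dummy variables to arrive at
\[
\int_\tau^1 \sum_{m=x}^{k-1} c(m,s)A(m,s)\sum_{l=m+1}^{k-1} P_l^\tau(s)\int_\tau^s b(l)c(l,\tau')\,d\tau'\, ds.
\]

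The final step is purely algebraic: setting $B_j^\tau(s) := \int_\tau^s b(j)c(j,r)\,dr$, one has the telescoping identity
\[
\prod_{j=m+1}^{k-1}\bigl(1 + B_j^\tau(s)\bigr) = 1 + \sum_{l=m+1}^{k-1} B_l^\tau(s)\prod_{j=m+1}^{l-1}\bigl(1 + B_j^\tau(s)\bigr),
\]
which is proved by an easy induction on $k-m$. Applying this identity identifies the $m$-sum inside the Fubini expression above with $\prod_{j=m+1}^{k-1}(1 + B_j^\tau(s)) - 1$. Combining with the first $A(l,\tau')$ term (after relabeling $l \to m$) contributes the missing $+1$, producing exactly the factor $\prod_{j=m+1}^{k-1}\bigl(1 + \int_\tau^s b(j)c(j,r)\,dr\bigr)$, which is the statement of \eqref{inequality conclusion} for $k$.

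The main obstacle is the mismatch between the $\int_\tau^{\tau''}$ appearing in the target conclusion and the $\int_{\tau'}^{\tau''}$ naturally produced by the inductive hypothesis; the monotonicity bookkeeping fixes this at the cost of turning an equality into an inequality, which is harmless. The second delicate point is recognizing that the nested Fubini structure collapses into the product form on the right-hand side of \eqref{inequality conclusion}, which is exactly what the telescoping identity above accomplishes.
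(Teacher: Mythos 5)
Your proposal is correct and follows essentially the same route as the paper's proof: strong induction on $k$, substitution of the inductive hypothesis into \eqref{inequality assumption}, the monotonicity bound replacing $\int_{\tau'}^{\tau''}$ by $\int_{\tau}^{\tau''}$ inside the product, a Fubini-type exchange of the integration/summation order, and the telescoping product identity to recombine the terms. The only cosmetic difference is that you apply the monotonicity upgrade before the Fubini step and spell out the telescoping identity explicitly, whereas the paper applies the bound after exchanging the integrals and states the identity at the end; the content is identical.
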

\begin{proof}
    We prove this by induction on $k.$ In the case $k=x,$ we have from (\ref{inequality assumption}) that $u(x,\tau)\leq A(x,\tau)$ as desired. Next, we assume that (\ref{inequality conclusion}) holds for $x,x+1,\ldots,k$ and we prove it for $k+1.$ We have:
    \[u(k+1,\tau)\leq A(k+1,\tau)+b(k+1)\int_{\tau}^1\sum_{l=x}^{k}c(l,\tau')u(l,\tau')d\tau'\]
    \[\leq A(k+1,\tau)+b(k+1)\int_{\tau}^1\sum_{l=x}^{k}c(l,\tau')\bigg\{A(l,\tau')+b(l)\int_{\tau'}^1\sum_{i=x}^{l-1}c(i,\tau'')A(i,\tau'')\prod_{j=i+1}^{l-1}\bigg(1+\int_{\tau'}^{\tau''}b(j)c(j,\Tilde{\tau})\bigg)d\tau''\bigg\}d\tau'\]
    \[\leq A(k+1,\tau)+b(k+1)\int_{\tau}^1\sum_{l=x}^{k}c(l,\tau')A(l,\tau')d\tau'+\]
    \[+b(k+1)\int_{\tau}^1\int_{\tau'}^1\sum_{l=x}^{k}\sum_{i=x}^{l-1}\bigg\{c(l,\tau')b(l)c(i,\tau'')A(i,\tau'')\prod_{j=i+1}^{l-1}\bigg(1+\int_{\tau'}^{\tau''}b(j)c(j,\Tilde{\tau})d\Tilde{\tau}\bigg)\bigg\}d\tau''d\tau'\]
    \[\leq A(k+1,\tau)+b(k+1)\int_{\tau}^1\sum_{l=x}^{k}c(l,\tau')A(l,\tau')d\tau'+\]
    \[+b(k+1)\int_{\tau}^1\int_{\tau}^{\tau''}\sum_{i=x}^{k-1}\sum_{l=i+1}^{k}\bigg\{c(l,\tau')b(l)c(i,\tau'')A(i,\tau'')\prod_{j=i+1}^{l-1}\bigg(1+\int_{\tau'}^{\tau''}b(j)c(j,\Tilde{\tau})d\Tilde{\tau}\bigg)\bigg\}d\tau'd\tau''\]
    \[\leq A(k+1,\tau)+b(k+1)\int_{\tau}^1\sum_{l=x}^{k}c(l,\tau')A(l,\tau')d\tau'+\]
    \[+b(k+1)\int_{\tau}^1\sum_{i=x}^{k-1}c(i,\tau'')A(i,\tau'')\bigg\{\int_{\tau}^{\tau''}\sum_{l=i+1}^{k}c(l,\tau')b(l)\prod_{j=i+1}^{l-1}\bigg(1+\int_{\tau'}^{\tau''}b(j)c(j,\Tilde{\tau})d\Tilde{\tau}\bigg)d\tau'\bigg\}d\tau''\]
    \[\leq A(k+1,\tau)+b(k+1)\int_{\tau}^1\sum_{i=x}^{k}c(i,\tau'')A(i,\tau'')\bigg\{1+\int_{\tau}^{\tau''}\sum_{l=i+1}^{k}c(l,\tau')b(l)\prod_{j=i+1}^{l-1}\bigg(1+\int_{\tau}^{\tau''}b(j)c(j,\Tilde{\tau})d\Tilde{\tau}\bigg)d\tau'\bigg\}d\tau''\]
    We conclude, since we have:
    \[1+\sum_{l=i+1}^{k}\int_{\tau}^{\tau''}c(l,\tau')b(l)d\tau'\cdot\prod_{j=i+1}^{l-1}\bigg(1+\int_{\tau}^{\tau''}b(j)c(j,\Tilde{\tau})d\Tilde{\tau}\bigg)=\prod_{j=i+1}^{k}\bigg(1+\int_{\tau}^{\tau''}b(j)c(j,\Tilde{\tau})d\Tilde{\tau}\bigg).\]
\end{proof}

\paragraph{Consequences of Lemma~\ref{grwonwall type lemma}.} We use the Gronwall lemma established above for the inequality \eqref{second schematic eq}.
\begin{corollary}
    For all $k\geq x$ and $\tau\in[X2^{-k-1},1]$, we have the high frequency regime estimate:
    \begin{equation}\label{third schematic eq}
        2^ka_k(\tau)\mathbf{1}_{k,\tau}\lesssim A_k(\tau)+2^{-7k}\int_{\tau}^1\sum_{l=0}^{k-1}\frac{2^{5l}}{(\tau')^3}\cdot A_l(\tau')\mathbf{1}_{l,\tau'}d\tau'
    \end{equation}
\end{corollary}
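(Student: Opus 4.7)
The plan is to apply Lemma~\ref{grwonwall type lemma} directly to the inequality \eqref{second schematic eq}, with the choices $u(k,\tau) = 2^k a_k(\tau)\mathbf{1}_{k,\tau}$, $A(k,\tau) = A_k(\tau)$, $b(k) = 2^{-8k}/10$, and $c(l,\tau') = \frac{2^{6l}}{(\tau')^3}\mathbf{1}_{l,\tau'}$. Absorbing the indicator $\mathbf{1}_{l,\tau'}$ into $c$ rather than into $u$ leaves the integrand $c(l,\tau')u(l,\tau')$ in \eqref{second schematic eq} unchanged, so the premise of the lemma holds; note also that for $k=x$ the sum in \eqref{second schematic eq} is empty, giving $u(x,\tau)\leq A_x(\tau)$. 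The conclusion of the lemma yields
\[
    u(k,\tau) \leq A_k(\tau) + \frac{2^{-8k}}{10}\int_\tau^1 \sum_{l=x}^{k-1}\frac{2^{6l}\mathbf{1}_{l,\tau'}}{(\tau')^3}A_l(\tau')\,P_{k,l}(\tau,\tau')\,d\tau',
\]
where $P_{k,l}(\tau,\tau') = \prod_{j=l+1}^{k-1}\bigl(1+\int_\tau^{\tau'}b(j)c(j,\tau'')d\tau''\bigr)$.

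The main step will be to show that $P_{k,l}(\tau,\tau')\leq 2^{k-l}$ uniformly in the high frequency regime $\tau\in[X2^{-k-1},1]$. For each $j$ with $l+1\leq j\leq k-1$, the indicator restricts the integral to $\tau''\in[\max(\tau,X2^{-j-1}),\tau']$, and I would split into two cases. If $X2^{-j-1}<\tau$, the lower limit is $\tau$ and $\tau^2>X^2 2^{-2j-2}$ gives
\[
    \int_\tau^{\tau'}\frac{2^{-2j}}{10(\tau'')^3}d\tau''\leq \frac{2^{-2j}}{20\tau^2}\leq \frac{1}{5X^2};
\]
if $X2^{-j-1}\geq\tau$, a direct computation of the antiderivative yields $\frac{2^{-2j}}{20}\cdot\frac{4\cdot 2^{2j}}{X^2}=\frac{1}{5X^2}$. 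Thus in both cases each factor of the product is bounded by $1+\frac{1}{5X^2}$, and choosing $X$ sufficiently large (consistent with \eqref{fix constant X}) produces $P_{k,l}\leq (1+\frac{1}{5X^2})^{k-l-1}\leq 2^{k-l}$.

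Substituting this uniform product bound into the Gronwall conclusion gives
\[
    u(k,\tau)\lesssim A_k(\tau)+2^{-7k}\int_\tau^1\sum_{l=x}^{k-1}\frac{2^{5l}\mathbf{1}_{l,\tau'}}{(\tau')^3}A_l(\tau')\,d\tau',
\]
which is \eqref{third schematic eq}. The sum can be extended down to $l=0$ at no cost, since for $l<x$ the indicator $\mathbf{1}_{l,\tau'}$ requires $\tau'\geq X2^{-l-1}\geq X2^{-x+1}>1$ and hence vanishes identically.

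The hardest part of this argument is realizing that the $\mathbf{1}_{l,\tau'}$ factors inherent in $u(l,\tau')$ must be absorbed into $c$ for the Gronwall multiplier to remain controlled. Without this absorption, the naive estimate $\int_\tau^{\tau'}b(j)c(j,\tau'')d\tau''\leq \frac{2^{-2j}}{20\tau^2}\leq \frac{2^{2(k-j)}}{5X^2}$ is unbounded as $j$ decreases away from $k$, and the resulting product grows like $\exp(c\cdot 2^{2(k-l)}/X^2)$ in $k-l$, destroying the sharp bound. It is precisely the indicator-enforced truncation of the inner integral that makes each factor $O(1/X^2)$ uniformly in $j$, which is the mechanism by which \eqref{second schematic eq} upgrades to the form \eqref{third schematic eq} with the gain of $2^{k-l}$.
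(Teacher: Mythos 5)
Your proposal is correct and follows essentially the same route as the paper: you apply Lemma~\ref{grwonwall type lemma} to \eqref{second schematic eq} with exactly the same choices $u(k,\tau)=2^ka_k(\tau)\mathbf{1}_{k,\tau}$, $b(k)=\tfrac{1}{10}2^{-8k}$, $c(l,\tau')=(\tau')^{-3}2^{6l}\mathbf{1}_{l,\tau'}$, and then estimate the resulting product. The only (harmless, in fact slightly sharper) deviation is in that last step: you use the indicator truncation $\tau''\geq X2^{-j-1}$ to bound each factor uniformly by $1+\tfrac{1}{5X^2}\leq2$, giving $\prod\leq2^{k-l}$, whereas the paper bounds the product by $1+2^{-l}\tau^{-1}$ and then invokes $\tau\geq X2^{-k-1}$ — both produce the same weight $2^{-8k+6l}\cdot2^{k-l}=2^{-7k+5l}$, and your extension of the sum to $l<x$ is free since those terms are nonnegative (indeed vanish, as $X2^{-l-1}\geq2>1$ there).
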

\begin{proof}
    We apply Lemma~\ref{grwonwall type lemma} for the inequality (\ref{second schematic eq}), where:\[u(k,\tau)=2^ka_k(\tau)\mathbf{1}_{k,\tau},\ b(k)=\frac{1}{10}\cdot2^{-8k},\ c(k,\tau)=\tau^{-3}2^{6k}\mathbf{1}_{k,\tau}.\]
    From \eqref{inequality conclusion} we obtain for $\tau\in[X2^{-k-1},1]$:
    \[2^ka_k(\tau)\mathbf{1}_{k,\tau}\leq A_k(\tau)+2^{-8k}\int_{\tau}^1\sum_{l=x}^{k-1}A_l(\tau')\cdot\frac{1}{(\tau')^{3}}2^{6l}\mathbf{1}_{l,\tau'}\prod_{j=l+1}^{k-1}\bigg(1+\int_{\tau}^{\tau'}b(j)c(j,\tau'')d\tau''\bigg)d\tau'.\]
    In order to bound the above, we first note that:
    \[\int_{\tau}^{\tau'}b(j)c(j,\tau'')d\tau''=\frac{1}{10}\cdot2^{-2j}\int_{\tau}^{\tau'}\frac{\mathbf{1}_{j,\tau''}}{(\tau'')^3}d\tau''\leq\min\big(X^{-2},2^{-2j}\tau^{-2}\big)\leq\min\big(1,2^{-2j}\tau^{-2}\big).\]
    In this bound we used the good control of the constant in the definition of $b(k),$ obtained using the smallness of $\delta$ in \eqref{fix constant delta 1}. As a result, we obtain for all $\tau\in[X2^{-k-1},1]$:
    \[\prod_{j=l+1}^{k-1}\bigg(1+\int_{\tau}^{\tau'}b(j)c(j,\tau'')d\tau''\bigg)\leq\prod_{j=l+1}^{k-1}\Big(\min\big(2,1+2^{-2j}\tau^{-2}\big)\Big)\lesssim1+2^{-l}\tau^{-1},\]
    where we bounded the terms with $2^{-j}<\tau$ using the inequality $x+1\leq e^x.$
    Finally, we conclude by noticing that:
    \[2^{-8k}2^{6l}\cdot2^{-l}\tau^{-1}\leq2^{-7k}2^{5l}\cdot\frac{2^{-k}}{\tau}\leq2^{-7k}2^{5l}\cdot\frac{2}{X}\leq2^{-7k}2^{5l}.\]
\end{proof}

We use the definition of $A_k(\tau)$ to compute the new error terms obtained on the RHS of (\ref{third schematic eq}). For the data terms we compute:
\[d_k+2^{-7k}\int_{\tau}^1\sum_{l=x}^{k-1}\frac{2^{5l}}{(\tau')^3}d_l\mathbf{1}_{l,\tau'}d\tau'\lesssim_{\delta} d_k+\sum_{l=x}^{k-1}2^{5(l-k)}d_l\lesssim_{\delta} d_k.\]
In this inequality, we used the fact that $\sum_k\sum_l2^{-5|k-l|}d_l\lesssim\mathcal{D}_{II},$ so the term $\sum_l2^{-5|k-l|}d_l$ can be written schematically as $d_k.$ We will use similar bounds implicitly for the rest of the proof.

Additionally, we have the following bound for the discrete error terms $S_k:$
\[S_k(\tau)+2^{-7k}\int_{\tau}^1\sum_{l=x}^{k-1}\frac{2^{5l}}{(\tau')^3}S_l(\tau')\mathbf{1}_{l,\tau'}d\tau'\lesssim S_k(\tau)+\delta X2^{-7k}\int_{\tau}^1\sum_{l=x}^{k-1}\sum_{\tau'<X2^{-m-1}\leq1}\frac{2^{-3l+6m}}{(\tau')^6}a_m(X2^{-m-1})\mathbf{1}_{l,\tau'}d\tau'\]
\[\lesssim S_k(\tau)+\delta X2^{-7k}\sum_{l=x}^{k-1}\sum_{\substack{\tau<X2^{-m-1} \\ x\leq m\leq l}}\int_{X2^{-l-1}}^{X2^{-m-1}}\frac{2^{-3l+6m}}{(\tau')^6}a_m(X2^{-m-1})d\tau'\]\[\lesssim S_k(\tau)+\frac{\delta}{X^4}2^{-7k}\sum_{l=x}^{k-1}\sum_{\substack{\tau<X2^{-m-1} \\ x\leq m\leq l}}2^{2l+6m}a_m(X2^{-m-1})\lesssim\frac{\delta}{X^2}2^{-5k}\sum_{\tau<X2^{-m-1}\leq1}2^{6m}a_m(X2^{-m-1}).\]

\paragraph{Discrete Gronwall inequality.} We deal with the sum of discrete error terms obtained in \eqref{third schematic eq} using the discrete Gronwall inequality. We first write the estimate in a convenient form by defining:
\begin{align*}
    \widetilde{S}_k(\tau)&:=\frac{\delta}{X^2}2^{-5k}\sum_{\tau<X2^{-m-1}\leq1}2^{6m}a_m(X2^{-m-1}),\\
    \widetilde{E}_k(\tau)&:= E_k(\tau)+2^{-7k}\int_{\tau}^1\sum_{l=x}^{k-1}\frac{2^{5l}}{(\tau')^3}E_l(\tau')\mathbf{1}_{l,\tau'}d\tau',\\
    \widetilde{I}_k(\tau)&:=I_k(\tau)+2^{-7k}\int_{\tau}^1\sum_{l=x}^{k-1}\frac{2^{5l}}{(\tau')^3}I_l(\tau')\mathbf{1}_{l,\tau'}d\tau'.
\end{align*}
Thus, (\ref{third schematic eq}) implies that for all $\tau\in[X2^{-k-1},1]$:
\begin{equation}\label{some est needed later}
    2^ka_k(\tau)\mathbf{1}_{k,\tau}\lesssim C_{\delta}d_k+\widetilde{S}_k(\tau)+C_{\delta}\widetilde{E}_k(\tau)+C_{\delta}\widetilde{I}_k(\tau).
\end{equation}
In particular, there exist constants $C''',C_{\delta}>0$ such that for any $k\geq x:$
\[2^ka_k(X2^{-k-1})\leq C_{\delta}d_k+C_{\delta}\widetilde{E}_k(X2^{-k-1})+C_{\delta}\widetilde{I}_k(X2^{-k-1})+C'''\frac{\delta}{X^2}2^{-5k}\sum_{m=x}^{k-1}2^{6m}a_m(X2^{-m-1}).\]
We fix the parameters $X,\delta>0$ satisfying (\ref{fix constant X}), (\ref{fix constant delta 1}), and :
\begin{equation}\label{fix constant delta 2}
    C'''\frac{\delta}{X^2}<\frac{1}{10}.
\end{equation}
We also introduce the notation:
\[b_k=C_{\delta}d_k+C_{\delta}\widetilde{E}_k(X2^{-k-1})+C_{\delta}\widetilde{I}_k(X2^{-k-1}).\]
Using this notation, we have for any $k\geq x:$
\[2^ka_k(X2^{-k-1})\leq b_k+\frac{1}{10}2^{-5k}\sum_{m=x}^{k-1}2^{5m}\cdot 2^ma_m(X2^{-m-1}).\]
We apply the discrete Gronwall inequality according to \cite{gronwall}:
\[2^ka_k(X2^{-k-1})\leq b_k+\frac{1}{10}2^{-5k}\sum_{m=x}^{k-1}\bigg(2^{5m}b_m\cdot\prod_{j=m+1}^{k-1}\big(1+1/10\cdot2^{-5j}\cdot2^{5j}\big)\bigg)\lesssim b_k+2^{-4k}\sum_{m=x}^{k-1}2^{4m}b_m.\]
As a consequence, we have the following bound for all $k\geq x$:
\begin{equation}\label{bound for discrete terms high}
    2^ka_k(X2^{-k-1})\lesssim_{\delta} d_k+\widetilde{E}_k(X2^{-k-1})+\widetilde{I}_k(X2^{-k-1})+\sum_{m=x}^{k-1}2^{-4k+4m}\Big(\widetilde{E}_m(X2^{-m-1})+\widetilde{I}_m(X2^{-m-1})\Big).
\end{equation}

\textbf{Notation.} We used the parameters $X>0$ and $\delta>0$ to apply Gronwall-like inequalities in the above proof. We now fixed these parameters (depending on $M,C_0,C_2$), so we return to our usual notation convention that we write $A\lesssim B$ if there exists a constant $C>0$ depending only on the constants $M,C_0,C_2$ such that $A\leq CB.$

\paragraph{Consequences of the discrete Gronwall inequality.} We use the estimate (\ref{bound for discrete terms high}) in the high frequency regime estimate \eqref{some est needed later}. From (\ref{bound for discrete terms high}) we obtain for all $\tau\in[X2^{-k-1},1]$:
\[\widetilde{S}_k(\tau)\lesssim\sum_{\tau<X2^{-m-1}\leq1}2^{-5k+5m}d_m+\sum_{\tau<X2^{-m-1}\leq1}2^{-4k+4m}\Big(\widetilde{E}_m(X2^{-m-1})+\widetilde{I}_m(X2^{-m-1})\Big)\]
\[\lesssim\sum_{\tau<X2^{-m-1}\leq1}2^{-5k+5m}d_m+\sum_{\tau<X2^{-m-1}\leq1}2^{-4k+4m}\Big(E_m(X2^{-m-1})+I_m(X2^{-m-1})\Big),\]
where the second bound follows by:
\[\sum_{\tau<X2^{-m-1}\leq1}2^{-4k+4m}\cdot2^{-7m}\sum_{l=x}^{m-1}\int_{X2^{-l-1}}^1\frac{2^{5l}}{(\tau')^3}E_l(\tau')d\tau'\lesssim\sum_{\tau<X2^{-l-1}\leq1}2^{-4k+4l}E_l(X2^{-l-1}),\]
and we have a similar inequality for the inhomogeneous terms $I_l.$ We also bound the remaining terms of \eqref{some est needed later}:
\[\widetilde{E}_k(\tau)=E_k(\tau)+2^{-7k}\int_{\tau}^1\sum_{l=x}^{k-1}\frac{2^{5l}}{(\tau')^3}E_l(\tau')\mathbf{1}_{l,\tau'}d\tau'\lesssim E_k(\tau)+2^{-2k}\tau^{-2}\sum_{l\geq x}\int_{\tau}^1e_l(\tau')d\tau'\]
\[\lesssim E_k(\tau)+2^{-2k}\tau^{-2}\int_{\tau}^1(\tau')^3\Big(\big\|\xi\big\|_{H^{3/2}}^2+\big\|\nabla_{\tau}\xi\big\|_{H^{1/2}}^2\Big)d\tau',\]
\[\widetilde{I}_k(\tau)=I_k(\tau)+2^{-7k}\int_{\tau}^1\sum_{l=x}^{k-1}\frac{2^{5l}}{(\tau')^3}I_l(\tau')\mathbf{1}_{l,\tau'}d\tau'\lesssim I_k(\tau)+2^{-2k}\tau^{-2}\int_{\tau}^1\tau'\big\|F_M'\big\|_{H^{1/2}}^2d\tau'.\]
We combine these estimates and we obtain that (\ref{some est needed later}) implies for all $\tau\in[X2^{-k-1},1]$:
\[2^ka_k(\tau)\mathbf{1}_{k,\tau}\lesssim d_k+\sum_{\tau<X2^{-m-1}\leq1}2^{-5k+5m}d_m+\sum_{\tau<X2^{-m-1}\leq1}2^{-4k+4m}\Big(E_m(X2^{-m-1})+I_m(X2^{-m-1})\Big)\]
\[+E_k(\tau)+I_k(\tau)+2^{-2k}\tau^{-2}\int_{\tau}^1(\tau')^3\Big(\big\|\xi\big\|_{H^{3/2}}^2+\big\|\nabla_{\tau}\xi\big\|_{H^{1/2}}^2\Big)d\tau'+2^{-2k}\tau^{-2}\int_{\tau}^1\tau'\big\|F_M'\big\|_{H^{1/2}}^2d\tau'.\]

\paragraph{Summing the high frequency estimates.} For each $\tau\in(0,1],$ we sum the above high frequency regime estimates for all $k\geq x$ such that $X2^{-k-1}\leq\tau:$
\[\sum_{\tau\geq X2^{-k-1}}2^ka_k(\tau)\lesssim\mathcal{D}_{II}+\sum_{\tau\geq X2^{-k-1}}\Big(E_k(\tau)+I_k(\tau)\Big)+\sum_{\tau<X2^{-m-1}\leq1}\Big(E_m(X2^{-m-1})+I_m(X2^{-m-1})\Big)+\]\[+\int_{\tau}^1(\tau')^3\Big(\big\|\xi\big\|_{H^{3/2}}^2+\big\|\nabla_{\tau}\xi\big\|_{H^{1/2}}^2\Big)d\tau'+\int_{\tau}^1\tau'\big\|F_M'\big\|_{H^{1/2}}^2d\tau'.\]
Additionally, we have the estimates:
\begin{align*}
    \sum_{\tau\geq X2^{-k-1}}E_k(\tau)+\sum_{\tau<X2^{-m-1}\leq1}E_m(X2^{-m-1})&\lesssim\sum_{k\geq x}\int_{\tau}^1e_k(\tau')\lesssim\int_{\tau}^1(\tau')^3\Big(\big\|\xi\big\|_{H^{3/2}}^2+\big\|\nabla_{\tau}\xi\big\|_{H^{1/2}}^2\Big)d\tau',\\
    \sum_{\tau\geq X2^{-k-1}}I_k(\tau)+\sum_{\tau<X2^{-m-1}\leq1}I_m(X2^{-m-1})&\lesssim\int_{\tau}^1\tau'\big\|F_M'\big\|_{H^{1/2}}^2d\tau'.
\end{align*}
Therefore, we obtain the following high frequency regime estimate:
\begin{equation}\label{high frequency prelim imp est}
    \sum_{\tau\geq X2^{-k-1}}2^ka_k(\tau)\lesssim\mathcal{D}_{II}+\int_{\tau}^1(\tau')^3\Big(\big\|\xi\big\|_{H^{3/2}}^2+\big\|\nabla_{\tau}\xi\big\|_{H^{1/2}}^2\Big)d\tau'+\int_{\tau}^1\tau'\big\|F_M'\big\|_{H^{1/2}}^2d\tau'.
\end{equation}

\paragraph{Proof of Proposition~\ref{improved high frequency estimate proposition}.}
The final step in the proof of the optimal high frequency regime estimate \eqref{optimal high frequency estimate} consists of bounding the second error term on the RHS of \eqref{high frequency prelim imp est}.

We first notice that we have the estimate for all $\tau\in(0,1]$:
\[\tau^3\Big(\big\|\xi\big\|_{H^{3/2}}^2+\big\|\nabla_{\tau}\xi\big\|_{H^{1/2}}^2\Big)\lesssim\tau^3\Big(\big\|\xi\big\|_{H^{1}}^2+\big\|\nabla_{\tau}\xi\big\|_{L^2}^2\Big)+\sum_{k\geq x}\tau^32^k\Big(\big\|\nabla P_k\xi\big\|_{L^2}^2+\big\|P_k\nabla_{\tau}\xi\big\|_{L^2}^2\Big)\]\[\lesssim\tau^3\Big(\big\|\xi\big\|_{H^{1}}^2+\big\|\nabla_{\tau}\xi\big\|_{L^2}^2\Big)+\sum_{\tau<X2^{-k-1}\leq1}\tau^32^k\Big(\big\|\nabla P_k\xi\big\|_{L^2}^2+\big\|P_k\nabla_{\tau}\xi\big\|_{L^2}^2\Big)+\tau^2\sum_{\tau\geq X2^{-k-1}}2^ka_k(\tau)\]\[\lesssim\tau^3\Big(\big\|\xi\big\|_{H^{1}}^2+\big\|\nabla_{\tau}\xi\big\|_{L^2}^2\Big)+\sum_{\tau<X2^{-k-1}\leq1}\tau^2\Big(\big\|\nabla P_k\xi\big\|_{L^2}^2+\big\|P_k\nabla_{\tau}\xi\big\|_{L^2}^2\Big)+\tau^2\sum_{\tau\geq X2^{-k-1}}2^ka_k(\tau)\]\[\lesssim\tau^2\Big(\big\|\xi\big\|_{H^{1}}^2+\big\|\nabla_{\tau}\xi\big\|_{L^2}^2\Big)+\tau^2\sum_{\tau\geq X2^{-k-1}}2^ka_k(\tau).\]
We use the preliminary estimates in Proposition~\ref{backward direction basic estimate proposition} and (\ref{high frequency prelim imp est}) to get:
\[\tau^3\Big(\big\|\xi\big\|_{H^{3/2}}^2+\big\|\nabla_{\tau}\xi\big\|_{H^{1/2}}^2\Big)\lesssim \mathcal{D}_{II}+\int_{\tau}^1(\tau')^3\Big(\big\|\xi\big\|_{H^{3/2}}^2+\big\|\nabla_{\tau}\xi\big\|_{H^{1/2}}^2\Big)d\tau'+\int_{\tau}^1\tau'\big\|F_M'\big\|_{H^{1/2}}^2d\tau'.\]
Using Gronwall, we proved that for all $\tau\in(0,1]$:
\begin{equation}\label{nonoptimal estimate}
    \tau^3\Big(\big\|\xi\big\|_{H^{3/2}}^2+\big\|\nabla_{\tau}\xi\big\|_{H^{1/2}}^2\Big)\lesssim \mathcal{D}_{II}+\int_{\tau}^1\tau'\big\|F_M'\big\|_{H^{1/2}}^2d\tau'.
\end{equation}
Finally, we use this estimate in (\ref{high frequency prelim imp est}) to obtain the optimal high frequency estimate \eqref{optimal high frequency estimate}.

\subsection{The main estimate in Theorem \ref{main theorem second system}}\label{main estimate back direction proof section}

In this section, we prove the main estimate \eqref{main estimate second model system} in Theorem \ref{main theorem second system}. For this, we first establish the following top order estimate for the singular component $\Phi_0:$
\begin{equation}\label{claim for top order singular}
    \mathcal{E}^0_{II}(\tau)\lesssim \mathcal{D}_{II}+\sum_{m=0}^M\int_{\tau}^1\tau'\big\|F_m'\big\|_{H^{1/2}}^2d\tau',
\end{equation}
where we define the top order energy for the singular component:
\[\mathcal{E}^0_{II}(\tau):=\tau\big\|\Phi_0\big\|_{H^{M+1/2}}^2+\tau^2\big\|\Phi_0\big\|_{H^{M+3/2}}^2+\tau^2\big\|\nabla_{\tau}\nabla^M\Phi_0\big\|_{H^{1/2}}^2+\sum_{m=0}^{M-1}\tau^2\big\|\nabla_{\tau}\nabla^m\Phi_0\big\|_{L^2}^2+\int_{\tau}^1\tau'\big\|\Phi_0\big\|_{H^{M+1}}^2d\tau'.\]

Using Propositions~\ref{backward direction basic estimate proposition}, \ref{prelim low freq singular proposition} and the estimate \eqref{optimal high frequency estimate}, we have the bound for all $\tau\in(0,1]$:
\[\tau^2\Big(\big\|\xi\big\|_{H^{3/2}}^2+\big\|\nabla_{\tau}\xi\big\|_{H^{1/2}}^2\Big)\lesssim\tau^2\Big(\big\|\xi\big\|_{H^{1}}^2+\big\|\nabla_{\tau}\xi\big\|_{L^2}^2\Big)+\sum_{k\geq x}\tau^22^k\Big(\big\|\nabla P_k\xi\big\|_{L^2}^2+\big\|P_k\nabla_{\tau}\xi\big\|_{L^2}^2\Big)\]\[\lesssim\tau^2\Big(\big\|\xi\big\|_{H^{1}}^2+\big\|\nabla_{\tau}\xi\big\|_{L^2}^2\Big)+\sum_{\tau<X2^{-k-1}\leq1}\tau^22^{k}\Big(\big\|\nabla P_k\xi\big\|_{L^2}^2+\big\|P_k\nabla_{\tau}\xi\big\|_{L^2}^2\Big)+\tau\sum_{\tau\geq X2^{-k-1}}2^ka_k(\tau)\]\[\lesssim\mathcal{D}_{II}+\int_{\tau}^1\tau'\big\|F_M'\big\|_{H^{1/2}}^2d\tau'+\sum_{\tau<X2^{-k-1}\leq1}2^ka_k(X2^{-k-1}).\]
We use the bound (\ref{bound for discrete terms high}) for $2^ka_k(X2^{-k-1})$ to get:
\[\sum_{\tau<X2^{-k-1}\leq1}2^ka_k(X2^{-k-1})\lesssim\mathcal{D}_{II}+\sum_{\tau<X2^{-k-1}\leq1}\widetilde{E}_k(X2^{-k-1})+\sum_{\tau<X2^{-k-1}\leq1}\widetilde{I}_k(X2^{-k-1})\]\[\lesssim\mathcal{D}_{II}+\sum_{\tau<X2^{-k-1}\leq1}E_k(X2^{-k-1})+\sum_{\tau<X2^{-k-1}\leq1}I_k(X2^{-k-1})\lesssim \mathcal{D}_{II}+\int_{\tau}^1\tau'\big\|F_M'\big\|_{H^{1/2}}^2d\tau'.\]
We note that we used (\ref{nonoptimal estimate}) in the last inequality. As a result, we proved that:
\[\tau^2\Big(\big\|\xi\big\|_{H^{3/2}}^2+\big\|\nabla_{\tau}\xi\big\|_{H^{1/2}}^2\Big)\lesssim \mathcal{D}_{II}+\int_{\tau}^1\tau'\big\|F_M'\big\|_{H^{1/2}}^2d\tau'.\]
As usual, we also get for all $\tau\in(0,1]$:
\begin{align*}
    \tau\big\|\xi\big\|_{H^{1/2}}^2&\lesssim \mathcal{D}_{II}+\int_{\tau}^1\tau'\big\|\xi\big\|_{H^{1/2}}\cdot\big\|\nabla_{\tau}\xi\big\|_{H^{1/2}}d\tau'\lesssim \mathcal{D}_{II}+\int_{\tau}^1\sqrt{\tau'}\big\|\xi\big\|_{H^{1/2}}^2d\tau'+\int_{\tau}^1(\tau')^{3/2}\big\|\nabla_{\tau}\xi\big\|_{H^{1/2}}^2d\tau'\\
    &\lesssim \mathcal{D}_{II}+\int_{\tau}^1\tau'\big\|F_M'\big\|_{H^{1/2}}^2d\tau'+\int_{\tau}^1\sqrt{\tau'}\big\|\xi\big\|_{H^{1/2}}^2d\tau'.
\end{align*}
Applying Gronwall, we proved that:
\[\tau^2\big\|\xi\big\|_{H^{3/2}}^2+\tau^2\big\|\nabla_{\tau}\xi\big\|_{H^{1/2}}^2+\tau\big\|\xi\big\|_{H^{1/2}}^2\lesssim \mathcal{D}_{II}+\int_{\tau}^1\tau'\big\|F_M'\big\|_{H^{1/2}}^2d\tau'.\]
We recall that $\xi=\nabla^M\Phi_0.$ Combining the above estimate with the preliminary estimates in Proposition~\ref{backward direction basic estimate proposition}, we proved \eqref{claim for top order singular}.

\begin{proof}[Proof of \eqref{main estimate second model system}]
    We use the estimates for the regular quantities in Section~\ref{good quantities estimates subsection} and the top order estimate for the singular quantities \eqref{claim for top order singular} to get:
    \[\mathcal{E}_{II}(\tau)\lesssim\mathcal{D}_{II}+\mathcal{F}_{II}(\tau)+\sum_{m=0}^M\int_{\tau}^1\tau'\big\|F_m'\big\|_{H^{1/2}}^2d\tau'.\]
    We also recall our notation $F_m'=\psi\sum_{i=1}^I\nabla^{m+1}\Phi_i+F_{m}^{0}.$ Using again the estimates for the regular quantities proved in Section~\ref{good quantities estimates subsection}, we conclude.
\end{proof}

\subsection{Estimates for the asymptotic quantities}\label{asympt data estimates section}
In this section we prove the estimates \eqref{main estimate asymptotic data} and \eqref{main estimate asymptotic data h} for the asymptotic quantities $\mathcal{O}$, $\mathfrak{h},$ and $\Phi_i^0$ with $1\leq i\leq I,$ in order to complete the proof of Theorem~\ref{main theorem second system}. We notice that the estimates for the regular quantities in Proposition~\ref{good quantities estimates proposition} already imply the bound \eqref{good quantities asymptotic data intro} for $\Phi_i^0$ with $1\leq i\leq I.$ As a result, we only need to prove the estimates \eqref{estimate for O intro} and \eqref{main estimate asymptotic data h} for $\mathcal{O}$ and $\mathfrak{h}$.
\begin{proposition}\label{estimate for O proposition}
    The obstruction tensor $\mathcal{O}$ satisfies the estimate \eqref{estimate for O intro}:
    \[\big\|\mathcal{O}\big\|_{H^{M+1}}^2\lesssim \mathcal{D}_{II}+\mathcal{F}_{II}(0).\]
\end{proposition}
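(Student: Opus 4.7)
The strategy, as announced in Section \ref{second model system intro section}, is to take the limit $\tau \to 0^+$ in the low frequency regime estimate of Proposition \ref{prelim low freq singular proposition} applied to $\xi = \nabla^M \Phi_0,$ and then to sum the resulting bounds over $k \geq x.$ Commuting the asymptotic expansion for $\Phi_0$ in Definition \ref{model systems definition} with $\nabla^M$ yields, in $C^\infty(S^n),$
\[
\nabla^M \Phi_0(\tau) = 2 \nabla^M \mathcal{O} \log \tau + \nabla^M h + O\bigl(\tau^2 |\log\tau|^2\bigr), \qquad \nabla_\tau \nabla^M \Phi_0(\tau) = \frac{2\nabla^M \mathcal{O}}{\tau} + O\bigl(\tau |\log\tau|^2\bigr).
\]
Writing $P_k = {}^{(\slashed{g}_\tau)}P_k$ and invoking Lemma \ref{difference of projections lemma} to replace it by ${}^{(\slashed{g}_0)}P_k$ up to an $O(\tau^2)$ error, we obtain $\tau P_k \nabla_\tau \nabla^M \Phi_0 \to 2\, {}^{(\slashed{g}_0)}P_k \nabla^M \mathcal{O}$ in $L^2$ and $\tau \nabla P_k \nabla^M \Phi_0 \to 0$ as $\tau \to 0^+.$

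Substituting these limits into Proposition \ref{prelim low freq singular proposition} yields, for each $k \geq x,$
\[
\bigl\|{}^{(\slashed{g}_0)}P_k \nabla^M \mathcal{O}\bigr\|_{L^2}^2 \lesssim X^2 2^{-2k} \Bigl(\|P_k \nabla_\tau \xi\|_{L^2}^2 + \|\nabla P_k \xi\|_{L^2}^2\Bigr)\Big|_{\tau = X 2^{-k-1}} + C_X 2^{-3k}\mathcal{D}_{II} + C_X 2^{-k}\int_0^{X 2^{-k-1}}(\tau')^2 \|P_k F_M'\|_{L^2}^2 d\tau' + C_X 2^{-3k}\int_0^1(\tau')^2 \|F_M'\|_{L^2}^2 d\tau',
\]
where $F_M' = \psi \sum_{i=1}^I \nabla^{M+1}\Phi_i + F_M^0.$ Multiplying by $2^{2k}$ and summing over $k \geq x,$ the boundary data at $\tau = X 2^{-k-1}$ is dominated by $X \cdot 2^k a_k(X 2^{-k-1}),$ whose sum is bounded by $\mathcal{D}_{II} + \mathcal{F}_{II}(0)$ via the discrete Gronwall consequence \eqref{bound for discrete terms high} proved in Section \ref{improved high frequency estimates section}. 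Using the pointwise estimate $(\tau')^2 \leq X 2^{-k-1} \tau'$ on the integration domain together with Bessel's inequality gives $\sum_k 2^k \int_0^{X 2^{-k-1}}(\tau')^2 \|P_k F_M'\|_{L^2}^2 d\tau' \lesssim_X \int_0^1 \tau' \|F_M'\|_{L^2}^2 d\tau',$ and the contribution of $\nabla^{M+1}\Phi_i$ to this integral is controlled by $\mathcal{D}_{II} + \mathcal{F}_{II}(0)$ via Proposition \ref{good quantities estimates proposition}. The remaining geometric series are trivially bounded by $\mathcal{D}_{II} + \mathcal{F}_{II}(0).$

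The low frequency piece $\sum_{k < x} 2^{2k} \|P_k \nabla^M \mathcal{O}\|_{L^2}^2$ and the lower order contributions $\|\nabla^m \mathcal{O}\|_{L^2}^2$ for $m \leq M$ follow from the same limiting procedure applied to the preliminary estimate in Proposition \ref{backward direction basic estimate proposition}: multiplying that estimate by $\tau$ and passing to the limit $\tau \to 0^+$ produces $\|\nabla^m \mathcal{O}\|_{L^2}^2 \lesssim \mathcal{D}_{II} + \mathcal{F}_{II}(0).$ Combining the top order sum with the low frequency and lower order bounds, together with the LP characterization of fractional Sobolev spaces introduced in Section \ref{LP section}, yields the claimed estimate \eqref{estimate for O intro}.

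The principal technical obstacle is the rigorous justification of the $L^2$ limit $\tau \to 0^+$ after applying $P_k,$ since $P_k$ depends on $\tau$ through the time-dependent metric $\slashed{g}_\tau.$ This is exactly what Lemma \ref{difference of projections lemma} is designed for: its $O(\tau^2)$ convergence rate matches the $O(\tau^2|\log\tau|^2)$ remainder in the expansion of $\nabla^M \Phi_0,$ allowing the projection to pass through the limit without obstruction. All other ingredients --- the low frequency estimate, the discrete bound \eqref{bound for discrete terms high}, and the regular-quantity estimates in Proposition \ref{good quantities estimates proposition} --- are already established in the paper.
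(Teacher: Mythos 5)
Your proposal is correct and follows essentially the same route as the paper's proof: take the limit $\tau\rightarrow0$ in the low frequency regime estimate of Proposition~\ref{prelim low freq singular proposition} using the expansion of $\nabla_{\tau}\nabla^M\Phi_0$ at $\mathcal{I}^-$, multiply by $2^{2k}$ and sum over $k\geq x$, control $\sum_{k\geq x}2^ka_k(X2^{-k-1})$ by $\mathcal{D}_{II}+\mathcal{F}_{II}(0)$ via the chain of estimates from Sections~\ref{improved high frequency estimates section}--\ref{main estimate back direction proof section} (your citation of \eqref{bound for discrete terms high} implicitly requires the summation of the $\widetilde{E}_k,\widetilde{I}_k$ terms there, which is exactly what those sections provide), and obtain the lower order bounds $\|\nabla^m\mathcal{O}\|_{L^2}$ from Proposition~\ref{backward direction basic estimate proposition}. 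The only slip is cosmetic: no multiplication by $\tau$ is needed for the lower order step, since $\tau^2\|\nabla_{\tau}\nabla^m\Phi_0\|_{L^2}^2$ already appears on the left-hand side of the preliminary estimate, so passing to the limit directly yields $\|\nabla^m\mathcal{O}\|_{L^2}^2\lesssim\mathcal{D}_{II}+\mathcal{F}_{II}(0)$.
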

\begin{proof}
    As a consequence of the preliminary estimates in Proposition \ref{backward direction basic estimate proposition}, we obtain for all $0\leq m\leq M$:
    \[\big\|\nabla^m\mathcal{O}\big\|_{L^2}^2\lesssim \bigg(\big\|\Phi_0\big\|_{H^{m+3/2}}^2+\big\|\nabla_{\tau}\Phi_0\big\|_{H^{m+1/2}}^2\bigg)\bigg|_{\tau=1}+\int_{0}^1(\tau')^2\big\|F_m'\big\|_{L^2}^2d\tau'\lesssim \mathcal{D}_{II}+\mathcal{F}_{II}(0),\]
    so it remains to prove the above bound for $\big\|\nabla^{M+1}\mathcal{O}\big\|_{L^2}^2$. 
    
    For any $k\geq x$ and $0\leq\tau<X2^{-k-1}\leq1$, the low frequency regime estimate in Proposition~\ref{prelim low freq singular proposition} implies:
    \[2^{2k}\big\|\tau P_k\nabla_{\tau}\nabla^M\Phi_0\big\|_{L^2}^2\lesssim2^ka_k(X2^{-k-1})+2^{-k}\mathcal{D}_{II}+2^{k}\int_{\tau}^{1}(\tau')^2\big\|P_kF_M'\big\|_{L^2}^2d\tau'+2^{-k}\int_{\tau}^{1}(\tau')^2\big\|F_M'\big\|_{L^2}^2d\tau'.\]
    Using the expansion of $\nabla_{\tau}\nabla^M\Phi_0$ at $\mathcal{I}^-$, we get that for all $k\geq x:$
    \[2^{2k}\big\|P_k\nabla^M\mathcal{O}\big\|_{L^2}^2\lesssim2^ka_k(X2^{-k-1})+2^{-k}\mathcal{D}_{II}+2^{k}\int_{0}^{1}(\tau')^2\big\|P_kF_M'\big\|_{L^2}^2d\tau'+2^{-k}\int_{0}^{1}(\tau')^2\big\|F_M'\big\|_{L^2}^2d\tau'.\]
    Together with our previous estimate for $\|\mathcal{O}\|_{H^M},$ we obtain that:
    \[\big\|\nabla^{M}\mathcal{O}\big\|_{H^1}^2\lesssim\mathcal{D}_{II}+\mathcal{F}_{II}(0)+\sum_{k\geq x}2^ka_k(X2^{-k-1}).\]
    We complete the proof since we already proved in Section~\ref{main estimate back direction proof section}:
    \[\sum_{k\geq x}2^ka_k(X2^{-k-1})\lesssim \mathcal{D}_{II}+\int_{0}^1\tau'\big\|F_M'\big\|_{H^{1/2}}^2d\tau'\lesssim\mathcal{D}_{II}+\mathcal{F}_{II}(0).\]
\end{proof}

In order to prove the estimate \eqref{main estimate asymptotic data h} for $\mathfrak{h}$, we first prove that it can be reduced to the proof of \eqref{estimate for h intro}. We recall the notation $\mathfrak{h}_M=\nabla^Mh-2(\log\nabla)\nabla^M\mathcal{O},$ and notice that we have the bounds:
\begin{align*}
    \big\|\mathfrak{h}\big\|_{H^{M+1}}^2&\lesssim\big\|\mathfrak{h}\big\|_{L^2}^2+\big\|\big[(\log\nabla),\nabla^M\big]\mathcal{O}\big\|_{H^{1}}^2+\big\|\nabla\mathfrak{h}_M\big\|_{L^2}^2\\
    &\lesssim\big\|\mathfrak{h}\big\|_{L^2}^2+C\Big(\big\|\slashed{Riem}_0\big\|_{H^{M}}^2\Big)\big\|\mathcal{O}\big\|_{H^{M}}^2+\sum_{k\geq x}2^{2k}\big\|P_k\mathfrak{h}_M\big\|_{L^2}^2+\big\|\mathfrak{h}_M\big\|_{L^2}^2\\
    &\lesssim\big\|\mathfrak{h}\big\|_{H^M}^2+C\Big(\big\|\slashed{Riem}_0\big\|_{H^{M}}^2\Big)\big\|\mathcal{O}\big\|_{H^{M}}^2+\sum_{k\geq x}2^{2k}\big\|P_k\mathfrak{h}_M\big\|_{L^2}^2,
\end{align*}
where we used  the proof of Lemma~\ref{commute with log lemma} to bound the commutator term. Using the interpolation inequalities of \cite{geometricLP} and the bound in Lemma~\ref{bound on log nabla lemma}, we obtain:
\[\big\|\mathfrak{h}\big\|_{H^{M+1}}^2\lesssim\big\|h\big\|_{L^2}^2+C\Big(\big\|\slashed{Riem}_0\big\|_{H^{M}}^2\Big)\big\|\mathcal{O}\big\|_{H^{M}}^2+\sum_{k\geq x}2^{2k}\big\|P_k\mathfrak{h}_M\big\|_{L^2}^2.\]
Using the already established bound for $\mathcal{O},$ we showed that it suffices to prove \eqref{estimate for h intro}:
\[\sum_{k\geq x}2^{2k}\big\|P_k\mathfrak{h}_M\big\|_{L^2}^2\lesssim\mathcal{D}_{II}+\mathcal{F}_{II}(0).\]

We recall our notation $\xi=\nabla^M\Phi_0$ and we consider the renormalized quantities for all $k\geq x$:
\[\overline{\xi}_k=P_k\xi-\tau\log(2^k\tau)P_k\nabla_{\tau}\xi.\]
Using the expansion for $\xi$ at $\mathcal{I}^-,$ we get:
\[\lim_{\tau\rightarrow0}\overline{\xi}_k=\lim_{\tau\rightarrow0}\Big(P_k\nabla^M\Phi_0-\tau\log(2^k\tau)P_k\nabla_{\tau}\nabla^M\Phi_0\Big)=P_k\nabla^Mh-2\log(2^k)P_k\nabla^M\mathcal{O}.\]
Recalling the definition of the operator $R_k$ in \eqref{definition of R k}, we can write:
\[P_k\nabla^Mh-2\log(2^k)P_k\nabla^M\mathcal{O}=P_k\mathfrak{h}_M+2P_k(\log\nabla)\nabla^M\mathcal{O}-2\log(2^k)P_k\nabla^M\mathcal{O}=P_k\mathfrak{h}_M+R_k\nabla^M\mathcal{O}.\]
Thus, we obtain using (\ref{Rk est 3}):
\[2^{2k}\big\|P_k\mathfrak{h}_M\big\|_{L^2}^2\lesssim2^{2k}\lim_{\tau\rightarrow0}\big\|\overline{\xi}_k\big\|_{L^2}^2+2^{2k}\big\|R_k\nabla^M\mathcal{O}\big\|_{L^2}^2\lesssim2^{2k}\lim_{\tau\rightarrow0}\big\|\overline{\xi}_k\big\|_{L^2}^2+\big\|\underline{P}_k\nabla^M\mathcal{O}\big\|_{H^1}^2.\]
Summing for all $k\geq x$, we get that:
\[\sum_{k\geq x}2^{2k}\big\|P_k\mathfrak{h}_M\big\|_{L^2}^2\lesssim\big\|\mathcal{O}\big\|_{H^{M+1}}^2+\sum_{k\geq x}2^{2k}\lim_{\tau\rightarrow0}\big\|\overline{\xi}_k\big\|_{L^2}^2.\]

According to the above estimate and \eqref{estimate for O intro}, in order to prove \eqref{estimate for h intro} we need to establish the following result:

\begin{proposition}\label{estimate for PkhM proposition}
    We have the following estimate:
    \[\sum_{k\geq x}2^{2k}\lim_{\tau\rightarrow0}\big\|\overline{\xi}_k\big\|_{L^2}^2\lesssim\mathcal{D}_{II}+\mathcal{F}_{II}(0).\]
    As a result, \eqref{main estimate asymptotic data h} holds, completing the proof of Theorem~\ref{main theorem second system}.
\end{proposition}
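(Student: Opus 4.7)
The plan is to apply the fundamental theorem of calculus to $\overline{\xi}_k$, exploiting the fact that the expansion of $\xi = \nabla^M\Phi_0$ at $\mathcal{I}^-$ ensures that $\overline{\xi}_k$ has a finite limit at $\tau=0$. Setting $\tau_k := X 2^{-k-1}$, this will yield
\[
\overline{\xi}_k(0) \;=\; \overline{\xi}_k(\tau_k) - \int_0^{\tau_k} \nabla_\tau \overline{\xi}_k\, d\tau .
\]
A direct differentiation of the definition $\overline{\xi}_k = P_k\xi - \tau\log(2^k\tau) P_k\nabla_\tau\xi$ produces
\[
\nabla_\tau \overline{\xi}_k \;=\; -\log(2^k\tau)\, \nabla_\tau(\tau P_k \nabla_\tau \xi) + [\nabla_\tau, P_k]\xi ,
\]
and applying $P_k$ to the wave equation \eqref{back main lin wave equation}, together with the identity $\nabla_\tau(\tau\nabla_\tau\xi) = 4\tau\Delta\xi + \tau\psi\nabla\xi + \tau F_M'$, will give
\[
\nabla_\tau(\tau P_k \nabla_\tau \xi) \;=\; 4\tau \Delta P_k \xi + \tau P_k(\psi \nabla \xi) + \tau P_k F_M' + \tau [\nabla_\tau, P_k] \nabla_\tau \xi .
\]
I will then bound $2^{2k}\|\overline{\xi}_k(0)\|_{L^2}^2$ by the boundary contribution plus a Cauchy--Schwarz estimate on the integral term, and sum the resulting inequality over $k \geq x$.

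For the boundary term, since $2^k\tau_k = X/2$ is bounded, I expect
\[
2^{2k}\|\overline{\xi}_k(\tau_k)\|_{L^2}^2 \;\lesssim\; 2^{2k}\|P_k\xi(\tau_k)\|_{L^2}^2 + X^2\|P_k\nabla_\tau\xi(\tau_k)\|_{L^2}^2 \;\lesssim\; 2^k a_k(\tau_k) ,
\]
since $2^{2k} \sim 2^k/\tau_k$ and $X^2 \sim (2^k\tau_k)^2$. Summation and the bound $\sum_{k\geq x} 2^k a_k(\tau_k) \lesssim \mathcal{D}_{II} + \mathcal{F}_{II}(0)$ already obtained in Section~\ref{main estimate back direction proof section} as a consequence of Propositions~\ref{backward direction basic estimate proposition} and \ref{improved high frequency estimate proposition} will close this piece.

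For the integral term, the plan is to bound each of the five contributions to $\nabla_\tau\overline{\xi}_k$ separately by Cauchy--Schwarz in time, exploiting the key identity $\int_0^{\tau_k}\log^2(2^k\tau)\, d\tau \lesssim 2^{-k}$. For the leading piece $\log(2^k\tau)\cdot 4\tau\Delta P_k\xi$, I will combine the finite band property $\|\Delta P_k\xi\|_{L^2}\lesssim 2^k\|\nabla P_k\xi\|_{L^2}$ with the low frequency regime estimate of Proposition~\ref{prelim low freq singular proposition}, producing a contribution of order $2^{-k}\cdot 2^k a_k(\tau_k)$ plus tail terms carrying $2^{-k}$ weights on $\mathcal{D}_{II}$ and the inhomogeneous data, all of which sum appropriately after multiplication by $2^{2k}$. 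The semilinear and inhomogeneous factors $\tau P_k(\psi\nabla\xi)$ and $\tau P_kF_M'$ I will handle by exchanging the sum over $k$ with the time integral, converting LP-weighted quantities into $H^{1/2}$ Sobolev norms, and then appealing to the already-established main estimate \eqref{main estimate second model system} (which gives $\tau^2\|\xi\|_{H^{3/2}}^2 \lesssim \mathcal{D}_{II}+\mathcal{F}_{II}(0)$), the regular-quantity bounds of Section~\ref{good quantities estimates subsection}, and the definition of $\mathcal{F}_{II}$. Finally, the commutator terms $\tau[\nabla_\tau,P_k]\nabla_\tau\xi$ and $[\nabla_\tau,P_k]\xi$ I will split using \eqref{LP refined est 2} of Lemma~\ref{Litt Paley lemma refined} into a $2^{-k}$-decaying part, absorbed by Proposition~\ref{backward direction basic estimate proposition}, and an $\underline{\widetilde{P}}_k$ part controlled by the Bessel-type bound $\sum_k 2^k\|\underline{\widetilde{P}}_kF\|_{L^2}^2 \lesssim \|F\|_{H^{1/2}}^2$ and again by the main estimate.

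The main obstacle will be the combinatorial bookkeeping: the commutator terms produce LP projections with symbols different from $P_k$ that cannot be absorbed at a fixed frequency, so summation in $k$ must be performed before invoking the main estimate, and this interacts delicately with the fact that the low frequency bound in Proposition~\ref{prelim low freq singular proposition} only holds on $\tau \leq \tau_k$ and contains boundary data at $\tau=\tau_k$ that itself requires the high frequency analysis. A secondary subtlety is that $\xi$ blows up like $\log\tau$ at $\mathcal{I}^-$; fortunately, the identity $[\nabla_\tau,P_k] = 2\tau[\nabla_4,P_k]$ supplies an explicit $\tau$ factor that keeps every integrand integrable at $\tau=0$.
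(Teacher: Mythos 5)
Your overall architecture (integrate $\nabla_\tau\overline{\xi}_k$ from $\tau=0$ to $\tau_k=X2^{-k-1}$, control the boundary term by $\sum_k 2^k a_k(\tau_k)\lesssim\mathcal{D}_{II}+\mathcal{F}_{II}(0)$, treat the commutators with Lemma~\ref{Litt Paley lemma refined} and sum in $k$ before invoking global estimates) matches the paper's proof. However, there is a genuine gap at exactly the step that carries the main difficulty: you dispose of the leading term $\log(2^k\tau)\,\tau\,\Delta P_k\xi$ via the inequality $\|\Delta P_k\xi\|_{L^2}\lesssim 2^k\|\nabla P_k\xi\|_{L^2}$. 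This is not a consequence of the finite band property, and it is false for the heat-flow Littlewood--Paley projections used here: their symbols are not compactly supported in frequency, only polynomially decaying, so at fixed $k$ you cannot trade a derivative acting on $P_k\xi$ for a factor $2^k$ --- the high-frequency tail of $P_k\xi$ makes the ratio $\|\Delta P_k\xi\|/\big(2^k\|\nabla P_k\xi\|\big)$ unbounded (test against data concentrated at frequency $\lambda\gg 2^k$). This is precisely the obstruction emphasized in Section~\ref{LP theory intro}, and it is the reason the paper needs the refined Poincar\'e inequality \eqref{ref Poincarey} in the high frequency regime rather than a naive Poincar\'e bound.

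What the paper actually does at this point is use the legitimate finite band bound $\|\Delta P_k\xi\|_{L^2}\lesssim 2^{2k}\|\underline{P}_k\xi\|_{L^2}$ (estimate \eqref{est that gives 4 error terms}), which leaves a term with weight $2^{5k}(\tau')^2|\log(2^k\tau')|^2\|\underline{P}_k\xi\|_{L^2}^2$ carrying three more powers of $2^k$ than your version; summability in $k$ is then recovered only through the four-fold frequency decomposition \eqref{decomposition into error terms} of $\|\underline{P}_k\xi\|$ ($l<x$, $x\le l<k$, $l\ge k$ with $\tau'<X2^{-l-1}$, and the high frequency regime $l\ge k$ with $\mathbf{1}_{l,\tau'}$), using $L^2$-almost orthogonality to gain $2^{-5|k-l|}$, Proposition~\ref{prelim low freq singular proposition} for the low frequency pieces, and the separate Lemma~\ref{lemma for the fourth intermediate term} --- which relies on the full structure of $E_m$, $I_m$, $\widetilde{E}_k$, $\widetilde{I}_k$ and the bounds \eqref{some est needed later}, \eqref{bound for discrete terms high} from Section~\ref{improved high frequency estimates section} --- for the high frequency piece. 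Your proposal bypasses all of this; note also that you cannot repair it simply by substituting $2^{2k}\|\underline{P}_k\xi\|$ and then applying Proposition~\ref{prelim low freq singular proposition}, since that proposition controls $\tau\|\nabla P_k\xi\|$ and $\tau\|P_k\nabla_\tau\xi\|$, not $\|\underline{P}_k\xi\|$ itself, and bounding $\|\underline{P}_k\xi\|$ by $2^{-k}\|\nabla\underline{P}_k\xi\|$ fails for the same tail reason. So the crux of the proposition is missing, even though the peripheral steps (boundary term, commutators, inhomogeneous and $\psi\nabla\xi$ terms) are handled essentially as in the paper.
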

\begin{proof}
Using \eqref{back main lin wave equation} as in the proof of Proposition~\ref{prelim low freq singular proposition}, we get that $\overline{\xi}_k$ satisfies the equation on $\tau\in(0,X2^{-k-1}]$:
\[\nabla_{\tau}\overline{\xi}_k=[\nabla_{\tau},P_k]\xi-\log(2^k\tau)\nabla_{\tau}\big(\tau P_k\nabla_{\tau}\xi\big)=\]\[=\tau[\nabla_4,P_k]\xi-\log(2^k\tau)\Big(4\tau\Delta P_k\xi+\tau P_k\big(\psi\nabla\xi\big)+\tau[\nabla_{4},P_k]\tau\nabla_{\tau}\xi+\tau P_kF_M'\Big).\]

We contract by $\overline{\xi}_k$ to obtain the energy estimate:
\[2^{2k}\big\|\overline{\xi}_k\big\|_{L^2}^2\lesssim\Big(2^{2k}\big\|P_k\xi\big\|_{L^2}^2+\big\|P_k\nabla_{\tau}\xi\big\|_{L^2}^2\Big)\Big|_{\tau={X2^{-k-1}}}+\int_{\tau}^{X2^{-k-1}}2^k\big\|\overline{\xi}_k\big\|_{L^2}\cdot2^k\tau'\big\|[\nabla_4,P_k]\xi\big\|_{L^2}+\]\[+\int_{\tau}^{X2^{-k-1}}2^k\big\|\overline{\xi}_k\big\|_{L^2}\cdot|2^k\tau'\log(2^k\tau')|\cdot\big\|\Delta P_k\xi\big\|_{L^2}+\int_{\tau}^{X2^{-k-1}}2^k\big\|\overline{\xi}_k\big\|_{L^2}\cdot|2^k\tau'\log(2^k\tau')|\cdot\big\|P_k\big(\psi\nabla\xi\big)\big\|_{L^2}+\]\[+\int_{\tau}^{X2^{-k-1}}2^k\big\|\overline{\xi}_k\big\|_{L^2}\cdot|2^k\tau'\log(2^k\tau')|\cdot\big\|[\nabla_{4},P_k]\tau'\nabla_{\tau}\xi\big\|_{L^2}+\int_{\tau}^{X2^{-k-1}}2^k\big\|\overline{\xi}_k\big\|_{L^2}\cdot|2^k\tau'\log(2^k\tau')|\cdot\big\|P_kF_M'\big\|_{L^2}.\]
We use Lemma~\ref{Litt Paley lemma}, Gronwall, and the notation in Section~\ref{improved high frequency estimates section} to obtain:
\begin{align*}
    2^{2k}\big\|\overline{\xi}_k\big\|_{L^2}^2\lesssim &2^ka_{k}(X2^{-k-1})+\int_{\tau}^{X2^{-k-1}}2^k(\tau')^2\big\|\xi\big\|_{L^2}^2+\int_{\tau}^{X2^{-k-1}}2^k(\tau')^2|\log(2^k\tau')|^2\big\|\Delta P_k\xi\big\|_{L^2}^2\\
    &+\int_{\tau}^{X2^{-k-1}}2^{-k}(\tau')^2|\log(2^k\tau')|^2\big\|\xi\big\|_{H^1}^2+\int_{\tau}^{X2^{-k-1}}2^k(\tau')^2|\log(2^k\tau')|^2\big\|\nabla P_k\xi\big\|_{L^2}^2\\
    &+\int_{\tau}^{X2^{-k-1}}2^k(\tau')^2|\log(2^k\tau')|^2\big\|\tau'\nabla_{\tau}\xi\big\|_{L^2}^2+\int_{\tau}^{X2^{-k-1}}2^k(\tau')^2|\log(2^k\tau')|^2\big\|P_kF_M'\big\|_{L^2}^2.
\end{align*}
We use the preliminary estimates in Proposition \ref{backward direction basic estimate proposition} to get:
\[2^{2k}\big\|\overline{\xi}_k\big\|_{L^2}^2\lesssim 2^{-k}\mathcal{D}_{II}+2^ka_{k}(X2^{-k-1})+\]\[+\int_{\tau}^{X2^{-k-1}}2^k(\tau')^2|\log(2^k\tau')|^2\big\|\Delta P_k\xi\big\|_{L^2}^2d\tau'+2^{-k}\int_{\tau}^{1}(\tau')^2\big\|F_M'\big\|_{L^2}^2d\tau'+\int_{\tau}^{1}\tau'\big\|P_kF_M'\big\|_{L^2}^2d\tau'.\]
Using the finite band property of the LP projections for $P_k=\underline{P}_k^2$, we have the bound:
\begin{equation}\label{est that gives 4 error terms}
    \int_{\tau}^{X2^{-k-1}}2^k(\tau')^2|\log(2^k\tau')|^2\big\|\Delta P_k\xi\big\|_{L^2}^2d\tau'\lesssim\int_{\tau}^{X2^{-k-1}}2^{3k}(2^k\tau')^2|\log(2^k\tau')|^2\big\|\underline{P}_k\xi\big\|_{L^2}^2d\tau'.
\end{equation}

Once we established the above estimate for $2^{k}\|\overline{\xi}_k\|_{L^2},$ the idea of the proof is to decompose the error term in \eqref{est that gives 4 error terms} into its low frequency and high frequency regime parts, in order to use our previous estimates. For each $\tau'\in[\tau,X2^{-k-1}]$ we have:
\begin{equation}\label{decomposition into error terms}
    \big\|\underline{P}_k\xi\big\|_{L^2}(\tau')\lesssim\sum_{l<x}\big\|P_l^2\underline{P}_k\xi\big\|_{L^2}+\sum_{l=x}^{k-1}\big\|P_l^2\underline{P}_k\xi\big\|_{L^2}+\sum_{\substack{l\geq k \\ \tau'<X2^{-l-1}}}\big\|P_l^2\underline{P}_k\xi\big\|_{L^2}+\sum_{l\geq k}\mathbf{1}_{l,\tau'}\big\|P_l^2\underline{P}_k\xi\big\|_{L^2}.
\end{equation}
The first three terms in \eqref{decomposition into error terms} are in the low frequency regime, and the last term is in the high frequency regime. 

We bound the \textit{first term} in \eqref{decomposition into error terms} using the $L^2$ almost orthogonality of the LP projections:
\[\sum_{l<x}\big\|P_l^2\underline{P}_k\xi\big\|_{L^2}(\tau')\lesssim\sum_{l<x}2^{-5k+5l}\big\|P_l\xi\big\|_{L^2}(\tau')\lesssim2^{-5k}\big\|\xi\big\|_{L^2}(\tau').\]
The corresponding term in (\ref{est that gives 4 error terms}) is bounded using Proposition \ref{backward direction basic estimate proposition}:
\[\int_{\tau}^{X2^{-k-1}}2^{-6k}(2^k\tau')|\log(2^k\tau')|^2\cdot\tau'\big\|\xi\big\|_{L^2}^2d\tau'\lesssim2^{-k}\mathcal{D}_{II}+2^{-k}\mathcal{F}_{II}(\tau).\]

For the \textit{second term} in \eqref{decomposition into error terms} we use Cauchy-Schwarz and we consider a projection operator with $\underline{P}_k=\underline{\widetilde{P}}_k^2$:
\begin{align*}
    \bigg(\sum_{l=x}^{k-1}\big\|P_l^2\underline{\widetilde{P}}_k^2\xi\big\|_{L^2}(\tau')\bigg)^2&\lesssim\sum_{l=x}^{k-1}2^{-5(k-l)}\big\|\underline{\widetilde{P}}_kP_l\xi\big\|_{L^2}^2(\tau')\lesssim\sum_{l=x}^{k-1}2^{-2k}2^{-5(k-l)}\big\|\nabla P_l\xi\big\|_{L^2}^2(\tau')\\
    &\lesssim\frac{1}{(\tau')^{2}}\sum_{l=x}^{k-1}2^{-2k}2^{-5(k-l)}\big(2^{-2l}\cdot2^la_l(X2^{-l-1})+2^{-3l}\mathcal{D}_{II}+2^{-3l}\mathcal{F}_{II}(\tau')\big),
\end{align*}
where we used Proposition~\ref{prelim low freq singular proposition} in the second line. The corresponding term in (\ref{est that gives 4 error terms}) is bounded by:
\[\int_{\tau}^{X2^{-k-1}}\sum_{l=x}^{k-1}2^{k}|\log(2^k\tau')|^22^{-3(k-l)}\bigg(2^la_l(X2^{-l-1})+2^{-l}\mathcal{D}_{II}+2^{-l}\mathcal{F}_{II}(\tau')\bigg)d\tau'\lesssim\]
\[\lesssim2^{-k}\mathcal{D}_{II}+2^{-k}\mathcal{F}_{II}(\tau)+\sum_{l=x}^{k-1}2^{-3(k-l)}\cdot2^la_l(X2^{-l-1}).\]

For the \textit{third term} in \eqref{decomposition into error terms}, we have similarly from Proposition \ref{prelim low freq singular proposition}:
\[\bigg(\sum_{\substack{l\geq k \\ \tau'<X2^{-l-1}}}\big\|P_l^2\underline{\widetilde{P}}_k^2\xi\big\|_{L^2}(\tau')\bigg)^2\lesssim\sum_{\substack{l\geq k \\ \tau'<X2^{-l-1}}}2^{-5(l-k)}\big\|\underline{\widetilde{P}}_kP_l\xi\big\|_{L^2}^2(\tau')\lesssim\sum_{\substack{l\geq k \\ \tau'<X2^{-l-1}}}2^{-2k}2^{-5(l-k)}\big\|\nabla P_l\xi\big\|_{L^2}^2(\tau')\]
\[\lesssim\frac{1}{(\tau')^{2}}\sum_{\substack{l\geq k \\ \tau'<X2^{-l-1}}}2^{-2k}2^{-5(l-k)}\big(2^{-2l}\cdot2^la_l(X2^{-l-1})+2^{-3l}\mathcal{D}_{II}+2^{-3l}\mathcal{F}_{II}(\tau')\big).\]
As a result, the corresponding term in (\ref{est that gives 4 error terms}) is bounded using Proposition \ref{prelim low freq singular proposition}:
\[\sum_{\substack{l\geq k \\ \tau<X2^{-l-1}}}\int_{\tau}^{X2^{-l-1}}2^{k}|\log(2^k\tau')|^22^{-7(l-k)}\big(2^la_l(X2^{-l-1})+2^{-l}\mathcal{D}_{II}+2^{-l}\mathcal{F}_{II}(\tau')\big)d\tau'\lesssim\]
\[\lesssim\sum_{\substack{l\geq k \\ \tau<X2^{-l-1}}}\big(2^la_l(X2^{-l-1})+2^{-l}\mathcal{D}_{II}+2^{-l}\mathcal{F}_{II}(\tau)\big)\cdot\int_{\tau}^{X2^{-l-1}}2^{k}|\log(2^k\tau')|^22^{-7(l-k)}d\tau'\lesssim\]
\[\lesssim2^{-k}\mathcal{D}_{II}+2^{-k}\mathcal{F}_{II}(\tau)+\sum_{l\geq k}2^{-7(l-k)}\cdot2^la_l(X2^{-l-1}).\]

For the high frequency regime \textit{fourth term} in \eqref{decomposition into error terms}, we have using our notation in Section \ref{improved high frequency estimates section}:
\[\bigg(\sum_{l\geq k}\mathbf{1}_{l,\tau'}\big\|P_l^2\underline{P}_k\xi\big\|_{L^2}(\tau')\bigg)^2\lesssim\sum_{l\geq k}\mathbf{1}_{l,\tau'}2^{-5(l-k)}\big\|P_l\xi\big\|_{L^2}^2(\tau')\lesssim\tau'\sum_{l\geq k}\mathbf{1}_{l,\tau'}2^{-5(l-k)}a_l(\tau').\]
The corresponding term in (\ref{est that gives 4 error terms}) is bounded in Lemma \ref{lemma for the fourth intermediate term} at the end of the section. 

We combine the previous bounds and we get from (\ref{est that gives 4 error terms}) that for all $\tau\in(0,X2^{-k-1}]$:
\[2^{2k}\big\|\overline{\xi}_k\big\|_{L^2}^2\lesssim d_k+\tau^62^{6k}\mathcal{D}_{II}+2^ka_{k}(X2^{-k-1})+\sum_{l\geq x}2^{-3|k-l|}\cdot2^la_l(X2^{-l-1})\]\[+\big(2^{-k}+\tau^62^{6k}\big)\mathcal{F}_{II}(\tau)+\sum_{l\geq k}2^{-6(l-k)}\int_{\tau}^12^l\tau'\big\|P_lF_M'\big\|_{L^2}^2d\tau'+\widetilde{E}_k(X2^{-k-1})+\widetilde{I}_k(X2^{-k-1})+\]
\[+\sum_{\tau<X2^{-m-1}\leq1}2^{-4|k-m|}\Big(E_m(X2^{-m-1})+I_m(X2^{-m-1})\Big)+\sum_{l\geq k}2^{-6(l-k)}\int_{\tau}^1e_l(\tau')d\tau'.\]
We take the limit $\tau\rightarrow0$ and sum for all $k\geq x:$
\begin{align*}
    \sum_{k\geq x}\Big(2^{2k}\lim_{\tau\rightarrow0}\big\|\overline{\xi}_k\big\|_{L^2}^2\Big)&\lesssim\mathcal{D}_{II}+\mathcal{F}_{II}(0)+\sum_{k\geq x}2^ka_{k}(X2^{-k-1})\\
    &+\sum_{k\geq x}\big(\widetilde{E}_k(X2^{-k-1})+\widetilde{I}_k(X2^{-k-1})\big)+\int_0^1(\tau')^3\Big(\big\|\xi\big\|_{H^{3/2}}^2+\big\|\nabla_{\tau}\xi\big\|_{H^{1/2}}^2\Big)d\tau'.
\end{align*}
For the last three terms, we use the estimates in Section~\ref{improved high frequency estimates section} and we obtain the conclusion.
\end{proof}

We conclude the section by proving the additional estimate used in the previous proof:
\begin{lemma}\label{lemma for the fourth intermediate term}
    We have the bound for all $\tau\in(0,X2^{-k-1}]$:
    \[\int_{\tau}^{X2^{-k-1}}2^{k}(2^k\tau')^3|\log(2^k\tau')|^2\sum_{l\geq k}\mathbf{1}_{l,\tau'}2^{-6(l-k)}\cdot2^la_l(\tau')d\tau'\lesssim\]\[\lesssim d_k+\tau^62^{6k}\cdot\big(\mathcal{D}_{II}+\mathcal{F}_{II}(\tau)\big)+\sum_{\tau<X2^{-m-1}\leq1}2^{-4|k-m|}\Big(E_m(X2^{-m-1})+I_m(X2^{-m-1})\Big)+\]\[+\widetilde{E}_k(X2^{-k-1})+\widetilde{I}_k(X2^{-k-1})+\sum_{l\geq k}2^{-6(l-k)}\bigg(\int_{\tau}^1e_l(\tau')d\tau'+\int_{\tau}^12^l\tau'\big\|P_lF_M'\big\|_{L^2}^2d\tau'\bigg)+2^{-k}\mathcal{F}_{II}(\tau).\]
\end{lemma}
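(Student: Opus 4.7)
The plan is to substitute the key high frequency estimate \eqref{some est needed later} into the LHS and decompose into four pieces corresponding to $d_l$, $\widetilde{S}_l$, $\widetilde{E}_l$, $\widetilde{I}_l$. The starting point is the preliminary observation, obtained via the substitution $u=2^k\tau'$, that $\int_\tau^{X2^{-k-1}}2^k(2^k\tau')^3|\log(2^k\tau')|^2\,d\tau'\lesssim 1$ uniformly in $\tau,k$, and more generally $\int_\tau^{X2^{-m-1}}2^k(2^k\tau')^3|\log(2^k\tau')|^2\,d\tau'\lesssim 2^{-4(m-k)}(m-k+1)^2$ whenever $m\geq k$. This weight estimate is the essential mechanism producing exponential decay in $|k-m|$ for each of the contributions below.

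For the $d_l$ and $\widetilde{E}_l$, $\widetilde{I}_l$ pieces, the argument is straightforward. The $d_l$ term immediately gives $\sum_{l\geq k}2^{-6(l-k)}d_l$, which by the schematic convention is $\lesssim d_k$. For $\widetilde{E}_l(\tau')$ and $\widetilde{I}_l(\tau')$, I would use the fact that they are non-increasing in $\tau'$ (being $\tau'$-integrals with fixed upper endpoint 1), so on the valid range $\tau'\geq X2^{-l-1}$ dictated by $\mathbf{1}_{l,\tau'}$, we have $\widetilde{E}_l(\tau')\leq\widetilde{E}_l(X2^{-l-1})$ and similarly for $\widetilde{I}_l$. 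After pulling the sum outside the time integral, this yields $\sum_{l\geq k}2^{-6(l-k)}\bigl(\widetilde{E}_l(X2^{-l-1})+\widetilde{I}_l(X2^{-l-1})\bigr)$, whose $l=k$ contribution is precisely $\widetilde{E}_k(X2^{-k-1})+\widetilde{I}_k(X2^{-k-1})$. For $l>k$, expanding the definition of $\widetilde{E}_l$ into $E_l + 2^{-7l}\int\sum_{j<l}\cdot E_j$ and using monotonicity of $E_j(t)$ to estimate the inner integral gives contributions of the form $\sum_m 2^{-c|k-m|}E_m(X2^{-m-1})$ with $c\geq 4$, matching the target's $\sum_{\tau<X2^{-m-1}\leq 1}2^{-4|k-m|}E_m(X2^{-m-1})$ term, and likewise for $I_m$.

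The main obstacle is the $\widetilde{S}_l(\tau')$ piece, because $\widetilde{S}_l$ itself couples different frequency scales through its discrete sum over $m$. I would substitute $\widetilde{S}_l(\tau')=\tfrac{\delta}{X^2}2^{-5l}\sum_{\tau'<X2^{-m-1}\leq 1}2^{6m}a_m(X2^{-m-1})$ and swap the $l$ and $m$ sums. The combined constraints $\tau'\geq X2^{-l-1}$ and $\tau'<X2^{-m-1}$ force $m<l$, so after performing the inner sum $\sum_{l\geq\max(m+1,k)}2^{-11l}$, the $l$-aggregate contributes $2^{-5k}\cdot 2^{-11(m-k)^+}$. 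Combining with the $\tau'$-integration (which produces an extra factor $\sim 1$ when $m<k$ and $\sim 2^{-4(m-k)}(m-k+1)^2$ when $m\geq k$), the $\widetilde{S}_l$ contribution collapses to $\sum_{m,\,\tau<X2^{-m-1}\leq 1}2^{-5|k-m|}\cdot 2^m a_m(X2^{-m-1})$. I would close the argument by re-applying \eqref{bound for discrete terms high} to each $2^m a_m(X2^{-m-1})$, which gives $d_m+\widetilde{E}_m+\widetilde{I}_m$ plus a nested sum carrying its own $2^{-4(m-j)}$ decay, all of which can then be absorbed into the RHS of the lemma using the same monotonicity arguments as in the second paragraph.

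The subtle point to watch is bookkeeping of the constants: since the coefficient in $\widetilde{S}_l$ carries the small parameter $\delta/X^2$ (recall $\delta$ and $X$ are fixed via \eqref{fix constant X}, \eqref{fix constant delta 1}, \eqref{fix constant delta 2}), one must verify that the reappearance of $2^m a_m(X2^{-m-1})$ terms after applying \eqref{bound for discrete terms high} does not require a second fixed-point argument; this is fine because the decay $2^{-5|k-m|}$ obtained from the $\widetilde{S}$ analysis is strictly stronger than the $2^{-4|k-m|}$ appearing in the target RHS, leaving room to absorb the universal constants appearing in \eqref{bound for discrete terms high} without further tuning of $\delta$ or $X$.
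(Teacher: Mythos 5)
Your overall strategy (substitute \eqref{some est needed later} into the integrand, exploit the weight bound $\int_\tau^{X2^{-k-1}}2^k(2^k\tau')^3|\log(2^k\tau')|^2d\tau'\lesssim 1$ and its refinement with upper limit $X2^{-m-1}$, and treat the four pieces $d_l,\widetilde{S}_l,\widetilde{E}_l,\widetilde{I}_l$ separately) is the same skeleton as the paper's proof, and your treatment of the $d_l$ piece and of the $\widetilde{S}_l$ piece is sound: re-applying \eqref{bound for discrete terms high} after swapping the $l$- and $m$-sums is essentially equivalent to the paper's use of its previously derived bound on $\widetilde{S}_l$, and there the restriction $\tau<X2^{-m-1}$ propagates correctly to all nested frequencies because they are all below an $m$ already satisfying it.

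The gap is in your second paragraph, i.e.\ the $\widetilde{E}_l(\tau')+\widetilde{I}_l(\tau')$ piece for $l>k$. The nested sum inside $\widetilde{E}_l$ runs over \emph{all} $j=x,\dots,l-1$, and since $l$ ranges over all $l\geq k$, it contains frequencies $j$ whose transition time satisfies $X2^{-j-1}\leq\tau$. For those $j$ your claimed bound "contributions of the form $\sum_m2^{-c|k-m|}E_m(X2^{-m-1})$ with $c\geq4$, matching the target" is not available: the target sum $\sum_{\tau<X2^{-m-1}\leq1}2^{-4|k-m|}\big(E_m(X2^{-m-1})+I_m(X2^{-m-1})\big)$ explicitly excludes such $m$, and moreover bounding $E_j(\tau'')\mathbf{1}_{j,\tau''}\leq E_j(X2^{-j-1})$ (and likewise $\widetilde{E}_l(\tau')\leq\widetilde{E}_l(X2^{-l-1})$ when $X2^{-l-1}<\tau$) reaches back to times in $[X2^{-j-1},\tau]$, which none of the admissible right-hand side quantities ($\int_\tau^1 e_l$, $\mathcal{F}_{II}(\tau)$, $\widetilde{E}_k(X2^{-k-1})$, the constrained $E_m$-sum) control. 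This is exactly the contribution the paper isolates as its term $III$: it is handled not by frequency-decay bookkeeping but by extracting the smallness $\sum_{\tau>X2^{-m-1}}2^{-8m}\lesssim\tau^8$ together with the $(\tau'')^{-3}$ weight (giving $\tau^{-2}$) and the estimate \eqref{nonoptimal estimate}, which is what produces the term $\tau^62^{6k}\big(\mathcal{D}_{II}+\mathcal{F}_{II}(\tau)\big)$ in the statement. Your proposal never produces (nor explains the need for) the $\tau^62^{6k}(\mathcal{D}_{II}+\mathcal{F}_{II}(\tau))$ and $2^{-k}\mathcal{F}_{II}(\tau)$ terms, which is a sign that this regime has been missed rather than absorbed. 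A related, fixable slip in the same paragraph: for the un-nested parts you should bound $E_l(\tau')\leq C\int_\tau^1e_l$ and $I_l(\tau')\leq C\big(\int_\tau^12^l\tau''\|P_lF_M'\|_{L^2}^2d\tau''+2^{-l}\int_\tau^1\tau''\|F_M'\|_{H^{1/2}}^2d\tau''\big)$ directly (using $\tau'\geq\tau$), which lands exactly on the terms $\sum_{l\geq k}2^{-6(l-k)}\big(\int_\tau^1e_l+\int_\tau^12^l\tau'\|P_lF_M'\|_{L^2}^2\big)+2^{-k}\mathcal{F}_{II}(\tau)$ of the statement, rather than going through $\widetilde{E}_l(X2^{-l-1})$, which again involves times before $\tau$ for large $l$.
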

\begin{proof}
    We use the estimate (\ref{some est needed later}) as a starting point. We also use the bound on $\widetilde{S}_l$ in Section~\ref{improved high frequency estimates section}, and the definitions of $\widetilde{E}_l$ and $\widetilde{I}_l$. Thus, we have for all $\tau\in[X2^{-l-1},1]$:
    \[2^la_l(\tau)\mathbf{1}_{l,\tau}\lesssim d_l+\widetilde{S}_l(\tau)+\widetilde{E}_l(\tau)+\widetilde{I}_l(\tau)\]
    \[\lesssim d_l+\sum_{\tau<X2^{-m-1}\leq1}2^{-5l+5m}d_m+\sum_{\tau<X2^{-m-1}\leq1}2^{-4l+4m}\Big(E_m(X2^{-m-1})+I_m(X2^{-m-1})\Big)+\]
    \[+E_l(\tau)+I_l(\tau)+2^{-7l}\int_{\tau}^1\sum_{m=x}^{l-1}\frac{2^{5m}}{(\tau')^3}\Big(E_m(\tau')+I_m(\tau')\Big)\mathbf{1}_{m,\tau'}d\tau'.\]
    We use this bound to obtain a total of 8 error terms that control our main integral. We notice that the first two terms can be dealt with in a straightforward way. For the third and fourth terms we compute that:
    \[\int_{\tau}^{X2^{-k-1}}2^{k}\sum_{l\geq k}\sum_{\tau'<X2^{-m-1}\leq1}\mathbf{1}_{l,\tau'}2^{-6(l-k)}2^{-4l+4m}\Big(E_m(X2^{-m-1})+I_m(X2^{-m-1})\Big)d\tau'\lesssim\]
    \[\lesssim\sum_{\tau<X2^{-m-1}\leq1}2^{-4|k-m|}\Big(E_m(X2^{-m-1})+I_m(X2^{-m-1})\Big),\]
    where we used the fact that $l\geq m$ and $l\geq k$ in the first line. For the fifth and sixth terms we have:
    \[\int_{\tau}^{X2^{-k-1}}2^{k}\sum_{l\geq k}\mathbf{1}_{l,\tau'}2^{-6(l-k)}\Big(E_l(\tau')+I_l(\tau')\Big)d\tau'\lesssim\]\[\lesssim\sum_{l\geq k}2^{-6(l-k)}\bigg(\int_{\tau}^1e_l(\tau')d\tau'+\int_{\tau}^12^l\tau'\big\|P_lF_M'\big\|_{L^2}^2d\tau'\bigg)+2^{-k}\int_{\tau}^1\tau'\big\|F_M'\big\|_{H^{1/2}}^2d\tau'.\]
    For the seventh and eight terms we can write:
    \[\int_{\tau}^{X2^{-k-1}}2^{k}\sum_{l\geq k}\mathbf{1}_{l,\tau'}2^{-6(l-k)}\cdot\bigg(2^{-7l}\int_{\tau'}^1\sum_{m=x}^{l-1}\frac{2^{5m}}{(\tau'')^3}\Big(E_m(\tau'')+I_m(\tau'')\Big)\mathbf{1}_{m,\tau''}d\tau''\bigg)d\tau'\lesssim I+II+III,\]
    where we introduce the notation:
    \begin{align*}
        I&=\int_{\tau}^{X2^{-k-1}}2^{k}\sum_{l\geq k}\mathbf{1}_{l,\tau'}2^{-6(l-k)}\cdot2^{-7l}\sum_{m=x}^{k-1}\bigg(\int_{\tau'}^1\frac{2^{5m}}{(\tau'')^3}\Big(E_m(\tau'')+I_m(\tau'')\Big)\mathbf{1}_{m,\tau''}d\tau''\bigg)d\tau',\\
        II&=\int_{\tau}^{X2^{-k-1}}2^{k}\sum_{l\geq k}\mathbf{1}_{l,\tau'}2^{-6(l-k)}\cdot2^{-7l}\sum_{\substack{\tau<X2^{-m-1}\leq1 \\ k\leq m< l}}\bigg(\int_{\tau'}^1\frac{2^{5m}}{(\tau'')^3}\Big(E_m(\tau'')+I_m(\tau'')\Big)\mathbf{1}_{m,\tau''}d\tau''\bigg)d\tau',\\
        III&=\int_{\tau}^{X2^{-k-1}}2^{k}\sum_{l\geq k}\mathbf{1}_{l,\tau'}2^{-6(l-k)}\cdot2^{-7l}\sum_{\substack{\tau>X2^{-m-1} \\ m<l}}\bigg(\int_{\tau'}^1\frac{2^{5m}}{(\tau'')^3}\Big(E_m(\tau'')+I_m(\tau'')\Big)d\tau''\bigg)d\tau'.
    \end{align*}
    We first notice that in the inner integral of $I$ we have $\tau''\geq X2^{-m-1}>X2^{-k-1}.$ Thus, we have the bound:
    \[I\lesssim2^{-7k}\sum_{m=x}^{k-1}\bigg(\int_{X2^{-k-1}}^1\frac{2^{5m}}{(\tau'')^3}\Big(E_m(\tau'')+I_m(\tau'')\Big)\mathbf{1}_{m,\tau''}d\tau''\bigg)\lesssim\widetilde{E}_k(X2^{-k-1})+\widetilde{I}_k(X2^{-k-1}),\]
    where in the second inequality we used the definitions of $\widetilde{E}_k$ and $\widetilde{I}_k.$ Next, we have the bound:
    \begin{align*}
        II&\lesssim\int_{\tau}^{X2^{-k-1}}2^{k}\sum_{l\geq k}\mathbf{1}_{l,\tau'}2^{6k-13l}\sum_{\substack{\tau<X2^{-m-1}\leq1 \\ k\leq m< l}}\bigg(\Big(E_m(X2^{-m-1})+I_m(X2^{-m-1})\Big)\int_{\tau'}^1\frac{2^{5m}}{(\tau'')^3}\mathbf{1}_{m,\tau''}d\tau''\bigg)d\tau'\\
        &\lesssim\int_{\tau}^{X2^{-k-1}}2^{k}\sum_{l\geq k}\mathbf{1}_{l,\tau'}2^{6k-13l}\sum_{\substack{\tau<X2^{-m-1}\leq1 \\ k\leq m< l}}2^{7m}\Big(E_m(X2^{-m-1})+I_m(X2^{-m-1})\Big)d\tau'\\
        &\lesssim\sum_{\tau<X2^{-m-1}\leq X2^{-k-1}}2^{7m}\Big(E_m(X2^{-m-1})+I_m(X2^{-m-1})\Big)\cdot\sum_{l>m}2^{6k-13l}\\
        &\lesssim\sum_{\tau<X2^{-m-1}\leq X2^{-k-1}}2^{-6|m-k|}\Big(E_m(X2^{-m-1})+I_m(X2^{-m-1})\Big),
    \end{align*}
    where we used again $\tau''\geq X2^{-m-1}$ to compute the inner integral in the first line.

    Finally, we notice that the last error term $III$ must vanish as $\tau\rightarrow0,$ since the inner sum would be empty. We prove a bound consistent with this expectation:
    \begin{align*}
        III&\lesssim\sum_{\tau>X2^{-m-1}}\bigg(\int_{\tau}^1\frac{2^{5m}}{(\tau'')^3}\Big(E_m(\tau'')+I_m(\tau'')\Big)d\tau''\bigg)\cdot\int_{\tau}^{X2^{-k-1}}2^{k}\sum_{m<l}2^{-6(l-k)}\cdot2^{-7l}d\tau'\\
        &\lesssim2^{6k}\sum_{\tau>X2^{-m-1}}2^{-8m}\bigg(\int_{\tau}^1\frac{1}{(\tau')^3}\Big(E_m(\tau')+I_m(\tau')\Big)d\tau'\bigg)\\
        &\lesssim\int_{\tau}^1\Big(\tau'\big\|F_M'\big\|_{H^{1/2}}^2+(\tau')^3\big\|\xi\big\|_{H^{3/2}}^2+(\tau')^3\big\|\nabla_{\tau}\xi\big\|_{H^{1/2}}^2\Big)d\tau'\cdot\tau^{-2}2^{6k}\sum_{\tau>X2^{-m-1}}2^{-8m}\\ 
        &\lesssim\tau^62^{6k}\bigg(\mathcal{D}_{II}+\int_{\tau}^1\tau'\big\|F_M'\big\|_{H^{1/2}}^2d\tau'\bigg)\lesssim\tau^62^{6k}\big(\mathcal{D}_{II}+\mathcal{F}_{II}(\tau)\big).
    \end{align*}
    Collecting all the bounds proved for the error terms, we conclude the proof.
\end{proof}

\bibliographystyle{amsalpha}
\bibliography{refs}

\providecommand{\bysame}{\leavevmode\hbox to3em{\hrulefill}\thinspace}
\providecommand{\MR}{\relax\ifhmode\unskip\space\fi MR }
\providecommand{\MRhref}[2]{%
  \href{http://www.ams.org/mathscinet-getitem?mr=#1}{#2}
}
\providecommand{\href}[2]{#2}
\begin{thebibliography}{RSR18}

\bibitem[Cic23]{linearwave}
S.~Cicortas, \emph{Scattering for the wave equation on de {S}itter space in all even spatial dimensions}, arXiv e-prints (2023), \href{https://arxiv.org/abs/2309.07342}{arXiv:2309.07342}.

\bibitem[Cic24]{Cmain}
\bysame, \emph{Nonlinear scattering theory for asymptotically de {S}itter vacuum solutions in all even spatial dimensions}, arXiv e-prints (2024), \href{https://arxiv.org/abs/2410.01558}{arXiv:2410.01558}.

\bibitem[FG85]{Fefferman-Graham}
Charles Fefferman and C.~Robin Graham, \emph{Conformal invariants}, Ast\'erisque (1985), 95--116. \MR{837196}

\bibitem[FG12]{ambient-metric}
C.~Fefferman and C.R. Graham, \emph{The ambient metric}, Annals of Mathematics Studies, vol. 178, Princeton University Press, Princeton, NJ, 2012. \MR{2858236}

\bibitem[Hin24]{hintz}
Peter Hintz, \emph{Asymptotically de {S}itter metrics from scattering data in all dimensions}, Philos. Trans. Roy. Soc. A \textbf{382} (2024), no.~2267, Paper No. 20230037, 13. \MR{4707129}

\bibitem[Jon64]{gronwall}
S.~Jones, \emph{Fundamental inequalities for discrete and discontinuous functional equations}, J. Soc. Indust. Appl. Math. \textbf{12} (1964), 43--57.

\bibitem[KR05]{duhamelLP}
Sergiu Klainerman and Igor Rodnianski, \emph{Causal geometry of {E}instein-vacuum spacetimes with finite curvature flux}, Invent. Math. \textbf{159} (2005), no.~3, 437--529. \MR{2125732}

\bibitem[KR06]{geometricLP}
S.~Klainerman and I.~Rodnianski, \emph{A geometric approach to the {L}ittlewood-{P}aley theory}, Geom. Funct. Anal. \textbf{16} (2006), no.~1, 126--163.

\bibitem[RSR18]{selfsimilarvacuum}
I.~Rodnianski and Y.~Shlapentokh-Rothman, \emph{The asymptotically self-similar regime for the {E}instein vacuum equations}, Geom. Funct. Anal. \textbf{28} (2018), no.~3, 755--878.

\bibitem[Vas10]{microlocal}
A.~Vasy, \emph{The wave equation on asymptotically de {S}itter-like spaces}, Adv. Math. \textbf{223} (2010), no.~1, 49--97.

\end{thebibliography}

\end{document}